\documentclass[a4paper, 11pt]{amsart}

\usepackage{a4wide}
\usepackage[english]{babel} 
\usepackage{amsmath, amssymb} 

\usepackage{tikz}
\usepackage{tikz-cd}
	\tikzcdset{row sep/normal=1.3em}
	\tikzcdset{column sep/normal=1.3em}

\usepackage[mathscr]{euscript}

\usepackage{hyperref}
\usepackage[capitalize,noabbrev]{cleveref}

\newcommand{\Sch}{\mathrm{Sch}}

\newcommand{\Q}{\mathbb{Q}}

\newcommand{\N}{\mathbb{N}}
\newcommand{\Z}{\mathbb{Z}}

\newcommand{\T}{\mathbb{T}}
\newcommand{\M}{\mathrm{M}}

\newcommand{\KH}{{KH}}
\newcommand{\cZ}{\mathcal{Z}}

\newcommand{\Y}{\mathscr{Y}}
\newcommand{\X}{\mathscr{X}}
\newcommand{\D}{\mathscr{D}}

\newcommand{\cO}{\mathcal{O}}
\newcommand{\cA}{\mathcal{A}}
\newcommand{\cB}{\mathcal{B}}

\newcommand{\cT}{\mathcal{T}}

\newcommand{\cC}{\mathcal{C}}
\newcommand{\cD}{\mathcal{D}}
\newcommand{\cE}{\mathcal{E}}

\newcommand{\cn}{\mathrm{cn}}

\DeclareMathOperator{\Pro}{Pro}
\newcommand{\Alg}{\mathrm{Alg}}
\newcommand{\QCoh}{\mathrm{QCoh}}

\newcommand{\cQ}{\mathcal{Q}}
\newcommand{\cR}{\mathcal{R}}

\newcommand{\bc}{\vec{c}}

\newcommand{\E}{\mathbb{E}}

\newcommand{\can}{\mathrm{can}}

\newcommand{\swedge}{{\scriptscriptstyle\wedge}}

\newcommand{\op}{\mathrm{op}}
\newcommand{\Cat}{\mathrm{Cat}}
\renewcommand{\Pr}{\mathrm{Pr}}

\newcommand{\perf}{\mathrm{perf}}

\newcommand{\inv}{\mathrm{inv}}
\renewcommand{\inf}{\mathrm{inf}}
\newcommand{\GL}{\mathrm{GL}}
\newcommand{\BGL}{\mathrm{BGL}}
\newcommand{\CAlg}{\mathrm{CAlg}}

\newcommand{\lto}{\longrightarrow}

\newcommand{\mmod}{/\!\!/}

\newcommand{\id}{\mathrm{id}}
\DeclareMathOperator{\Bl}{Bl}

\newcommand{\pr}{\mathrm{pr}}

\DeclareMathOperator{\Spec}{Spec}
\DeclareMathOperator{\Sp}{Sp}

\DeclareMathOperator*{\colim}{colim}
\newcommand{\laxtimes}[1]{\mathop{\times\mkern-13mu\raise1.3ex\hbox{$\scriptscriptstyle\to$}_{#1}}}
\newcommand{\slax}{ \times\mkern-14mu\raise1ex\hbox{$\scriptscriptstyle\to$} }

\DeclareMathOperator{\RMod}{RMod}
\DeclareMathOperator{\Mod}{Mod}
\DeclareMathOperator{\Perf}{Perf}
\DeclareMathOperator{\Tor}{Tor}

\DeclareMathOperator{\map}{map}
\DeclareMathOperator{\End}{End}
\DeclareMathOperator{\Fun}{Fun}

\DeclareMathOperator{\Ind}{Ind}
\DeclareMathOperator{\fib}{fib}
\DeclareMathOperator{\Idem}{Idem}
\DeclareMathOperator{\cof}{cofib}
\DeclareMathOperator{\TC}{TC}

\DeclareMathOperator{\THH}{THH}
\DeclareMathOperator{\HC}{HC}
\DeclareMathOperator{\HH}{HH}
\DeclareMathOperator{\HN}{HN}
\DeclareMathOperator{\HP}{HP}

\newcommand{\ol}[1]{\overline{#1}}

\newcommand{\wtimes}[2]{\odot_{#1}^{#2}}

\setcounter{tocdepth}{1}

\newtheorem{thm}{Theorem}
\newtheorem*{thm*}{Theorem}
\newtheorem{cor}[thm]{Corollary}
\newtheorem*{cor*}{Corollary}
\newtheorem{lemma}[thm]{Lemma}
\newtheorem{prop}[thm]{Proposition}
\newtheorem*{mainthm}{Main Theorem}

\theoremstyle{definition}
\newtheorem{dfn}[thm]{Definition}
\newtheorem*{dfn*}{Definition}
\newtheorem{Notation}[thm]{Notation}

\theoremstyle{remark}

\numberwithin{thm}{section}

\newtheorem{rem}[thm]{Remark}
\newtheorem*{rem*}{Remark}

\newtheorem*{ex*}{Example}
\newtheorem{ex}[thm]{Example}

\newtheorem*{step1}{Step 1}
\newtheorem*{step2}{Step 2}
\newtheorem*{step3}{Step 3}
\newtheorem*{step4}{Step 4}

\theoremstyle{plain}
\newcounter{zaehler}

\newtheorem{introthm}[zaehler]{Theorem}

\title{On the \textit{K}-theory of pullbacks}
\author{Markus Land}
\address{Fakult\"at f\"ur Mathematik, Universit\"at Regensburg, 93040 Regensburg, Germany}
\email{markus.land@ur.de}
\author{Georg Tamme}
\address{Fakult\"at f\"ur Mathematik, Universit\"at Regensburg, 93040 Regensburg, Germany}
\email{georg.tamme@ur.de}

\thanks{The authors are supported by the CRC/SFB 1085 \emph{Higher Invariants} (Universit\"at Regensburg) funded by the DFG}

%% MSC classes: 

\date{\today}

\begin{document}

\begin{abstract}
To any pullback square of ring spectra we associate a new ring spectrum and use it to describe the failure of excision in algebraic $K$-theory.
The construction of this new ring spectrum
is categorical and hence allows to determine the failure of excision for any localizing invariant in place of $K$-theory.  

As immediate consequences we obtain an improved version of Suslin's excision result in $K$-theory, generalizations of results of Geisser and Hesselholt on torsion in (bi)relative $K$-groups, and a generalized version of pro-excision for $K$-theory. Furthermore, we show that any truncating invariant satisfies excision, nilinvariance, and cdh-descent. Examples of truncating invariants include the fibre of the cyclotomic trace, the fibre of the rational Goodwillie--Jones Chern character, 
periodic cyclic homology in characteristic zero, and homotopy $K$-theory.

Various of the results we obtain have been known previously, though most of them in weaker forms and with less direct proofs.
\end{abstract}

\maketitle
\setcounter{tocdepth}{2}

\section*{Introduction}

The classical excision theorem in algebraic $K$-theory due to Milnor, Bass, and Murthy says that any Milnor square of rings
\begin{equation}
	\label{diag:ring-spectra}\tag{$\square$}
		\begin{tikzcd}
		A \ar[r] \ar[d] & B \ar[d] \\ 
		A' \ar[r] & B' 
		\end{tikzcd}
\end{equation}
i.e.~a pullback square of rings with  $B \to B'$ surjective, gives rise to a long exact sequence of  $K$-groups (the excision sequence)
\begin{equation}
	\label{eq:Bass-Milnor-sequence}
	K_1(A) \lto K_1(A')\oplus K_1(B) \lto K_1(B') \lto K_0(A) \lto K_0(A')\oplus K_0(B) \lto \dots
\end{equation}
starting at $K_{1}(A)$, see \cite[Theorem XII.8.3]{Bass}. This sequence is extremely useful for computations of  negative $K$-groups. 
Unfortunately, for a long time it seemed to be impossible to extend this sequence to the left in a natural way, which made computations of higher $K$-groups much harder.
For instance, Swan proved that there is no functor $\text{`$K_{2}$'}$ from rings to abelian groups such that the exact sequence \eqref{eq:Bass-Milnor-sequence} could be extended to an exact sequence
\[
\text{`$K_{2}$'}(A') \oplus \text{`$K_{2}$'}(B) \lto \text{`$K_{2}$'}(B') \lto K_{1}(A) \lto \dots
\]    
see \cite[Corollary 1.2]{Swan}. 
Our main insight is that to any Milnor square \eqref{diag:ring-spectra} one can in fact functorially associate a connective ring spectrum $A' \wtimes{A}{B'} B$  together with natural maps from $A'$ and $B$ and  to $B'$ for which there is a long exact sequence
\begin{equation}
	\label{eq:long-boxtimes-sequence}
\dots \lto  K_{i}(A) \lto K_i(A')\oplus K_i(B) \lto K_i(A' \wtimes{A}{B'} B) \lto K_{i-1}(A) \lto \dots \quad (i\in \Z)
\end{equation}
This sequence coincides with the sequence \eqref{eq:Bass-Milnor-sequence} in degrees $\leq 1$.

In fact, our main result is more general. Before formulating it, we remark two things. Firstly, a Milnor square is a particular example of a pullback square of $\E_{1}$-ring spectra. Secondly, non-connective algebraic $K$-theory is a particular example of a  localizing invariant. By the latter we mean a functor   from the $\infty$-category of small, stable, idempotent complete  $\infty$-categories to some stable $\infty$-category  that takes exact sequences to fibre sequences.  A localizing invariant $E$ gives rise to a functor on $\E_{1}$-ring spectra by evaluating $E$ on the $\infty$-category of perfect  modules: $E(A) = E(\Perf(A))$. 
Our main result is the following.

\begin{mainthm}
	\label{intro-main-result}
Assume that \eqref{diag:ring-spectra} is a pullback square of $\E_1$-ring spectra. Associated to this square there exists a natural $\E_1$-ring spectrum $A' \wtimes{A}{B'} B$ with the following properties: The original diagram \eqref{diag:ring-spectra} extends to  a  commutative diagram of $\E_{1}$-ring spectra
\begin{equation}
	\label{diag:natural-diagram0}
	\begin{tikzcd}
	 A \ar[d]\ar[r] & B \ar[d] \ar[ddr, bend left=20] & \\ 
	 A' \ar[r] \ar[drr, bend right=15] & A' \wtimes{A}{B'} B \ar[dr] & \\
	 & & B'
	\end{tikzcd}
\end{equation}
and any localizing invariant sends the inner square in~\eqref{diag:natural-diagram0} to a pullback square. 

The underlying spectrum of $A' \wtimes{A}{B'} B$ is equivalent to $A'\otimes_A B$, and the underlying diagram of spectra is the canonical one.
\end{mainthm}

 \begin{rem*}
If \eqref{diag:ring-spectra} is a diagram of $\E_k$-ring spectra, then the derived tensor product $A' \otimes_{A} B$ carries a natural $\E_{k-1}$-structure. Hence, if $k \geq 2$ then $A' \otimes_{A} B$ carries a natural  $\E_{1}$-structure, which in general  differs from the one on $A' \wtimes{A}{B'} B$. We work out an illuminating example in Section~\ref{sec:example}. 
\end{rem*}

As mentioned earlier, the absence of an excision long exact sequence of $K$-groups in positive degrees makes computations for positive $K$-groups  difficult. To remedy this situation, 
there have been essentially \emph{three approaches}:

\medskip

The \textit{first approach} is due to
 Suslin and Wodzicki \cite{Suslin-Wod,Suslin}.  Suslin proves \cite[Theorem A]{Suslin}  that for a Milnor square the sequence \eqref{eq:Bass-Milnor-sequence} in fact does extend  up to degree~$n$ provided the Tor groups $\Tor^{\Z\ltimes I}_{i}(\Z,\Z)$ vanish  for $i=1,\dots,n-1$. Here  $I$ denotes the kernel of the map $A \to A'$, and $\Z \ltimes I$ its unitalization. 
As a simple consequence of 
our Main Theorem we get the following generalization and strengthening of Suslin's result.

\begin{introthm}
	\label{ThmB}
Assume that  \eqref{diag:ring-spectra} is a pullback square of $\E_{1}$-ring spectra all of which are connective. If the multiplication map $A' \otimes_{A} B \to B'$ is $n$-connective for some $n \geq 1$,  then the induced diagram of $K$-theory spectra 
\[ 
\begin{tikzcd}
	K(A) \ar[r] \ar[d] & K(B) \ar[d] \\ 
	K(A') \ar[r] & K(B')
\end{tikzcd}
\]
is $n$-cartesian (see Definition~\ref{connectivity}).
There is also a more general statement for  $K$-theory with coefficients and for $k$-connective localizing invariants, see Theorems~\ref{thm:Suslin-general}, \ref{thm:Suslin}.
\end{introthm}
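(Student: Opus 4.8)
The plan is to deduce Theorem~\ref{ThmB} directly from the Main Theorem together with a connectivity estimate for the ring spectrum $A' \wtimes{A}{B'} B$. The key observation is that the Main Theorem already tells us that for any localizing invariant $E$ — in particular for non-connective $K$-theory — the inner square of \eqref{diag:natural-diagram0} becomes a pullback square. Concatenating with the outer square, we see that the $K$-theory square associated to \eqref{diag:ring-spectra} fails to be cartesian precisely by the total fibre of the map $A' \wtimes{A}{B'} B \to B'$ on $K$-theory; more precisely, the total fibre of the $K$-theory square of \eqref{diag:ring-spectra} is identified with the fibre of $K(A' \wtimes{A}{B'} B) \to K(B')$. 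So the entire statement reduces to showing that this fibre is $n$-connective, for which it suffices that the map of ring spectra $A' \wtimes{A}{B'} B \to B'$ is $n$-connective, using that connective $K$-theory preserves $n$-connectivity of maps between connective ring spectra (the relative $K$-theory term is built from the relative term on $K_0$ and $K_1$, and vanishes in the relevant range — this is a standard consequence of the plus-construction / group-completion picture, or of the fact that $\tau_{\leq n}$ of $K$ only depends on $\tau_{\leq n}$ of the ring for $n\geq 0$).

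The first concrete step, then, is to recall from the Main Theorem that the underlying spectrum of $A' \wtimes{A}{B'} B$ is $A' \otimes_A B$ with the canonical map to $B'$, so that the hypothesis ``$A' \otimes_A B \to B'$ is $n$-connective'' is exactly the statement that the map of ring spectra $A' \wtimes{A}{B'} B \to B'$ is $n$-connective. Second, I would invoke the Main Theorem to get the pullback square
\[
\begin{tikzcd}
K(A) \ar[r] \ar[d] & K(B) \ar[d] \\
K(A') \ar[r] & K(A' \wtimes{A}{B'} B)
\end{tikzcd}
\]
and observe that the total fibre of the $K$-theory square of \eqref{diag:ring-spectra} therefore agrees with $\fib\bigl(K(A' \wtimes{A}{B'} B) \to K(B')\bigr)$. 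Third, I would cite the connectivity statement for $K$-theory: if $R \to S$ is an $n$-connective map of connective $\E_1$-rings with $n \geq 1$, then $K(R) \to K(S)$ is $(n+1)$-connective, hence in particular the relative $K$-theory $\fib(K(R)\to K(S))$ is $n$-connective; this gives that the square is $n$-cartesian in the sense of Definition~\ref{connectivity}.

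For the more general statements (Theorems~\ref{thm:Suslin-general} and~\ref{thm:Suslin}), the same strategy applies verbatim once one replaces ``$K$-theory preserves connectivity'' by the appropriate input: for $K$-theory with finite coefficients one uses the corresponding connectivity estimate for $K(-;\Z/m)$, and for a $k$-connective localizing invariant $E$ one uses that $E$ shifts the connectivity of an $n$-connective map of connective rings by the amount built into the definition of $k$-connectivity, so that $\fib(E(A' \wtimes{A}{B'} B) \to E(B'))$ lands in the required range. The point in all cases is that the categorical content is entirely in the Main Theorem, and what remains is a purely formal connectivity bookkeeping.

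The main obstacle I anticipate is not the reduction itself — which is essentially formal given the Main Theorem — but pinning down the precise connectivity statement for $K$-theory (and for a general $k$-connective localizing invariant) in a form that is both correct and matches Definition~\ref{connectivity}; in particular one must be careful about the off-by-one between ``the map $A' \otimes_A B \to B'$ is $n$-connective'' and ``the $K$-theory square is $n$-cartesian'', and about whether the hypothesis $n \geq 1$ is genuinely needed (it is, since $K_0$ of a ring is not determined by $\pi_0$ alone once one drops connectivity of the rings, and more relevantly because the relative $K_1$ picks up an extra unit-related contribution that forces one to start the count at $1$). Verifying that these connectivity inputs are available in the generality of connective $\E_1$-ring spectra — rather than only for discrete rings, as in Suslin's original argument — is where the real work lies, but it is standard and can be cited.
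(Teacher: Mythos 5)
Your proposal is correct and follows essentially the paper's own route: reduce via the Main Theorem to the connectivity of the map $K(A' \wtimes{A}{B'} B) \to K(B')$, then invoke the Waldhausen-type statement that an $n$-connective $\pi_0$-isomorphism of connective $\E_1$-rings induces an $(n+1)$-connective map on $K$-theory (the paper's Lemma~\ref{lemma:waldhausen-E1}, fed into Theorems~\ref{thm:Suslin-general} and~\ref{thm:Suslin}). The only slip is that the total fibre of the original square is $\Omega\fib\bigl(K(A' \wtimes{A}{B'} B) \to K(B')\bigr)$ rather than that fibre itself, but since your cited input gives $(n+1)$-connectivity of the map, this off-by-one does not affect the conclusion.
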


If \eqref{diag:ring-spectra} is a Milnor square, then $A' \otimes_{A}  B \to B'$ is $n$-connective if and only if $\Tor_{i}^{A}(A', B) = 0$ for $i=1, \dots, n-1$. Thus \cref{ThmB} strengthens Suslin's result as the condition only depends on the given Milnor square and not on the ideal $I = \ker(A \to A')$ as an abstract non-unital ring. For instance, for $i=1$, Suslin's condition is that $I=I^{2}$, whereas $\Tor^{A}_{1}(A/I, B)$ vanishes for example if $B = A/J$ and $I\cap J = 0$, a strictly weaker condition. 

Our result also applies more generally than Suslin's, for instance to `analytic isomorphism squares' and to affine Nisnevich squares, respectively, see Example~\ref{ex:LT-squares} for details. In both cases, the multiplication map $A' \otimes_{A} B \to B'$ is an equivalence, hence $A' \wtimes{A}{B'} B \simeq B'$, and the long exact sequence \eqref{eq:long-boxtimes-sequence} is the classical one obtained from the work of Karoubi \cite{Karoubi-ai}, Quillen \cite{Quillen-Higher-K}, Vorst \cite{Vorst}, and Thomason \cite{ThomasonTrobaugh}, respectively. 
 
 \medskip
The \textit{second approach} is to use trace methods. Here, the decisive step was done by Corti{\~n}as.
Combining a pro-version of the result of Suslin--Wodzicki \cite{Suslin-Wod} with ideas of Cuntz and Quillen from their proof of excision for periodic cyclic homology over fields of characteristic zero \cite[Theorem 5.3]{CQ}, he proved that  the failure of excision in $K$-theory is rationally the same as that in (negative) cyclic homology \cite[Main Theorem]{Cortinas}.
Geisser and Hesselholt \cite[Theorem 1]{GH} proved the analog of Corti{\~n}as' result with finite coefficients, replacing negative cyclic homology by topological cyclic homology. Using these results, Dundas and Kittang finally proved that the failure of excision in $K$-theory  is also integrally the same as that in topological cyclic homology under the additional assumption that both maps $A' \to B'$ and $B \to B'$ in the Milnor square \eqref{diag:ring-spectra} are surjective \cite[Theorem 1.1]{DK2}.

All these results are special cases of the following theorem, which itself is a simple  direct consequence of our Main Theorem. 
We call a localizing invariant $E$ \emph{truncating}, if the canonical map $E(A) \to E(\pi_{0}(A))$ is an equivalence for every connective $\E_{1}$-ring spectrum $A$.
Examples of truncating invariants are periodic cyclic homology $\HP(-/k)$ over commutative $\Q$-algebras~$k$, the fibre $K^{\inf}_{\Q}$ of the Goodwillie--Jones Chern character $K(-)_{\Q} \to \HN(-\otimes \Q / \Q)$ from rational $K$-theory to negative cyclic homology, the fibre $K^{\inv}$ of the cyclotomic trace $K \to \TC$ from $K$-theory to topological cyclic homology, and Weibel's homotopy $K$-theory of $H\Z$-algebras.

\begin{introthm}
	\label{truncating-invariants-are-excisive}
Any truncating invariant satisfies excision and nilinvariance. More precisely, assume that \eqref{diag:ring-spectra} is a pullback square of $\E_{1}$-ring spectra all of which are connective and that the induced map $\pi_{0}(A' \otimes_{A} B) \to \pi_{0}(B')$ is an isomorphism. If $E$ is a truncating  invariant, then the induced square
\[
\begin{tikzcd}
	E(A) \ar[r] \ar[d] & E(B) \ar[d] \\ 
	E(A') \ar[r] & E(B')
\end{tikzcd}
\]
is a pullback. Moreover, if $I$ is a two-sided nilpotent ideal in the discrete ring $A$, then $E(A) \to E(A/I)$ is an equivalence.
\end{introthm}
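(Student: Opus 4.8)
The plan is to obtain both assertions as formal consequences of the Main Theorem together with the truncating hypothesis, using only elementary stability properties of pullback squares.

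\textit{Excision.} Given a pullback square of connective $\E_1$-rings as in \eqref{diag:ring-spectra}, apply the Main Theorem to produce the ring $R := A' \wtimes{A}{B'} B$ and the diagram \eqref{diag:natural-diagram0}. Since the underlying spectrum of $R$ is $A' \otimes_A B$, and a relative tensor product of connective modules over a connective ring is connective, $R$ is again a connective $\E_1$-ring. As $E$ is a localizing invariant, it sends the inner square of \eqref{diag:natural-diagram0} to a pullback. The commutativity of \eqref{diag:natural-diagram0} exhibits the original square as the image of this inner square under the natural transformation which is the identity on $E(A)$, $E(B)$, $E(A')$ and the map $E(R) \to E(B')$ on the remaining vertex; so it suffices to show that $E(R) \to E(B')$ is an equivalence. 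Here the truncating hypothesis enters: both $R$ and $B'$ are connective, so $E(R) \simeq E(\pi_0 R)$ and $E(B') \simeq E(\pi_0 B')$ naturally, and $\pi_0 R = \pi_0(A' \otimes_A B) \to \pi_0(B')$ is an isomorphism of rings by assumption, hence is carried by $E$ to an equivalence.

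\textit{Nilinvariance.} A d\'evissage along the tower $A = A/I^N \to A/I^{N-1} \to \dots \to A/I$, whose successive kernels $I^k/I^{k+1}$ square to zero, reduces us to the case $I^2 = 0$. Then $I$ is an $A/I$-bimodule and $A \to A/I$ is a classical, hence derived, square-zero extension (Lurie, \emph{Higher Algebra}, \S7.4), so $A$ fits into a pullback square of connective $\E_1$-rings
\[
\begin{tikzcd}
A \ar[r] \ar[d] & A/I \ar[d, "d_0"] \\
A/I \ar[r, "d"] & (A/I) \oplus I[1]
\end{tikzcd}
\]
with $d_0$ the trivial derivation, $d$ the one classifying the extension, and the two maps $A \to A/I$ both the quotient. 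On $\pi_0$ the canonical map $(A/I) \otimes_A (A/I) \to (A/I) \oplus I[1]$ is the identity of $A/I$, so the excision statement just proved applies and sends this square to a pullback. Finally, the trivial square-zero extension $(A/I) \oplus I[1]$ is connective with $\pi_0 = A/I$, so its projection to $A/I$ induces, by the truncating hypothesis, an equivalence $E((A/I) \oplus I[1]) \xrightarrow{\sim} E(A/I)$ which is a retraction of $E(d_0)$; hence $E(d_0)$, and therefore the parallel map $E(A) \to E(A/I)$ in the pullback square, is an equivalence.

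The argument is essentially bookkeeping once the Main Theorem is available. The only points that need genuine care are that $A' \otimes_A B$ (hence $R$) is connective, so that the truncating hypothesis applies to it; that the hypothesis $\pi_0(A' \otimes_A B) \xrightarrow{\sim} \pi_0(B')$ really holds for the square-zero square used in the nilinvariance step; and the standard fact that a classical square-zero extension of discrete rings is an instance of the derived notion, so that the displayed square is genuinely a pullback of $\E_1$-rings. I do not expect any serious obstacle beyond these verifications.
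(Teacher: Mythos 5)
Your proof is correct and takes essentially the same route as the paper: the excision part is exactly the argument of Theorem~\ref{thm:truncating-excisive} (connectivity of $A'\wtimes{A}{B'}B$ plus the $\pi_0$-isomorphism makes $E(A'\wtimes{A}{B'}B)\to E(B')$ an equivalence), and the nilinvariance part reduces to $I^2=0$ and applies excision to the very same square-zero pullback square that the paper uses. The only cosmetic differences are that the paper realizes that square concretely via the two-term dg-algebras $C(I,A)\to C(I,A/I)$ and verifies the pullback property on horizontal fibres, whereas you invoke Lurie's square-zero extension formalism, and that you conclude using the section $d_0$ instead of the bottom map $d$ — immaterial variations.
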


We refer to Corollaries~\ref{cor:Cuntz-Quillen}, \ref{cor:Cortinas}, and \ref{excision for Kinv} for the results of Cuntz--Quillen, Corti{\~n}as, and Dundas--Kittang.
In particular, this removes the additional surjectivity assumption in the result of Dundas and Kittang.

\medskip

The \textit{third approach} is based on the pro-version of Suslin's excision theorem due to Corti{\~n}as \cite[Theorem 3.16]{Cortinas} and  Geisser--Hesselholt \cite[Theorem 3.1]{GH2} and the observation of Morrow that the sufficient pro-Tor vanishing condition for excision  is automatically satisfied for ideals in noetherian commutative rings \cite[Theorem 0.3]{Morrow-pro-unitality}. A similar pro-excision result for noetherian simplicial commutative rings was established by Kerz--Strunk--Tamme in order to prove pro-descent for abstract blow-up squares in $K$-theory \cite[Theorem 4.11]{KST}.
Again, our Main Theorem easily implies a  common generalization of all of these pro-excision results: Theorem~\ref{ThmB} holds mutatis mutandis for pro-systems of $\E_{1}$-ring spectra, see Theorem~\ref{thm:pro-excision} for details.
In particular, if $A \to B$ is a map of discrete rings sending the ideal $I \subseteq A$ isomorphically onto the ideal $J \subseteq B$, then the  diagram of pro-rings
\[
\begin{tikzcd}
 A \ar[d]\ar[r] & B \ar[d] \\ 
 \{A/I^{\lambda}\} \ar[r] & \{ B/J^{\lambda}\} 
\end{tikzcd}
\]
where $\lambda\in \N$ induces a weakly cartesian square of $K$-theory pro-spectra provided the pro-Tor-groups $\{\Tor^{A}_{i}(A/I^{\lambda}, B)\}$ vanish for $i>0$.
 Again, this condition is strictly weaker than the one occurring in \cite[Theorem 3.1]{GH2} and is automatic if $A$ is commutative and noetherian. See Corollaries~\ref{pro excision GH} and \ref{first cor pro exc} for details.

\medskip

A key feature of our Main Theorem applied to $K$-theory is that it describes the failure of excision as the relative $K$-theory of the $\E_{1}$-map $A' \wtimes{A}{B'} B \to B'$. Note that for a Milnor square, this map is just a 0-truncation.
 Besides the applications already mentioned, we can use this observation to obtain qualitative statements about the torsion in birelative algebraic $K$-groups improving a previous result of Geisser and Hesselholt \cite[Theorem C]{GH2}.
We are grateful to Akhil Mathew for pointing out that our method should give a direct proof of this result avoiding topological cyclic homology.

\begin{introthm}
Assume that \eqref{diag:ring-spectra} is a pullback square of $\E_1$-ring spectra all of which are connective. If the map $A' \otimes_A B \to B'$ is 1-connective, and if the homotopy groups of its fibre in degrees $\leq n$ are annihilated by  $N$ for some integer $N >0$, then the birelative $K$-groups $K_{i}(A,B,A',B')$ are annihilated by some power of $N$ for every $i \leq n$.
\end{introthm}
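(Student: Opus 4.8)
The plan is to derive the statement from the Main Theorem together with a Goodwillie‑calculus estimate, along the lines Mathew indicated, which avoids topological cyclic homology. Throughout, write $C := A' \wtimes{A}{B'} B$.

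\emph{Step 1: reduction to a single relative term.} First I would apply the Main Theorem to fit $C$ into the diagram~\eqref{diag:natural-diagram0}. Since non‑connective $K$‑theory is a localizing invariant, the inner square goes to a pullback, so $\fib\bigl(K(A)\to K(A')\bigr)\simeq\fib\bigl(K(B)\to K(C)\bigr)$; combined with the fibre sequence attached to $K(B)\to K(C)\to K(B')$, this shows that the total fibre of the square of $K$‑theory spectra associated with~\eqref{diag:ring-spectra} — the birelative $K$‑theory $K(A,B,A',B')$ — is equivalent to $\Omega\,\fib\bigl(K(C)\to K(B')\bigr)$. Hence $K_i(A,B,A',B')\cong\pi_{i+1}\fib\bigl(K(C)\to K(B')\bigr)$, and it suffices to show that $\pi_j\fib\bigl(K(C)\to K(B')\bigr)$ is annihilated by a power of $N$ for every $j\le n+1$. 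Here $C$ and $B'$ are connective, $C\to B'$ realizes $A'\otimes_A B\to B'$ on underlying spectra, and its fibre $I$ is $1$-connective with $\pi_i(I)$ annihilated by $N$ for $i\le n$.

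\emph{Step 2: truncation.} Next I would replace $C\to B'$ by $\tau_{\le n}C\to\tau_{\le n}B'$. The fibre of $C\to\tau_{\le n}C$ is $(n+1)$-connective, so $K(C)\to K(\tau_{\le n}C)$ is $(n+2)$-connective by the standard connectivity estimate (an $m$-connective map of connective $\E_1$-rings, $m\ge 1$, induces an $(m+1)$-connective map on $K$-theory; this also lies behind Theorem~\ref{ThmB}), and likewise for $B'$. Thus $\fib\bigl(K(C)\to K(B')\bigr)\to\fib\bigl(K(\tau_{\le n}C)\to K(\tau_{\le n}B')\bigr)$ is an isomorphism on $\pi_j$ for $j\le n+1$. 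The gain is that the fibre of $\tau_{\le n}C\to\tau_{\le n}B'$ is $\tau_{\le n}I$, which is $1$-connective and now has \emph{all} of its homotopy groups annihilated by $N$.

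\emph{Step 3: Goodwillie calculus.} Finally I would invoke the Goodwillie--Dundas--McCarthy analysis of relative algebraic $K$-theory, in the form of the $\THH$-theoretic description of the layers of its Taylor tower (Goodwillie; Lindenstrauss--McCarthy) — an argument that never involves topological cyclic homology. For a $1$-connective map $R\to S$ of connective $\E_1$-rings with fibre $M$, the relative term $\fib\bigl(K(R)\to K(S)\bigr)$ is the limit of a Taylor tower $\{P_k\}_{k\ge 1}$ in which the comparison map $\fib\bigl(K(R)\to K(S)\bigr)\to P_k$ becomes arbitrarily highly connected as $k\to\infty$, and whose layers $D_k=\fib(P_k\to P_{k-1})$ are built — by homotopy orbits for cyclic groups and by smashing with finite complexes — out of $\THH$ with coefficients in the tensor powers $M^{\otimes p}$, $1\le p\le k$. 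Applying this with $(R,S,M)=(\tau_{\le n}C,\tau_{\le n}B',\tau_{\le n}I)$: since every homotopy group of $M$ is annihilated by $N$, so are those of each tensor power $M^{\otimes p}$ with $p\ge1$, hence those of each $\THH(\,\cdot\,;M^{\otimes p})$, hence those of each layer $D_k$, and hence those of each $P_k$ — degreewise, with the exponent bounded in terms of the degree because in each fixed degree only finitely many $\Tor$-contributions, group-homology contributions, and extensions enter (everything in sight being at least $p$-connective). Therefore $\pi_j\fib\bigl(K(\tau_{\le n}C)\to K(\tau_{\le n}B')\bigr)$ is annihilated by a power of $N$ for every $j$, in particular for $j\le n+1$; together with Step~1 this proves the theorem.

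\emph{Expected main obstacle.} The delicate point is Step~3: one must quote the structure and convergence of the Taylor tower of relative $K$-theory correctly — including that the $\THH$-based description of the layers applies to the general (not necessarily split) $1$-connective extension $\tau_{\le n}C\to\tau_{\le n}B'$, not just to trivial square-zero extensions — and then keep track that "annihilated by a power of $N$" survives the passage through tensor powers, $\THH$, homotopy orbits, and the finitely many extensions in each degree, with a uniform bound per degree. Step~2 is precisely what makes this bookkeeping uniform, by moving the problem into the range where the relevant fibre has only finitely many homotopy groups, all $N$-torsion.
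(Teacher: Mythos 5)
Your Step 1 is exactly the paper's reduction (Lemma~\ref{birelative-term}), and Step 2 is a correct, though not strictly necessary, application of the connectivity estimate behind Lemma~\ref{lemma:waldhausen-E1}. The genuine gap is Step 3. The $\THH$-based description of the layers of a Taylor tower of relative $K$-theory that you invoke is available in the literature only for \emph{split square-zero} extensions $R\ltimes M \to R$ (Dundas--McCarthy for the first layer, Lindenstrauss--McCarthy for the full tower): there one has a functor in the bimodule variable $M$ whose Goodwillie tower one can take. For a general $1$-connective map $R \to S$ of connective $\E_1$-rings — here $\tau_{\le n}C \to \tau_{\le n}B'$ — the fibre $M$ is not a square-zero ideal and $R$ is not an extension of $S$ by a bimodule in any sense, so there is no functor of $M$ in sight and no quotable theorem identifying layers with terms built from $\THH(\,\cdot\,;M^{\otimes p})_{hC_p}$, nor a quotable convergence statement for such a tower. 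Producing these inputs would require interpolating along the relative Postnikov tower of $R\to S$ (whose stages are non-split square-zero extensions) and then reducing non-split to split square-zero extensions, i.e.\ essentially redoing Dundas--Goodwillie--McCarthy-style machinery — which both far exceeds what the statement needs and reintroduces exactly the trace-method technology the argument was meant to avoid. You flag this yourself as the ``main obstacle'', but it is not a matter of careful quotation or bookkeeping: the key input does not exist as a citable result in this generality.

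The paper replaces your Step 3 by a much more elementary argument (Proposition~\ref{prop p torsion K groups}). Since $C \to B'$ is $1$-connective and a $\pi_0$-isomorphism, the map $K(C)\to K(B')$ is $2$-connective, so the relative $K$-groups in the relevant range are computed from the fibration $\BGL(C)^+ \to \BGL(B')^+$. One then runs the relative Serre spectral sequence twice — once before and once after the plus construction, using that the plus construction is a homology equivalence — and uses that the bounded $N$-torsion abelian groups form a Serre class, to conclude that $\pi_j$ of the relative term is bounded $N$-torsion for $j \le n+1$; combined with your Step 1 this gives the theorem. No functor calculus, no $\THH$, and no square-zero structure is needed, only the hypothesis that the fibre of $C\to B'$ is $1$-connective with $N$-torsion homotopy groups in degrees $\le n$. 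If you want to keep your outline, replace Step 3 by this Serre-class argument.
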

If \eqref{diag:ring-spectra} is a Milnor square, the condition in the theorem is that the groups $\Tor_{i}^{A}(A',B)$ are killed by multiplication by $N$. 
Using our Main Theorem again, we can reduce questions about relative $K$-groups of nilpotent ideals to questions about relative $K$-groups of 0-truncations. We obtain the following  generalization of a result of Geisser--Hesselholt \cite[Theorem A]{GH2}.

\begin{introthm}
	\label{intro-thm:D}
Let $A$ be a discrete ring, and let $I$ be a two-sided nilpotent ideal in $A$. Assume that $N\cdot I = 0$ for some integer $N > 0$. Then for every integer $i$, the relative K-groups $K_{i}(A,A/I)$ are annihilated by some power of $N$.
\end{introthm}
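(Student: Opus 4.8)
The plan is to reduce, in two steps, to a statement about the reduced $K$-theory of a trivial square-zero extension by a torsion bimodule, which is where the actual work lies. \textbf{First, a reduction to the case $I^2=0$.} Since $I^m=0$ for some $m$, consider the tower of quotients $A=A/I^m\to A/I^{m-1}\to\dots\to A/I$; for $k\ge 2$ the ideal $I^{k-1}/I^k\subseteq A/I^k$ is square-zero and annihilated by $N$, and the evident fibre sequences $K(A/I^j,A/I^{j-1})\to K(A/I^j,A/I)\to K(A/I^{j-1},A/I)$ express $K(A,A/I)$, by induction on $m$, in terms of finitely many of the $K(A/I^k,A/I^{k-1})$. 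So it suffices to treat $I^2=0$, which I do from now on; write $\bar A:=A/I$.

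\textbf{Presenting $A$ as a pullback and applying the birelative torsion theorem.} Since $I^2=0$, the ring $A$ is a square-zero extension of $\bar A$ by the $\bar A$-bimodule $I$, classified by a derivation $d\colon\bar A\to\bar A\oplus\Sigma I$ of $\E_1$-rings; thus $A\simeq\bar A\times_{\bar A\oplus\Sigma I}\bar A$, the pullback along $d$ and the unit inclusion $\iota\colon\bar A\to\bar A\oplus\Sigma I$ (both rings connective). To apply the theorem on torsion in birelative $K$-groups to this square I would check: the canonical map $\bar A\otimes_A\bar A\to\bar A\oplus\Sigma I$ is an isomorphism on $\pi_0$, hence $1$-connective; and, using the cofibre sequence $I\to A\to\bar A$ of $A$-modules, one gets $\bar A\otimes_A\bar A\simeq\cof\bigl((\bar A\otimes_A\bar A)\otimes_{\bar A}I\to\bar A\bigr)$, so that $\tau_{\ge 1}(\bar A\otimes_A\bar A)$---and with it the fibre of $\bar A\otimes_A\bar A\to\bar A\oplus\Sigma I$---has homotopy groups annihilated by $N$ in all degrees (by a power of $N$ if one is not careful about the exact exponent). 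Hence the total fibre $T$ of the square obtained by applying $K$ has homotopy groups annihilated by powers of $N$.

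\textbf{What remains.} Both maps $\bar A\to\bar A\oplus\Sigma I$ in the square are sections of the projection $p\colon\bar A\oplus\Sigma I\to\bar A$, so they become split injective after applying $K$, with common cofibre the reduced $K$-theory $\widetilde K(\bar A\oplus\Sigma I):=\fib\bigl(K(\bar A\oplus\Sigma I)\xrightarrow{p_*}K(\bar A)\bigr)$. Taking fibres along the rows of the square, the total fibre identifies with a fibre sequence $T\to K(A,A/I)\to\Omega\widetilde K(\bar A\oplus\Sigma I)$; since $\pi_*T$ is $N$-power-torsion, it now suffices to prove that for every ring $B$ and every connective $B$-bimodule $M$ with $NM=0$, each homotopy group of $\widetilde K(B\oplus M)$ is annihilated by a power of $N$ (applied with $B=\bar A$ and $M=\Sigma I$).

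\textbf{The crux, and the expected obstacle.} This last statement I would prove by Goodwillie calculus: the functor $M\mapsto\widetilde K(B\oplus M)$ from connective $B$-bimodules to spectra is reduced and analytic, so on a $1$-connective $M$---such as $M=\Sigma I$; if the radius of convergence is an issue one can first run the previous two steps once more to pass to the $2$-connective $\Sigma M$ modulo $N$-power-torsion---its Goodwillie--Taylor tower converges, and its $j$-th layer $L_j(M)$ is homogeneous of degree $j$. Precomposing with the bimodule map $N\cdot\id_M$, which is the zero map since $NM=0$, induces on $L_j(M)$ simultaneously multiplication by $N^j$ and---since homogeneous functors are reduced---the zero map; hence $L_j(M)$ is annihilated by $N^j$. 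For each $i$ the group $\pi_i\widetilde K(B\oplus M)$ equals $\pi_i$ of a finite stage $P_k(M)$ of the tower, which is built from $L_1(M),\dots,L_k(M)$ by finitely many fibre sequences, so it is annihilated by $N^{1+2+\dots+k}$. Putting everything together gives that $K_i(A,A/I)$ is annihilated by some power of $N$, as claimed. I expect this final step to be the main difficulty: it is the only place where $K$-theory enters through more than its formal (localizing-invariant) properties, it genuinely uses the convergence of the Taylor tower, and the exponent must be allowed to grow with $i$---already $\F_p[\varepsilon]/\varepsilon^2$ over $\F_p$ shows the relative $K$-groups need not have bounded $p$-exponent. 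Trying to replace Goodwillie calculus by a further iteration of the birelative torsion theorem does not seem to work, since the connectivity bookkeeping fails to close up.
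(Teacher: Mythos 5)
Your overall route is the same as the paper's: reduce by induction to $I^2=0$, write $A$ as a pullback over the trivial square-zero extension $\bar A\oplus\Sigma I$ (the paper realizes exactly this square via the dg-algebras $C(I,A)\to C(I,A/I)$ in the proof of \cref{thm:torsion-relative-K-groups}), control the birelative term by the torsion theorem, and thereby reduce to showing that $\widetilde K(\bar A\oplus\Sigma I)=\fib\bigl(K(\bar A\oplus\Sigma I)\to K(\bar A)\bigr)$ has $N$-power-torsion homotopy groups. Where you diverge is precisely at the step you flag as the crux, and this is where there is a gap as written: you invoke Goodwillie calculus (analyticity and convergence of the Taylor tower of $M\mapsto\widetilde K(B\oplus M)$ on $1$-connective bimodules, plus the $N^j$-torsion of the homogeneous layers) but do not prove the convergence input; that is a genuine theorem (in the spirit of Dundas--Goodwillie--McCarthy or Lindenstrauss--McCarthy) and is by far the heaviest ingredient in your argument, so leaving it as an appeal makes the proof incomplete as it stands. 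Moreover it is unnecessary: the birelative torsion theorem you already used is proved from \cref{prop p torsion K groups} (a $1$-connective map of connective $\E_1$-rings whose fibre has bounded $N$-torsion homotopy groups in degrees $\leq n$ has bounded $N$-torsion relative $K$-groups in degrees $\leq n+1$, proved by the plus construction and Serre class theory), and that same proposition applied to the $0$-truncation $\bar A\oplus\Sigma I\to\bar A$ — a $1$-connective map with fibre $I[1]$ killed by $N$ — finishes your remaining step immediately; this is exactly how the paper concludes, with no calculus and no growing radius-of-convergence bookkeeping. Two smaller points: the inference ``$\pi_0$-isomorphism, hence $1$-connective'' for $\bar A\otimes_A\bar A\to\bar A\oplus\Sigma I$ is not valid in general (one also needs surjectivity on $\pi_1$); it does hold here, and the paper checks it via the cofibre-sequence diagram of \cref{rem:multiplication-cofibre-sequence} and the five lemma, so you should supply that check. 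Your layer argument ($L_j$ killed by $N^j$ because $N\cdot\mathrm{id}_M=0$ and homogeneous functors are reduced and $j$-multilinear up to $\Sigma_j$-coinvariants) is correct, so your route could be completed by citing a convergence theorem for the relative $K$-theory tower — but that makes the proof strictly heavier and less self-contained than the paper's, whose only $K$-theoretic input beyond the Main Theorem is \cref{prop p torsion K groups}.
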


In both theorems the assumptions are automatically satisfied if \eqref{diag:ring-spectra} is a Milnor square of $\Z/N$-algebras. In this situation the previous two theorems are due to Geisser and Hesselholt. Their method does not apply directly to our more general setting.

\medskip

Our methods combined with an argument of \cite{KST} give the following cdh-descent result, which we prove in Appendix~\ref{sec:cdh}.
Various special cases of it have been known previously, see  Appendix~\ref{sec:cdh} for a discussion.
\begin{introthm}
Any truncating invariant satisfies cdh-descent on quasi-compact, quasi-separated schemes. 
\end{introthm}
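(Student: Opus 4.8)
The plan is to reduce cdh-descent to the distinguished squares of the defining cd-structure and then treat each kind. Recall that the cdh topology on quasi-compact, quasi-separated schemes is given by a (complete, regular, bounded) cd-structure, as carried out in \cite{KST} extending Voevodsky's construction, so that a finitary presheaf of spectra $X\mapsto E(X):=E(\Perf(X))$ satisfies cdh-descent if and only if it sends the initial scheme to $0$, sends finite disjoint unions to products, satisfies Nisnevich descent, and sends every \emph{abstract blow-up square}
\[
\begin{tikzcd}
Y\times_X Z \ar[r] \ar[d] & Y \ar[d] \\
Z \ar[r] & X
\end{tikzcd}
\]
(with $Z\hookrightarrow X$ a finitely presented closed immersion and $Y\to X$ proper, finitely presented, and an isomorphism over $X\setminus Z$) to a cartesian square. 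Additivity is automatic for a localizing invariant: $\Perf(\emptyset)=0$ forces $E(\emptyset)\simeq 0$, and the split exact sequence $\Perf(X)\to\Perf(X\sqcup X')\to\Perf(X')$ gives $E(X\sqcup X')\simeq E(X)\times E(X')$. So the real content is Nisnevich descent and abstract blow-up squares.

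Nisnevich descent holds for \emph{every} localizing invariant and uses nothing about truncation: for a Nisnevich square with closed complement $Z=X\setminus U$ and \'etale $p\colon V\to X$ an isomorphism over $Z$, Thomason--Trobaugh's form of Nisnevich excision identifies the fibres of $\Perf(X)\to\Perf(U)$ and of $\Perf(V)\to\Perf(U\times_X V)$ — both are the category of perfect complexes supported on $Z$, resp.\ on $p^{-1}(Z)$, and these are equivalent — so applying $E$ and using the localization fibre sequences yields a cartesian square. In particular $E$ satisfies Zariski descent, which I use below to globalize nilinvariance from affine schemes to arbitrary qcqs schemes.

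The essential step is the abstract blow-up square, and here I would invoke the pro-descent theorem for blow-ups of \cite{KST} (together with the standard reduction, via flatification and Noetherian induction, of abstract blow-up descent to Nisnevich descent plus ordinary blow-up descent). Writing $Z_r\hookrightarrow X$ for the $r$-th infinitesimal thickening of $Z$, that theorem says that for \emph{any} localizing invariant $E$ the square of pro-spectra
\[
\begin{tikzcd}
E(X) \ar[r] \ar[d] & E(Y) \ar[d] \\
\{E(Z_r)\}_r \ar[r] & \{E(Y\times_X Z_r)\}_r
\end{tikzcd}
\]
(with $E(X),E(Y)$ viewed as constant pro-spectra) is cartesian; this is the step in which properness of $Y\to X$ and its being an isomorphism over $X\setminus Z$ genuinely enter, through the blow-up and deformation arguments of \cite{KST}. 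Now I feed in the hypothesis: since $E$ is truncating it is nilinvariant by \cref{truncating-invariants-are-excisive}, and since $E$ has Zariski descent this globalizes. For every $r$ the closed subscheme $Z_r\subseteq X$ has the same reduction as $Z$, so $E(Z_r)\simeq E((Z_r)_{\mathrm{red}})=E(Z_{\mathrm{red}})\simeq E(Z)$, and likewise $(Y\times_X Z_r)_{\mathrm{red}}=(Y\times_X Z)_{\mathrm{red}}$ gives $E(Y\times_X Z_r)\simeq E(Y\times_X Z)$, compatibly with the transition maps. Hence both pro-systems in the bottom row become constant after applying $E$, the pullback of the bottom row against $E(Y)$ is the constant pro-spectrum $E(Z)\times_{E(Y\times_X Z)}E(Y)$, and the \cite{KST} equivalence identifies it with the constant pro-spectrum $E(X)$. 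By full faithfulness of $\Sp\hookrightarrow\Pro(\Sp)$ the comparison map is already an equivalence of spectra, i.e.\ $E$ sends the abstract blow-up square to a cartesian square; combined with the previous paragraph, $E$ satisfies cdh-descent.

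Two points require care in the full write-up. First, the reduction to Noetherian schemes needed to apply \cite{KST}: every qcqs scheme is a cofiltered limit of finite-type $\Z$-schemes along affine transition maps, and since $\Perf(\lim_\alpha X_\alpha)\simeq\colim_\alpha\Perf(X_\alpha)$ and $E$ is finitary, the relevant finitely presented squares can be checked Noetherian-locally. Second, one must know that the cdh cd-structure on qcqs schemes is complete, regular and bounded, so that the sheaf condition really reduces to the squares condition \cite{KST}, as well as the passage from ordinary to abstract blow-up squares. The one genuinely nonformal ingredient — and thus the main obstacle — is importing the \cite{KST} pro-descent statement in exactly the generality used and confirming that the infinitesimal-thickening pro-systems collapse as claimed once a nilinvariant $E$ is applied; granting that, everything else is bookkeeping with the cd-structure and the already-established nilinvariance of truncating invariants.
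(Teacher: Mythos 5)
The route you take is genuinely different from the paper's (which proves \cref{cdh} directly via excision, nilinvariance, derived blow-up squares, flatification and a retract argument, and only afterwards deduces pro-descent statements), but as written it has a real gap in the non-noetherian reduction. To get from qcqs schemes to the noetherian case you commute $E$ past a cofiltered limit of schemes, explicitly assuming ``$E$ is finitary''. The theorem makes no such assumption, and the headline examples of truncating invariants --- $K^{\inv}$, $K^{\inf}_{\Q}$, $\HP(-/k)$ --- do \emph{not} commute with filtered colimits, since $\TC$, $\HN$ and $\HP$ are built from limits and Tate constructions. So your argument only covers finitary truncating invariants and misses precisely the main applications; indeed the remark after the proof of \cref{cdh} (and \cref{ex:non-nil}) shows that finitariness is an additional hypothesis with real consequences. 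The paper avoids this by noetherian approximation of the \emph{square} rather than of $E$: the abstract blow-up square is written as the pullback of a noetherian one along $X \to X_0$, and every step (excision for finitely presented closed covers via \cref{truncating-invariants-are-excisive}, base change of derived blow-up squares, Raynaud--Gruson flatification plus the two-step retract argument) is stable under such base change and requires no compatibility of $E$ with filtered colimits. To repair your proof you would have to redo the blow-up step for pullbacks of noetherian squares along arbitrary maps, which is essentially what Steps 2--4 of the paper's proof do.

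A secondary point: the pro-descent statement you invoke is not available ``for any localizing invariant''. The proof in \cite{KST}, and its generalization \cref{pro cdh descent for THH}, require pro-excision and hence a connectivity hypothesis ($k$-connectivity for some $k$). This is repairable, since truncating invariants are $k$-connective for every $k$, and granting that input your collapse of the pro-systems $\{E(Z_r)\}$ via nilinvariance (\cref{truncating => nil}, globalized by Zariski descent; note the nilradical is nilpotent only because the schemes are noetherian) is sound: a weak equivalence of constant pro-spectra is an equivalence of spectra. But be aware that this makes the full pro-descent theorem --- whose proof itself runs through derived blow-ups, flatification and pro-excision for noetherian simplicial rings --- an \emph{input} to cdh-descent, whereas in the paper the logical order is reversed and the direct argument is considerably lighter; there is no circularity, but you inherit the burden of verifying the \cite{KST} argument in the stated generality, which you correctly flag as the nonformal core of your approach.
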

Finally, in Appendix~\ref{app:B} we study localizing invariants with coefficients in some auxiliary small, stable, idempotent complete $\infty$-category and prove cdh-descent for truncating invariants with coefficients in some quasi-coherent sheaf of connective algebras over the base scheme.

\medskip

To end this introduction, we sketch the proof of the Main Theorem.
The condition that \eqref{diag:ring-spectra} is a pullback square implies that $\Perf(A)$ embeds as a full subcategory in the lax pullback
\[
\Perf(A') \laxtimes{\Perf(B')} \Perf(B).
\]
It turns out that the Verdier quotient of this lax pullback by $\Perf(A)$ is generated by a single object, and that the underlying spectrum of the endomorphisms of this generator is equivalent to $A' \otimes_{A} B$. This easily implies the result.

As side results we also obtain criteria for the diagram of (perfect) modules associated with \eqref{diag:ring-spectra} to be cartesian, see Proposition~\ref{prop:pullback-module-cats}, and a proof of the fundamental theorem for localizing invariants via the Toeplitz ring in analogy to Cuntz's proof of Bott periodicity, see Corollary~\ref{cor:fundamental-theorem}.

\medskip

\noindent \textsc{Acknowledgements.} We would like to thank Moritz Kerz, Hoang Kim Nguyen, and Christoph Schrade for useful discussions. We are grateful to Akhil Mathew for his comments on our work. In particular, he pointed out the possibility to use our results to give a direct proof of Geisser and Hesselholt's results on torsion in relative and birelative $K$-groups. We further thank Lars Hesselholt, Thomas Nikolaus, and Oscar Randal-Williams for helpful comments on a draft of this paper. Finally, we thank the anonymous referees. They suggested some simplifications in the section about pro-excision, to formulate Lemma~\ref{rings vs 1-generated cats}, which makes the proof of the main theorem  clearer, as well as to include homotopy $K$-theory as an example of  a truncating invariant,  and made several further suggestions which helped to improve the paper.

\section{The main theorem}

\begin{Notation}
We write   $\Cat_\infty^\perf$ for the $\infty$-category of small, stable, idempotent complete $\infty$-categories and exact functors.
Given an $\E_1$-ring spectrum $A$, we denote by $\RMod(A)$ the presentable, stable $\infty$-category of $A$-right modules. The $\infty$-category of \emph{perfect} $A$-modules, $\Perf(A) \in \Cat_\infty^\perf$, is by definition the smallest stable full subcategory of $\RMod(A)$ which contains $A$ and is closed under retracts. It coincides with the compact objects in $\RMod(A)$ and its ind-completion $\Ind(\Perf(A))$ is equivalent to $\RMod(A)$.
\end{Notation}

\begin{dfn} \label{localizing invariants}
A sequence $\cA \to \cB \to \cC$ in $\Cat^{\perf}_{\infty}$ is called \emph{exact} if the composite is zero, $\cA \to \cB$ is fully faithful, and the induced functor on the idempotent completion of the Verdier quotient $\Idem(\cB/\cA) \to \cC$ is an equivalence. Equivalently, the sequence is a fibre and a cofibre sequence in $\Cat^{\perf}_{\infty}$. We refer to \cite[\S 5]{BGT} for a general discussion of exact sequences in $\Cat^{\perf}_{\infty}$.
For $\cT$ a stable $\infty$-category, a $\cT$-valued \emph{localizing invariant}\footnote{Such an invariant was called \emph{weakly localizing} in \cite{Tamme:excision}, as in \cite{BGT} a localizing invariant is further required to commute with filtered colimits. However, we think that the decisive property is that of sending exact sequences to fibre sequences and propose to use another adjective, e.g.\ finitary, for the localizing invariants that preserve filtered colimits.}  is a functor 
\[
E\colon \Cat^{\perf}_{\infty} \lto \cT,
\]
which sends exact sequences in $\Cat^{\perf}_{\infty}$ to fibre sequences in $\cT$.
\end{dfn}

Let us  recall the setup for our main theorem.
We consider a commutative diagram of $\E_1$-ring spectra as follows. 
\begin{equation}
	\label{diag:ring-spectra2}\tag{$\square$}
		\begin{tikzcd}
		A \ar[r] \ar[d] & B \ar[d] \\ 
		A' \ar[r] & B'
		\end{tikzcd}
\end{equation}

For convenience we recall the statement of the Main Theorem.

\begin{thm}[Main Theorem]\label{main theorem}  
Assume that \eqref{diag:ring-spectra2} is a pullback square of $\E_1$-ring spectra. Associated to this square there exists a natural $\E_1$-ring spectrum $A' \wtimes{A}{B'} B$ with the following properties: The original diagram  \eqref{diag:ring-spectra} extends to a  commutative diagram of $\E_{1}$-ring spectra
\begin{equation}
	\label{diag:natural-diagram}
	\begin{tikzcd}
	 A \ar[d]\ar[r] & B \ar[d] \ar[ddr, bend left=20] & \\ 
	 A' \ar[r] \ar[drr, bend right=15] & A' \wtimes{A}{B'} B \ar[dr] & \\
	 & & B'
	\end{tikzcd}
\end{equation}
and any localizing invariant sends the inner square in~\eqref{diag:natural-diagram} to a pullback square. 

The underlying spectrum of $A' \wtimes{A}{B'} B$ is equivalent to $A'\otimes_A B$, and the underlying diagram of spectra is the canonical one.
\end{thm}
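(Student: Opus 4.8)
The plan is to realize the diagram \eqref{diag:ring-spectra2} inside the world of stable $\infty$-categories via the lax pullback, as indicated in the introduction, and to extract the new ring spectrum as the endomorphisms of a single generator of a certain Verdier quotient. Concretely, I would first set $\cC = \Perf(A') \laxtimes{\Perf(B')} \Perf(B)$, the lax pullback (comma category of $\Perf(A') \to \Perf(B') \leftarrow \Perf(B)$), which receives a canonical functor $\Perf(A) \to \cC$ sending $A$ to the triple $(A', B, \mathrm{id})$ obtained from the images of $A$. The hypothesis that \eqref{diag:ring-spectra2} is a pullback of $\E_1$-rings is exactly what makes this functor fully faithful: a map out of $A$ in $\Perf(A)$ is computed by the pullback $A = A' \times_{B'} B$, which is the defining property of morphisms in the comma category. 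So I get an exact sequence $\Perf(A) \to \cC \to \cC/\Perf(A)$ in $\Cat^{\perf}_\infty$ after idempotent completion.

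Next I would identify the quotient $\cD := \Idem(\cC/\Perf(A))$. The key structural input (this is where Lemma~\ref{rings vs 1-generated cats}, alluded to as a referee suggestion, comes in) is that $\cD$ is generated by a single compact object, namely the image of the object $(A', 0, 0)$ (or equivalently, by the cofibre of $A \to A'$ inside the lax pullback). A category in $\Cat^{\perf}_\infty$ generated by one object $X$ with $\End(X)$ having underlying ring spectrum $R$ is equivalent to $\Perf(R)$, via $X \mapsto R$; and a localizing invariant applied to it computes $E(R)$. The $\E_1$-structure on $A' \wtimes{A}{B'} B$ is then \emph{defined} to be $\mathrm{End}_{\cD}$ of this generator, which automatically produces the maps in \eqref{diag:natural-diagram}: the map $B \to A' \wtimes{A}{B'} B$ comes from the functor $\Perf(B) \to \cD$ (the composite $\Perf(B) \to \cC \to \cD$, which sends $B$ to the generator since $(0,B,0)$ and $(A',0,0)$ become equivalent mod $\Perf(A)$), $A' \to A' \wtimes{A}{B'} B$ comes from $\Perf(A') \to \cD$ similarly, and the map to $B'$ comes from the canonical functor $\cC \to \Perf(B')$ (second projection), which kills $\Perf(A)$ precisely because $A \to B \to B'$ and $A \to A' \to B'$ agree, hence factors through $\cD \to \Perf(B')$ and sends the generator to $B'$.

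For the pullback assertion: applying any localizing invariant $E$ to the two exact sequences $\Perf(A) \to \Perf(A') \to \Perf(A')/\Perf(A)$ (wait — not quite; rather) I would use that $E$ sends the exact sequence $\Perf(A) \to \cC \to \cD$ to a fibre sequence, and separately that the lax pullback $\cC$ sits in an exact sequence $\Perf(B) \to \cC \to \Perf(A')$ (the lax pullback of $\Perf(A') \to \Perf(B') \leftarrow \Perf(B)$ has $\Perf(B)$, via $(0,-,0)$, as a full subcategory with quotient $\Perf(A')$ — this is a standard fact about comma/lax-pullback categories, sometimes phrased via the $\os$ or $\lc$ construction). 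Chasing these two fibre sequences, together with the identification $\cD \simeq \Perf(A' \wtimes{A}{B'} B)$, yields that the inner square of \eqref{diag:natural-diagram} goes to a pullback under $E$: the total cofibre vanishes because $E(\cC) \to E(\Perf(A'))$ has fibre $E(\Perf(B))$ and $E(\cC) \to E(\cD)$ has fibre $E(\Perf(A))$, and compatibility of these with the maps to $E(B')$ forces the square to be cartesian. Finally, the underlying spectrum statement: the generator of $\cD$ is the cofibre of $A \to A'$ in $\cC$; its endomorphism spectrum, computed in the Verdier quotient as a mapping spectrum, is $\mathrm{map}_{\cC/\Perf(A)}(\mathrm{cof}, \mathrm{cof})$, and an explicit (co)fibre sequence computation — using that mapping spectra in a Verdier quotient by a localizing subcategory are computed by inverting maps with (co)fibre in the subcategory, and that $A' \otimes_A B$ is precisely the relevant homotopy (co)limit — identifies this underlying spectrum with $A' \otimes_A B$ compatibly with the structure maps, which are the canonical ones by construction.

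The main obstacle I expect is the careful verification that $\cD$ really is generated by the single object $(A',0,0)$ with the claimed endomorphisms, i.e.\ controlling mapping spectra in the Verdier quotient $\cC/\Perf(A)$. This requires knowing that $\Perf(A)$ is not merely fully faithful but that the quotient is well-behaved — one wants a concrete model (e.g.\ realizing the quotient as compact objects in a Bousfield localization of $\Ind(\cC)$, or using that $\Perf(A) \hookrightarrow \cC$ admits an explicit left or right adjoint after ind-completion) so that $\mathrm{map}_{\cC/\Perf(A)}(X,Y) = \mathrm{cofib}\big(\mathrm{map}_\cC(X, a) \to \mathrm{map}_\cC(X, Y)\big)$ or the analogous fibre formula, where $a$ is the (co)localization of $Y$ onto $\Perf(A)$. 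Getting the $\E_1$-ring structure — not just the spectrum — out of $\End_{\cD}$ of the generator is formally clean (endomorphisms of any object in a stable $\infty$-category form an $\E_1$-algebra), but matching it with the maps from $A'$ and $B$ and to $B'$ as \emph{$\E_1$-ring maps}, coherently and naturally in the square, is the bookkeeping that needs the most care; the single-generator lemma is what makes this tractable.
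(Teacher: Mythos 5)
Your proposal follows essentially the same route as the paper: embed $\Perf(A)$ fully faithfully into the lax pullback $\Perf(A')\laxtimes{\Perf(B')}\Perf(B)$, identify the idempotent-completed Verdier quotient as $\Perf$ of the endomorphism ring of a single generator via the Schwede--Shipley recognition lemma, use the splitting $E(\text{lax pullback})\simeq E(A')\oplus E(B)$ together with the exact sequence to get the cartesian square, and compute the underlying spectrum via the Bousfield localization of the ind-completion. Two small slips to repair in a write-up: the images of $(0,B,0)$ and $(A',0,0)$ in the quotient agree only up to a shift (the paper inserts an $\Omega$ throughout to account for this), and the functor to $\Perf(B')$ is not a projection but the cofibre functor $(M,N,f)\mapsto \cof(f)$ --- which is exactly what your justification that ``the two composites to $B'$ agree'' actually requires, since a genuine projection would not annihilate $\Perf(A)$.
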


The following is an immediate consequence of the Main Theorem. 
\begin{cor}\label{cor intro}
Assume that \eqref{diag:ring-spectra2} is a pullback square of $\E_1$-ring spectra. If the multiplication map $A' \otimes_{A} B \to B'$ is an equivalence, then  the square
\[
\begin{tikzcd}
 E(A) \ar[d]\ar[r] & E(B) \ar[d] \\ 
 E(A') \ar[r] & E(B') 
\end{tikzcd}
\]
is cartesian for any localizing invariant $E$.
\end{cor}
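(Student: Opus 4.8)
The plan is to deduce this directly from the Main Theorem by showing that, under the stated hypothesis, the auxiliary $\E_1$-ring $A' \wtimes{A}{B'} B$ coincides with $B'$. Recall from the Main Theorem that there is a map of $\E_1$-ring spectra $A' \wtimes{A}{B'} B \to B'$ whose underlying map of spectra is the canonical multiplication map $A' \otimes_A B \to B'$. Since the forgetful functor $\Alg_{\E_1}(\Sp) \to \Sp$ is conservative, a map of $\E_1$-rings which is an equivalence on underlying spectra is itself an equivalence of $\E_1$-rings; hence the assumption that $A' \otimes_A B \to B'$ be an equivalence forces $A' \wtimes{A}{B'} B \to B'$ to be an equivalence of $\E_1$-ring spectra.

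Next I would feed this equivalence into the commutative diagram \eqref{diag:natural-diagram} and apply the localizing invariant $E$, understood as usual via $E(R) = E(\Perf(R))$. An equivalence of $\E_1$-rings induces an equivalence of perfect module categories, so $E$ inverts it; in particular the edge $E(A' \wtimes{A}{B'} B) \to E(B')$ in the $E$-image of \eqref{diag:natural-diagram} is an equivalence. Because \eqref{diag:natural-diagram} extends \eqref{diag:ring-spectra2}, the composite edges $B \to A' \wtimes{A}{B'} B \to B'$ and $A' \to A' \wtimes{A}{B'} B \to B'$ recover the original maps; composing with the equivalence just produced therefore identifies the $E$-image of the inner square of \eqref{diag:natural-diagram} with the square obtained by applying $E$ to \eqref{diag:ring-spectra2}. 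By the Main Theorem the former square is cartesian, hence so is the latter, which is the assertion.

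I expect no genuine obstacle here, since all the content is in the Main Theorem: the only points needing (routine) justification are conservativity of the forgetful functor $\Alg_{\E_1}(\Sp) \to \Sp$ and the fact that $E(-) = E(\Perf(-))$ inverts equivalences of $\E_1$-rings, both standard. The one place to be slightly careful is to phrase the final identification $\infty$-categorically, i.e.\ as an equivalence of $\Delta^1 \times \Delta^1$-diagrams which is an objectwise equivalence rather than a mere isomorphism in a homotopy category, but this is immediate from functoriality of $E \circ \Perf$.
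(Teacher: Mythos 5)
Your proposal is correct and follows exactly the route the paper intends: the corollary is stated there as an immediate consequence of the Main Theorem, precisely because the identification of the underlying map of $A' \wtimes{A}{B'} B \to B'$ with the multiplication map $A' \otimes_A B \to B'$ forces this map to be an equivalence of $\E_1$-rings, so $E$ applied to the inner square of \eqref{diag:natural-diagram} is identified with $E$ applied to \eqref{diag:ring-spectra2}. The routine points you flag (conservativity of the forgetful functor and invariance of $E \circ \Perf$ under equivalences) are exactly what the paper leaves implicit.
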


If the map $A \to A'$ is Tor-unital, i.e.\ if the multiplication $A'\otimes_A A' \to A'$ is an equivalence, then also the map $A'\otimes_A B \to B'$ is an equivalence, see \cref{lemma Tor-unitality}. Thus \cref{cor intro} recovers a previous result of the second author, see \cite[Theorem 28]{Tamme:excision}.

\medskip

For the proof of \cref{main theorem} we need some preparations.
We begin by  recalling the notion of lax pullbacks of $\infty$-categories \cite[Definition~5]{Tamme:excision} and refer to \cite[Section 1]{Tamme:excision} for more details and references.
Let us consider a  diagram 
\begin{equation}\label{diag:cats}
\begin{tikzcd}
	& \cB \ar[d,"q"] \\ \cA \ar[r,"p"] & \cC 
\end{tikzcd}
\end{equation}
of  $\infty$-categories.
The \emph{lax pullback} of \eqref{diag:cats}, denoted by $\cA \laxtimes{\cC} \cB$, is defined via the pullback diagram
\begin{equation}\label{diag:cats2}
\begin{tikzcd}
	\cA \laxtimes{\cC} \cB \ar[r] \ar[d] & \Fun(\Delta^1,\cC) \ar[d] \\ 
	\cA \times \cB \ar[r,"p\times q"] & \cC \times \cC
\end{tikzcd}
\end{equation}
in simplicial sets. 
Since the source-target map $\Fun(\Delta^1,\cC) \to \cC\times \cC$ is a categorical fibration, $\cA \laxtimes{\cC} \cB$ is in fact an $\infty$-category, and \eqref{diag:cats2} is a pullback diagram of $\infty$-categories. Thus, the objects of $\cA\laxtimes{\cC} \cB$ are triples $(X,Y,f)$ with $X \in \cA$, $Y \in \cB$, and $f \colon p(X) \to q(Y)$ a morphism in $\cC$.

If \eqref{diag:cats} is a diagram of stable $\infty$-categories and exact functors, then  $\cA \laxtimes{\cC} \cB$ is stable, and the projection functors to $\cA$, $\cB$, and $\Fun(\Delta^{1}, \cC)$, respectively, are exact \cite[Lemma 8]{Tamme:excision}. 
Moreover, colimits  in $\cA \laxtimes{\cC} \cB$ are formed component-wise. There are fully faithful inclusions $j_{1}\colon \cA \to \cA \laxtimes{\cC} \cB$ and $j_{2}\colon \cB \to \cA \laxtimes{\cC} \cB$ given by $X \mapsto (X,0,0)$ and $Y \mapsto (0,Y,0)$, respectively, and any object $(X,Y,f)$ of $\cA \laxtimes{\cC} \cB$ sits in a fibre sequence 
\begin{equation}
	\label{seq:canonical-fibre-sequence}
(0,Y,0) \lto (X,Y,f) \lto (X,0,0).
\end{equation}

The following lemma will be used in the proof of Proposition~\ref{prop:identification-boxtimes-A} below.
\begin{lemma}
	\label{lemma:right-adjoint-of-A-in-lax-pb}
Assume that \eqref{diag:cats} is a diagram  of stable $\infty$-categories and exact functors and suppose that $p$ admits a right adjoint $u$. Then the functor $j_{1}\colon \cA \to \cA \laxtimes{\cC} \cB$ has a right adjoint given by $(X,Y,f) \mapsto \fib(X \to uq(Y))$ where the map in the fibre is the composition of the unit $X \to up(X)$ with $u(f)\colon up(X) \to uq(Y)$.
\end{lemma}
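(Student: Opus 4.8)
The plan is to verify the adjunction directly by exhibiting a natural equivalence of mapping spaces
\[
\Map_{\cA \laxtimes{\cC} \cB}(j_1(W), (X,Y,f)) \simeq \Map_{\cA}(W, \fib(X \to uq(Y)))
\]
for $W \in \cA$ and $(X,Y,f) \in \cA \laxtimes{\cC}\cB$, natural in both variables. First I would use the defining pullback square \eqref{diag:cats2}: since $\cA \laxtimes{\cC}\cB$ is the pullback of $\cA \times \cB$ and $\Fun(\Delta^1, \cC)$ over $\cC \times \cC$, a mapping space out of $j_1(W) = (W, 0, 0)$ into $(X,Y,f)$ is computed as the pullback of mapping spaces
\[
\Map_{\cA}(W,X) \times_{\Map_{\cC}(p(W), q(Y))} \Map_{\Fun(\Delta^1,\cC)}\big((\id_{p(W)}: p(W)\to p(W)),\, f\big),
\]
where I have used that the map $0 \to Y$ forces the $\cB$-component of any morphism to be essentially unique, and that the corresponding object of $\Fun(\Delta^1,\cC)$ on the source side is the identity arrow $\id_{p(W)}$ (the image under $p$ of $W \to 0$ shifted appropriately — more precisely the structure arrow of $j_1(W)$ is $0 = p(W)' \to p(W)$, but since we are mapping into $f: p(X)\to q(Y)$ the relevant arrow is $p(W) \to p(X)$ fitting into a square with $f$).

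Next I would unwind the mapping space in $\Fun(\Delta^1,\cC)$: a morphism from the identity arrow $\id_{p(W)}$ to the arrow $f : p(X) \to q(Y)$ is a commuting square, i.e.\ a pair of arrows $p(W) \to p(X)$ and $p(W) \to q(Y)$ together with a homotopy making the square commute; but the left vertical being the identity means such data is equivalent to just an arrow $a: p(W) \to p(X)$ together with a path $f \circ a \simeq (\text{the bottom arrow } p(W)\to q(Y))$. Combining this with the fibre-product description and the adjunction $p \dashv u$, the arrow $p(W) \to q(Y)$ corresponds to an arrow $W \to uq(Y)$, the arrow $W \to X$ (from the $\cA$-component) composed with $X \to uq(Y)$ must agree with it, and the arrow $a : p(W) \to p(X)$ is determined by $W \to X$ via the unit. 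Assembling everything, the mapping space becomes the space of arrows $W \to X$ together with a nullhomotopy of the composite $W \to X \to uq(Y)$, which is exactly $\Map_{\cA}(W, \fib(X \to uq(Y)))$. Throughout, the map $X \to uq(Y)$ is the adjoint-transpose of $f$ precomposed with the unit, i.e.\ $u(f)$ composed with $X \to up(X)$, as claimed; I would make sure the identifications are compatible with this specific map rather than some other one.

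I expect the main obstacle to be bookkeeping the homotopy-coherence: the "commuting square" data in $\Fun(\Delta^1,\cC)$ and the various triangle identities for the adjunction $p \dashv u$ must be threaded through carefully so that the resulting equivalence is genuinely natural in $(X,Y,f)$ and not just a pointwise bijection on homotopy classes. A clean way to organize this, which I would adopt, is to avoid hand computation of mapping spaces altogether: instead, produce the right adjoint abstractly by noting that $j_1$ preserves all colimits (colimits in the lax pullback are computed componentwise, and $j_1$ is the inclusion $X \mapsto (X,0,0)$, manifestly colimit-preserving), and that all categories in sight are presentable (or at least that $j_1$ satisfies the hypotheses of the adjoint functor theorem in the stable setting), so a right adjoint exists; then identify the right adjoint by checking it sends $(X,Y,f)$ to an object corepresenting the correct functor, using the fibre sequence \eqref{seq:canonical-fibre-sequence} to reduce to the two cases $(X,0,0)$ and $(0,Y,0)$. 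For $(0,Y,0)$ the right adjoint should be $\fib(0 \to uq(Y)) = uq(Y)[-1]$, and for $(X,0,0)$ it should be $X$ itself (as $j_1$ is fully faithful); checking the connecting map in the fibre sequence matches $u(f) \circ (\text{unit})$ pins down the formula on a general object. Either route works; I would present whichever turns out shorter, but the adjoint-functor-theorem route sidesteps the coherence headache and is likely the one to write up.
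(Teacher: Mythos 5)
Your first route is essentially the paper's proof: the paper takes the unit to be the canonical equivalence $X \simeq \fib(X \to 0)$ and checks the adjunction identity directly from the formula for mapping spaces in the lax pullback, which is exactly your direct verification. However, as written your computation contains a concrete slip that would derail it: the structure morphism of $j_1(W) = (W,0,0)$ is $p(W) \to q(0) \simeq 0$, not $\id_{p(W)}$ (nor $0 \to p(W)$). This matters because the fibre in the statement arises precisely from the target of this arrow being $0$: a morphism $(W,0,0)\to(X,Y,f)$ consists of $a\colon W \to X$, the essentially unique map $0 \to Y$, and a homotopy filling the square in $\cC$ whose left edge is $p(W)\to 0$, i.e.\ a \emph{nullhomotopy} of $f\circ p(a)$, which the adjunction $p \dashv u$ converts into a nullhomotopy of the composite $W \xrightarrow{a} X \to uq(Y)$. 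With source $\id_{p(W)}$ the square datum is merely an identification of the bottom arrow with $f\circ p(a)$, no nullhomotopy appears, and the computation would give the wrong mapping space; correspondingly your displayed fibre product should be taken over $\Map_{\cC}(p(W),p(X))$ (via source evaluation), not over $\Map_{\cC}(p(W),q(Y))$. Since your closing sentence states the correct answer, this is a fixable bookkeeping error rather than a wrong approach, but the corrected source object is exactly where the content lies.

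Concerning your preferred fallback: note that the lemma assumes only stable $\infty$-categories and exact functors, with no presentability hypothesis, so the adjoint functor theorem is not available in the stated generality (it is available in the paper's application, \cref{lemma:adjoint of inclusion in lax pullback}, where the paper does argue exactly that way). Moreover, even granting abstract existence of a right adjoint $R$, computing $R(0,Y,0)\simeq \Omega\, uq(Y)$ and identifying the connecting map of the fibre sequence \eqref{seq:canonical-fibre-sequence} with $u(f)$ composed with the unit again requires the same mapping-space formula, so this route reorganizes the coherence bookkeeping rather than sidestepping it.
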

\begin{proof}
A unit transformation is given by the canonical equivalence $X \simeq \fib(X \to 0)$.
Indeed, this follows easily from the formula for mapping spaces in the lax pullback, see for instance \cite[Remark 6]{Tamme:excision}.
\end{proof}

We now consider a commutative square \eqref{diag:ring-spectra2} of $\E_1$-ring spectra.

\begin{lemma}
	\label{lemma:adjoint of inclusion in lax pullback}
The  functor
\[
i\colon \RMod(A) \lto \RMod(A') \laxtimes{\RMod{(B')}} \RMod(B)
\]
induced by extension of scalars admits a right adjoint $s$.  
Explicitly, for an object $(M,N,f) \in \RMod(A') \laxtimes{\RMod{(B')}} \RMod(B)$ we have
\[
s(M,N,f) \simeq M \times_{N \otimes_{B} B'} N
\]
where the map $M \to N \otimes_{B} B'$ is the composite $M \to M \otimes_{A'} B' \xrightarrow{f} N \otimes_{B} B'$.
\end{lemma}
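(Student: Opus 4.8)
The plan is to produce the right adjoint $s$ by applying the general recipe of Lemma~\ref{lemma:right-adjoint-of-A-in-lax-pb} and then to massage the resulting formula into the stated one. First I would observe that the functor $i$ is, up to the identification $\RMod(A) \simeq \RMod(A') \times_{\RMod(B')} \RMod(B)$ coming from the fact that \eqref{diag:ring-spectra2} is \emph{not} assumed cartesian here, simply the functor $j_1$ of the earlier lemma composed with extension of scalars along $A \to A'$; more precisely, $i$ factors as $\RMod(A) \xrightarrow{-\otimes_A A'} \RMod(A') \xrightarrow{j_1} \RMod(A') \laxtimes{\RMod(B')} \RMod(B)$ only after noting the morphism data, so the cleaner route is to write $i$ directly and identify its right adjoint. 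The extension-of-scalars functor $p = -\otimes_A A' \colon \RMod(A) \to \RMod(A')$ admits a right adjoint $u$ given by restriction of scalars along $A \to A'$, and likewise $q = -\otimes_A B$ has the restriction-of-scalars right adjoint. One then checks that the composite $i$ is the functor $(M,N,f) \mapsto$ is left adjoint to the functor sending $(M,N,f)$ to the pullback in $\RMod(A)$ of the restrictions of $M$ and $N$ over the restriction of $N\otimes_B B'$, which is exactly the claimed formula once all restrictions of scalars are left implicit.

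Concretely, the key steps in order are: (1) identify the right adjoints of $p = -\otimes_A A'$ and $q = -\otimes_A B$ as the respective restriction-of-scalars functors, using the projection formula / standard adjunction between base change and restriction for modules over $\E_1$-rings; (2) apply Lemma~\ref{lemma:right-adjoint-of-A-in-lax-pb} with $\cA = \RMod(A)$, $\cB = \RMod(B)$, $\cC = \RMod(B')$, $p$ as above, and $u$ its right adjoint, to get that $j_1$ — extension of scalars $\RMod(A') \to$ lax pullback — has right adjoint $(M,N,f)\mapsto \fib(M \to u q(N))$; (3) combine this with the adjunction $(-\otimes_A A') \dashv (\text{restriction})$ to obtain the right adjoint of the composite $i$, and rewrite the fibre $\fib\big(M \to N\otimes_B B'\big)$, where the map is $M \to M\otimes_{A'} B' \xrightarrow{f} N\otimes_B B'$ as stated; (4) finally recognize this fibre of $A$-modules as the pullback $M \times_{N\otimes_B B'} N$, using that in a stable $\infty$-category a pullback square is equivalent to the fibre sequence comparing the two ``legs'', i.e.\ $M \times_{N\otimes_B B'} N \simeq \fib\big(M \oplus N \to N\otimes_B B'\big)$ and noting $N \to N\otimes_B B'$ is the unit, so this reduces to $\fib(M \to N\otimes_B B')$ as desired.

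The main obstacle I expect is bookkeeping with the morphism data and the various implicit restriction-of-scalars functors: one must be careful that the map $M \to N\otimes_B B'$ used in the pullback is genuinely the composite $M \to M\otimes_{A'}B' \xrightarrow{f} N\otimes_B B'$ and not some other natural map, and that the unit transformation exhibiting the adjunction is the evident one. In particular, verifying step (3) — that the right adjoint of $i$ is computed by first applying the right adjoint of $j_1$ from Lemma~\ref{lemma:right-adjoint-of-A-in-lax-pb} and then restricting scalars, and that this composite simplifies without needing $A \to A'$ to be flat or the square to be cartesian — requires tracking the unit $M \to M\otimes_{A'}B'$ through the adjunctions. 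Once the formula $s(M,N,f) \simeq \fib(M \to N\otimes_B B')$ is in hand, the identification with the pullback $M\times_{N\otimes_B B'} N$ is a formal consequence of stability and poses no real difficulty; one can cite the description of mapping spaces in the lax pullback (as in \cite[Remark~6]{Tamme:excision}) to double-check the adjunction directly if a cleaner argument is preferred.
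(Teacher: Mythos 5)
There is a genuine gap here, in fact two errors that only superficially cancel each other. First, the factorization $i = j_1\circ(-\otimes_A A')$ on which your steps (2)--(3) rest is false: the functor $j_1$ of Lemma~\ref{lemma:right-adjoint-of-A-in-lax-pb} is the inclusion $X\mapsto (X,0,0)$ (and to apply that lemma to the lax pullback in question you must take $\cA=\RMod(A')$ with $p=-\otimes_{A'}B'$, not $\cA=\RMod(A)$ with $p=-\otimes_A A'$ as written), so the composite $j_1\circ(-\otimes_A A')$ sends $M$ to $(M\otimes_A A',0,0)$, whereas $i(M)=(M\otimes_A A', M\otimes_A B,\can)$ has nonzero second component. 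Hence your steps compute the right adjoint of a different functor, and the intermediate formula $\fib\bigl(M\to N\otimes_B B'\bigr)$ is not $s(M,N,f)$. A sanity check: evaluated on $i(A)=(A',B,\id)$, the correct right adjoint must return $A'\times_{B'}B$ (which is $A$ when the square is cartesian, as needed for Lemma~\ref{lemma:fully faithful}), whereas $\fib(A'\to B')$ is only the fibre of $A'\to B'$.

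Second, step (4) is not a formal consequence of stability: indeed $M\times_{N\otimes_B B'}N\simeq \fib\bigl(M\oplus N\to N\otimes_B B'\bigr)$, where on the second summand the map is the unit $N\to N\otimes_B B'$, and since this unit is not an equivalence (unless $B\to B'$ is), the pullback does not collapse to $\fib(M\to N\otimes_B B')$; for instance with $M=0$ the pullback is $\fib(N\to N\otimes_B B')$ while your formula gives $\Omega(N\otimes_B B')$. The paper argues differently: existence of $s$ follows from presentability of both sides together with the fact that $i$ preserves colimits, and the formula is obtained by computing $s(M,N,f)\simeq \map_A(A,s(M,N,f))\simeq \map_{\slax}(i(A),(M,N,f))$ with $i(A)=(A',B,\id)$, using the mapping-space formula in the lax pullback; this exhibits $s(M,N,f)$ as the iterated pullback $M\times_{N\otimes_B B'}N$ with the two maps exactly as in the statement. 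To repair your argument you would need to compute $\map_{\slax}(i(A),-)$ directly in this way (or verify the unit/counit for the pullback formula by hand); the detour through Lemma~\ref{lemma:right-adjoint-of-A-in-lax-pb} simply does not apply to $i$.
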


\begin{proof}
Both categories are presentable, and the functor $i$ preserves colimits \cite[Lemma 8]{Tamme:excision}, thus $i$ admits a right adjoint $s$.
To obtain the explicit formula for $s$, we observe that we have canonical equivalences of mapping spectra
\[
s(M,N,f) \simeq \map_{A}(A, s(M,N,f) ) \simeq \map_{\slax} ( i(A), (M,N,f) )
\]
where the second equivalence comes from the adjunction. We may identify $i(A) = (A', B, \id)$. By the formula for mapping spaces in a lax pullback we have a pullback square
\[
\begin{tikzcd}
 \map_{\slax}(i(A), (M,N,f) ) \ar[d]\ar[r] & \map_{\RMod(B')^{\Delta^{1}}}( \id, f)  \ar[d] \\ 
 \map_{A'}(A' ,M) \times \map_{B}(B,N) \ar[r] & \map_{B'}(B', M \otimes_{A'} B' ) \times \map_{B'}(B', N \otimes_{B} B').
\end{tikzcd}
\]
In this square, the lower left corner identifies with $M \times N$, the lower right corner with $M \otimes_{A'} B' \times N \otimes_{B} B'$.
The mapping spectrum in the upper right corner canonically identifies with $M \otimes_{A'} B'$. Under these identifications, the right vertical map is given by $(\id, f)$. We thus get the left pullback square in the following diagram.
\[
\begin{tikzcd}
 s(M,N,f) \ar[d]\ar[r] & M \otimes_{A'} B' \ar[d, "{(\id,f)}"] \ar[r]			&  0 \ar[d]\\ 
 M \times N \ar[r] & M\otimes_{A'} B'  \times  N \otimes_{B} B' \ar[r, "{f-\id}"] & N\otimes_{B} B' 
\end{tikzcd}
\]
Obviously, the right-hand square is also a pullback square, whence is the combined square.  This proves the claim.
\end{proof}

\begin{lemma}
	\label{lemma:fully faithful}
Assume that \eqref{diag:ring-spectra2} is a pullback square of $\E_1$-ring spectra.
Then the functor $i$ in \cref{lemma:adjoint of inclusion in lax pullback} and hence its restriction to perfect modules
\[ 
	i\colon \Perf(A) \lto \Perf(A') \laxtimes{\Perf{(B')}} \Perf(B) 
\]
are fully faithful.
\end{lemma}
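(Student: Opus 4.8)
The plan is to prove that $i$ is fully faithful by showing that the unit of the adjunction $(i,s)$ from \cref{lemma:adjoint of inclusion in lax pullback} is an equivalence, i.e.\ that $A \xrightarrow{\sim} s(i(A))$, and more generally that the unit $M \to s(i(M))$ is an equivalence for every $M \in \RMod(A)$. Once this is established for all $A$-modules, it restricts to perfect modules, because $i$ preserves compact objects (it is a left adjoint between presentable categories whose right adjoint $s$ preserves filtered colimits, the latter because colimits in the lax pullback and the tensor products involved are all computed componentwise and commute with filtered colimits), and hence carries $\Perf(A)$ into $\Perf(A') \laxtimes{\Perf(B')} \Perf(B)$; full faithfulness is inherited by any full subcategory.

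First I would compute $s(i(A))$. We have $i(A) = (A', B, \id)$ in the notation of the previous lemma, so the explicit formula for $s$ gives
\[
s(i(A)) \simeq A' \times_{B \otimes_B B'} B \simeq A' \times_{B'} B,
\]
where the map $A' \to B'$ is the one in the square \eqref{diag:ring-spectra2} and the map $B \to B'$ is likewise the structural one; here I use that $B \otimes_B B' \simeq B'$ and that under this identification the composite $A' \to A' \otimes_{A'} B' \xrightarrow{f} B \otimes_B B'$ with $f = \id$ becomes precisely $A' \to B'$. By hypothesis \eqref{diag:ring-spectra2} is a pullback square of spectra (a pullback of $\E_1$-rings has underlying spectrum the pullback of the underlying spectra), so $A \simeq A' \times_{B'} B$, and one checks the unit map $A \to s(i(A))$ is this equivalence. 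Next I would bootstrap from the generator $A$ to an arbitrary module: the collection of $M \in \RMod(A)$ for which $M \to s(i(M))$ is an equivalence is closed under colimits and shifts (both $i$ and $s$ preserve colimits — $i$ as a left adjoint, $s$ because the pullback $M \times_{N\otimes_B B'} N$ and the tensor products commute with colimits), hence contains all of $\RMod(A)$ since $A$ generates under colimits.

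The main obstacle — really the only delicate point — is verifying that the unit map $A \to A' \times_{B'} B = s(i(A))$ agrees, as a map of spectra (hence as an equivalence), with the canonical comparison map witnessing \eqref{diag:ring-spectra2} as a pullback of spectra. This is a compatibility-of-units bookkeeping check: one unwinds the definition of the unit of the adjunction $(i,s)$ through the chain of identifications made in the proof of \cref{lemma:adjoint of inclusion in lax pullback} (the pullback-of-mapping-spaces diagram and the identification of its corners), and confirms that the two maps $A \to A'$ and $A \to B$ induced from the unit are the given structural maps of the square. Granting this, the argument is complete; alternatively, one can bypass the issue by noting that $s(i(A))$ receives compatible maps from $A' $ and $B$ over $B'$, hence a canonical map from $A = A'\times_{B'} B$, and that the composite $A \to s(i(A)) \simeq A'\times_{B'} B$ with the two projections recovers $A \to A'$ and $A\to B$, forcing it to be the identity.
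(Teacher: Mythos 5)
Your proposal is correct and follows essentially the paper's own argument: the paper also proves full faithfulness by showing the unit $X \to si(X)$ is an equivalence via the explicit formula for $s$ from Lemma~\ref{lemma:adjoint of inclusion in lax pullback} and the pullback hypothesis, with the reduction to the generator $X=A$ being precisely the alternative the paper mentions. Your additional bookkeeping (colimit-closure of the class where the unit is an equivalence, and the identification of the unit with the canonical comparison map $A \to A'\times_{B'}B$) just fleshes out the steps the paper leaves implicit.
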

\begin{proof}
It suffices to show that the unit map $X \to s   i(X)$ is an equivalence for every $A$-module $X$. Using the concrete formulas, this follows directly from the fact that \eqref{diag:ring-spectra2} is a pullback diagram. Alternatively, one can check directly that the induced map on mapping spectra is an equivalence by 
 reducing  to the case $X=A$, where it is again immediate from the fact that \eqref{diag:ring-spectra2} is a pullback. 
\end{proof}

\begin{dfn}
A small, stable, idempotent complete $\infty$-category $\cA$ is said to be \emph{generated by a set of objects} $S$, if $\cA$ coincides with the smallest stable, idempotent complete  full subcategory of $\cA$ that contains $S$.
\end{dfn}

\begin{lemma}\label{lemma:generators lax pullback}
Consider a diagram as \eqref{diag:cats} in $\Cat^{\perf}_{\infty}$. If $\cA$ is generated by the set of objects $S$ and $\cB$ is generated by the set of objects $T$, then the lax pullback $\cA \laxtimes{\cC} \cB$ is generated by the set of objects
\[ \big\{ (X,0,0) \;|\; X \in S \big\} \cup \{ (0,Y,0) \;|\; Y \in T \big\}.\]
\end{lemma}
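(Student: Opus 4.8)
The plan is to exploit the fibre sequence~\eqref{seq:canonical-fibre-sequence}, which exhibits every object $(X,Y,f)$ as an extension of $(X,0,0) = j_1(X)$ by $(0,Y,0) = j_2(Y)$. Let $\cD$ denote the smallest stable, idempotent complete full subcategory of $\cA \laxtimes{\cC} \cB$ containing the objects $j_1(X)$ for $X \in S$ and $j_2(Y)$ for $Y \in T$; I want to show $\cD$ is everything. Since $\cD$ is stable and closed under retracts, it suffices to show every object $(X,Y,f)$ lies in $\cD$, and by the fibre sequence~\eqref{seq:canonical-fibre-sequence} it is enough to show $j_1(X) \in \cD$ for all $X \in \cA$ and $j_2(Y) \in \cD$ for all $Y \in \cB$.

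The first main step is to observe that $j_1$ and $j_2$ are exact functors (they preserve finite limits and colimits, since colimits in the lax pullback are computed componentwise and the zero objects/maps they insert are preserved), and that an exact functor between stable $\infty$-categories carries the smallest stable idempotent complete subcategory generated by a set $S$ into the smallest stable idempotent complete subcategory generated by its image. Concretely: the preimage $j_1^{-1}(\cD)$ is a stable, idempotent complete full subcategory of $\cA$ — stable because $j_1$ is exact and $\cD$ is stable, idempotent complete because $\cD$ is and idempotents in $\cA$ map to idempotents with the same splitting behaviour under a fully faithful functor (or more simply, because $j_1$ reflects retracts as it is fully faithful) — and it contains $S$ by construction. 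Since $\cA$ is generated by $S$, we conclude $j_1^{-1}(\cD) = \cA$, i.e.\ $j_1(X) \in \cD$ for all $X \in \cA$. The identical argument with $j_2$ and $T$ gives $j_2(Y) \in \cD$ for all $Y \in \cB$.

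Finally, combining these two facts with~\eqref{seq:canonical-fibre-sequence}: for arbitrary $(X,Y,f)$, both outer terms $(0,Y,0)$ and $(X,0,0)$ of that fibre sequence lie in $\cD$, and $\cD$ is closed under extensions (being stable), so $(X,Y,f) \in \cD$. Hence $\cD = \cA \laxtimes{\cC} \cB$, which is the claim. The only point requiring a little care — and the one I would expect to be the mild obstacle — is the bookkeeping that $j_1^{-1}(\cD)$ is genuinely idempotent complete and stable as a subcategory of $\cA$; this is where one uses that $j_1$ is fully faithful and exact, so that it creates the relevant limits, colimits, and retracts. Everything else is formal manipulation of the fibre sequence~\eqref{seq:canonical-fibre-sequence}.
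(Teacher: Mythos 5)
Your proof is correct and follows essentially the same route as the paper, whose entire argument is the observation that every object $(M,N,f)$ sits in the cofibre sequence $(0,N,0) \to (M,N,f) \to (M,0,0)$; you simply spell out the (implicit) preimage bookkeeping that the images $j_1(\cA)$ and $j_2(\cB)$ land in the subcategory generated by $j_1(S)\cup j_2(T)$, using exactness and closure under retracts. (Full faithfulness of $j_1$, $j_2$ is not actually needed for that step — functoriality plus retract-closure of $\cD$ in the ambient category suffices — but this is harmless.)
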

\begin{proof}
It suffices to recall that every object $(M,N,f)$ in $\cA \laxtimes{\cC} \cB$ sits in a cofibre sequence
\[(0,N,0) \to (M,N,f) \to (M,0,0). \qedhere \]
\end{proof}

Write $\Pr^{\mathrm{L}}$ for the symmetric monoidal $\infty$-category of presentable $\infty$-categories and left adjoint functors \cite[4.8.1.15]{LurieHA}, $\Pr^{\mathrm{L, st}}$ for its full subcategory of stable, presentable $\infty$-categories, and $\Pr^{\mathrm{L, st}}_\omega$ for its subcategory of compactly generated stable $\infty$-categories and functors that preserve compact objects. The tensor product $\cA \otimes \cB$ in $\Pr^{\mathrm{L}}$ is the initial object equipped with a functor from $\cA \times \cB$ which preserves colimits in each variable.
As a smashing localization of $\Pr^{\mathrm{L}}$, the $\infty$-category $\Pr^{\mathrm{L, st}}$ inherits a symmetric monoidal structure \cite[Section 4.8.2]{LurieHA} with tensor unit the $\infty$-category $\Sp$ of spectra. As $\Sp$ is compactly generated, and the tensor product of two compactly generated, stable $\infty$-categories is again so \cite[Lemma 5.3.2.11]{LurieHA}, $\Pr^{\mathrm{L, st}}_\omega \subseteq \Pr^{\mathrm{L, st}}$ is a symmetric monoidal subcategory. The equivalence $\Cat_\infty^\mathrm{perf} \simeq \Pr^{\mathrm{L, st}}_\omega$, given by taking ind-completions and compact objects, induces a symmetric monoidal structure on $\Cat_\infty^\mathrm{perf}$. The tensor product $\cA \otimes \cB$ in this case is the initial object of $\Cat_\infty^\mathrm{perf}$ equipped with a functor from $\cA \times \cB$ which is exact in each variable, the tensor unit is the $\infty$-category $\Sp^{\omega}$ of finite spectra.

Recall furthermore that an $\E_{0}$-algebra in a symmetric monoidal $\infty$-category is just an object together with a map from the tensor unit \cite[Remark 2.1.3.10]{LurieHA}. Thus the $\infty$-category $\mathrm{Alg} _{\E_{0}}(\Cat^{\perf}_{\infty})$ may be identified with the $\infty$-category of pairs $(\cC, C)$ where $\cC \in \Cat^{\perf}_{\infty}$ and $C$ is an object of $\cC$. Morphisms in $\mathrm{Alg} _{\E_{0}}(\Cat^{\perf}_{\infty})$ are functors $F\colon \cC \to \cC'$ equipped with an equivalence $F(C) \simeq C'$.

Our proof of the main theorem relies  on the Schwede--Shipley recognition theorem in its following form.
\begin{lemma}\label{rings vs 1-generated cats}
The association $R \mapsto (\Perf(R),R)$ extends to a fully faithful functor
\[ 
\mathrm{Alg}_{\E_1}(\Sp) \lto \mathrm{Alg}_{\E_0}(\Cat^{\perf}_\infty).
\]
Its essential image consists of the pairs $(\cC,C)$ for which $C$ generates $\cC$, and its inverse is given by the functor sending $(\cC,C)$ to the  endomorphism ring spectrum $\End_{\cC}(C)$.
\end{lemma}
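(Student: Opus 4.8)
The plan is to deduce this from the $\infty$-categorical Schwede--Shipley (Morita) theorem in the form already available in the literature, upgraded to keep track of the $\E_0$-structures. First I would recall the classical input: for an $\E_1$-ring spectrum $R$, the object $R \in \RMod(R)$ is compact, and it generates $\RMod(R)$ under colimits; moreover $\map_{\RMod(R)}(R,R) \simeq R$ as $\E_1$-ring spectra, and $\Perf(R) = \RMod(R)^\omega$. The Schwede--Shipley theorem (see \cite[7.1.2.1]{LurieHA}) says that for a compactly generated stable $\infty$-category $\cD$ with a compact generator $C$, the functor $\map_\cD(C,-)\colon \cD \to \RMod(\End_\cD(C))$ is an equivalence, and this is natural. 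Passing to compact objects and using the equivalence $\Cat_\infty^\perf \simeq \Pr^{\mathrm{L,st}}_\omega$, this already identifies the objects of $\Cat_\infty^\perf$ that are equivalent to some $\Perf(R)$ as exactly those generated by a single object.

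Next I would make the assignment functorial with basepoints. The point is that there is a functor $\mathrm{ev}_C \colon \Alg_{\E_0}(\Cat_\infty^\perf) \to \Alg_{\E_0}(\Sp) = \Sp_{\Sp^\omega/}$ — the full statement is that for a pair $(\cC, C)$, the mapping-spectrum functor $\End_\cC(-)$ applied to the basepoint gives an $\E_1$-ring; this uses that endomorphisms of an object in a stable $\infty$-category form an $\E_1$-ring spectrum, functorially in the pair $(\cC, C)$ (one can get this from the $\infty$-operadic machinery: an object of $\cC$ is a lax monoidal functor $\Sp^\omega \to \cC$ in an appropriate sense, or more concretely from \cite[4.7.1.34, 7.1.2.1]{LurieHA} which already phrase Schwede--Shipley with the relevant functoriality). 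So I would cite the precise form of Schwede--Shipley that produces a functor $R \mapsto (\RMod(R), R)$ from $\Alg_{\E_1}(\Sp)$ to $\Alg_{\E_0}(\Pr^{\mathrm{L,st}})$ which is fully faithful onto the pairs with $R$ a generator, then restrict to compact objects to land in $\Alg_{\E_0}(\Cat_\infty^\perf)$, and observe the inverse is $(\cC,C)\mapsto \End_\cC(C)$.

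For full faithfulness I would argue directly on mapping spaces: given $\E_1$-rings $R, S$, a morphism $(\Perf(R),R) \to (\Perf(S),S)$ in $\Alg_{\E_0}(\Cat_\infty^\perf)$ is an exact functor $F\colon \Perf(R) \to \Perf(S)$ together with an equivalence $F(R) \simeq S$. Ind-completing, $F$ extends to a colimit-preserving functor $\RMod(R) \to \RMod(S)$ sending $R$ to $S$; by \cite[7.1.2.1]{LurieHA} (or the theory of $\RMod(R)$ as the free stable cocomplete category on a pointed object / module structures) such functors are classified by $\E_1$-ring maps $R = \End_{\RMod(R)}(R) \to \End_{\RMod(S)}(S) = S$, and the mapping space of such pointed functors is equivalent to $\Map_{\Alg_{\E_1}(\Sp)}(R,S)$. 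This gives the equivalence on mapping spaces, hence full faithfulness; the essential-image claim is exactly the single-generator condition established above, and the formula for the inverse is read off from the construction.

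The main obstacle I anticipate is purely one of bookkeeping the $\E_0$-structures through the symmetric monoidal equivalence $\Cat_\infty^\perf \simeq \Pr^{\mathrm{L,st}}_\omega$ and through Ind/compact-objects — i.e.\ checking that ``basepoint'' is carried along functorially and that $\End$ genuinely gives an $\E_1$-ring and not merely an $\E_1$-monoid in spaces or an associative algebra in homotopy categories. Everything here is standard consequences of \cite[\S4.7, \S4.8, \S7.1]{LurieHA}, so the proof will consist of assembling the right citations rather than new arguments; I would keep it short and point to \cite{LurieHA} for the Morita-theoretic statements, perhaps noting that an entirely parallel statement with $\E_0$ replaced by $\E_k$ and $\Cat_\infty^\perf$ by its module categories is available but not needed.
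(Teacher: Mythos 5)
Your proposal is correct and follows essentially the same route as the paper: both deduce the lemma from Lurie's pointed form of the Schwede--Shipley theorem, namely the fully faithful functor $R \mapsto (\RMod(R),R)$ from $\mathrm{Alg}_{\E_1}(\Sp)$ to $\mathrm{Alg}_{\E_0}(\Pr^{\mathrm{L}})$ whose essential image consists of the pairs with the basepoint a compact generator, combined with the symmetric monoidal equivalence $\Cat^{\perf}_\infty \simeq \Pr^{\mathrm{L,st}}_\omega$ and passage to compact objects, with the inverse read off as $(\cC,C)\mapsto \End_\cC(C)$. The only cosmetic difference is the citation: the pointed, fully faithful statement you describe is \cite[Proposition 7.1.2.6]{LurieHA} (with the inverse from Remark 7.1.2.3 there) rather than 7.1.2.1, which is the unpointed equivalence $\cD \simeq \RMod(\End_\cD(C))$.
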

\begin{proof}
The composite
\[ 
\mathrm{Alg}_{\E_1}(\Sp) \lto  \mathrm{Alg}_{\E_0}(\Cat^\mathrm{perf}_\infty) \overset{\simeq}{\lto} \mathrm{Alg}_{\E_0}(\Pr^{\mathrm{L, st}}_\omega) \lto \mathrm{Alg}_{\E_0}(\Pr^{\mathrm{L}}) 
\]
is the functor $R \mapsto (\RMod(R), R)$. By \cite[Proposition 7.1.2.6]{LurieHA} it is fully faithful with essential image those pairs $(\cC,C)$ for which $C$ is  compact  and generates $\cC$ under colimits, or equivalently $C$ lies in $\cC^{\omega}$ and is a generator in our sense.
Since the last functor is fully faithful on the subcategory of pairs $(\cC, C)$ where $C$ is a compact generator of $\cC$, this implies the first two assertions. 
The description of the inverse  follows from  \cite[Remark 7.1.2.3]{LurieHA}.
\end{proof}

Now we start with the proof of Theorem~\ref{main theorem}.
So suppose that the square of $\E_{1}$-rings \eqref{diag:ring-spectra2} is a pullback. 
We write $i_{1}\colon \Perf(A) \to \Perf(A')$ and $i_{2}\colon \Perf(A) \to \Perf(B)$ for the respective extension of scalars functors, and 
\begin{align*}
&j_{1}\colon \Perf(A') \lto \Perf(A') \laxtimes{\Perf(B')} \Perf(B), \\
&j_{2}\colon \Perf(B) \lto \Perf(A') \laxtimes{\Perf(B')} \Perf(B)
\end{align*}
for the inclusion functors.
We  observe that the diagram 
\[
\begin{tikzcd}
	\Perf(A) \ar[r,"i_2"] \ar[d,"i_1"'] & \Perf(B) \ar[d,"j_2"] \\
	\Perf(A') \ar[r,"\Omega j_1"]  \arrow[ur, Rightarrow, shorten >= 2em, shorten <= 2em] & \Perf(A')\laxtimes{\Perf(B')} \Perf(B)
\end{tikzcd}
\]
comes equipped with a canonical natural transformation $\tau \colon \Omega j_1  i_1 \Rightarrow j_2 i_2$ induced by the canonical (rotated) fibre sequence
\begin{equation}
	\label{seq:definition-tau}
(\Omega X\otimes_A A',0,0) \overset{\tau_{X}}{\lto} (0,X\otimes_A B,0) \lto (X\otimes_A A',X\otimes_A B, \can) 
\end{equation}
of \eqref{seq:canonical-fibre-sequence}. Here $\can$ is the natural equivalence $(X\otimes_{A} A') \otimes_{A'} B'
 \simeq (X \otimes_{A} B) \otimes_{B} B'$ that makes the diagram $\Perf(\text{\ref{diag:ring-spectra2}})$ obtained by applying $\Perf$ to \eqref{diag:ring-spectra2} commute.

We  let $\cQ$ be the cofibre of the fully faithful embedding $i$ of perfect modules from \cref{lemma:fully faithful} in $\Cat^{\perf}_{\infty}$ and write $p$ for the canonical functor $\Perf(A) \laxtimes{\Perf(B')} \Perf(B) \to \cQ$.
\begin{lemma}
	\label{Q generated by}
The induced natural transformation $p(\tau) \colon p\operatorname{\Omega} j_{1} i_{1} \Rightarrow pj_{2}i_{2}$ is an equivalence.
Moreover, $\cQ$ is generated by the single object $\overline{B} = p(0,B,0)$.
\end{lemma}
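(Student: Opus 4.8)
The plan is to analyze the Verdier quotient $\cQ = \cof(i)$ using the fibre sequence \eqref{seq:canonical-fibre-sequence} and the explicit description of the right adjoint $s$ from \cref{lemma:adjoint of inclusion in lax pullback}. First I would note that by \cref{lemma:generators lax pullback}, the lax pullback $\Perf(A') \laxtimes{\Perf(B')} \Perf(B)$ is generated by the objects $(X \otimes_A A', 0, 0)$ for $X \in \Perf(A)$ together with the objects $(0, Y, 0)$ for $Y \in \Perf(B)$ — here I use that $\Perf(A')$ is generated by $A'$ hence by all extensions of scalars $X\otimes_A A'$, and similarly for $\Perf(B)$, but in fact it is cleaner to note the lax pullback is generated by $(A',0,0)$ and $(0,B,0)$ alone. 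Since $p$ is essentially surjective (being a Verdier quotient functor) and preserves the generation property, $\cQ$ is generated by $p(A',0,0)$ and $p(0,B,0) = \ol{B}$.

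The heart of the argument is to show that $p(A',0,0)$ lies in the thick subcategory generated by $\ol B$, and for this I would use the transformation $\tau$. The fibre sequence \eqref{seq:definition-tau}, with $X = A$, reads
\[
(\Omega A', 0, 0) \overset{\tau_A}{\lto} (0, B, 0) \lto (A', B, \can),
\]
and the third term $(A', B, \can) = i(A)$ lies in the image of $i$, hence $p(A',B,\can) \simeq 0$. Applying $p$ to this fibre sequence therefore shows $p(\Omega A', 0, 0) \xrightarrow{\sim} p(0, B, 0) = \ol B$, i.e. $p(A',0,0) \simeq \Sigma \ol B$ lies in the thick closure of $\ol B$. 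Combined with the previous paragraph, this gives that $\cQ$ is generated by $\ol B$ alone. For the first assertion, that $p(\tau)$ is an equivalence, the same reasoning applied to general $X \in \Perf(A)$ works: the fibre sequence \eqref{seq:definition-tau} has third term $i(X) = (X\otimes_A A', X\otimes_A B, \can)$, which maps to zero under $p$ since $i(X)$ is in the essential image of the fully faithful functor $i$; hence the first map in the (rotated) sequence becomes an equivalence after applying $p$, and by construction this first map is precisely $\tau_X$ up to the suspension/loop identifications, so $p(\tau_X)\colon p\Omega j_1 i_1(X) \to p j_2 i_2(X)$ is an equivalence for all $X$, i.e. $p(\tau)$ is an equivalence of natural transformations.

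The main obstacle, or rather the only real point requiring care, is to make precise that the third term of \eqref{seq:definition-tau} is genuinely in the essential image of $i$ — that is, to identify $(X\otimes_A A', X\otimes_A B, \can)$ with $i(X)$ compatibly with the natural transformation $\tau$. This is where one must unwind the definition of $i = \Perf$ applied to the base-change functor into the lax pullback: on objects $i(X) = (X\otimes_A A', X\otimes_A B, \can)$ where $\can$ is the canonical comparison equivalence of \eqref{seq:definition-tau}, and the fibre sequence \eqref{seq:canonical-fibre-sequence} applied to this object is exactly \eqref{seq:definition-tau}. Since $i$ is fully faithful by \cref{lemma:fully faithful}, its image is a thick subcategory (closed under the relevant operations), so the cofibre $p$ annihilates it; the naturality in $X$ of the fibre sequence \eqref{seq:canonical-fibre-sequence} then upgrades the pointwise equivalences $p(\tau_X)$ to an equivalence of natural transformations. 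No further input is needed: the statement follows formally once these identifications are in place.
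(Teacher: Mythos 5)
Your argument is correct and follows essentially the same route as the paper's proof: you observe that the cofibre of $\tau_X$ is $i(X)$, hence killed by $p$, so $p(\tau)$ is an equivalence, and you combine Lemma~\ref{lemma:generators lax pullback} (generation of the lax pullback by $(A',0,0)$ and $(0,B,0)$) with the equivalence $p(\tau_A)\colon p(\Omega A',0,0)\simeq \ol B$ to conclude that $\ol B$ alone generates $\cQ$. The extra care you take in identifying the third term of \eqref{seq:definition-tau} with $i(X)$ is exactly the point the paper treats implicitly, so nothing is missing.
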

\begin{proof}
Since the cofibre of $\tau_{X}$ is in the image of the  functor $i$, the induced natural transformation $p(\tau)$ is an equivalence.
By definition, $\Perf(A')$ and $\Perf(B)$ are generated by $A'$ and $B$, respectively. Thus, by \cref{lemma:generators lax pullback}, the lax pullback $\Perf(A')\laxtimes{\Perf{(B')}} \Perf(B)$ is generated by $(0,B,0)$ and $(A',0,0)$. It follows that their images $\overline{B}$ and  $p(A',0,0)$ generate $\cQ$.
But $p(\tau_{A})$ is an equivalence $p(\Omega A',0,0) \simeq \ol B$, concluding the proof.
\end{proof}

We denote the $\E_1$-ring $\End_{\cQ}(\ol{B})$ by $A'\wtimes{A}{B'} B$. 
By Lemma~\ref{rings vs 1-generated cats} we thus have a canoncial equivalence
\[
(\cQ, \ol B) \simeq ( \Perf( A' \wtimes{A}{B'} B ), A' \wtimes{A}{B'} B )
\]
which we use to identify $(\cQ, \ol B)$ in the following. Moreover,  we get a commutative diagram
\begin{equation}\label{diag:perf-natural-diagram}
\begin{tikzcd}
	\Perf(A) \ar[r,"i_2"] \ar[d,"i_1"] & \Perf(B) \ar[d,"p j_2"] \\
	\Perf(A') \ar[r,"p\Omega j_1"] & \Perf(A'\wtimes{A}{B'} B) 
\end{tikzcd}
\end{equation}
witnessed by the natural equivalence $p(\tau)$ from  Lemma~\ref{Q generated by}, and an exact sequence
\begin{equation}
	\label{eq:fundamental-exact-sequence} 
\begin{tikzcd}
	\Perf(A) \ar[r, "i"] & \Perf(A') \laxtimes{\Perf{(B')}} \Perf(B) \ar[r,"p"] & \Perf(A'\wtimes{A}{B'} B) 
\end{tikzcd}
\end{equation}
in $\Cat_\infty^\perf$. Applying Lemma~\ref{rings vs 1-generated cats} to diagram \eqref{diag:perf-natural-diagram} gives the inner square of diagram~\eqref{diag:natural-diagram}.

Given any localizing invariant $E$, we want to show that $E$ applied to the square \eqref{diag:perf-natural-diagram}  gives a pullback square. Since $E(\Omega)$ is multiplication by $-1$, an equivalent statement is that the sequence
\begin{equation}	\label{eeee}
E(\Perf(A)) \xrightarrow{(i_{1},i_{2})} E(\Perf(A')) \oplus E(\Perf(B)) \xrightarrow{p j_{1}+p j_{2}} E(\Perf( A' \wtimes{A}{B'} B ))
\end{equation}
is a fibre sequence. But applying $E$ to the exact sequence \eqref{eq:fundamental-exact-sequence} yields a fibre sequence
\begin{equation}	\label{ffff}
E(\Perf(A)) \lto E( \Perf(A') \laxtimes{\Perf(B')} \Perf(B) )  \lto E( \Perf( A' \wtimes{A}{B'} B ) )
\end{equation}
and using \cite[Proposition 10]{Tamme:excision} we  get an equivalence
\[ 
E(\Perf(A')) \oplus E(\Perf(B)) \xrightarrow{\simeq} E( \Perf(A') \laxtimes{\Perf(B')} \Perf(B) )
\]
induced by $j_{1}$ and $j_{2}$ with inverse induced by the two projection functors. Thus the sequence \eqref{eeee} is in fact equivalent to the sequence \eqref{ffff}, and thus a fibre sequence.

We now construct the remaining parts of diagram~\eqref{diag:natural-diagram} from Theorem~\ref{main theorem} on the level of categories of perfect modules,  more precisely, the image of diagram~\eqref{diag:natural-diagram} under the fully faithful functor from Lemma~\ref{rings vs 1-generated cats}.
There is a functor
\[
c\colon \Perf(A') \laxtimes{\Perf(B')} \Perf(B) \lto \Perf(B')
\]
sending an object $(M,N,f)$ to $\cof(f)$. 
We observe that in the diagram
\[
\begin{tikzcd}
	\Perf(A) \ar[r,"i_2"] \ar[d,"i_1"] 		& \Perf(B) \ar[d,"j_2"] \ar[ddr, bend left=20,"k_2"] & \\
	\Perf(A') \ar[r,"\Omega j_1"] \ar[drr, bend right=10,"k_1"']   \arrow[ur, Rightarrow, shorten >= 2em, shorten <= 2em]     	& \Perf(A')\laxtimes{\Perf{B'}} \Perf(B) \ar[dr,"c"] \ar[d, Rightarrow, shorten <= 1ex, shorten >= 1ex] 
			\ar[r, Rightarrow, shorten <= .5ex, shorten >= 1ex] & \phantom{\Perf(B')}\\
	& \phantom{c} & \Perf(B'),
\end{tikzcd}
\]
where $k_{1}$ and $k_{2}$ are the respective extension of scalars functors,
the inner square is endowed with the transformation $\tau$, and the triangles are equipped with the natural equivalences $\alpha_{1} \colon c \operatorname{\Omega} j_1 \Rightarrow k_1$ and $\alpha_{2} \colon c j_2 \Rightarrow k_2$ given by the canonical maps
\[
\alpha_{1,M} \colon c((\Omega M,0,0)) = \cof(\Omega M\otimes_{A'} B' \to 0) \overset{\simeq}{\lto} M\otimes_{A'} B' 
\]
and
\[ 
\alpha_{2,N} \colon c((0,N,0)) = \cof(0 \to N\otimes_B B')  \overset{\simeq}{\lto} N\otimes_B B'.
\]
We  claim that pasting these natural transformations together yields the original commutative square $\Perf(\text{\ref{diag:ring-spectra2}})$ of categories of perfect modules. For this we have to show that the  diagram of natural transformations 
\[
\begin{tikzcd}[column sep=large]
	c\operatorname{\Omega} j_1  i_1(X) \ar[r,"\alpha_{1,i_{1}(X)}"] \ar[d,"c(\tau_{X})"'] & k_1 i_1(X) \ar[d,"\can"] \\
	c j_2 i_2(X) \ar[r,"\alpha_{2,i_{2}(X)}"] & k_2 i_2(X)
\end{tikzcd}
\]
commutes naturally for $X$ in $\Perf(A)$.
But by definition $\tau_{X}$  sits in a cofibre sequence
\[
(\Omega X\otimes_A A',0,0) \overset{\tau_{X}}{\lto} (0,X\otimes_A B,0) \lto (X\otimes_A A',X\otimes_A B, \can) 
\]
so that its effect on cofibres is the canonical equivalence.

Since the composition of $c$ with the inclusion of $\Perf(A)$ in the lax pullback vanishes, it factors essentially uniquely through a functor $\bar c \colon \Perf(A' \wtimes{A}{B'} B ) \to \Perf(B')$
sending the generator $A' \wtimes{A}{B'} B$ to $c((0,B,0)) \simeq B'$.
As a result, we  obtain that in the diagram
\[\begin{tikzcd}
	\Perf(A) \ar[r,"i_2"] \ar[d,"i_1"] & \Perf(B) \ar[d,"p j_2"] \ar[ddr, bend left=15,"k_2"] & \\
	\Perf(A') \ar[r,"p\Omega j_1"] \ar[drr, bend right=10,"k_1"] & \Perf(A'\wtimes{A}{B'} B) \ar[dr,"\bar c"] & \\
	& & \Perf(B')
\end{tikzcd}\]
every subdiagram commutes, witnessed by the transformations $p(\tau)$, and the transformations induced by $\alpha_{1}$ and $\alpha_{2}$, and all functors respect the preferred generators.   Furthermore, by the discussion above, the outer diagram is  the original commutative diagram $\Perf(\text{\ref{diag:ring-spectra2}})$ of categories of perfect modules. Finally, by Lemma~\ref{rings vs 1-generated cats}, we obtain the commutative diagram~\eqref{diag:natural-diagram} by passing to endomorphisms of the preferred generators. This finishes the proof of the first part of Theorem~\ref{main theorem}.

In the following we  establish the identification of $A' \wtimes{A}{B'} B$ and all involved maps, see Propositions~\ref{prop:identification-boxtimes-B}, \ref{prop:identification-boxtimes-A}, and \ref{map out of weird ring} below. In fact, in addition to what is stated in Theorem~\ref{main theorem}, we also identify the underlying $A'$- respectively $B$-bimodule structure of $A' \wtimes{A}{B'} B$.
To compute $A' \wtimes{A}{B'} B = \End_{\cQ}(\ol{B})$ we may pass to ind-completions.
Note that by \cite[Proposition 13]{Tamme:excision}, $\Ind( \Perf(A') \laxtimes{\Perf(B')} \Perf(B) ) \simeq \RMod(A') \laxtimes{\RMod{(B')}} \RMod(B)$.

\begin{lemma}
	\label{lemma:right-adjoint-of-pi}
The functor 
\[ 
	p\colon \RMod(A') \laxtimes{\RMod{(B')}} \RMod(B) \lto \Ind(\cQ)
\]
is a Bousfield localization, i.e.~it admits a fully faithful right adjoint $r$. The localization functor $L = rp$ is given by the cofibre
\[
\cof(is(-) \to (-) )
\]
of the counit transformation of the adjunction $(i,s)$ from \cref{lemma:adjoint of inclusion in lax pullback}.
\end{lemma}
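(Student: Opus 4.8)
The plan is to identify $p$ as a Verdier quotient functor and then invoke the standard characterization of Bousfield localizations by recollement data. Recall that $\cQ$ was \emph{defined} as the cofibre of $i\colon \Perf(A) \to \Perf(A') \laxtimes{\Perf(B')} \Perf(B)$ in $\Cat^{\perf}_\infty$, and $i$ is fully faithful by \cref{lemma:fully faithful}. Passing to ind-completions, which is a symmetric monoidal equivalence $\Cat^{\perf}_\infty \simeq \Pr^{\mathrm{L,st}}_\omega$ preserving exact sequences, the exact sequence $\Perf(A) \xrightarrow{i} \Perf(A') \laxtimes{\Perf(B')} \Perf(B) \xrightarrow{p} \cQ$ becomes a (co)fibre sequence of compactly generated stable $\infty$-categories, so that $\Ind(\cQ)$ is identified with the Verdier quotient of $\RMod(A') \laxtimes{\RMod(B')} \RMod(B)$ by the essential image of $\Ind(i)$, where I use \cite[Proposition 13]{Tamme:excision} to identify the ind-completion of the lax pullback of perfect modules with the lax pullback of the module categories. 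Since $\Ind(i)$ preserves colimits and the module categories are presentable, $\Ind(i)$ admits a right adjoint $s$ — this is exactly \cref{lemma:adjoint of inclusion in lax pullback} — and the essential image of $\Ind(i)$ is the full subcategory of objects $(M,N,f)$ for which the unit $(M,N,f) \to is(M,N,f)$\ldots\ wait, rather for which $(M,N,f)$ lies in the image; concretely it is the localizing subcategory generated by the image of $\Perf(A)$.

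From here I would argue as follows. A functor $p$ exhibiting $\Ind(\cQ)$ as a Verdier quotient (= cofibre in $\Pr^{\mathrm{L,st}}$) of $\RMod(A') \laxtimes{\RMod(B')} \RMod(B)$ by the image of the colimit-preserving, fully faithful $i$ is automatically a Bousfield localization: by the theory of stable recollements (e.g.\ \cite[Proposition A.8.20]{LurieHA} or the discussion of semiorthogonal decompositions), since $i$ has a right adjoint $s$, the quotient functor $p$ has a fully faithful right adjoint $r$, and the pair $(i, r)$ together with $(s,p)$ assembles into a recollement. Thus the first claim, that $p$ is a Bousfield localization with fully faithful right adjoint $r$, is immediate once the Verdier quotient description is in place.

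For the explicit formula $L = rp = \cof(is(-) \to (-))$, I would use that in a stable recollement with fully faithful $i$ (left adjoint $i$ with right adjoint $s$) and $p$ (with fully faithful right adjoint $r$), every object $Z$ sits in a canonical fibre sequence $is(Z) \to Z \to rp(Z)$: this is the standard localization triangle, the fibre sequence relating the two idempotent (co)localizations attached to the recollement. Indeed, the cofibre $C := \cof(is(Z) \to Z)$ satisfies $\Map(i(X), C) \simeq 0$ for all $X \in \RMod(A)$, because $\Map(i(X), is(Z)) \simeq \Map(i(X), Z)$ via the counit (as $i$ is fully faithful and $s$ is its right adjoint, the counit $is(Z)\to Z$ induces an equivalence on mapping spectra out of the image of $i$); hence $C$ lies in the right orthogonal of $\operatorname{Im}(i)$, which is precisely the essential image of $r$. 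One then checks that $Z \mapsto C$ together with the map $Z \to C$ has the universal property of the localization $L$, i.e.\ is initial among maps from $Z$ to objects in the image of $r$; equivalently, $pC \simeq pZ$ and $C \in \operatorname{Im}(r)$ force $C \simeq rp(Z)$. This gives $L = \cof(is(-)\to(-))$ as claimed.

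The main obstacle I anticipate is purely bookkeeping: making sure the identification $\Ind(\cQ) \simeq \big(\RMod(A') \laxtimes{\RMod(B')} \RMod(B)\big)/\operatorname{Im}(i)$ is set up compatibly with the functor $p$ already fixed in the construction of $\cQ$, so that the abstract recollement machinery applies to \emph{this} $p$ and not merely to some equivalent functor. Concretely one must track that ind-completion sends the cofibre sequence $\Perf(A) \to \Perf(A') \laxtimes{\Perf(B')} \Perf(B) \to \cQ$ in $\Cat^{\perf}_\infty$ to a cofibre sequence in $\Pr^{\mathrm{L,st}}_\omega \subseteq \Pr^{\mathrm{L,st}}$ (using that $\Cat^{\perf}_\infty \simeq \Pr^{\mathrm{L,st}}_\omega$ is exact, and that cofibre sequences in $\Pr^{\mathrm{L,st}}_\omega$ are cofibre sequences in $\Pr^{\mathrm{L,st}}$), and that the right adjoint $s$ produced by the adjoint functor theorem agrees with the one from \cref{lemma:adjoint of inclusion in lax pullback}; both are essentially formal but need to be stated. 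Once that compatibility is recorded, the rest is the standard recollement argument sketched above.
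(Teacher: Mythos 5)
Your proposal is correct and follows essentially the same route as the paper: identify $\Ind(\cQ)$ with the Verdier quotient of the lax pullback of module categories by $\RMod(A)\simeq\Ind(\Perf(A))$, conclude that $p$ is a Bousfield localization, and read off $L\simeq\cof(is(-)\to(-))$ from the semi-orthogonal decomposition (localization fibre sequence $is(Z)\to Z\to rp(Z)$). The only difference is cosmetic: where the paper cites \cite[Proposition I.3.5]{NS} and \cite[Remark 7.2.0.2]{LurieSAG}, you supply the standard orthogonality argument directly, which is fine.
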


\begin{proof}
Since $\cQ$ is the idempotent completion of the Verdier quotient of the lax pullback of perfect modules by $\Perf(A)$, the functor $p$ on ind-completions is a Bousfield localization and
has  kernel $\Ind(\Perf(A)) \simeq \RMod(A)$, see for instance \cite[Proposition I.3.5]{NS}. 
The local objects are given by the image of $r$.
It follows that the pair $(i(\RMod(A)), r(\Ind(\cQ)))$ is a semi-orthogonal decomposition of the $\infty$-category
\(
\RMod(A') \laxtimes{\RMod{(B')}} \RMod(B) 
\)
in the sense of \cite[Definition 7.2.0.1]{LurieSAG}, see Corollary 7.2.1.7 there. The desired formula for $L$ now follows immediately from \cite[Remark 7.2.0.2]{LurieSAG}.
\end{proof}

The following observations will be used in the proofs below. 
We consider a functor $F\colon \cC \to \cD$ between stable, presentable $\infty$-categories and we fix an object $c \in \cC$. Then we can form the endomorphism spectrum $\End_{\cC}(c)$, and $F(c)$ carries canonically the structure of an $\End_{\cC}(c)$-left module in $\cD$. If $G$ is a second functor $\cC \to \cD$, we have the two $\End_{\cC}(c)$-left modules $F(c)$, $G(c)$ in $\cD$ and the mapping spectrum $\map_{\cD}( F(c), G(c) )$ acquires a canonical $\End_{\cC}(c)$-bimodule structure, where the left module structure is induced by post-composition via $G$ and the right module structure is induced by pre-composition via $F$.

\medskip

Given a pullback square of $\E_{1}$-ring spectra  \eqref{diag:ring-spectra2}, we denote the fibre of $B \to B'$ by $I$. Thus $I$ is naturally a $B$-bimodule, and the fibre of $A \to A'$ is obtained from $I$ by the forgetful functor to $A$-bimodules.
Note that via the respective forgetful functors $I$ has a canonical $(B,A)$-bimodule structure and $B$ has a canonical $(A,B)$-bimodule structure. Hence the tensor product $I \otimes_{A} B$ carries a canonical $B$-bimodule structure.

\begin{prop}\label{1.13}
	\label{prop:identification-boxtimes-B}
The underlying spectrum of $A' \wtimes{A}{B'} B$ is canonically equivalent to $A' \otimes_{A} B$. Under this identification the  ring map $B \to A' \wtimes{A}{B'} B$ induced from the functor $p j_{2}$ is the obvious map $B \to A' \otimes_{A} B$. Moreover, the underlying $B$-bimodule of $A' \wtimes{A}{B'} B$ sits in a cofibre sequence
\[
I \otimes_{A} B \lto B \lto A' \wtimes{A}{B'} B
\]
in the $\infty$-category of $B$-bimodule spectra.
\end{prop}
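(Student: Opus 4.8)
The plan is to compute $\End_{\cQ}(\ol B)$ by passing to ind-completions and using the explicit description of the localization functor $L = rp$ from \cref{lemma:right-adjoint-of-pi}. Concretely, for any object $Y$ in the lax pullback $\RMod(A') \laxtimes{\RMod(B')} \RMod(B)$, the mapping spectrum $\map_{\Ind(\cQ)}(p(0,B,0), p(Y))$ is equivalent to $\map_{\slax}(L(0,B,0), Y)$ since $p$ is a Bousfield localization with fully faithful right adjoint. So first I would compute $L(0,B,0) = \cof(is(0,B,0) \to (0,B,0))$. By the explicit formula in \cref{lemma:adjoint of inclusion in lax pullback} we have $s(0,B,0) \simeq 0 \times_{B\otimes_B B'} B = \fib(B \to B')\otimes$(nothing) — more precisely $s(0,B,0) \simeq \fib(0 \to B\otimes_B B') \times_{\{0\}} \ldots$; unwinding, $s(0,B,0) \simeq \Omega(B\otimes_B B') \times_? \ldots$ Let me instead just say: using $s(M,N,f) \simeq M\times_{N\otimes_B B'} N$ with $M=0$, $N=B$, $f=0$, we get $s(0,B,0) \simeq \Omega B' $ as an $A$-module (the fibre of $0 \to B\otimes_B B' \simeq B'$), and then $is(0,B,0)$ is the image of $\Omega B'$ under extension of scalars, i.e.\ $(\Omega B'\otimes_A A', \Omega B'\otimes_A B, \can)$. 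Taking the cofibre of the counit $is(0,B,0)\to(0,B,0)$ componentwise then gives an explicit description of $L(0,B,0)$ with components built from $B'\otimes_A A'$, $B\oplus(B'\otimes_A B)$, and so on.

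Next I would set $Y = p(0,B,0)$ again — i.e.\ apply the same analysis to compute $\End_{\cQ}(\ol B) \simeq \map_{\slax}(L(0,B,0),(0,B,0))$, using that $\ol B$ is local so $\map(\ol B, p(Y))\simeq\map(\ol B, LY)$ and $L(0,B,0)$ has already been computed. The key input is the formula for mapping spectra in a lax pullback (as in \cite[Remark 6]{Tamme:excision}): $\map_{\slax}((M_1,N_1,f_1),(M_2,N_2,f_2))$ is the total fibre of a square built from $\map_{A'}(M_1,M_2)$, $\map_B(N_1,N_2)$, and $\map_{B'}(M_1\otimes_{A'}B', N_2\otimes_B B')$. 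Plugging in the components of $L(0,B,0)$ and $(0,B,0)$ and simplifying (using dualizability/projectivity of the relevant modules and the pullback hypothesis $A \simeq A'\times_{B'} B$, equivalently $I \simeq \fib(B\to B')$ as $A$-bimodules) should collapse the total fibre to $A'\otimes_A B$. To get the $B$-bimodule structure and the cofibre sequence $I\otimes_A B \to B \to A'\wtimes{A}{B'}B$, I would use the remarks immediately preceding the proposition: the $B$-bimodule structure on $\map_{\cD}(F(\ol B), G(\ol B))$ comes from pre- and post-composition, and here $pj_2\colon \RMod(B) \to \Ind(\cQ)$ makes $\ol B = pj_2(B)$ into an object receiving a $B$-action, so $\End_{\cQ}(\ol B)$ is a $B$-bimodule and the ring map $B \to \End_{\cQ}(\ol B)$ is induced by $pj_2$ on endomorphisms, which is the unit, hence the obvious map $B \to A'\otimes_A B$. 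Finally, the cofibre sequence $I\otimes_A B \to B \to A'\otimes_A B$ of $B$-bimodules is just the result of tensoring the $B$-bimodule fibre sequence $I \to B \to A'$ (wait — that's not a $B$-bimodule sequence; rather $I = \fib(B\to B')$ and $A' = \cof(I\to B)$ as right $A$-modules... ) with $B$ over $A$ on the left: the fibre sequence $I \to B \to A'$ of $(B,A)$-bimodules, tensored over $A$ with the $(A,B)$-bimodule $B$, yields the claimed cofibre sequence of $B$-bimodules, and one checks the middle map agrees with the ring map just identified.

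The main obstacle I anticipate is the bookkeeping in the second step: correctly tracking the $B$-bimodule structure through the total-fibre computation of the mapping spectrum, making sure the identification $\End_{\cQ}(\ol B) \simeq A'\otimes_A B$ is not merely one of underlying spectra but is compatible with the $B$-action coming from $pj_2$, and verifying the connecting map in the cofibre sequence is the expected one rather than differing by a unit or a twist. The actual simplification of the total fibre — once one has the explicit components of $L(0,B,0)$ and uses $A \simeq A'\times_{B'}B$ — should be routine diagram-chasing, but requires care since several of the mapping spectra involve $B'$-modules obtained by extension of scalars and one must use the pullback hypothesis at exactly the right place. An alternative, possibly cleaner, route would be to avoid computing $L(0,B,0)$ in full and instead use the fibre sequence $is(0,B,0)\to(0,B,0)\to L(0,B,0)$ together with the known right adjoint $s$ from \cref{lemma:adjoint of inclusion in lax pullback} to reduce the endomorphism computation to a mapping spectrum in $\RMod(A)$ plus a correction term; I would try this first and fall back on the direct total-fibre computation if the bimodule structure becomes opaque.
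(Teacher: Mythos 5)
Your overall strategy---pass to ind-completions, use the Bousfield localization $L=\cof(is\to\mathrm{id})$ from \cref{lemma:right-adjoint-of-pi}, and reduce $\End_{\cQ}(\ol B)$ to mapping spectra in the lax pullback (indeed, the ``alternative route'' you sketch at the end is essentially the paper's argument)---is the right one, but as written the computation contains errors that would derail it. First, $s(0,B,0)$ is miscomputed: by the formula $s(M,N,f)\simeq M\times_{N\otimes_B B'}N$ of \cref{lemma:adjoint of inclusion in lax pullback} with $M=0$, $N=B$, $f=0$, the pullback $0\times_{B'}B$ is the fibre of the canonical map $B\to B'$, i.e.\ $s(0,B,0)\simeq I$, and not $\Omega B'=\fib(0\to B')$. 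This is not a harmless slip: the identification $sj_2(B)\simeq I$ is exactly what produces the term $I\otimes_A B$ in the asserted cofibre sequence, and with $\Omega B'$ in its place the cofibre of $isj_2(B)\to j_2(B)$ no longer has $A'\otimes_A B$ as its $B$-component. Second, your endomorphism identification goes the wrong way: since $Lj_2(B)$ is local and $j_2(B)\to Lj_2(B)$ is an $L$-equivalence, one has $\End_{\cQ}(\ol B)\simeq\map_{\slax}(j_2(B),Lj_2(B))$, not $\map_{\slax}(L(0,B,0),(0,B,0))$. Moreover, instead of a total-fibre computation with the explicit components of $L(0,B,0)$, the efficient move is to apply $\map_{\slax}(j_2(B),-)$ to the cofibre sequence $isj_2(B)\to j_2(B)\to Lj_2(B)$ and use the adjunction $j_2\dashv\pr_2$, which gives $\map_{\slax}(j_2(B),(M,N,f))\simeq\map_B(B,N)\simeq N$; the first map then visibly becomes the $B$-bimodule map $I\otimes_A B\to B$, with no bookkeeping of the $A'$-components at all.

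Finally, the identification of the cofibre is garbled at the end: the sequence to tensor with $B$ over $A$ is $I\to A\to A'$ (as right $A$-modules), which is a cofibre sequence precisely because the square is a pullback, i.e.\ $\fib(A\to A')\simeq\fib(B\to B')=I$---this is the one place where the pullback hypothesis enters this step. Your sequence ``$I\to B\to A'$'' is not a cofibre sequence (its cofibre is $B'$, not $A'$), and tensoring it with $B$ would in any case produce $B\otimes_A B$ rather than $B$ in the middle. With these three corrections your outline collapses to the paper's proof: $s j_2(B)\simeq I$, the cofibre sequence of mapping spectra out of $j_2(B)$ identifies with $I\otimes_A B\to B\to A'\wtimes{A}{B'}B$ as $B$-bimodules, and tensoring $I\to A\to A'$ with $B$ identifies the cofibre with $A'\otimes_A B$ and the ring map $B\to A'\otimes_A B$ with the obvious one.
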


\begin{proof}
It follows from  \cref{lemma:right-adjoint-of-pi} that we have an equivalence of $\E_{1}$-rings
\[
A' \wtimes{A}{B'} B = \End_{\cQ}(\ol{B}) \simeq \End_{\slax}(L(0,B,0) ).
\]
Left multiplication provides an equivalence of $\E_{1}$-rings $B \simeq \End_{B}(B)$. We still write $j_{2}$ for the functor
\[
\RMod(B) \lto \RMod(A') \laxtimes{\RMod(B')} \RMod(B)
\]
that sends $M$ to $(0,M,0)$. Then  the $\E_{1}$-map $B \to A' \wtimes{A}{B'} B$ is the map $B \simeq \End_{B}(B) \to \End_{\slax}(L(0,B,0)) = \End_{\slax}(Lj_{2}(B)) \simeq A' \wtimes{A}{B'} B$ induced by the functor $L  j_{2}$.  
Since $Lj_{2}(B)$ is a local object, the canonical map $j_{2}(B) \to Lj_{2}(B)$ induces an equivalence
\[
\End_{\slax}(Lj_{2}(B)) \xrightarrow{\simeq} \map_{\slax}( j_{2}(B), Lj_{2}(B) ).
\]
Using Lemma~\ref{lemma:right-adjoint-of-pi} we see that there is a commutative diagram 
\begin{equation}
	\label{diag:identification-ring-map}
\begin{tikzcd}
 \map_{\slax}( j_{2}(B), i sj_{2}(B) ) \ar[r] &  \map_{\slax}( j_{2}(B), j_{2}(B) ) \ar[r] &  \map_{\slax}( j_{2}(B), Lj_{2}(B) )  \\
 & \End_{B}(B) \ar[u, "{\simeq}"] \ar[r] & \End_{\slax}( Lj_{2}(B) ) \ar[u, "{\simeq}"]
\end{tikzcd}
\end{equation}
where the top row is a cofibre sequence of $B \simeq \End_{B}(B)$-bimodules, and the lower horizontal map is the $\E_{1}$-map we are interested in.
We can identify the top row further. The functor $j_{2}$ is left adjoint to the projection $\pr_{2}$ from the lax pullback to $\RMod(B)$, see for example \cite[Proposition 10]{Tamme:excision}.
From the formula for $s$ in Lemma~\ref{lemma:adjoint of inclusion in lax pullback} we see that $sj_{2}(B) \simeq I$ considered as an $A$-right module.
Moreover, for any $B$-right module $M$, we have a canonical equivalence $\map_{B}(B, M) \simeq M$ and if $M$ is a $B$-bimodule this equivalence refines to a $B$-bimodule equivalence.
Hence the first map in the  top row of \eqref{diag:identification-ring-map} canonically identifies with the $B$-bimodule map $I \otimes_{A} B \to B$ which is given by $I \to B$ and the multiplication in $B$. This establishes the last claim of the proposition.

On the other hand, tensoring the cofibre sequence of $A$-right modules $I \to A \to A'$ with $B$ we deduce  that the above map sits in a cofibre sequence
\[
I \otimes_{A} B \lto B \lto A' \otimes_{A} B
\]
where the second map is the obvious one. From this we immediately deduce the first two claims of the proposition.
\end{proof}

We now prove the analog of the previous proposition for the ring map $A' \to A' \wtimes{A}{B'} B$ induced by the functor $\operatorname{\Omega}  p j_{1}$ appearing in diagram \eqref{diag:perf-natural-diagram}. We denote the fibre of $A' \to B'$ in $A'$-bimodules by $J$. Again, after forgetting to $A$-bimodules, $J$ becomes equivalent to the fibre of $A \to B$.

\begin{prop}
	\label{prop:identification-boxtimes-A}
Under the identification of Proposition~\ref{prop:identification-boxtimes-B}, the ring map $A' \to A' \wtimes{A}{B'} B$ induced by the functor $\operatorname\Omega  p j_{1}$ is the obvious map $A' \to A' \otimes_{A} B$. Moreover, the underlying $A'$-bimodule of $A' \wtimes{A}{B'} B$ sits in a cofibre sequence 
\[
A' \otimes_{A} J \lto A' \lto A' \wtimes{A}{B'} B
\]
in the $\infty$-category of $A'$-bimodule spectra.
\end{prop}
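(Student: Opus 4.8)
The plan is to mirror the proof of Proposition~\ref{prop:identification-boxtimes-B}, but now working with the functor $j_1$ and its role in the lax pullback instead of $j_2$. The key point is that $j_1\colon \RMod(A') \to \RMod(A')\laxtimes{\RMod(B')}\RMod(B)$ sending $M \mapsto (M,0,0)$ is \emph{not} left adjoint to the projection $\pr_1$; rather, its shift $\Omega j_1$ is the relevant functor appearing in diagram~\eqref{diag:perf-natural-diagram}, arising from the rotated fibre sequence~\eqref{seq:definition-tau}. As in the previous proof, one has the equivalence of $\E_1$-rings $A' \wtimes{A}{B'} B \simeq \End_{\slax}(L(0,B,0))$, and since $L(0,B,0) = L\Omega j_1(A')$ — this is exactly the content of $p(\tau_A)$ being an equivalence $p(\Omega A',0,0)\simeq \ol B$ from Lemma~\ref{Q generated by} — we get that the $\E_1$-map $A' \to A'\wtimes{A}{B'} B$ is induced on endomorphism spectra by the functor $L\Omega j_1$, using the identification $A' \simeq \End_{A'}(A')$ via left multiplication.

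First I would set up the analog of diagram~\eqref{diag:identification-ring-map}: since $L\Omega j_1(A')$ is a local object, the unit map $\Omega j_1(A') \to L\Omega j_1(A')$ induces an equivalence $\End_{\slax}(L\Omega j_1(A')) \xrightarrow{\simeq} \map_{\slax}(\Omega j_1(A'), L\Omega j_1(A'))$, and using Lemma~\ref{lemma:right-adjoint-of-pi} ($L = \cof(is \to \id)$) the latter fits into a cofibre sequence of $A' \simeq \End_{A'}(A')$-bimodules
\[
\map_{\slax}(\Omega j_1(A'), i s\Omega j_1(A')) \lto \map_{\slax}(\Omega j_1(A'), \Omega j_1(A')) \lto \map_{\slax}(\Omega j_1(A'), L\Omega j_1(A')).
\]
The middle term is $\End_{\slax}(\Omega j_1(A')) \simeq \End_{\slax}(j_1(A')) \simeq \End_{A'}(A') \simeq A'$ since $j_1$ is fully faithful (by \cite[Lemma 8]{Tamme:excision}, or directly: $(M,0,0)$ has no maps to the $\Fun(\Delta^1,-)$ part). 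So the task reduces to computing the first term, i.e.\ $s\Omega j_1(A')$. Here I would use the explicit formula for $s$ from Lemma~\ref{lemma:adjoint of inclusion in lax pullback}: $s(M,N,f) = M \times_{N\otimes_B B'} N$. Applying this to $\Omega j_1(A') = (\Omega A', 0, 0)$ gives $s(\Omega A',0,0) = \Omega A' \times_0 0 = \Omega A' \otimes_{A'} \cdots$ — careful, one actually wants $(\Omega A', 0, 0)$, whose $s$ is $\Omega A' \times_{0} 0 \simeq \Omega A'$ as an $A$-module; then $\map_{\slax}(\Omega j_1(A'), i s\Omega j_1(A'))$ should be computed as $\map_A(s\Omega j_1(A'), s\Omega j_1(A')) \otimes$-twisted, and after desuspending and using $s(\Omega j_1(A')) \simeq \Omega J$ viewed as an $A$-module (since the fibre of $A'\to B'$ restricted to $A$-modules is the fibre $J$ of $A\to B$, and the extra $\Omega$ from working with $\Omega j_1$ cancels against the rotation), the first term identifies with the $A'$-bimodule $A' \otimes_A J$, with the map to $A'$ given by $J \to A'$ and multiplication.

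The main obstacle will be the careful bookkeeping of the loop/rotation shifts and of which bimodule structures are being used where: one must check that $\Omega j_1$ (not $j_1$) is what intertwines the adjunction correctly, that $s$ applied to the shifted object produces $\Omega J$ rather than $J$ or $\Omega A'$, and that post-/pre-composition give the correct $A'$-bimodule structure (left module by post-composition via $L\Omega j_1$, right module by pre-composition). Once the first term of the cofibre sequence is identified as the $A'$-bimodule $A' \otimes_A J$ mapping to $A'$ by multiplication along $J \hookrightarrow A'$, the cofibre sequence
\[
A' \otimes_A J \lto A' \lto A' \wtimes{A}{B'} B
\]
follows, and comparing with the cofibre sequence of $A'$-bimodules obtained by tensoring $J \to A \to B$ with $A'$ on the left — which reads $A' \otimes_A J \to A' \to A' \otimes_A B$ — shows both that the underlying spectrum is $A' \otimes_A B$ (compatibly with Proposition~\ref{prop:identification-boxtimes-B}) and that the $\E_1$-map $A' \to A'\wtimes{A}{B'} B$ is the evident one $A' \to A' \otimes_A B$. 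I would close by remarking that the compatibility of the two identifications of the underlying spectrum (from this proposition and from Proposition~\ref{prop:identification-boxtimes-B}) is automatic since both are induced by the canonical maps from the respective fibre sequences and agree after forgetting to $A$-modules.
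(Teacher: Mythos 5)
Your overall strategy (compute $\End$ of the local object via the cofibre sequence $is\to\id\to L$ and identify the first term as $A'\otimes_A J$) is in the spirit of the paper's proof, but there are two genuine problems. First, your identification of the first term is internally inconsistent and, as stated, wrong: by the formula of Lemma~\ref{lemma:adjoint of inclusion in lax pullback}, $s(\Omega A',0,0)\simeq \Omega A'$ viewed as an $A$-right module (as you correctly compute at first), \emph{not} $\Omega J$; if $s\Omega j_1(A')$ were $\Omega J$, the term $\map_{\slax}(\Omega j_1(A'), is\Omega j_1(A'))$ would come out as something like $J\otimes_A J$ rather than $A'\otimes_A J$. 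The correct route, which your sketch never uses, is Lemma~\ref{lemma:right-adjoint-of-A-in-lax-pb} (stated in the paper precisely for this purpose): since $j_1$ has the right adjoint $(X,Y,f)\mapsto\fib(X\to uq(Y))$, one gets
\[
\map_{\slax}\bigl(\Omega(A',0,0),\, i(\Omega A')\bigr)\simeq \map_{A'}\bigl(\Omega A',\,\fib(\Omega A'\otimes_A A'\to \Omega A'\otimes_A B')\bigr)\simeq \map_{A'}\bigl(\Omega A',\Omega(A'\otimes_A J)\bigr)\simeq A'\otimes_A J,
\]
with the map to $A'$ the canonical one; your ``$\map_A(s\Omega j_1(A'),s\Omega j_1(A'))$ $\otimes$-twisted, desuspend, the $\Omega$ cancels against the rotation'' does not substitute for this computation.

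Second, and more seriously, the closing remark that the compatibility with the identification of Proposition~\ref{prop:identification-boxtimes-B} ``is automatic'' is exactly the step that cannot be waved away and is the crux of the paper's argument. The ring map in question is induced by $\Omega p j_1$, so it lands in $\End_{\cQ}(\ol B)$ only after transporting along the equivalence $p(\tau_A)\colon p(\Omega A',0,0)\simeq \ol B$, whereas the identification of $A'\wtimes{A}{B'}B$ with $A'\otimes_A B$ in Proposition~\ref{prop:identification-boxtimes-B} was made via the generator $(0,B,0)$ and the cofibre sequence $I\otimes_A B\to B\to A'\otimes_A B$. Your argument identifies the cofibre of $A'\otimes_A J\to A'$ with $A'\otimes_A B$ by a different canonical equivalence, and a priori the two identifications could differ by a nontrivial automorphism of the spectrum $A'\otimes_A B$. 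The paper handles this by a three-column diagram of mapping spectra linked by $\tau_A^*$ and $\tau_{A,*}$ and shows that, under the identifications, the top row becomes the identity of $A'\otimes_A B$; this is verified by matching the two lower squares with the rotated fibre sequences $\Omega B\to J\to A$ tensored with $A'$ and $\Omega A'\to I\to A$ tensored with $B$. Without an argument of this kind your proof establishes the cofibre sequence of $A'$-bimodules, but not the statement ``under the identification of Proposition~\ref{prop:identification-boxtimes-B} the ring map is the obvious one,'' which is what the proposition asserts.
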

\begin{proof}
Recall the natural transformation
\[
\tau_{A}\colon \Omega(A',0,0) \lto (0,B,0),
\]
which   becomes an equivalence upon applying the localization functor $L$ by Lemma~\ref{Q generated by}. We consider the following commutative diagram of mapping spectra in the lax pullback.
\[
\begin{small}
\begin{tikzcd}[column sep =1.5em]
\map(\Omega(A',0,0), L\Omega(A',0,0) )		& \map( (0,B,0), L\Omega(A',0,0) ) \ar[l, "{\tau_{A}^{*}}"', "\simeq"] \ar[r, "{\tau_{A,*}}", "\simeq"']	& \map( (0,B,0), L(0,B,0) )   	\\
\map(\Omega(A',0,0), \Omega(A',0,0)) \ar[u] 	& \map( (0,B,0), \Omega(A',0,0) ) \ar[l, "{\tau_{A}^{*}}"'] \ar[r, "{\tau_{A,*}}"]  \ar[u]				& \map( (0,B,0), (0,B,0) ) \ar[u]	\\
\map(\Omega(A',0,0), is\Omega(A',0,0)) \ar[u] 	& \map( (0,B,0), is\Omega(A',0,0) ) \ar[l, "{\tau_{A}^{*}}"'] \ar[r, "{\tau_{A,*}}"]  \ar[u]				& \map( (0,B,0), is(0,B,0) ) \ar[u]	\\
\end{tikzcd}
\end{small}
\]
Note that the columns in this diagram are cofibre sequences.
The inclusion of $\RMod(A')$ in the lax pullback and the functor $\Omega$ induce a canonical equivalence 
\[
A' \simeq \End_{A'}(A') \simeq \map(\Omega(A',0,0), \Omega(A',0,0) ).
\]
In the proof of Proposition \ref{prop:identification-boxtimes-B} we have identified 
\[
A' \wtimes{A}{B'} B \simeq \map((0,B,0), L(0,B,0) ) \simeq A' \otimes_{A} B.
\]
Under these identifications, the map $A' \to A' \wtimes{A}{B'} B$ induced by the functor $\operatorname\Omega  p j_{1}$ is the map from the middle term in the left column to the top right corner in the above diagram.
We now identify all terms in the above diagram. Firstly, $s\Omega(A',0,0)$ is $\Omega A'$ viewed as $A$-right module. Using Lemma~\ref{lemma:right-adjoint-of-A-in-lax-pb} and the canonical identification $\map_{A'}(\Omega A', \Omega X) \simeq X$ for any $A'$-right module (or bimodule) $X$, we identify the  lower left vertical map as the canonical map $A' \otimes_{A} J \to A'$ in $A'$-bimodules. The underlying spectrum of its cofibre is canonically equivalent to $A' \otimes_{A} B$ via the obvious map $A' \to A' \otimes_{A} B$. Hence the proof of the proposition is finished once we show that under the respective identifications the top row of the above diagram becomes the identity of $A' \otimes_{A} B$.
To see this, we observe that the two lower pullback squares of the diagram canonically identify with the pullback squares
\[
\begin{tikzcd}
A' 					& 0 \ar[l] \ar[r] 						& B \\
A' \otimes_{A} J \ar[u]	& \Omega A' \otimes_{A} B \ar[u]\ar[l]\ar[r]	& I \otimes_{A} B \ar[u]
\end{tikzcd}
\]
where the left-hand square is the cofibre sequence $\Omega B \to J \to A$ tensored with $A'$, the right-hand square is the cofibre sequence $\Omega A' \to I \to A$ tensored with $B$. This implies that the induced map on vertical cofibres is the identity of $A' \otimes_{A} B$, as desired.
\end{proof}

\begin{prop}\label{map out of weird ring}
Under the identification of Proposition \ref{prop:identification-boxtimes-B}, the $\E_{1}$-map $A' \wtimes{A}{B'} B \to B'$ is given by the map $A' \otimes_{A} B \to B'$ induced by the maps $A' \to B'$, $B \to B'$, and the multiplication in $B'$.
\end{prop}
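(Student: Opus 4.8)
The plan is to compute the $\E_1$-map $A' \wtimes{A}{B'} B \to B'$ directly, following the pattern of the proofs of Propositions~\ref{prop:identification-boxtimes-B} and~\ref{prop:identification-boxtimes-A}. By the construction carried out in the proof of \cref{main theorem}, this map is the image under the inverse equivalence of \cref{rings vs 1-generated cats} of the morphism $\bar c\colon (\Perf(A' \wtimes{A}{B'} B),\, A' \wtimes{A}{B'} B) \to (\Perf(B'),\, B')$ in $\Alg_{\E_0}(\Cat_\infty^\perf)$; that is, it is the map on endomorphisms of the preferred generators induced by the functor $\bar c$, using $\bar c(\ol B)\simeq B'$. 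I would compute this after passing to ind-completions, where $\bar c$ is induced by the colimit-preserving functor $\hat c\colon \RMod(A')\laxtimes{\RMod(B')}\RMod(B)\to\RMod(B')$, $(M,N,f)\mapsto\cof(f)$: indeed $\hat c\circ i$ carries $X$ to $\cof(\can)\simeq 0$, so $\hat c$ annihilates $\ker(p)\simeq\RMod(A)$ and factors through $p$. Recall from the proof of \cref{prop:identification-boxtimes-B} that $A'\wtimes{A}{B'}B\simeq\map_{\slax}(j_2(B), Lj_2(B))$ and that, under this identification, the map in question is the one induced by $\hat c$ on endomorphisms of $Lj_2(B)$.

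Next I would apply $\hat c$ to the cofibre sequence of $B$-bimodules $I\otimes_A B\to B\to A'\otimes_A B$ from the top row of~\eqref{diag:identification-ring-map}, obtained by applying $\map_{\slax}(j_2(B),-)$ to the counit cofibre sequence $isj_2(B)\to j_2(B)\to Lj_2(B)$ of \cref{lemma:right-adjoint-of-pi}. Since $\hat c\circ i=0$ we get $\hat c(isj_2(B))\simeq 0$, so $\hat c$ carries $j_2(B)\to Lj_2(B)$ to an equivalence; and since $\hat c\circ j_2\simeq(-)\otimes_B B'$ is extension of scalars along $B\to B'$, we have $\hat c j_2(B)\simeq B\otimes_B B'\simeq B'$ with the induced map $\End_{\RMod(B)}(B)\to\End_{\RMod(B')}(B')$ equal to the ring map $B\to B'$. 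Combining these identifications, the $\E_1$-map $A'\wtimes{A}{B'}B\to B'$ becomes the map $A'\otimes_A B=\cof(I\otimes_A B\to B)\to\cof(0\to B')=B'$ induced by the ring map $B\to B'$. In particular its restriction along the canonical map $B\to A'\otimes_A B$ is $B\to B'$, and the analogous computation (using \cref{prop:identification-boxtimes-A}, or the commuting triangle $\bar c\circ p\Omega j_1\simeq k_1$ from the proof of \cref{main theorem}) shows its restriction along the canonical map $A'\to A'\otimes_A B$ is $A'\to B'$.

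Finally I would identify this with the map induced by $A'\to B'$, $B\to B'$ and the multiplication of $B'$. By Propositions~\ref{prop:identification-boxtimes-B} and~\ref{prop:identification-boxtimes-A} the right $B$- and left $A'$-module structures underlying $A'\wtimes{A}{B'}B$ are the evident ones on $A'\otimes_A B$, so the composite $A'\otimes_\bbS B\to(A'\wtimes{A}{B'}B)^{\otimes 2}\xrightarrow{\mu}A'\wtimes{A}{B'}B$ built from the two ring maps out of $A'$ and $B$ is the canonical map $A'\otimes_\bbS B\to A'\otimes_A B$; together with $\E_1$-multiplicativity of $A'\wtimes{A}{B'}B\to B'$ and the two restrictions computed above, this yields the claim. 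The only genuine content is this last bookkeeping step — keeping track of the natural equivalences ($\can$, the identifications of Propositions~\ref{prop:identification-boxtimes-B} and~\ref{prop:identification-boxtimes-A}, and the multiplication coherences), equivalently checking that the nullhomotopy of $I\otimes_A B\to B\to B'$ produced by the computation agrees with the evident one coming from $I=\fib(B\to B')$. I expect this routine verification to be the main obstacle; no essentially new ideas are needed.
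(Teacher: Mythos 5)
Your setup is fine: identifying the map as the one induced by $\bar c$ on endomorphisms of the preferred generators, passing to ind-completions, noting $\hat c\circ i\simeq 0$ and $\hat c\circ j_2\simeq -\otimes_B B'$, and deducing that the restrictions along $B\to A'\otimes_A B$ and $A'\to A'\otimes_A B$ are the given ring maps — this correctly (if more laboriously) recovers what the paper gets for free from the already-established commutativity of diagram~\eqref{diag:natural-diagram}, and your cofibre description is essentially Remark~\ref{rem:multiplication-cofibre-sequence}. The gap is in the concluding step. Knowing the two restrictions and $\E_1$-multiplicativity only pins down the composite $A'\otimes_{\bbS} B\to A'\otimes_A B\xrightarrow{f} B'$, and precomposition with the projection $A'\otimes_{\bbS}B\to A'\otimes_A B$ does \emph{not} determine a map out of $A'\otimes_A B$ up to homotopy (the relative tensor product is a geometric realization, and agreement on ``simple tensors'' is not a determination principle in the $\infty$-categorical setting). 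Equivalently, in your cofibre formulation the induced map on $\cof(I\otimes_A B\to B)$ depends on the choice of nullhomotopy of $I\otimes_A B\to B\to B'$, and identifying the nullhomotopy produced by the localization construction with the evident one is precisely the nontrivial content of the proposition, not a routine bookkeeping step; you give no argument for it.

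The paper closes exactly this gap with a structural uniqueness argument that your sketch is missing: since the map is an $\E_1$-map it is in particular $A'$-left linear, and by Proposition~\ref{prop:identification-boxtimes-A} the $A'$-left module structure on $A'\wtimes{A}{B'}B\simeq A'\otimes_A B$ is the canonical one; hence by the induction--restriction adjunction ($A'$-linear maps out of $A'\otimes_A B$ correspond to $A$-linear maps out of $B$) the map is the \emph{unique} $A'$-linear extension of its restriction $B\to B'$, and the multiplication-induced map is visibly such an extension as well. If you replace your final paragraph (the $A'\otimes_{\bbS}B$ argument and the deferred nullhomotopy check) by this adjunction argument, your proof goes through and is essentially the paper's, with the restriction computations done by hand rather than quoted from the commutativity of \eqref{diag:natural-diagram}.
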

\begin{proof}
We have already shown that diagram \eqref{diag:natural-diagram} commutes. Hence the  composites  $A' \to A' \wtimes{A}{B'} B \to B'$  and $B \to A' \wtimes{A}{B'} B \to B'$ are the given maps $A' \to B'$ and $B \to B'$, respectively.
Since as an $\E_{1}$-map $A' \wtimes{A}{B'} B \to B'$ is in particular $A'$-left linear, and since under the identification of $A' \wtimes{A}{B'} B$ with $A' \otimes_{A} B$, the $A'$-left module structure of $A' \wtimes{A}{B'} B$ is the canonical one by Proposition~\ref{prop:identification-boxtimes-A}, the map $A' \wtimes{A}{B'} B \to B'$ is the unique $A'$-linear extension of the  composite $B \to A' \wtimes{A}{B'} B \to B'$. 
Together with the above, this finishes the proof.
\end{proof}

\begin{rem}
	\label{rem:multiplication-cofibre-sequence}
For later use we record the following consequence. Assume as before that \eqref{diag:ring-spectra2} is a pullback square of $\E_{1}$-ring spectra, and write $I = \fib( B \to B' )$. The map $A' \wtimes{A}{B'} B \to B'$ is also the unique $B$-right linear extension of the given map $A' \to B'$. Hence it sits in a diagram of cofibre sequences
\[
\begin{tikzcd}
I \otimes_{A} B \ar[r] \ar[d] & B \ar[r]\ar[d, equal] & A' \wtimes{A}{B'} B \ar[d] \\
I \ar[r] & B \ar[r] & B'
\end{tikzcd}
\]
where the upper sequence is the one of Proposition~\ref{prop:identification-boxtimes-B} and the left vertical map is the $B$-right module structure on $I$.
\end{rem}

All constructions made above are  natural in the input diagram \eqref{diag:ring-spectra2}. This finishes the proof of Theorem~\ref{main theorem}.

\medskip

With the methods of this section we can also prove the following result, which is of independent interest. We are grateful to a referee for suggesting this.
Compare \cite[Theorem 16.2.0.2]{LurieSAG} for a similar result.  

\begin{prop}
	\label{prop:pullback-module-cats}
Assume that \eqref{diag:ring-spectra2} is a pullback square of $\E_1$-ring spectra. If the functor $\Perf(A'\wtimes{A}{B'} B) \to \Perf(B')$, respectively the functor $\RMod(A'\wtimes{A}{B'} B) \to \RMod(B')$, is conservative, then the square of $\infty$-categories
\begin{center}
\begin{tikzcd}%[sep = small]
\Perf(A) \ar[d]\ar[r] & \Perf(B)\phantom{,} \ar[d]       \\ 
 \Perf(A') \ar[r] & \Perf(B'),  
\end{tikzcd}
\hspace{1em} respectively \hspace{1em}
\begin{tikzcd}
 \RMod(A) \ar[d]\ar[r] & \RMod(B)\phantom{,} \ar[d] \\ 
 \RMod(A') \ar[r] & \RMod(B'), 
\end{tikzcd}
\end{center}
is cartesian.
\end{prop}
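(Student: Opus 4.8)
The plan is to deduce this from the exact sequence \eqref{eq:fundamental-exact-sequence} together with the identifications of the various maps established in Propositions~\ref{prop:identification-boxtimes-B}, \ref{prop:identification-boxtimes-A}, and \ref{map out of weird ring}. First I would treat the module-category version, since the perfect-module version will then follow by restricting to compact objects (using that all functors in sight are exact and preserve compact objects, and that $\RMod(R) = \Ind(\Perf(R))$). So let $\cP = \RMod(A') \laxtimes{\RMod(B')} \RMod(B)$; by \cref{lemma:right-adjoint-of-pi} we have a Bousfield localization $p \colon \cP \to \Ind(\cQ) \simeq \RMod(A' \wtimes{A}{B'} B)$ with fully faithful right adjoint $r$ and kernel $i(\RMod(A))$, giving a semiorthogonal decomposition of $\cP$. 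I would combine this with the functor $c \colon \cP \to \RMod(B')$, $(M,N,f) \mapsto \cof(f)$, which by the discussion around $\bar c$ factors as $c \simeq \bar c \circ p$ where $\bar c \colon \RMod(A'\wtimes{A}{B'}B) \to \RMod(B')$ is extension of scalars along the $\E_1$-map $A'\wtimes{A}{B'}B \to B'$.

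The key step is to observe that the square of $\infty$-categories in question is, after unwinding, the square
\[
\begin{tikzcd}
\RMod(A) \ar[r] \ar[d] & \RMod(B) \ar[d,"j_2"] \\
\RMod(A') \ar[r,"j_1"] & \cP
\end{tikzcd}
\]
\emph{composed with} the projection that collapses $\cP$ onto $\RMod(B')$ via $c$ — more precisely, one shows the original square is the pullback along $c$ of the canonical square. Here I would use that the lax pullback $\cP$ sits in the (strict) pullback \eqref{diag:cats2} and that the fibre of $c$ over a morphism in $\RMod(B')$ recovers a genuine pullback square of module categories; the standard computation (cf.\ \cite[Theorem 16.2.0.2]{LurieSAG}) identifies $\RMod(A') \times_{\RMod(B')} \RMod(B)$ with the full subcategory of $\cP$ on objects $(M,N,f)$ with $f$ an equivalence, i.e.\ with $\fib(c)$. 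So it suffices to show that under the hypothesis ``$\bar c$ conservative'' the composite $i \colon \RMod(A) \to \cP$ factors through $\fib(c)$ and the induced functor $\RMod(A) \to \fib(c)$ is an equivalence. That $i$ lands in $\fib(c)$ is clear since $c \circ i(X) = \cof(X \otimes_A A' \xrightarrow{\sim} X\otimes_A A')$... wait, more precisely $c(i(X)) = \cof(X\otimes_A A' \xrightarrow{\can} X\otimes_A B)$ which is $0$ exactly because \eqref{diag:ring-spectra2} is a pullback — equivalently, $c \circ i$ is the zero functor as noted right before the construction of $\bar c$. Conversely, an object $(M,N,f) \in \fib(c)$ is in the image of $i$ iff it is killed by $p$, i.e.\ iff $L(M,N,f) \simeq 0$; by \cref{lemma:right-adjoint-of-pi}, $L(M,N,f) = \cof(is(M,N,f) \to (M,N,f))$, so I must show: if $f$ is an equivalence and $\bar c$ is conservative, then the counit $is(M,N,f) \to (M,N,f)$ is an equivalence.

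The main obstacle, and the heart of the argument, is precisely this last implication. Here is how I would handle it. By \cref{lemma:adjoint of inclusion in lax pullback}, $s(M,N,f) = M \times_{N\otimes_B B'} N$ and the counit is the canonical comparison map; its cofibre $L(M,N,f)$ is a local object, hence lies in the image of $r$, hence is of the form $r(P)$ for $P = p(M,N,f) \in \RMod(A'\wtimes{A}{B'}B)$. Now apply $c = \bar c \circ p$: since $c$ kills the image of $i$ and $c$ is exact, $c$ of the counit-cofibre sequence shows $\bar c(P) \simeq c(M,N,f) = \cof(f) \simeq 0$ because $f$ is an equivalence. Since $\bar c$ is conservative, $P \simeq 0$, i.e.\ $p(M,N,f) \simeq 0$, so $(M,N,f)$ lies in the kernel of $p$, which is exactly $i(\RMod(A))$. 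This shows $\fib(c) \subseteq i(\RMod(A))$; combined with the reverse inclusion and full faithfulness of $i$ (\cref{lemma:fully faithful}), $i$ restricts to an equivalence $\RMod(A) \xrightarrow{\sim} \fib(c)$, and transporting along the identifications of the structure maps in Propositions~\ref{prop:identification-boxtimes-B}–\ref{map out of weird ring} identifies this with the assertion that \eqref{diag:ring-spectra2} becomes cartesian on module categories. Finally, for the perfect-module statement one checks that $\Perf(A') \times_{\Perf(B')} \Perf(B) = \big(\RMod(A') \times_{\RMod(B')} \RMod(B)\big) \cap \cP^\omega$ and that the equivalence above restricts to compact objects — using that $i$, $j_1$, $j_2$, and $c$ all preserve compactness and that conservativity of $\Perf(A'\wtimes{A}{B'}B) \to \Perf(B')$ upgrades to conservativity of $\bar c$ on all of $\RMod(A'\wtimes{A}{B'}B)$ (both functors are extension of scalars along the same ring map, and a right-module category functor of that form is conservative on compacts iff it is conservative, since it commutes with colimits and detects $0$ on a generator).
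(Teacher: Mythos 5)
Your core argument is essentially the paper's own proof: identify the strict pullback of module categories with the kernel of the cofibre functor $c$ on the lax pullback, observe that the comparison functor from modules over $A$ is the corestriction of the fully faithful functor $i$, and use $c \simeq \bar c \circ p$ together with conservativity of $\bar c$ to conclude that any $(M,N,f)$ with $f$ an equivalence is killed by $p$, hence lies in $\ker(p)$, which is the essential image of $i$. For the $\RMod$-statement this is correct (the detour through $L$ and $r$ is not even needed: $\bar c(p(M,N,f)) \simeq c(M,N,f) \simeq 0$ directly). One small slip: $c(i(X))$ is the cofibre of the canonical equivalence $X \otimes_A B' \simeq X \otimes_A B'$, so $c \circ i \simeq 0$ holds for any commutative square \eqref{diag:ring-spectra2}; the pullback hypothesis enters through full faithfulness of $i$ and the construction of $A' \wtimes{A}{B'} B$, not here.

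The genuine gap is in your deduction of the $\Perf$-statement from the $\RMod$-statement. The claim that conservativity of $\Perf(A'\wtimes{A}{B'}B) \to \Perf(B')$ upgrades to conservativity of $\RMod(A'\wtimes{A}{B'}B) \to \RMod(B')$ because the functor is extension of scalars, preserves colimits and ``detects $0$ on a generator'' is false: extension of scalars along $\Z_p \to \F_p$ is conservative on perfect modules (by Nakayama, exactly the criterion recorded after the proposition), yet it kills the nonzero module $\Q_p$, since $\Q_p \otimes_{\Z_p} \F_p \simeq 0$. (Detecting $0$ on a generator does not help: for $X \neq 0$ there is a nonzero map from the generator to $X$, but the functor may kill that map without killing the generator.) This is precisely why the proposition states the two conservativity hypotheses separately; your restriction-to-compacts step, even with the correct observation that a fully faithful colimit-preserving $i$ detects compactness, would only prove the $\Perf$-square cartesian under the \emph{stronger} $\RMod$-hypothesis. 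The repair is immediate and is what the paper does: run the identical diagram chase directly at the level of perfect modules, using the exact sequence \eqref{eq:fundamental-exact-sequence} — whose kernel is exactly $\Perf(A)$, since the sequence is a fibre sequence in $\Cat^{\perf}_{\infty}$ and $i$ is fully faithful — the $\Perf$-level factorization $c \simeq \bar c \circ p$, and the $\Perf$-level conservativity hypothesis, with no passage through $\RMod$ at all.
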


Obviously, both functors in question are conservative if the canonical map $A' \otimes_{A} B \to B'$ is an equivalence.
For the case of perfect modules one has for instance the following criterion: If $R \to R'$ is a map of connective $\E_{1}$-rings which on $\pi_{0}$ induces a surjection with kernel contained in the Jacobson radical of $\pi_{0}(R)$, then $\Perf(R) \to \Perf(R')$ is conservative, i.e.~detects the zero object:
Given a non-trivial perfect $R$-module $P$, its lowest non-trivial homotopy group, say $\pi_{k}(P)$, is finitely generated over $\pi_{0}(R)$. Hence $\pi_k(P\otimes_R R') \cong \pi_k(P)\otimes_{\pi_0(R)} \pi_0(R')$ is also non-zero by Nakayama's lemma \cite[Proposition III.2.2]{Bass}.
In many examples the map $A' \wtimes{A}{B'} B \to B'$ induces an isomorphism on $\pi_{0}$, see for instance Example~\ref{ex:LT-squares}.

\begin{proof}[Proof of Proposition~\ref{prop:pullback-module-cats}]
Consider the following commutative diagram.
\[\begin{tikzcd}
	\Perf(A) \ar[r] \ar[d] & \Perf(A') \laxtimes{\Perf(B')} \Perf(B) \ar[r] \ar[d,equal] & \Perf(A' \wtimes{A}{B'} B) \ar[d] \\
	 \Perf(A') \times_{\Perf(B')} \Perf(B) \ar[r] &  \Perf(A') \laxtimes{\Perf(B')} \Perf(B) \ar[r,"c"] & \Perf(B')
\end{tikzcd}\]
The upper horizontal sequence is exact. The first functor  in the lower sequence identifies the pullback with the kernel of the cofibre functor $c$.
Hence the left vertical functor is fully faithful and it suffices to argue that it is essentially surjective. A diagram chase shows this to be the case if the functor $\Perf(A' \wtimes{A}{B'} B) \to \Perf(B')$ is conservative. The case of full module categories follows analogously by replacing $\Perf$ with $\RMod$ throughout.
\end{proof}

\begin{rem}
	\label{linear categories}
Let $\cE$ be an object of $\CAlg(\Cat_\infty^\perf)$, i.e.~a small, idempotent complete, stably symmetric monoidal $\infty$-category. For simplicity, we assume that $\cE$ is rigid, i.e.\ that every object of $\cE$ is dualizable.
We let $\Mod_{\cE}(\Cat_\infty^\perf)$ be the $\infty$-category of $\cE$-\emph{linear} $\infty$-categories, which we denote by $\Cat_\infty^\cE$. 
For an $\E_\infty$-ring spectrum $k$, we will write $\Cat_\infty^k$ for the $\infty$-category of $\Perf(k)$-linear categories, and refer to its objects simply as $k$-linear $\infty$-categories. Notice that $\Perf(k)$ is rigid.
A typical example of a $k$-linear $\infty$-category is the $\infty$-category $\Perf(A)$ of perfect $A$-modules for a $k$-algebra~$A$. 
The canonical forgetful functor
\[ \Cat_\infty^\cE \lto \Cat_\infty^\perf \]
preserves finite limits and finite colimits. A sequence $\cA \to \cB \to \cC$ of $\cE$-linear $\infty$-categories is called exact if it is a fibre and a cofibre sequence in $\Cat^{\cE}_{\infty}$, or equivalently if it is exact after forgetting the $\cE$-linear structure. For an auxiliary stable $\infty$-category $\cT$, a $\cT$-valued \emph{localizing invariant of $\cE$-linear $\infty$-categories} is then  a functor 
\[ 
\Cat_\infty^\cE \lto \cT 
\]
that sends exact sequences of $\cE$-linear $\infty$-categories to fibre sequences in $\cT$. 
We refer to \cite[\S\S 4\&5]{HoyoisScherotzkeSibilla} for a detailed treatment of localizing invariants in the $\cE$-linear setting. 
For a fixed $\E_\infty$-ring $k$, examples of localizing invariants of $k$-linear $\infty$-categories are provided by topological Hochschild homology relative to $k$, or Weibel's  homotopy $K$-theory for $H\Z$-linear categories, see  \cref{sec:applications to truncating invariants}.

In a $k$-linear  $\infty$-category, the mapping spectra canonically refine to $k$-module spectra, i.e.~any $k$-linear  $\infty$-category is enriched in the presentably symmetric monoidal, stable $\infty$-category $\Mod(k)$.
The analog of the Schwede--Shipley theorem (Lemma~\ref{rings vs 1-generated cats}) also holds in the $k$-linear context: The association $R \mapsto (\Perf(R), R)$ extends to a fully faithful functor $\mathrm{Alg}_{\E_{1}}(\Mod(k)) \to \mathrm{Alg}_{\E_{0}}(\Cat^{k}_{\infty})$ with essential image given by the $k$-linear $\infty$-categories equipped with a generator. Indeed, this follows from \cite[Corollary 5.1.2.6, Proposition 4.8.5.8]{LurieHA} as in the proof of Proposition 7.1.2.6 there.

If \eqref{diag:ring-spectra2} is a pullback diagram of $k$-algebras, then all arguments in the proof of \cref{main theorem} can be made after replacing $\Cat_\infty^\perf$ by $\Cat_\infty^k$. In particular, $A'\wtimes{A}{B'} B$ then carries a canonical $k$-algebra structure and \cref{main theorem} holds for any localizing invariant of $k$-linear $\infty$-categories.
\end{rem}

\section{Applications to \textit{K}-theory}

\subsection{Preliminaries}

\begin{dfn}\label{connectivity}
A map $f\colon X \to Y$ of spectra is said to be \emph{$n$-connective} if its fibre $F$ is $n$-connective.\footnote{
We follow the convention of \cite{LurieHTT} which is made such that a spectrum $X$ is as connective as the canonical morphism $X \to 0$. 
Warning: An $n$-connective map in our sense is often called $n$-connected.
} If $\Lambda$ is an abelian group, then $f$ is said to be \emph{$\Lambda$-$n$-connective} if $F\otimes M\Lambda$ is $n$-connective, where $M\Lambda$ is the Moore spectrum for $\Lambda$.
A commutative diagram
\[\begin{tikzcd}
	X \ar[r] \ar[d] & Y \ar[d] \\ Z \ar[r] & W
\end{tikzcd}\]
is said to be \emph{$n$-cartesian} if the canonical map $X \to Z \times_{W} Y$ is $n$-connective, or, equivalently, if the induced map on horizontal fibres is $n$-connective.
Similarly, the above diagram is said to be \emph{$\Lambda$-$n$-cartesian}, if the diagram obtained by tensoring with $M\Lambda$ is $n$-cartesian, or, equivalently, if the canonical map $X \to Z \times_{W} Y$ is $\Lambda$-$n$-connective.
\end{dfn}

\begin{rem}
	\label{rem:connectivity-pullbacks-1}
From the equivalence
\[ 
\Omega\fib( Z \sqcup_{X} Y \to W) \simeq \fib(X \to Z \times_{W} Y) 
\]
we deduce that a diagram is $\Lambda$-$n$-cartesian if and only if the canonical map $Z \sqcup_{X} Y \to W$ is $\Lambda$-$(n+1)$-connective.
\end{rem}

\begin{rem}
	\label{rem:connectivity-pullbacks}
An $n$-cartesian diagram as above in particular gives rise to a long exact Mayer--Vietoris  sequence
\[ 
\pi_{n}(X) \lto \pi_{n}(Y)\oplus\pi_{n}(Z) \lto \pi_{n}(W) \lto \pi_{n-1}(X) \lto \dots
\]
\end{rem}

For future reference, we record the following well-known statement, see \cite[Proposition 1.1]{Waldhausen} for a similar result.
\begin{lemma}\label{lemma:waldhausen-E1}
Let $\Lambda$ be a localisation of $\Z$ or $\Z/m$ for some integer $m$.
Assume that $A \to B$ is a map of connective $\E_1$-rings which induces an isomorphism $\pi_{0}(A) \xrightarrow{\sim} \pi_{0}(B)$ and  is $\Lambda$-$n$-connective for some  $n\geq 1$.
Then the induced map 
\[K(A) \lto K(B) \]
is $\Lambda$-$(n+1)$-connective. In other words, the map $K(A;\Lambda) \to K(B;\Lambda)$ of $K$-theories with coefficients in $\Lambda$ is $(n+1)$-connective.
\end{lemma}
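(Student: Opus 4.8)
The plan is to reduce everything to the classical connectivity estimate for algebraic $K$-theory via the plus construction, applied to the map of group completions of the relevant symmetric monoidal categories, after first translating the hypothesis into a statement about the relative $K$-theory.

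First I would reduce to the case of discrete rings. The map $A \to B$ is $\Lambda$-$n$-connective with $n \geq 1$ and induces an isomorphism on $\pi_0$, so in particular $\pi_0(A) = \pi_0(B)$ is a genuine ring. Since $K$-theory of a connective $\E_1$-ring commutes with the formation of Postnikov towers in the appropriate range (more precisely, $K(A) \to K(\tau_{\leq m} A)$ is highly connected), and since tensoring with the Moore spectrum $M\Lambda$ is exact, the claim will follow by an induction up the Postnikov tower once we handle a single stage. Thus it suffices to treat the case where $B = \tau_{\leq m}A$ and $A = \tau_{\leq m+1}A$ differ by a single homotopy group $\pi_{m+1}$, which is a $\pi_0$-bimodule $M$, placed in degree $m+1$; the hypothesis then says $M \otimes_{\Z} M\Lambda$ (i.e. $M$ if $\Lambda$ is a localization of $\Z$, or $M/\ell$ together with $\ell$-torsion information if $\Lambda = \Z/m$) vanishes in the relevant range, equivalently $M$ is a $\Lambda$-acyclic group, and $m+1 \geq n$. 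So we are reduced to: a square-zero extension $A \to \pi_0(A)$ by a module $M$ concentrated in a single degree $\geq n$, with $M$ $\Lambda$-acyclic, induces a $\Lambda$-$(n+1)$-connective map on $K$-theory.

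Next I would invoke the standard computation of the fiber of such a $K$-theory map in terms of relative $K$-theory and, via the cyclotomic trace or directly via stable homotopy of the relevant category, identify its lowest nonvanishing homotopy. The cleanest route is: the relative term $K(A, \pi_0 A)$ has connectivity at least that of $M$ itself (this is the classical range of the trace map / Goodwillie's theorem comparing relative $K$-theory and relative cyclic homology, or one can use that the first nonzero homotopy group of relative $K$-theory of a square-zero extension by a degree-$d$ module is in degree $d+1$ and equals $M$ tensored up suitably). Since $M$ is $\Lambda$-acyclic and sits in degree $\geq n$, the relative $K$-theory spectrum is $\Lambda$-$(n+1)$-connective, which is exactly the assertion. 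Alternatively, and perhaps more in the spirit of this paper, one can avoid trace methods entirely: use that $K$-theory with coefficients in $\Lambda$ of the category $\Perf(A)$ can be computed from the $S_\bullet$-construction, whose simplices are built from $A$-modules; the fiber over $\Perf(\pi_0 A)$ is controlled by the relative $S_\bullet$-construction, whose first interesting piece involves $\mathrm{GL}$ of $A$ relative to $\pi_0 A$, and the relevant homotopy groups are built from $M$ by finitely many extensions and tensor operations over $\pi_0 A$, hence remain $\Lambda$-acyclic.

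The main obstacle will be making the connectivity bookkeeping precise in the $\Lambda$-local sense when $\Lambda = \Z/m$, since there tensoring with $M\Lambda$ is not just a localization and one must track both the mod-$m$ reduction and the $m$-torsion; one needs that if $F$ is $\Z/m$-$n$-connective then so is any spectrum built from $F$ by finite colimits and smashing with finite spectra in nonnegative degrees, and that the relative $K$-theory spectrum is indeed such a construction out of (shifts of) $M$. I would handle this by working with the cofiber sequence $F \to F \to F \otimes M(\Z/m)$ coming from multiplication by $m$, reducing the $\Z/m$ statement to the integral statement for $F$ and for $F[1]$, together with the observation that $n$-connectivity of $M \otimes M(\Z/m)$ forces $\pi_{n-1}(M)$ to have no $m$-torsion and $\pi_{\leq n-1}(M/m) = 0$, which is precisely what is needed to run the induction. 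Everything else is a routine assembly of the Postnikov induction and the known connectivity of relative $K$-theory of square-zero extensions.
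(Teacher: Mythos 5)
There is a genuine gap, and it is in your very first reduction. The hypothesis only says that the fibre $I$ of $A \to B$ becomes $n$-connective after smashing with $M\Lambda$; integrally $I$ may be nonzero in every degree below $n$ (for instance $p$-power torsion groups when $\Lambda$ is a localisation of $\Z$ away from $p$, or uniquely $m$-divisible groups such as $\Q$-vector spaces when $\Lambda = \Z/m$). Consequently $A \to B$ does not, even after truncating in a safe range, decompose into square-zero extensions by single homotopy groups ``placed in degree $\geq n$'': any Postnikov-style d\'evissage inevitably produces stages whose defining bimodule $M$ is a merely $\Lambda$-acyclic group sitting in a degree $d < n$. (Your sketch also treats $B$ as though it were a Postnikov truncation of $A$, which it need not be, and the sentence asserting simultaneously that $M$ is $\Lambda$-acyclic and that $m+1 \geq n$ conflates the two cases: in degrees $\geq n$ the hypothesis imposes no condition on $M$, while in degrees $< n$ it gives $\Lambda$-acyclicity, not vanishing.) For the low-degree stages the statement you need is not a connectivity estimate at all: Waldhausen's integral theorem, or the identification of the first relative $K$-group of a square-zero extension as a commutator quotient of $M$, says nothing beyond degree $d+1$, whereas you need every relative $K$-group to be $\Lambda$-acyclic up through degree $n$. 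Your justification --- ``the relevant homotopy groups are built from $M$ by finitely many extensions and tensor operations'' --- is precisely what is not available by elementary means; making it precise means invoking Goodwillie/Dundas--Goodwillie--McCarthy to replace relative $K$ by relative $\TC$ of a nilpotent extension and then tracking $\Lambda$-acyclicity through the limits and completions in $\TR/\TC$, which is far from the ``routine assembly'' you claim and much heavier than what the lemma is meant to rest on.

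By contrast, the paper's proof avoids any d\'evissage. It first discards negative $K$-groups (they depend only on $\pi_0$), then works with the plus construction: because $\pi_0(A) \cong \pi_0(B)$, the fibre of $\BGL(A) \to \BGL(B)$ agrees with the fibre of the map of matrix spaces $\M(A) \to \M(B)$, hence is a nilpotent space with abelian fundamental group which is $\Lambda$-$(n+1)$-connective directly from the hypothesis. A mod-$\Lambda$ Hurewicz argument (Serre class theory when $\Lambda$ is a localisation of $\Z$; Neisendorfer's mod-$m$ Hurewicz theorem, including the check that $\Tor_1^\Z(\pi_1,\Z/m)$ vanishes, when $\Lambda = \Z/m$) gives the corresponding homology statement, which passes through the acyclic map $\BGL \to \BGL^+$, and a mod-$\Lambda$ Whitehead-type argument converts it back into $\Lambda$-$(n+1)$-connectivity of $\BGL(A)^+ \to \BGL(B)^+$. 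If you want to rescue your outline, the stages with $\Lambda$-acyclic modules in degrees below $n$ are where the real work lies; as written, that step is missing.
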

\begin{proof}
As non-positive $K$-groups of a connective $\E_{1}$-ring $A$ only depend on $\pi_0(A)$, see \cite[Theorem 9.53]{BGT}, it suffices to prove the result for connected $K$-theory.
For this we recall the plus-construction of algebraic $K$-theory. For any connective $\E_1$-ring $A$, there is a group-like $\E_1$-monoid $\GL(A)$ which satisfies
\[ \Omega^\infty_0(K(A)) \simeq \BGL(A)^+\]
where $(-)^{+}$ denotes Quillen's plus-construction and the subscript $0$ denotes the component of the trivial element of $K_0(A)$, see e.g.\ \cite[Lemma 9.39]{BGT}.
By construction of $\GL$ and the assumption that $A \to B$ induces an isomorphism on $\pi_{0}$, there is an equivalence
\[
\fib( \GL(A) \to \GL(B) ) \overset{\simeq}{\lto} \fib( \M(A) \to \M(B))
\]
where $\M$ denotes the $\E_{\infty}$-space of matrices. This implies
that the map of spaces
\[ \BGL(A) \lto \BGL(B) \] 
is an isomorphism on fundamental groups and $\Lambda$-$(n+1)$-connective, see for instance \cite{Neisendorfer} for the notion of homotopy groups with coefficients for spaces. It follows that the fibre $F$ of this map is a nilpotent space (we may pass to universal covers without changing the fibre of the map) with abelian fundamental group which is in addition $\Lambda$-$(n+1)$-connective, i.e.\ that $\pi_i(F;\Lambda) = 0$ for $i \leq n$.
We claim that also $\tilde H_{i}(F; \Lambda) = 0$ for $i \leq n$.
 If $\Lambda$ is a localisation of $\Z$, this follows by Serre class theory --- use the Serre class of abelian groups which vanish after tensoring with $\Lambda$, and note that $\pi_i(F;\Lambda) \cong \pi_i(F)\otimes \Lambda$ and $H_i(F;\Lambda) \cong H_i(F;\Z)\otimes \Lambda$ by  flatness. Now consider the case  $\Lambda = \Z/m$. If $n = 1$, the claim follows  from the Hurewicz isomorphism $\pi_1(F;\Lambda) \cong \pi_1(F)\otimes \Lambda \cong H_1(F;\Lambda)$. If $n \geq 2$, then 
$\Tor_1^\Z(\pi_1(F),\Z/m) = 0$ as it is a quotient of $\pi_2(F;\Z/m)$. So we may apply \cite[Theorem 9.7]{Neisendorfer} and deduce the claim.
It follows that the map 
\[ H_i(\BGL(A);\Lambda) \lto H_i(\BGL(B);\Lambda) \]
is an isomorphism for $i \leq n$ and a surjection for $i = n+1$. Since the plus-construction is a homology equivalence it follows that also the map
\[ H_i(\BGL(A)^+;\Lambda) \lto H_i(\BGL(B)^+;\Lambda) \]
is an isomorphism for $i \leq n$ and a surjection for $i=n+1$. Hence the map $\BGL(A)^+ \to \BGL(B)^+$ is  $\Lambda$-$(n+1)$-connective: For $\Lambda$ a localization of $\Z$ this again follows by Serre class theory and for $\Lambda = \Z/m$ from \cite[Corollary 9.15]{Neisendorfer}.
\end{proof}

\subsection{Excision results in algebraic \textit{K}-theory}
In this section, we will apply our main theorem to localizing invariants which satisfy a certain connectivity assumption. We thank Thomas Nikolaus for suggesting to introduce the following definition. As earlier, let $\Lambda$ be a localisation of $\Z$ or $\Z/m$ for some integer $m$.
\begin{dfn}
A spectrum valued localizing invariant $E$ is said to be $\Lambda$-$k$-\emph{connective} if for every map $A \to B$ of connective $\E_1$-rings which is a $\pi_0$-isomorphism and $\Lambda$-$n$-connective for some $n\geq 1$ the induced map $E(A) \to E(B)$ is $\Lambda$-$(n+k)$-connective.
A $\Z$-$k$-connective localizing invariant is simply called $k$-\emph{connective}.
\end{dfn}
Notice  that $\Lambda$-$k$-connective localizing invariants are closed under extensions, i.e.\ if 
\( E' \to E \to E''\)
is a fibre sequence of localizing invariants where $E'$ and $E''$ are $\Lambda$-$k$-connective, then so is $E$.

\begin{ex}
Many  localizing invariants  are $k$-connective for some $k$. In the following examples, $\Lambda$ is as before a localization of $\Z$ or $\Z/m$.
\begin{enumerate}
\item $K$-theory is $\Lambda$-$1$-connective by \cref{lemma:waldhausen-E1},
\item topological Hochschild homology $\THH$ is $\Lambda$-$0$-connective,
\item truncating invariants (see Defininition~\ref{def:truncating-nil-excision}) are $k$-connective for every $k$,
\item topological cyclic homology $\TC$ and rational negative cyclic homology $\HN(-\otimes \Q/\Q)$ are $1$-connective, as each sits in a fibre sequence with $K$-theory and a truncating invariant.
\end{enumerate}

\end{ex}
With this notion we have the following consequence of \cref{main theorem}.

\begin{thm}\label{thm:Suslin-general}
Assume that \eqref{diag:ring-spectra2} is a pullback square of $\E_1$-ring spectra all of which are connective. If the map $A' \otimes_A B \to B'$ is a $\pi_0$-isomorphism and $\Lambda$-$n$-connective for some $n \geq 1$, and if $E$ is a $\Lambda$-$k$-connective localizing invariant, then the diagram
\[\begin{tikzcd}
	E(A) \ar[r] \ar[d] & E(B) \ar[d] \\ E(A') \ar[r] & E(B')
\end{tikzcd}\]
is $\Lambda$-$(n+k-1)$-cartesian.
\end{thm}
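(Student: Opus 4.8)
The plan is to reduce the statement to the case $B' = A' \wtimes{A}{B'} B$ via the Main Theorem, and then apply the connectivity hypothesis on $E$ directly. First I would invoke \cref{main theorem}: since \eqref{diag:ring-spectra2} is a pullback square, it extends to the commutative diagram \eqref{diag:natural-diagram}, and any localizing invariant sends the inner square
\[
\begin{tikzcd}
	E(A) \ar[r] \ar[d] & E(B) \ar[d] \\
	E(A') \ar[r] & E(A' \wtimes{A}{B'} B)
\end{tikzcd}
\]
to a pullback square. Consequently the canonical map from $E(A)$ to the pullback $E(A') \times_{E(B')} E(B)$ factors, up to equivalence, through $E(A' \wtimes{A}{B'} B) \times_{E(B')} E(B)$, and the relevant question becomes: how connective is the map
\[
E(A' \wtimes{A}{B'} B) \lto E(B')?
\]
Because the inner square is a pullback under $E$, the fibre of the map $E(A) \to E(A') \times_{E(B')} E(B)$ is equivalent to the fibre of $E(A' \wtimes{A}{B'} B) \to E(B')$ (take horizontal fibres in both the inner square and the comparison square), so it suffices to show this latter map is $\Lambda$-$(n+k-1)$-connective — equivalently, by \cref{connectivity}, that the diagram in the statement is $\Lambda$-$(n+k-1)$-cartesian.

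Next I would analyze the $\E_1$-map $A' \wtimes{A}{B'} B \to B'$ using \cref{prop:identification-boxtimes-B} and \cref{map out of weird ring}. By those results, its underlying spectrum map is $A' \otimes_A B \to B'$, which is $\Lambda$-$n$-connective and a $\pi_0$-isomorphism by hypothesis. Moreover, since $A$, $A'$, $B$, $B'$ are all connective, $A' \otimes_A B$ is connective, so $A' \wtimes{A}{B'} B$ is a connective $\E_1$-ring. Therefore $A' \wtimes{A}{B'} B \to B'$ is a map of connective $\E_1$-rings which is a $\pi_0$-isomorphism and $\Lambda$-$n$-connective. Applying the definition of a $\Lambda$-$k$-connective localizing invariant to this map yields that $E(A' \wtimes{A}{B'} B) \to E(B')$ is $\Lambda$-$(n+k)$-connective.

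Finally, I would assemble the connectivity bookkeeping. A $\Lambda$-$(n+k)$-connective map of spectra has $\Lambda$-$(n+k)$-connective fibre, hence is in particular $\Lambda$-$(n+k-1)$-connective; by the previous paragraph this fibre is the fibre of the horizontal comparison map, so the square $E(A) \to E(B)$, $E(A') \to E(B')$ is $\Lambda$-$(n+k-1)$-cartesian, which is the claim. (One in fact gets the slightly sharper statement that it is $\Lambda$-$(n+k)$-cartesian; I would remark on this but state the theorem as is.) I do not expect a genuine obstacle here: all the real content is already packaged into the Main Theorem and the identification of $A' \wtimes{A}{B'} B \to B'$ — the only point requiring a little care is keeping the indexing straight and noting the connectivity of $A' \wtimes{A}{B'} B$, which follows from connectivity of all four rings in \eqref{diag:ring-spectra2} together with the identification of its underlying spectrum as $A' \otimes_A B$.
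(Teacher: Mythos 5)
Your argument follows the paper's proof: reduce via \cref{main theorem} to the connectivity of $E(A'\wtimes{A}{B'} B)\to E(B')$, identify $A'\wtimes{A}{B'} B\to B'$ as a $\pi_0$-isomorphism and $\Lambda$-$n$-connective map of connective $\E_1$-rings using \cref{prop:identification-boxtimes-B} and \cref{map out of weird ring}, and apply the definition of a $\Lambda$-$k$-connective invariant. That is correct in substance.

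One bookkeeping slip, though: the fibre of the comparison map $E(A)\to E(A')\times_{E(B')}E(B)$ is not $\fib\bigl(E(A'\wtimes{A}{B'} B)\to E(B')\bigr)$ but its loop, $\Omega\fib\bigl(E(A'\wtimes{A}{B'} B)\to E(B')\bigr)$ — this is exactly \cref{rem:connectivity-pullbacks-1} (compare also \cref{birelative-term}). So the correct reduction is that the square is $\Lambda$-$(n+k-1)$-cartesian if and only if $E(A'\wtimes{A}{B'} B)\to E(B')$ is $\Lambda$-$(n+k)$-connective, which is precisely what you prove; hence the theorem as stated does follow from your argument. But your parenthetical claim that one "in fact gets" the sharper $\Lambda$-$(n+k)$-cartesian statement is a consequence of the dropped loop and does not follow: that would require $\Lambda$-$(n+k+1)$-connectivity of $E(A'\wtimes{A}{B'} B)\to E(B')$, which is not available. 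The paper avoids this by invoking \cref{rem:connectivity-pullbacks-1} directly, which is the only difference from your write-up.
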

\begin{proof}
By \cref{main theorem} and Remark~\ref{rem:connectivity-pullbacks-1} we need to show that the map $E(A'\wtimes{A}{B'} B) \to E(B')$ is $\Lambda$-$(n+k)$-connective. This follows directly from the assumptions.
\end{proof}

As $K$-theory is $\Lambda$-$1$-connective we obtain the following theorem.

\begin{thm}\label{thm:Suslin}
Assume that \eqref{diag:ring-spectra2} is a pullback square of $\E_1$-ring spectra all of which are connective. If the map $A' \otimes_A B \to B'$ is a $\pi_0$-isomorphism and $\Lambda$-$n$-connective for some $n \geq 1$, then the diagram
\[\begin{tikzcd}
	K(A) \ar[r] \ar[d] & K(B) \ar[d] \\ K(A') \ar[r] & K(B')
\end{tikzcd}\]
is $\Lambda$-$n$-cartesian.
\end{thm}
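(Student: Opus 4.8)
The plan is to deduce this immediately from Theorem~\ref{thm:Suslin-general} once we know that $K$-theory meets the required connectivity hypothesis with $k=1$. First I would recall that non-connective algebraic $K$-theory is a localizing invariant in the sense of Definition~\ref{localizing invariants}, so that the Main Theorem applies: applying $K$ to the inner square of \eqref{diag:natural-diagram} produces a pullback square, and hence, via Remark~\ref{rem:connectivity-pullbacks-1}, the square in the statement is $\Lambda$-$n$-cartesian as soon as the map $K(A'\wtimes{A}{B'} B) \to K(B')$ is $\Lambda$-$(n+1)$-connective.

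Next I would identify the map $A'\wtimes{A}{B'} B \to B'$ using the structural results of the previous section. By the Main Theorem its underlying spectrum is $A'\otimes_A B$, and by Proposition~\ref{map out of weird ring} the map is the canonical one $A'\otimes_A B \to B'$. Since every ring in \eqref{diag:ring-spectra2} is connective, $A'\otimes_A B$ is connective as well, and by hypothesis this map is a $\pi_0$-isomorphism and $\Lambda$-$n$-connective; thus it is precisely of the form to which Lemma~\ref{lemma:waldhausen-E1} applies, and that lemma yields that $K(A'\wtimes{A}{B'} B) \to K(B')$ is $\Lambda$-$(n+1)$-connective. Packaged differently, Lemma~\ref{lemma:waldhausen-E1} says exactly that $K$-theory is a $\Lambda$-$1$-connective localizing invariant, so one may simply invoke Theorem~\ref{thm:Suslin-general} with $k=1$ and read off that the square is $\Lambda$-$(n+k-1)=\Lambda$-$n$-cartesian. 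Either way, no new argument is needed beyond plugging in the value $k=1$.

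The only genuine content, and hence the main obstacle, lies entirely in Lemma~\ref{lemma:waldhausen-E1}: the input that a $\pi_0$-isomorphism of connective $\E_1$-rings which is $\Lambda$-$n$-connective induces a $\Lambda$-$(n+1)$-connective map on $K$-theory. That is handled in the excerpt via the plus-construction, the comparison of $\GL$ with the $\E_\infty$-space of matrices, and Serre-class and Hurewicz arguments for homotopy and homology with $\Lambda$-coefficients, and may be assumed here. Granting it, the deduction of Theorem~\ref{thm:Suslin} from Theorem~\ref{thm:Suslin-general} is purely formal.
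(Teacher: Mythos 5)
Your proposal matches the paper's argument exactly: the paper proves Theorem~\ref{thm:Suslin} by observing that $K$-theory is $\Lambda$-$1$-connective (Lemma~\ref{lemma:waldhausen-E1}) and then invoking Theorem~\ref{thm:Suslin-general} with $k=1$, which is precisely your deduction. The unpacked version you sketch (Main Theorem plus Remark~\ref{rem:connectivity-pullbacks-1} applied to $K(A'\wtimes{A}{B'} B)\to K(B')$) is just the paper's proof of Theorem~\ref{thm:Suslin-general} specialized to $K$-theory, so there is no substantive difference.
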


\begin{ex}
	\label{ex:LT-squares}
To illustrate this theorem, consider the case of a diagram \eqref{diag:ring-spectra2} consisting of discrete rings. In all of the following cases the diagram \eqref{diag:ring-spectra2} is a pullback square of $\E_{1}$-ring spectra and the map $A' \otimes_{A} B \to B'$ is a $\pi_{0}$-isomorphism. 
\begin{enumerate}
\item  The diagram \eqref{diag:ring-spectra2} is a \emph{Milnor square}, i.e.~the maps $A \to A'$ and $B \to B'$ are surjective and the map $A \to B$ sends the ideal $I = \ker(A \to A')$ isomorphically to $\ker(B \to B')$.  We have:
\[
\pi_{0}(A' \otimes_{A} B) \cong \Tor_{0}^{A}(A/I, B) \cong B/IB \cong B'.
\]
Since $B'$ is discrete, the map $A' \otimes_{A} B \to B'$ is always  1-connective, and we deduce the classical Mayer-Vietoris sequence of a Milnor square (see \cite[Theorem~XII.8.3]{Bass})
\[ 
\begin{tikzcd}[column sep=small]
\phantom{x}	& K_1(A) \ar[r] & K_1(A')\oplus K_1(B) \ar[r] & K_1(B') \ar[r,"\partial"] & K_0(A) \ar[r] & K_0(A')\oplus K_0(B) \ar[r] & \dots
\end{tikzcd}
\]

\item The map $A \to B$ is an \emph{analytic isomorphism} along a multiplicatively closed set $S$ of central elements of $A$, i.e.~it maps $S$ to central elements of $B$ and induces  isomorphisms on the kernels\footnote{Often, this condition is replaced by the stronger condition that $S$ consists of nonzerodivisors which map to nonzerodivisors in $B$.} and  cokernels of multiplication by $s$ for every $s \in S$, and we have $A' = S^{-1}A$, $B' = S^{-1}B$. These conditions imply that the induced maps $\ker(A \to S^{-1}A) \to \ker(B \to S^{-1}B)$ and $\mathrm{coker}(A \to S^{-1}A) \to \mathrm{coker}(B \to S^{-1}B)$ are also isomorphisms and hence that the square \eqref{diag:ring-spectra2} is a pullback of $\E_1$-ring spectra.
Moreover, $A \to A'$ is flat and thus $A' \otimes_{A} B \to B'$  an equivalence, and in particular $n$-connective for any $n$. We deduce the classical long exact sequence of an analytic isomorphism due to Karoubi, Quillen, and Vorst \cite[Proposition 1.5]{Vorst}
\[
\begin{tikzcd}[column sep =small]
\dots \ar[r] & K_{i}(A) \ar[r] & K_{i}(A') \oplus K_{i}(B) \ar[r] & K_{i}(B') \ar[r,"\partial"] & K_{i-1}(A) \ar[r] & \dots
\end{tikzcd}
\]

\item The diagram \eqref{diag:ring-spectra2} is an affine \emph{Nisnevich square}, i.e.~all rings in it are commutative, the map $\Spec(A') \to \Spec(A)$ is an open immersion, $B' \cong A' \otimes_{A} B$, and $A \to B$ is \'etale  and induces an isomorphism on the closed complements $(\Spec(B) \setminus \Spec(B')) \xrightarrow{\sim} (\Spec(A) \setminus \Spec(A'))$ with the reduced subscheme structure. 
To see that such a Nisnevich square is a pullback square of $\E_1$-ring spectra, one can use 
the Mayer--Vietoris sequence of \'etale cohomology groups 
\[
0 \lto A \lto A' \oplus B \lto B' \lto H^1_{\acute{\mathrm{e}}\mathrm{t}}(\Spec(A);\mathcal{O}) \lto \dots
\]
(see  \cite[Proposition III.1.27]{Milne}) where the last group vanishes as $\Spec(A)$ is affine.
Furthermore, all maps in the diagram are flat, hence $B' \simeq A' \wtimes{A}{B'} B$, and we deduce the long exact Nisnevich Mayer--Vietoris sequence \cite[Theorem 10.8]{ThomasonTrobaugh}.
\end{enumerate}
\end{ex}

If \eqref{diag:ring-spectra2} is a Milnor square of discrete rings (see Example~\ref{ex:LT-squares}(i)), 
 the connectivity condition  in \cref{thm:Suslin} may be expressed as a condition on Tor-groups as in the following corollary.
 For simplicity, we formulate the result only for $K$-theory;   for an arbitrary $k$-connective localizing invariants the resulting square is $\Lambda$-$(n+k-1)$-cartesian.

\begin{cor}\label{cor:Suslin}
Assume that \eqref{diag:ring-spectra2} is a Milnor square of discrete rings. 
\begin{enumerate}
\item If $\Lambda$ is a localisation of $\Z$ and $\Tor_i^A(A',B\otimes_\Z \Lambda) = 0$ for $i = 1,\dots,n-1$, or
\item if $\Lambda = \Z/m$ and the $m$-multiplication on $\Tor_{i}^{A}(A',B)$ is an isomorphism for $i = 1, \dots, n-2$ and surjective for $i = n-1$, 
\end{enumerate}
then the diagram 
\[\begin{tikzcd}
	K(A) \ar[r] \ar[d] & K(B) \ar[d] \\ K(A') \ar[r] & K(B')
\end{tikzcd}\]
is $\Lambda$-$n$-cartesian.
\end{cor}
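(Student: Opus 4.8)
The plan is to deduce Corollary~\ref{cor:Suslin} directly from Theorem~\ref{thm:Suslin} by translating the connectivity hypothesis on the multiplication map $A' \otimes_A B \to B'$ into the given Tor-vanishing conditions. Recall that for a Milnor square of discrete rings we computed in Example~\ref{ex:LT-squares}(i) that $\pi_0(A' \otimes_A B) \cong B/IB \cong B'$, so the multiplication map $A' \otimes_A B \to B'$ is automatically a $\pi_0$-isomorphism. Since all rings involved are discrete, $\pi_i(A' \otimes_A B) \cong \Tor_i^A(A', B)$, and the map $A' \otimes_A B \to B'$ has fibre $F$ with $\pi_0(F) = \pi_{-1}(F) = 0$ and $\pi_i(F) \cong \Tor_{i+1}^A(A',B)$ for $i \geq 1$ (using that $\pi_1(B') = 0$ and the long exact sequence of the fibre sequence $F \to A'\otimes_A B \to B'$). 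Thus controlling the connectivity of $F$ after tensoring with the Moore spectrum $M\Lambda$ amounts precisely to controlling these Tor groups.

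First I would treat case (i), where $\Lambda$ is a localisation of $\Z$, hence flat. Then $M\Lambda \simeq H\Lambda$ is flat over $\Z$, so $\pi_i(F \otimes M\Lambda) \cong \pi_i(F) \otimes_\Z \Lambda \cong \Tor_{i+1}^A(A',B) \otimes_\Z \Lambda$. One identifies $\Tor_i^A(A',B)\otimes_\Z \Lambda \cong \Tor_i^A(A', B\otimes_\Z\Lambda)$ using flatness of $\Lambda$ (the complex $A' \otimes_A^{\mathbb{L}} B$ tensored with $\Lambda$ computes $A'\otimes_A^{\mathbb{L}}(B\otimes_\Z\Lambda)$, as $-\otimes_\Z\Lambda$ is exact). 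Hence the hypothesis $\Tor_i^A(A', B\otimes_\Z\Lambda) = 0$ for $i = 1,\dots,n-1$ says exactly that $\pi_i(F\otimes M\Lambda) = 0$ for $i = 0,\dots,n-2$, i.e.\ that $F\otimes M\Lambda$ is $(n-1)$-connective, i.e.\ that $A'\otimes_A B \to B'$ is $\Lambda$-$n$-connective. Theorem~\ref{thm:Suslin} then gives that the square is $\Lambda$-$n$-cartesian.

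For case (ii), with $\Lambda = \Z/m$, the Moore spectrum $M\Z/m$ is no longer flat: there is a cofibre sequence $F \xrightarrow{m} F \to F\otimes M\Z/m$, yielding for each $i$ a short exact sequence
\[
0 \to \pi_i(F)/m \to \pi_i(F\otimes M\Z/m) \to \pi_{i-1}(F)[m] \to 0,
\]
where $[m]$ denotes the $m$-torsion subgroup. Now $\pi_i(F\otimes M\Z/m) = 0$ if and only if multiplication by $m$ is surjective on $\pi_i(F)$ and injective on $\pi_{i-1}(F)$. Unwinding via $\pi_j(F)\cong \Tor_{j+1}^A(A',B)$ for $j\geq 1$ and $\pi_0(F)=0$, requiring $F\otimes M\Z/m$ to be $(n-1)$-connective (i.e.\ $\pi_i(F\otimes M\Z/m)=0$ for $i\leq n-2$) becomes: $m$ acts invertibly on $\Tor_i^A(A',B)$ for $i=1,\dots,n-2$ and surjectively on $\Tor_{n-1}^A(A',B)$ — which is precisely the stated hypothesis. (One should keep careful track of the index shift and of the boundary term coming from $\pi_0(F)=0$, which is why the injectivity condition drops out at the top degree and only surjectivity on $\Tor_{n-1}$ is needed.) Then $A'\otimes_A B\to B'$ is $\Z/m$-$n$-connective and Theorem~\ref{thm:Suslin} applies.

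The only real bookkeeping obstacle is getting the degree shifts exactly right and verifying carefully which of the two conditions (surjectivity vs.\ isomorphism) is needed in each degree for the $\Z/m$ case; this is where the asymmetry between the two clauses of the hypothesis originates, and it must be matched precisely against the short exact sequence above. The remark about the more general statement for a $k$-connective localizing invariant follows verbatim by invoking Theorem~\ref{thm:Suslin-general} in place of Theorem~\ref{thm:Suslin}, with $\Lambda$-$n$-cartesian replaced by $\Lambda$-$(n+k-1)$-cartesian.
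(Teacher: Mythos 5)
Your overall strategy is exactly the intended one: the paper derives \cref{cor:Suslin} from \cref{thm:Suslin} by precisely this translation of $\Lambda$-$n$-connectivity of $A'\otimes_A B \to B'$ into Tor conditions, using that $\pi_0(A'\otimes_A B)\cong B'$ for a Milnor square. However, your degree bookkeeping --- the very point you flag as the crux --- contains two off-by-one slips. First, the identification of the fibre is wrong: from the long exact sequence of $F \to A'\otimes_A B \to B'$, with $B'$ discrete and $\pi_0$ of the map an isomorphism, one gets $\pi_i(F)\cong \Tor_i^A(A',B)$ for $i\geq 1$ and $\pi_i(F)=0$ for $i\leq 0$, not $\pi_i(F)\cong\Tor_{i+1}^A(A',B)$. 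Second, by Definition~\ref{connectivity} the map is $\Lambda$-$n$-connective precisely when $F\otimes M\Lambda$ is $n$-connective (i.e.\ $\pi_i(F\otimes M\Lambda)=0$ for $i\leq n-1$); your step ``$F\otimes M\Lambda$ is $(n-1)$-connective, i.e.\ the map is $\Lambda$-$n$-connective'' conflates these.

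These two slips compensate at the top of the range, which is why the final translations you write down agree with the corollary, but they do not compensate at the bottom, so the unwinding as written does not actually produce them: taken literally, in case (i) your chain only shows the map is $\Lambda$-$(n-1)$-connective, hence only a $\Lambda$-$(n-1)$-cartesian square; and in case (ii) your identifications would impose no condition on $\Tor_1^A(A',B)$ at all (conditions would start at $\Tor_2$), contradicting the hypothesis you then state. With the corrected identification everything matches exactly: in case (i), flatness of $\Lambda$ gives $\pi_i(F\otimes M\Lambda)\cong \Tor_i^A(A',B\otimes_\Z\Lambda)$, so the hypothesis is equivalent to $n$-connectivity of $F\otimes M\Lambda$; in case (ii), your universal-coefficient sequence applied for $i\leq n-1$ yields surjectivity of $m$ on $\Tor_i^A(A',B)$ for $i=1,\dots,n-1$ and injectivity for $i=1,\dots,n-2$ (the $i=1$ boundary term being $\pi_0(F)=0$), which is precisely hypothesis (ii). The remaining ingredients --- the $\pi_0$-isomorphism from Example~\ref{ex:LT-squares}(i) and the appeal to \cref{thm:Suslin}, resp.\ \cref{thm:Suslin-general} for $k$-connective invariants --- are fine.
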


\begin{rem}
Assume that \eqref{diag:ring-spectra2} is a Milnor square. If $\Tor_{i}^{A}(A',B) = 0$ for $ i = 1, \dots, n-1$, then Corollary \ref{cor:Suslin} says that the induced map on relative $K$-groups (see Definition~\ref{def:rel-K})
\[
K_{i}(A,A') \lto K_{i}(B, B')
\]
is bijective for $i < n$ and surjective for $i=n$. In fact, one can say a little bit more: It follows from Theorem~\ref{main theorem} and \cite[Proposition 1.2]{Waldhausen} that there is an exact sequenc
\[
K_{n+1}(A,A') \lto K_{n+1}(B,B') 
\lto \Tor^{A}_{n}(A', B)/(bx-xb) \lto K_{n}(A,A')  \lto K_{n}(B,B') \lto 0
\]
where the middle term denotes the symmetrization of the $B'$-bimodule $\Tor^{A}_{n}(A',B)$. For example, if $A$ and $B$ are commutative and $I=\ker(A \to A')$, then the symmetrization of $\Tor^{A}_{1}(A',B)$ is isomorphic to $\Omega_{B/A} \otimes_{B} I/I^{2}$ where $\Omega_{B/A}$ denotes the $B$-module of K\"ahler differentials. For $n=1$, the  part of the above exact sequence starting with the Tor-term was obtained by Swan \cite[Corollary 4.7]{Swan} (and Vorst, see \cite[Exercise III.2.6]{Weibel}).
\end{rem}

Let us now explain how this implies Suslin's excision result for non-unital rings. Let $I$ be a not necessarily unital ring and  denote by $\Z \ltimes I$ its unitalisation. It admits a natural augmentation $\Z\ltimes I \to \Z$ and we set
\[ K(I) = \fib(K(\Z\ltimes I) \to K(\Z)).\]

\begin{dfn}[Suslin]
A non-unital ring $I$ is said to satisfy 
 excision in $K$-theory with coefficients in $\Lambda$ up to degree $n$ if for any unital ring $B$ containing $I$ as a two-sided ideal, the diagram
\[ \begin{tikzcd}
	K(\Z\ltimes I) \ar[r] \ar[d] & K(B) \ar[d] \\
	 K(\Z) \ar[r] & K(B/I)
\end{tikzcd}\]
is $\Lambda$-$(n+1)$-cartesian. 
\end{dfn}

We find the following consequence of \cref{thm:Suslin}, which is  \cite[Theorem A]{Suslin}. Here, as before,  $\Lambda$ is a localisation of $\Z$ or $\Z/m$ for some integer $m$.

\begin{thm}
	\label{thm:original-suslin}
The non-unital ring   $I$ satisfies excision in $K$-theory with coefficients in $\Lambda$ up to degree $n$ if $\Tor_i^{\Z\ltimes I}(\Z,\Lambda) = 0$ for $i = 1, \dots, n$.
\end{thm}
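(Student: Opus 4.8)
The plan is to apply \cref{thm:Suslin} to a specific Milnor square and then translate the hypothesis on $\Tor^{\Z\ltimes I}(\Z,\Lambda)$ into the connectivity condition that theorem requires. Given a unital ring $B$ containing $I$ as a two-sided ideal, let $A = \Z\ltimes I$, let $A' = \Z$, and note that the inclusion $I \hookrightarrow B$ induces a ring map $A = \Z\ltimes I \to B$. Then the square
\[
\begin{tikzcd}
\Z\ltimes I \ar[r] \ar[d] & B \ar[d] \\
\Z \ar[r] & B/I
\end{tikzcd}
\]
is exactly a Milnor square in the sense of \cref{ex:LT-squares}(i): both vertical maps are surjective with kernel $I$, and $A\to B$ restricts to the identity on the ideal $I$. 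So \cref{thm:Suslin} applies, and it tells us that this square of $K$-theory spectra is $\Lambda$-$n$-cartesian as soon as the multiplication map $A'\otimes_A B = \Z\otimes_{\Z\ltimes I} B \to B/I$ is a $\pi_0$-isomorphism (which holds automatically here, as in \cref{ex:LT-squares}(i), since $B/I$ is discrete) and $\Lambda$-$n$-connective, i.e.\ after tensoring with $M\Lambda$ its fibre is $n$-connective. Comparing with the definition of excision up to degree $n$, this is precisely what we need.

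Thus the real content is the \emph{Tor-translation}: the map $\Z\otimes_{\Z\ltimes I} B \to B/I$ is $\Lambda$-$n$-connective if and only if $\Tor_i^{\Z\ltimes I}(\Z,\Lambda) = 0$ for $i=1,\dots,n$. The first step is to observe that, since $A\to B$ restricts to an isomorphism on the ideals, $\ker(A\to A') = I = \ker(B\to B')$, so base change along $A\to B$ of the short exact sequence $0\to I\to \Z\ltimes I\to\Z\to 0$ yields a cofibre sequence of $B$-modules $I\otimes_{\Z\ltimes I} B \to B \to \Z\otimes_{\Z\ltimes I} B$; since $I\otimes_{\Z\ltimes I}B$ maps to $I\subseteq B$ via the multiplication and the composite $B\to B/I$ has cofibre computed by $I$, one identifies the fibre of $\Z\otimes_{\Z\ltimes I}B\to B/I$ with the shift of the fibre of $I\otimes_{\Z\ltimes I}B\to I$. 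The homotopy groups of $I\otimes_{\Z\ltimes I}B$ are $\Tor^{\Z\ltimes I}_*(I,B)$, and from the long exact sequence of $\Tor$ associated to $0\to I\to\Z\ltimes I\to\Z\to 0$ one gets natural isomorphisms $\Tor_i^{\Z\ltimes I}(I,B)\cong\Tor_{i+1}^{\Z\ltimes I}(\Z,B)$ for $i\geq 1$ (together with a short exact sequence in low degrees controlling $i=0$). So the $\Lambda$-$n$-connectivity of the multiplication map reduces to the vanishing of $\Tor_i^{\Z\ltimes I}(\Z,B\otimes_\Z\Lambda) = \Tor_i^{\Z\ltimes I}(\Z, B)\otimes_\Z\Lambda$ (by flatness of localization; for $\Lambda=\Z/m$ one uses the universal-coefficient sequence) for $i=1,\dots,n$.

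The last step is to remove the dependence on $B$: one shows $\Tor_i^{\Z\ltimes I}(\Z,B)\otimes_\Z\Lambda = 0$ for all $B\supseteq I$ and all $i=1,\dots,n$ \emph{iff} $\Tor_i^{\Z\ltimes I}(\Z,\Lambda) = \Tor_i^{\Z\ltimes I}(\Z,\Z\ltimes I)\otimes_\Z\Lambda$ vanishes for $i=1,\dots,n$. One direction is Suslin's (and Wodzicki's) key homological observation: the condition on $\Tor^{\Z\ltimes I}(\Z,\Z)$ — equivalently, that $I$ is an $H$-unital, or Tor-unital, non-unital ring after tensoring with $\Lambda$ — implies the analogous vanishing with arbitrary coefficient module $B$; concretely one resolves $\Z$ over $\Z\ltimes I$ by the bar-type complex built from $I$, whose terms are (up to $\Z$-summands) tensor powers $I^{\otimes k}$, and whose acyclicity after $\otimes_{\Z\ltimes I}B$ is governed by the bar complex of $I$ itself, which is what $\Tor^{\Z\ltimes I}(\Z,\Z\ltimes I)\otimes\Lambda$ measures. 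The converse direction is immediate by taking $B=\Z\ltimes I$. I expect this homological input — the passage from vanishing of $\Tor^{\Z\ltimes I}_*(\Z,\Lambda)$ to vanishing of $\Tor^{\Z\ltimes I}_*(\Z,B)\otimes\Lambda$ uniformly in $B$ — to be the main obstacle, as it is the technical heart of Suslin--Wodzicki's excision theorem; everything else is a formal unwinding of \cref{thm:Suslin} and standard manipulation of $\Tor$ long exact sequences, together with the $\Z/m$-coefficient bookkeeping via the Moore spectrum.
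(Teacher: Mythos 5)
Your reduction is the same as the paper's: form the Milnor square with corners $\Z\ltimes I$, $B$, $\Z$, $B/I$ and feed it into \cref{thm:Suslin}. Two bookkeeping slips largely cancel but should be fixed: the definition of excision up to degree $n$ demands a $\Lambda$-$(n+1)$-cartesian square, so via \cref{thm:Suslin} you need the map $\Z\otimes_{\Z\ltimes I}B\to B/I$ to be $\Lambda$-$(n+1)$-connective (not $n$-connective); on the other hand the vanishing of $\Tor_i^{\Z\ltimes I}(\Z,\Lambda)$ for $i=1,\dots,n$ does give $(n+1)$-connectivity of the relevant multiplication map, since the fibre's $\Lambda$-homotopy starts with $\Tor_1$ in degree $1$. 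Also, your identity $\Tor_i^{\Z\ltimes I}(\Z,\Lambda)=\Tor_i^{\Z\ltimes I}(\Z,\Z\ltimes I)\otimes_\Z\Lambda$ is a slip (the second argument should be $\Z$, and for $\Lambda=\Z/m$ there is a universal-coefficient correction).

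The genuine gap is precisely the step you yourself flag as the ``main obstacle'': passing from the hypothesis, which only involves $\Tor_*^{\Z\ltimes I}(\Z,\Lambda)$, to the $\Lambda$-$(n+1)$-connectivity of $\Z\otimes_{\Z\ltimes I}B\to B/I$ uniformly in the ambient ring $B$. You propose to import this from Suslin--Wodzicki's bar-complex/H-unitality machinery, but you neither state the required range-and-coefficient version precisely nor prove it, and importing the technical heart of the old proof defeats the purpose of re-deriving Suslin's theorem from the Main Theorem. The paper disposes of this step with the short, purely formal \cref{lemma Tor-unitality}: the hypothesis says the multiplication $\Z\otimes_{\Z\ltimes I}\Z\to\Z$ is $\Lambda$-$(n+1)$-connective; base-changing it along $\Z\to B/I$ and then comparing $\Z\otimes_{\Z\ltimes I}B$ with $\Z\otimes_{\Z\ltimes I}(B/I)$ via the square \eqref{diag:Milnor-Suslin} tensored with $\Z$ over $\Z\ltimes I$ and with $M\Lambda$ --- a pullback square whose left vertical map is split by the highly connective multiplication --- a diagram chase shows $\Z\otimes_{\Z\ltimes I}B\to B/I$ is $\Lambda$-$n$-connective and an isomorphism on $\pi_n$ with $\Lambda$-coefficients, and discreteness of $B/I$ upgrades this to $\Lambda$-$(n+1)$-connectivity. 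So your framing via \cref{thm:Suslin} is correct, but the decisive homological input, which you leave unproven and would source from the heavy Suslin--Wodzicki apparatus, in fact follows from the pullback property by a two-line argument; supplying that argument (or an equivalent) is what your proposal is missing.
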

\begin{proof}
Let $B$ be any ring containing $I$ as a two-sided ideal.
We observe that the diagram
\begin{equation}
	\label{diag:Milnor-Suslin}
\begin{tikzcd}
	\Z \ltimes I \ar[r] \ar[d] & B \ar[d] \\
	\Z \ar[r] & B/I
\end{tikzcd}
\end{equation}
is a Milnor square. The assumption implies that the multiplication map $\Z \otimes_{\Z\ltimes I} \Z \to \Z$ is $\Lambda$-$(n+1)$-connective, since $\Z \otimes M\Lambda \simeq \Lambda$.
Lemma~\ref{lemma Tor-unitality} below together with the fact that $B/I$ is discrete imply that $\Z \otimes_{\Z \ltimes I} B \to B/I$ is $\Lambda$-$(n+1)$-connective. Obviously, it is also a $\pi_{0}$-isomorphism. Now Theorem~\ref{thm:Suslin} implies that the diagram of $K$-theory spectra induced from the Milnor square \eqref{diag:Milnor-Suslin} is $\Lambda$-$(n+1)$-cartesian, as desired.
\end{proof}

\begin{lemma}\label{lemma Tor-unitality}
Suppose that \eqref{diag:ring-spectra2} is a pullback square of  $\E_1$-rings all of which are connective. If the map $A'\otimes_A A' \to A'$ is $\Lambda$-$n$-connective, then the map $A'\otimes_A B \to B'$ is $\Lambda$-$(n-1)$-connective and in addition induces an isomorphism on $\pi_{n-1}$ with coefficients in $\Lambda$. If $n= \infty$, then the same conclusion holds for possibly non-connective $\E_1$-rings.
\end{lemma}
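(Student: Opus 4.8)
The plan is to deduce the statement about $A' \otimes_A B \to B'$ from the statement about $A' \otimes_A A' \to A'$ by using the pullback square to relate $B$ and $B'$. First I would record that since \eqref{diag:ring-spectra2} is a pullback square, the induced map on vertical fibres is an equivalence: writing $I = \fib(B \to B')$, the fibre of $A \to A'$ is also $I$ (with the appropriate module structure), by the universal property of pullbacks. Tensoring the cofibre sequence $I \to A \to A'$ of $A$-bimodules over $A$ with $A'$ on the left and $B$ on the right in the two obvious ways will give cofibre sequences relating $A' \otimes_A I$, $A' \otimes_A B$, $A' \otimes_A A'$ and $A' \otimes_A B'$. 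The key point is to compare the two multiplication maps $\mu_{A'}\colon A' \otimes_A A' \to A'$ and $\mu_B\colon A' \otimes_A B \to B'$ through these sequences.

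The main computation I would carry out is the following. Applying $A' \otimes_A (-)$ to the cofibre sequence $I \to B \to B'$ of $B$-bimodules (restricted to $A$ on the relevant side) yields a cofibre sequence $A' \otimes_A I \to A' \otimes_A B \to A' \otimes_A B'$. Now $A' \otimes_A B' \simeq A' \otimes_A A' \otimes_{A'} B'$, and using that \eqref{diag:ring-spectra2} is a pullback one also has a description of $A' \otimes_A B'$; in fact the cleanest route is to observe that the cofibre of $\mu_B\colon A' \otimes_A B \to B'$ is canonically identified with the cofibre of $\mu_{A'} \otimes_{A'} \mathrm{id}_{B'}\colon A' \otimes_A A' \otimes_{A'} B' \to B'$, i.e.\ with $\Cone(\mu_{A'}) \otimes_{A'} B'$. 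Indeed, the cofibre sequence $I \to A \to A'$ tensored appropriately shows $\Cone(\mu_{A'}) \simeq \Sigma(A' \otimes_A I)$ as an $A'$-bimodule, and $\Cone(\mu_B) \simeq \Sigma(A' \otimes_A I \otimes_B B') \simeq \Sigma(A' \otimes_A I) \otimes_B B' \simeq \Cone(\mu_{A'}) \otimes_{A'} B'$, where I use that $I \otimes_B B' \simeq I$ need not hold, so more carefully: $\Cone(\mu_B)$ is computed from the sequence $A' \otimes_A I \to A' \otimes_A B \to B'$ only once we know $\mu_B$ fits there, which follows from Remark~\ref{rem:multiplication-cofibre-sequence} applied with $A$ replaced by itself — i.e.\ $\Cone(\mu_B) \simeq \Sigma(A'\otimes_A I)/(\text{stuff})$; the honest statement is that there is a cofibre sequence $A'\otimes_A I \to A'\otimes_A B \to B'$ and also $A'\otimes_A I \to A' \to \Cone(\mu_{A'})$, and comparing gives $\Cone(\mu_B) \simeq \Cone(\mu_{A'}) \otimes_{A'} B'$ after identifying the relevant maps.

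Once this identification $\Cone(\mu_B) \simeq \Cone(\mu_{A'}) \otimes_{A'} B'$ (equivalently $\fib(\mu_B) \simeq \fib(\mu_{A'}) \otimes_{A'} B'$ up to a shift, or directly $\fib(\mu_B)\simeq \fib(\mu_{A'})\otimes_{A'}B'$) is in place, the connectivity estimate is a routine right-t-exactness argument: if $\mu_{A'}$ is $\Lambda$-$n$-connective, then $\fib(\mu_{A'})\otimes M\Lambda$ is $n$-connective, hence (since $B'$ is connective and the relative tensor product of a connective spectrum with an $n$-connective one is $n$-connective) $\fib(\mu_{A'}) \otimes_{A'} B' \otimes M\Lambda$ is $n$-connective, and on the boundary degree the relative tensor product formula $\pi_n(\fib(\mu_{A'})\otimes_{A'}B' \otimes M\Lambda) \cong \pi_n(\fib(\mu_{A'})\otimes M\Lambda)\otimes_{\pi_0(A')}\pi_0(B')$ together with $\pi_0(A') \cong \pi_0(B')$ (which holds: $A \to A'$ and $B \to B'$ have the same fibre $I$, and one checks $\pi_0$) shows the isomorphism on $\pi_{n-1}$ with $\Lambda$-coefficients after the degree shift coming from $\fib$ versus $\Cone$. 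I would be slightly careful that the statement claims $(n-1)$-connectivity of $\mu_B$, which matches: $\fib(\mu_B) \simeq \fib(\mu_{A'})\otimes_{A'}B'$ has the same connectivity as $\fib(\mu_{A'})$ in good cases but the relative tensor product over a possibly nonconnective ring can lose one degree — hence the drop from $n$ to $n-1$ — and the sharp statement on $\pi_{n-1}$ is exactly the edge case of the right-t-exactness. The $n = \infty$ case is immediate from the equivalence $\fib(\mu_B)\simeq \fib(\mu_{A'})\otimes_{A'}B'$, since tensoring a zero object gives zero, with no connectivity hypotheses needed.

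The main obstacle I anticipate is getting the bookkeeping of module structures exactly right when forming the relative tensor products — in particular making precise the identification $\fib(\mu_B) \simeq \fib(\mu_{A'}) \otimes_{A'} B'$, which requires knowing that the map $A' \otimes_A B \to B'$ really is induced (via $B \to B'$) from $\mu_{A'}$ after base change along $A' \to B'$, i.e.\ essentially invoking Proposition~\ref{map out of weird ring} / Remark~\ref{rem:multiplication-cofibre-sequence} or a direct diagram chase — and then correctly tracking where the single loss of connectivity enters, which is the relative-tensor-product-over-a-nonconnective-ring subtlety rather than anything about the square itself.
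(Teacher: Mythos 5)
Your proposed proof hinges on the identification $\fib(A'\otimes_A B \to B') \simeq \fib(A'\otimes_A A' \to A')\otimes_{A'}B'$, and this is false in general. Base changing $\mu_{A'}\colon A'\otimes_A A'\to A'$ along $A'\to B'$ yields the map $A'\otimes_A B'\to B'$, not $A'\otimes_A B\to B'$; the correct relation is a fibre sequence $A'\otimes_A I \to \fib(A'\otimes_A B\to B') \to \fib(\mu_{A'})\otimes_{A'}B'$ with $I=\fib(B\to B')$, coming from tensoring $I\to B\to B'$ with $A'$ over $A$, and the extra term $A'\otimes_A I$ does not vanish. Concretely, for the Milnor square $A=\Z[x,y]/(xy)$, $A'=A/(y)$, $B=A/(x)$, $B'=A/(x,y)$ one has $\pi_*\fib(\mu_{A'})\otimes_{A'}B' \cong \Z$ in every degree $\geq 1$, while $\pi_*\fib(A'\otimes_A B\to B')\cong\Tor_*^A(A',B)$ is $\Z$ in even degrees $\geq 2$ and zero otherwise. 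Your own sanity checks should have flagged this: the cofibre of $A'\otimes_A I\to A'\otimes_A B$ is $A'\otimes_A B'$, not $B'$ (the cofibre sequence of Remark~\ref{rem:multiplication-cofibre-sequence} has the tensor factor on the other side, $I\otimes_A B\to B\to A'\wtimes{A}{B'}B$); moreover, if your identification held, the conclusion would be $\Lambda$-$n$-connectivity of $A'\otimes_A B\to B'$ (all rings being connective, your ``nonconnective base ring loses a degree'' explanation does not apply), whereas the lemma only asserts $\Lambda$-$(n-1)$-connectivity. The degree loss is precisely caused by the extra term $A'\otimes_A I$, not by any $t$-structure subtlety of the relative tensor product. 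A further incorrect ingredient is the claim $\pi_0(A')\cong\pi_0(B')$, which fails already in the example above.

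Where the pullback hypothesis and the correct argument enter is in controlling this extra term: since the square is cartesian, $I\simeq\fib(A\to A')$, so $A'\otimes_A I\simeq\fib\bigl(A'\to A'\otimes_A A'\bigr)$, the fibre of the unit map. The paper tensors the whole square \eqref{diag:ring-spectra2} with $A'$ over $A$ and with $M\Lambda$, obtaining a pullback square whose left vertical map $A'\otimes M\Lambda\to A'\otimes_A A'\otimes M\Lambda$ is split by $\mu_{A'}\otimes M\Lambda$; the $\Lambda$-$n$-connectivity of the splitting forces the common vertical fibre to be $(n-1)$-connective after smashing with $M\Lambda$, with the boundary behaviour on $\pi_{n-1}$ controlled by split injectivity, and then one composes with the base-changed map $A'\otimes_A B'\to B'$, which is genuinely $\Lambda$-$n$-connective. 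Your plan, by contrast, never uses the pullback hypothesis in a way that tames $A'\otimes_A I$, so the connectivity estimate cannot be completed along the route you describe.
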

\begin{proof}
Since $A'\otimes_A A' \to A'$ is $\Lambda$-$n$-connective, the same holds for the map $A' \otimes_A B' \to B'$ obtained by base change along $A' \to B'$. It thus suffices to show that after tensoring with $M\Lambda$ the map $A'\otimes_A B \to A'\otimes_A B'$ is $(n-1)$-connective and induces an isomorphism on $\pi_{n-1}$. 
For this we consider the pullback square 
\[
\begin{tikzcd}
 A' \otimes M\Lambda \ar[d]\ar[r] & A' \otimes_{A} B \otimes M\Lambda \ar[d] \\ 
 A' \otimes_{A} A' \otimes M\Lambda \ar[r] & A' \otimes_{A} B' \otimes M\Lambda 
\end{tikzcd}
\]
obtained from \eqref{diag:ring-spectra2} by tensoring with $A'$ over $A$ and with $M\Lambda$.
The left vertical map is split by the multiplication map. By assumption, this split is $n$-connective. A little diagram chase using the long exact sequences of homotopy groups associated to the vertical maps now gives the claim.
The assertion about non-connective $\E_{1}$-rings follows analogously.
\end{proof}

\begin{rem}
It is worthwhile to clarify the relation between the various Tor-vanishing conditions. Suppose \eqref{diag:ring-spectra2} is a Milnor square. By \cref{cor:Suslin}, excision in $K$-theory holds for this particular square provided $\Tor_i^A(A',B) = 0$ for $i \geq 1$. Suslin proved that excision holds for \emph{all} Milnor squares with fixed ideal $I$ if $\Tor_i^{\Z\ltimes I}(\Z,\Z) = 0$ for $i\geq 1$. 
Morrow showed in \cite[Theorem 0.2]{Morrow-pro-unitality}  that Suslin's condition is in fact equivalent to the vanishing of $\Tor_i^A(A',A')$ for $i\geq 1$ and \emph{some} Milnor square \eqref{diag:ring-spectra2} with ideal $I$. 
Lemma~\ref{lemma Tor-unitality} and the following example show  that the condition of \cref{cor:Suslin} is the most general of the above.
\end{rem}

\begin{ex} 
Let $A'$ be a commutative ring, and $f \in A'$. We let $A'_f = A'[1/f]$ be the localization and consider the Milnor square
\[ \begin{tikzcd}
	A \ar[r] \ar[d] & A'_f[x] \ar[d] \\ 
	A' \ar[r] & A'_f 
\end{tikzcd}\]
where the right vertical map sends the variable $x$ to zero. On the one hand, the map $A \to A'_f[x]$ is a localization and thus flat. On the other hand $\Tor_1^A(A',A') \cong I/I^2$, where $I = \ker(A \to A')$, and the variable $x$  obviously gives rise to a non-zero element in $I/I^2$.
\end{ex}

\subsection{Torsion in birelative and relative \textit{K}-groups}

In this section we explain how to use our main theorem to prove stronger versions of the results obtained by Geisser--Hesselholt in \cite{GH2}. We thank Akhil Mathew for pointing out that our main theorem should give a direct proof of these results.   

\begin{dfn}
	\label{def:rel-K}
The \emph{relative $K$-theory} of a map of $\E_{1}$-rings $A \to B$ is defined as the fibre of the induced map of $K$-theory spectra,
\[
K(A,B) = \fib( K(A) \to K(B) ),
\]
its homotopy groups are denoted by $K_{i}(A,B)$. Similarly, if \eqref{diag:ring-spectra2} is a commutative square of $\E_{1}$-rings, the \emph{birelative $K$-theory} is 
\[
K(A,B,A',B') = \fib( K(A,B) \to K(A',B') ).
\]
\end{dfn}

The following is an immediate consequence of \cref{main theorem}.
\begin{lemma}\label{birelative-term}
If the commutative square \eqref{diag:ring-spectra2} is a pullback square of $\E_1$-rings, then there is a canonical equivalence
\[ 
K(A,B,A',B') \simeq \Omega \fib( K(A'\wtimes{A}{B'} B) \to K(B') ).
\] 
\end{lemma}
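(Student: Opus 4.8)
The plan is to read off the statement directly from the Main Theorem, which already provides the commutative diagram~\eqref{diag:natural-diagram} and the fact that its inner square becomes a pullback after applying any localizing invariant. Since $K$-theory (non-connective) is a localizing invariant, applying $K$ to the inner square of~\eqref{diag:natural-diagram} yields a pullback square of spectra
\[
\begin{tikzcd}
K(A) \ar[r] \ar[d] & K(B) \ar[d] \\
K(A') \ar[r] & K(A' \wtimes{A}{B'} B).
\end{tikzcd}
\]
The first step is therefore to compare the birelative term $K(A,B,A',B')$, defined via the original square, with the analogous birelative term of the square obtained by replacing the corner $B'$ by $A' \wtimes{A}{B'} B$. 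The maps $B \to A' \wtimes{A}{B'} B \to B'$ and $A' \to A' \wtimes{A}{B'} B \to B'$ from~\eqref{diag:natural-diagram} give a map of squares, hence a comparison of birelative $K$-theories.

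Concretely, I would argue as follows. By definition $K(A,B,A',B') = \fib\bigl(\fib(K(A)\to K(B)) \to \fib(K(A')\to K(B'))\bigr)$, which by a standard manipulation of iterated fibres is equivalent to the total fibre of the square $K(\text{\eqref{diag:ring-spectra2}})$, i.e.\ to $\fib\bigl(K(A) \to K(A')\times_{K(B')} K(B)\bigr)$. Now factor this square as the composite of the inner square of~\eqref{diag:natural-diagram} followed by the square
\[
\begin{tikzcd}
A' \wtimes{A}{B'} B \ar[r] \ar[d, equal] & B \ar[d] \\
A' \wtimes{A}{B'} B \ar[r] & B'.
\end{tikzcd}
\]
Applying $K$ and using that the total fibre of a vertically composed pair of squares is the fibre of the map between their total fibres, the total fibre of $K(\text{\eqref{diag:ring-spectra2}})$ is the fibre of (total fibre of $K$ of the inner square) $\to$ (total fibre of $K$ of the lower square). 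The inner square has vanishing total fibre after $K$ by the Main Theorem; the lower square has a column that is the identity, so its total fibre is $\Omega\,\fib\bigl(K(A' \wtimes{A}{B'} B) \to K(B')\bigr)$ shifted appropriately. Chasing the shifts, one gets precisely $K(A,B,A',B') \simeq \Omega\,\fib\bigl(K(A' \wtimes{A}{B'} B) \to K(B')\bigr)$.

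The only genuinely delicate point is bookkeeping the loop-space shift and the direction of the fibre sequences correctly — in particular making sure that the ``$\Omega$'' in the statement is the one that actually appears, rather than a spurious suspension, and that the map $K(A' \wtimes{A}{B'} B) \to K(B')$ used is the one induced by the functor $\bar c$ constructed in the proof of the Main Theorem. This is purely formal once one sets up the total-fibre calculus carefully, e.g.\ by viewing the relevant $3\times 3$ diagram of spectra and taking iterated fibres in two ways; I do not expect any conceptual obstacle, only the need to be careful with signs and directions. An equivalent and perhaps cleaner route is to observe directly that $K(A,B,A',B') \simeq \fib\bigl(K(A,B) \to K(A', B')\bigr)$, replace $K(A,B)$ using the pullback square above (which gives $K(A,B) \simeq \fib(K(A') \to K(A' \wtimes{A}{B'} B))$ after identifying $K(A,B)$ with a fibre of a map of horizontal fibres), and then combine with the commutativity of~\eqref{diag:natural-diagram} to cancel $K(A')$; either way the result follows.
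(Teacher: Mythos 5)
Your second route is precisely the paper's (implicit) argument: the lemma is stated there as an immediate consequence of Theorem~\ref{main theorem}, namely since $K$ of the inner square of \eqref{diag:natural-diagram} is cartesian one has $K(A,B)=\fib(K(A)\to K(B))\simeq \fib\bigl(K(A')\to K(A'\wtimes{A}{B'}B)\bigr)$, and then $K(A,B,A',B')=\fib\bigl(K(A,B)\to K(A',B')\bigr)$ is the total fibre of the square with identity left column $K(A')=K(A')$ and horizontal maps to $K(A'\wtimes{A}{B'}B)$ and $K(B')$, which is $\Omega\,\fib\bigl(K(A'\wtimes{A}{B'}B)\to K(B')\bigr)$; the only point to check is that under this identification the map $K(A,B)\to K(A',B')$ is the one induced by $\bar c$, which is exactly the commutativity of \eqref{diag:natural-diagram} that you invoke. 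Two slips in your first route, though, should be corrected. First, the displayed second square is garbled: there is no ring map $A'\wtimes{A}{B'}B\to B$; the square you want (pasting the inner square of \eqref{diag:natural-diagram} to recover \eqref{diag:ring-spectra2}) has an identity column, either $A'=A'$ with horizontal maps $A'\to A'\wtimes{A}{B'}B$ and $A'\to B'$, or $B=B$ with vertical maps to $A'\wtimes{A}{B'}B$ and $B'$. Second, the total-fibre principle is stated with the wrong variance: for a composite of squares one has a fibre sequence $\mathrm{totfib}(\text{inner})\to\mathrm{totfib}(\text{outer})\to\mathrm{totfib}(\text{second})$, i.e.\ $\mathrm{totfib}(\text{inner})\simeq\fib\bigl(\mathrm{totfib}(\text{outer})\to\mathrm{totfib}(\text{second})\bigr)$; taking your formula $\mathrm{totfib}(\text{outer})=\fib\bigl(\mathrm{totfib}(\text{inner})\to\mathrm{totfib}(\text{second})\bigr)$ literally would produce a spurious extra loop ($\Omega^2$ instead of $\Omega$). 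Since $\mathrm{totfib}$ of the inner square vanishes by the Main Theorem, the correct sequence gives $\mathrm{totfib}(\text{outer})\simeq\mathrm{totfib}(\text{second})\simeq\Omega\,\fib\bigl(K(A'\wtimes{A}{B'}B)\to K(B')\bigr)$, so the conclusion stands either via the corrected first route or, more cleanly, via your second one.
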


Fix an integer $N \geq 1$. We say that an abelian group $A$ is a bounded $N$-torsion group if $A$ is killed by a power of $N$. 

\begin{prop}\label{prop p torsion K groups}
Let $A \to B$ be a 1-connective map of connective $\E_1$-ring spectra. Assume that $\pi_{i}(\fib(A\to B))$ is a bounded $N$-torsion group for $i \leq n$.
 Then  the relative K-group $K_{i}(A,B)$
is a bounded $N$-torsion group for each $i \leq n+1$.
\end{prop}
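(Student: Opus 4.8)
The plan is to reduce the statement to a filtration argument on the fibre and a convergence/connectivity estimate for relative $K$-theory. First I would set $F = \fib(A \to B)$ and use the assumption that $A \to B$ is $1$-connective, so $F$ is $1$-connective, together with the hypothesis that $\pi_i(F)$ is bounded $N$-torsion for $i \le n$. The key input is that relative $K$-theory is suitably connective: if $A \to B$ is an $m$-connective map of connective $\E_1$-rings inducing an isomorphism on $\pi_0$, then (cf.\ \cref{lemma:waldhausen-E1}) $K(A) \to K(B)$ is $(m+1)$-connective, so $K_i(A,B) = 0$ for $i \le m$. In our situation $A \to B$ is $1$-connective but need not be a $\pi_0$-isomorphism; however $\pi_0(A) \to \pi_0(B)$ is surjective with kernel $\pi_0(F)$, and one can factor $A \to B$ through a square-zero-type extension, or simply work directly with the Postnikov/connectivity filtration of the map.

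The core of the argument is an induction building $B$ from $A$ by successively coning off the homotopy of $F$. Concretely, I would use the tower of connective covers $\tau_{\ge k}F$ and form the corresponding tower of $\E_1$-rings interpolating between $A$ and (an approximation to) $B$: at stage $k$ one attaches a single Eilenberg--MacLane piece $\Sigma^k H\pi_k(F)$, which is a square-zero extension. For a square-zero extension $A \oplus M[k] \to A$ with $M$ a bounded $N$-torsion $\pi_0(A)$-module placed in degree $k \ge 1$, the relative $K$-theory fits (via stable $K$-theory / $\THH$, or via the identification of the first nontrivial relative $K$-group with $M$ itself in the range where the map is highly connected) into a tower whose graded pieces are built out of $M$ and are therefore bounded $N$-torsion in each fixed degree. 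Since a fixed degree $i \le n+1$ only sees finitely many stages of the tower (by connectivity, $K_i$ of the relative term at stage $k$ vanishes once $k$ is large compared to $i$), one concludes $K_i(A,B)$ is a finite iterated extension of bounded $N$-torsion groups, hence itself bounded $N$-torsion.

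The main obstacle I expect is controlling the relative $K$-theory of a square-zero extension well enough to see the $N$-torsion claim: one needs that $K(A \oplus M[k]) \to K(A)$ has a fibre whose homotopy in each degree is built (via a finite filtration in a fixed range) from tensor powers of $M$ over $\pi_0 A$, so that boundedness of $N$-torsion is inherited. This is where one invokes the Dundas--Goodwillie--McCarthy circle of ideas, or more elementarily the fact that for a $1$-connective map the relative $K$-theory agrees with relative $\THH$ in a range, and $\THH$ of a square-zero extension has an explicit description with $M$-linear graded pieces. A secondary subtlety is the passage from the Postnikov tower of $F$ to an actual tower of $\E_1$-rings between $A$ and $B$; this is standard but needs the connective, $\E_1$ setting and the fact that one only needs agreement with $B$ through degree $n+1$, so truncating $B$ (replacing it by $\tau_{\le n+1}B$, which changes $K_i$ only for $i > n+1$ by the connectivity estimate again) suffices to make the tower finite. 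Once these pieces are in place, the bounded-torsion conclusion follows formally, since bounded $N$-torsion groups are closed under subquotients and finite extensions.
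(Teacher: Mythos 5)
There is a genuine gap at the heart of your argument. The reduction to a finite tower of square-zero extensions interpolating between $A$ and (a truncation of) $B$ is fine in principle — and, as an aside, a $1$-connective map automatically induces an isomorphism on $\pi_0$, so \cref{lemma:waldhausen-E1} applies directly, contrary to your remark. The unproved step is the key claim that for each stage, a square-zero extension by $M=\pi_k(\fib)$ placed in degree $k$ with $M$ bounded $N$-torsion, the relative $K$-groups are bounded $N$-torsion in \emph{every} degree $i\le n+1$. The tools you cite — agreement of relative $K$-theory with relative $\THH$ ``in a range'', or the identification of the first nonvanishing relative $K$-group with $M$ — only control degrees up to roughly twice the connectivity of the stage; for the low stages $k=1,2,\dots$ and $i$ as large as $n+1$ this leaves most of the required range uncovered. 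To cover all degrees you would either have to pass to relative $\TC$ via Dundas--Goodwillie--McCarthy and then prove that relative $\TC$ of such extensions has bounded $N$-torsion homotopy (essentially the Geisser--Hesselholt analysis that this proposition is meant to bypass), or run the full Dundas--McCarthy Taylor-tower analysis with convergence — which is moreover set up for \emph{split} square-zero extensions, whereas the stages of a relative Postnikov tower of an $\E_1$-map are square-zero but not split. None of this is supplied, so the essential content of the statement (torsion control beyond the stable range) is being assumed rather than proved.

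For comparison, the paper's proof is short and avoids trace methods and Postnikov induction altogether: since $A\to B$ is $1$-connective, $K(A)\to K(B)$ is $2$-connective by \cref{lemma:waldhausen-E1}, so the relative groups are the homotopy groups of the fibre $\tilde F$ of $\BGL(A)^+\to\BGL(B)^+$. The fibre $F$ of $\BGL(A)\to\BGL(B)$ is simply connected with bounded $N$-torsion homotopy in degrees $\le n+1$ (via $\M$ of the fibre of $A\to B$); since bounded $N$-torsion groups form a Serre class, $\tilde H_i(F;\Z)$ is bounded $N$-torsion for $i\le n+1$, and two relative Serre spectral sequences — one for the original fibration, one for the fibration of plus-constructions, using that $X\to X^+$ is acyclic and that the latter spectral sequence is untwisted — transfer this first to $H_*(\BGL(A),\BGL(B);\Z)$, then to $\tilde H_*(\tilde F;\Z)$, and finally back to $\pi_*(\tilde F)=K_*(A,B)$ by Serre class theory. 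If you wanted to salvage your route, you would in effect have to reprove this very statement for each square-zero stage, at which point the tower buys you nothing.
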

\begin{proof}
The proof is  similar to that of Lemma~\ref{lemma:waldhausen-E1}. Since the map $K(A) \to K(B)$ is 2-connective, we may again use the plus-construction of $\BGL$ to compute the relative K-groups.
Consider the fibre sequences
\begin{align*}
&F \lto  \BGL(A) \lto \BGL(B), \\
&\tilde{F} \lto \BGL(A)^+ \lto \BGL(B)^+ ,
\end{align*}
and note that $K_i(A,B) \cong \pi_i(\tilde{F})$.
The assumptions imply that $F$ is simply connected, and that $\pi_{i}(F)$ is a bounded $N$-torsion group for $i \leq n+1$. As the bounded $N$-torsion   groups  form a Serre class of abelian groups, it follows that the reduced homology $\tilde{H}_i(F;\Z)$ is a bounded $N$-torsion group for every $i \leq n+1$. The relative Serre spectral sequence 
\[ 
E^2_{p,q} = H_p(\BGL(B);\tilde{H}_q(F;\Z)) \Longrightarrow H_{p+q}(\BGL(A),\BGL(B);\Z) 
\]
then implies  that $H_{i}(\BGL(A),\BGL(B);\Z)$  is a bounded $N$-torsion group for $i \leq n+1$. Now consider the relative Serre spectral sequence for the fibration of plus-constructions. Using that the map to the plus construction is acyclic we find a spectral sequence
\[ 
E^2_{p,q} = H_p(\BGL(B);\tilde{H}_{q}(\tilde{F};\Z)) \Longrightarrow H_{p+q}(\BGL(A),\BGL(B);\Z).
\]
whose abutment is a bounded $N$-torsion group in degrees $\leq n+1$ by the above. Since this Serre spectral sequence is untwisted, we see by induction over $i$ that also $\tilde{H}_i(\tilde{F};\Z)$ is a bounded $N$-torsion group for $i \leq n+1$. As $\tilde{F}$ is also simply connected, again by Serre class theory it follows that also $\pi_i(\tilde{F})$ for $i \leq n+1$ are bounded $N$-torsion groups.
\end{proof}

\begin{thm}
Assume that \eqref{diag:ring-spectra2} is a pullback square of $\E_1$-ring spectra all of which are connective. If the map $A' \otimes_A B \to B'$ is 1-connective, and if the homotopy groups of its fibre are  bounded $N$-torsion groups in degrees $\leq n$, then the birelative $K$-groups $K_{i}(A,B,A',B')$ are bounded $N$-torsion groups for every $i \leq n$.
\end{thm}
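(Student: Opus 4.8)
The plan is to deduce the statement directly from the identification of the birelative term in \cref{birelative-term} together with \cref{prop p torsion K groups}, so that no new homotopy-theoretic input is needed.

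First I would use \cref{birelative-term}: since \eqref{diag:ring-spectra2} is a pullback square, there is a canonical equivalence
\[
K(A,B,A',B') \simeq \Omega\fib\bigl( K(A'\wtimes{A}{B'} B) \to K(B') \bigr),
\]
whence $K_{i}(A,B,A',B') \cong K_{i+1}\bigl(A'\wtimes{A}{B'} B,\, B'\bigr)$ for every integer $i$. It therefore suffices to show that the relative $K$-group $K_{j}\bigl(A'\wtimes{A}{B'} B,\, B'\bigr)$ is a bounded $N$-torsion group for all $j \le n+1$.

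Next I would check that the hypotheses of \cref{prop p torsion K groups} are met for the $\E_{1}$-ring map $A'\wtimes{A}{B'} B \to B'$. By \cref{main theorem} the underlying spectrum of $A'\wtimes{A}{B'} B$ is $A'\otimes_{A} B$, which is connective because $A$, $A'$ and $B$ are, and $B'$ is connective by assumption; so both source and target are connective $\E_1$-rings. Again by \cref{main theorem} — and more explicitly by \cref{map out of weird ring} — the underlying map of spectra of $A'\wtimes{A}{B'} B \to B'$ is the canonical multiplication map $A'\otimes_{A} B \to B'$, so its fibre in spectra coincides with $\fib(A'\otimes_{A} B \to B')$. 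By hypothesis this fibre is $1$-connective, i.e.\ $A'\wtimes{A}{B'} B \to B'$ is a $1$-connective map, and its homotopy groups in degrees $\le n$ are bounded $N$-torsion groups. Then \cref{prop p torsion K groups}, applied with its ``$n$'' taken to be our $n$, shows that $K_{j}\bigl(A'\wtimes{A}{B'} B,\, B'\bigr)$ is a bounded $N$-torsion group for every $j \le n+1$. Combining this with the isomorphism of the first step yields that $K_{i}(A,B,A',B')$ is a bounded $N$-torsion group for every $i \le n$, as desired.

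I do not expect any genuine obstacle here: once \cref{main theorem} and \cref{prop p torsion K groups} are available the argument is purely formal. The only points needing a moment's care are the degree shift introduced by the loop space in \cref{birelative-term} and the identification of $\fib(A'\wtimes{A}{B'} B \to B')$ with $\fib(A'\otimes_{A} B \to B')$ on underlying spectra, which is exactly the content of the last sentence of \cref{main theorem}; everything else is bookkeeping.
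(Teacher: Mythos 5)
Your argument is correct and is exactly the paper's proof: the paper deduces the theorem in one line from \cref{birelative-term} and \cref{prop p torsion K groups}, and your write-up simply makes explicit the same degree shift and the identification of $\fib(A'\wtimes{A}{B'}B \to B')$ with $\fib(A'\otimes_A B \to B')$ via \cref{main theorem} and \cref{map out of weird ring}. No gaps.
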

\begin{proof}
The claim follows directly from the assumptions, \cref{birelative-term}, and  \cref{prop p torsion K groups}.
\end{proof}

In the case of a Milnor square, we may again express the assumptions in the previous theorem in terms of Tor-groups as follows.
\begin{cor}
	\label{cor:GH-birelative-torsion}
Assume that \eqref{diag:ring-spectra2} is a Milnor square of discrete rings such that $\Tor_i^A(A',B)$ is a bounded $N$-torsion group for all $i = 1, \dots, n$. Then the birelative K-group $K_{i}(A,B,A', B')$ is a bounded $N$-torsion group for every $i \leq n$.
\end{cor}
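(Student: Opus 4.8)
The plan is to reduce \Cref{cor:GH-birelative-torsion} to the theorem immediately preceding it by translating the Tor-vanishing-up-to-torsion hypothesis into a connectivity/torsion statement about the fibre of the multiplication map $A' \otimes_A B \to B'$. First I would recall that for a Milnor square of discrete rings, $\pi_0(A' \otimes_A B) \cong B/IB \cong B'$, so the multiplication map is automatically $1$-connective (its fibre is $1$-connective). Hence the only thing to check is that $\pi_i(\fib(A' \otimes_A B \to B'))$ is a bounded $N$-torsion group for $i \leq n$. From the cofibre sequence of spectra $\fib(A'\otimes_A B \to B') \to A' \otimes_A B \to B'$ and the fact that $B'$ is discrete, the long exact sequence of homotopy groups gives $\pi_i(\fib(A'\otimes_A B \to B')) \cong \pi_i(A' \otimes_A B) \cong \Tor_i^A(A', B)$ for $i \geq 1$ (and the fibre is $1$-connective, so there is nothing in degree $0$).

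With this identification in hand, the second step is purely formal: the hypothesis that $\Tor_i^A(A', B)$ is a bounded $N$-torsion group for $i = 1, \dots, n$ is exactly the hypothesis of the preceding theorem, so that theorem yields that $K_i(A,B,A',B')$ is a bounded $N$-torsion group for every $i \leq n$. No further argument is needed.

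I do not expect a genuine obstacle here; the content of the corollary is entirely in the preceding theorem (and ultimately in \Cref{main theorem} via \Cref{birelative-term} and \Cref{prop p torsion K groups}). The only mildly delicate point is the bookkeeping around low degrees: one must confirm that the $1$-connectivity of the map $A' \otimes_A B \to B'$ holds unconditionally for a Milnor square (which follows from the computation $\pi_0(A' \otimes_A B) \cong B'$ noted already in \Cref{ex:LT-squares}(i)), and that the homotopy groups of the fibre in positive degrees agree with the Tor-groups rather than being shifted. Both are immediate from the long exact sequence, so the proof is a one-paragraph reduction.

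\begin{proof}
As noted in \cref{ex:LT-squares}(i), for a Milnor square of discrete rings the map $A' \otimes_A B \to B'$ induces an isomorphism on $\pi_0$; in particular it is $1$-connective, and for $i \geq 1$ its fibre has homotopy groups
\[
\pi_i\bigl(\fib(A'\otimes_A B \to B')\bigr) \cong \pi_i(A'\otimes_A B) \cong \Tor_i^A(A',B),
\]
since $B'$ is discrete. By assumption these groups are bounded $N$-torsion for $i = 1, \dots, n$, so the hypotheses of the preceding theorem are satisfied, and it follows that $K_i(A,B,A',B')$ is a bounded $N$-torsion group for every $i \leq n$.
\end{proof}
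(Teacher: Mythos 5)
Your proof is correct and is essentially the paper's own argument: the paper likewise identifies $\pi_i$ of the fibre of $A'\otimes_A B \to B'$ with $\Tor_i^A(A',B)$ for $i \geq 1$ (trivial otherwise, since the rings are discrete) and then invokes the preceding theorem. No substantive difference.
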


\begin{proof}
As all rings are discrete, $\pi_i$ of the fibre of the map $A'\otimes_A B \to B'$ is given by $\Tor_i^A(A',B)$ for $i\geq 1$ and is trivial otherwise. The claim thus follows from the previous theorem.
\end{proof}

\begin{rem}
The assumptions of the corollary are automatically satisfied for any integer $n$ if $A'$ or $B$ is a $\Z/N$-algebra. If $A$ itself is a $\Z/N$-algebra, then the corollary is due to Geisser--Hesselholt \cite[Theorem C]{GH2}, see also \cref{second remark about GH}.
\end{rem}

Here is an example where Theorem C of \cite{GH2} cannot be applied, but \cref{cor:GH-birelative-torsion} does apply.
Let $G$ be a finite group of order $N$. Let $\mathfrak{M}$ be a maximal $\Z$-order of the rational group ring $\Q[G]$ containing $\Z[G]$.
Then $I = N\cdot \mathfrak{M}$ is a common ideal in $\Z[G]$ and $\mathfrak{M}$ \cite[Corollary XI.1.2]{Bass} so that we get a Milnor square
\[
\begin{tikzcd}
 \Z[G] \ar[d]\ar[r] & \mathfrak{M} \ar[d] \\ 
 \Z[G]/I \ar[r] & \mathfrak{M}/I .
\end{tikzcd}
\]

\begin{cor}
For a finite group $G$ the birelative $K$-groups $K_{*}(\Z[G], \mathfrak{M}, \Z[G]/I, \mathfrak{M}/I)$ are bounded $N$-torsion groups, and for $i \geq 1$, the relative $K$-groups $K_i(\Z[G],\mathfrak{M})$ are torsion groups of bounded exponent.
\end{cor}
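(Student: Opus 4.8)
The plan is to deduce both assertions from Corollary~\ref{cor:GH-birelative-torsion} and from the defining fibre sequence of birelative $K$-theory, applied to the Milnor square above, i.e.\ to $A=\Z[G]$, $B=\mathfrak{M}$, $A'=\Z[G]/I$ and $B'=\mathfrak{M}/I$. For the first assertion the key point is that since $\Z[G]\subseteq\mathfrak{M}$ we have $N\Z[G]\subseteq N\mathfrak{M}=I$, so that $\Z[G]/I$ is a quotient of $(\Z/N)[G]$ and in particular is annihilated by $N$ as an abelian group. As $\Tor^{\Z[G]}_i(-,\mathfrak{M})$ is additive, multiplication by $N$ then annihilates $\Tor^{\Z[G]}_i(\Z[G]/I,\mathfrak{M})$ for every $i$, so all these Tor-groups are bounded $N$-torsion. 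Applying Corollary~\ref{cor:GH-birelative-torsion} for every $n$ now shows that $K_i(\Z[G],\mathfrak{M},\Z[G]/I,\mathfrak{M}/I)$ is a bounded $N$-torsion group for all $i$.

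For the second assertion, first observe that $\mathfrak{M}$ is a finitely generated $\Z$-module, so that $\mathfrak{M}/I=\mathfrak{M}/N\mathfrak{M}$ is a finite ring, and $\Z[G]/I$, being a quotient of $(\Z/N)[G]$, is finite as well. I would then use that the $K$-groups of a finite ring $R$ are torsion groups of bounded exponent in positive degrees: writing $\mathfrak{r}$ for the nilpotent Jacobson radical of $R$, the abelian group $\mathfrak{r}$ is finite, hence annihilated by some integer $M$, so $K_i(R,R/\mathfrak{r})$ is bounded $M$-torsion by Theorem~\ref{intro-thm:D}, while $R/\mathfrak{r}$ is a finite product of matrix rings over finite fields, whose positive $K$-groups are finite by Quillen; the claim follows from the long exact sequence of $K(R,R/\mathfrak{r})\to K(R)\to K(R/\mathfrak{r})$ (alternatively, this is Kuku's finiteness theorem). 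Hence $K_i(\Z[G]/I)$ and $K_i(\mathfrak{M}/I)$, and therefore also $K_i(\Z[G]/I,\mathfrak{M}/I)$, are torsion groups of bounded exponent for every $i\geq 1$.

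It remains to combine these two inputs via the fibre sequence
\[
K(\Z[G],\mathfrak{M},\Z[G]/I,\mathfrak{M}/I)\lto K(\Z[G],\mathfrak{M})\lto K(\Z[G]/I,\mathfrak{M}/I)
\]
of Definition~\ref{def:rel-K}: its long exact sequence exhibits $K_i(\Z[G],\mathfrak{M})$, for $i\geq 1$, as an extension of a subgroup of the bounded-exponent torsion group $K_i(\Z[G]/I,\mathfrak{M}/I)$ by a quotient of the bounded $N$-torsion group $K_i(\Z[G],\mathfrak{M},\Z[G]/I,\mathfrak{M}/I)$, and an extension of two torsion groups of bounded exponent is again of this kind. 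I do not expect a genuine obstacle in this argument; the only external ingredient is the classical finiteness of the $K$-theory of finite fields, and one should note that the bound in the second assertion genuinely depends on more than $N$ — it also sees the orders of the $K$-groups of the residue fields of $\mathfrak{M}/N\mathfrak{M}$ — which is precisely why one can only claim ``torsion of bounded exponent'' there rather than ``bounded $N$-torsion''.
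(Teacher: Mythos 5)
Your argument is correct and follows essentially the same route as the paper: the birelative statement via Corollary~\ref{cor:GH-birelative-torsion}, whose hypothesis holds because $\Z[G]/I$ is a $\Z/N$-algebra, and then the relative statement by combining this with the finiteness of the positive $K$-groups of the finite rings $\Z[G]/I$ and $\mathfrak{M}/I$ through the birelative fibre sequence. The only cosmetic difference is that the paper simply cites Kuku's finiteness theorem for $K$-groups of finite rings, whereas you reprove a (slightly weaker, but sufficient) bounded-exponent version via the Jacobson radical, Theorem~\ref{thm:torsion-relative-K-groups}, and Quillen's computation for finite fields, noting Kuku as an alternative.
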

\begin{proof}
Since $\Z[G]/I$ is a $\Z/N$-algebra, Corollary~\ref{cor:GH-birelative-torsion} implies the statement about birelative $K$-groups. Since $\Z[G]/I$ and $\mathfrak{M}/I$ are both finite, their $K$-groups in positive degrees are also finite by a result of Kuku \cite[Proposition IV.1.16]{Weibel}. Hence the relative $K$-groups $K_{i}( \Z[G]/I, \mathfrak{M}/I)$ are finite for $i \geq 1$. This implies the claim. 
\end{proof}

\begin{rem}
From the localization sequence for $\mathfrak{M}$ \cite[Theorem 1.17]{Oliver} we see that also the relative $K$-groups $K_{i}(\mathfrak{M}, \Q[G])$ are torsion for $i \geq 1$. It follows that 
$K_{i}(\Z[G]) \otimes \Q \to K_{i}(\Q[G]) \otimes \Q$ is an isomorphism for $i \geq 2$, a result due to Borel \cite[Theorem IV.1.17]{Weibel}, and injective for $i=1$. 
\end{rem}

The following is Theorem~\ref{intro-thm:D} from the introduction. It may be proved directly using a Serre spectral sequence argument, similarly as in the proof of Proposition~\ref{prop p torsion K groups}. Here we give an alternative argument using Proposition~\ref{prop p torsion K groups} to reduce it to the birelative case.

\begin{thm}
	\label{thm:torsion-relative-K-groups}
Let $A$ be a discrete ring, and let $I$ be a two-sided nilpotent ideal in $A$. Assume that, as an abelian group, $I$ is a bounded $N$-torsion group. Then the relative K-groups $K_{*}(A,A/I)$ are bounded $N$-torsion groups.
\end{thm}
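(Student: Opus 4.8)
The plan is to reduce the relative statement for the nilpotent ideal $I\subseteq A$ to the birelative statement of the previous theorem (more precisely, to \cref{cor:GH-birelative-torsion}) by constructing a suitable Milnor square. First I would recall that, since $I$ is nilpotent, we may argue by induction on the nilpotency degree of $I$: if $I^{2}=0$ we are in the base case, and in general the ideal $I/I^{2}\subseteq A/I^{2}$ satisfies the hypotheses (it is still bounded $N$-torsion as a quotient of $I$), while $I^{2}\subseteq A$ is nilpotent of smaller degree and is a bounded $N$-torsion group; the fibre sequence $K(A,A/I^{2})\to K(A,A/I)\to K(A/I^{2},A/I)$ (where the last term is identified via $\Spec(A/I)=\Spec(A/I^{2})$ with a relative $K$-theory of the nilpotent-square situation for $A/I^{2}$) then lets us assemble the general case from the case $I^{2}=0$, using that bounded $N$-torsion groups are closed under extensions. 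So it suffices to treat $I$ with $I^{2}=0$.

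For $I$ with $I^{2}=0$, the key point is to realise $A\to A/I$ as one of the legs of a Milnor square to which \cref{cor:GH-birelative-torsion} applies. The natural choice is the square
\[
\begin{tikzcd}
A \ar[r] \ar[d] & \Z\ltimes I \ar[d] \\
A/I \ar[r] & \Z
\end{tikzcd}
\]
where the top map sends $a\mapsto (\ol a, \text{(image of $a$ in $I$)})$ using that, because $I^{2}=0$, $I$ is a square-zero $A$-bimodule and in fact a $\Z$-bimodule, so $A\to A/I\ltimes I$ and then projecting; more precisely one checks this is a Milnor square with common ideal $I$, the map $A\to \Z\ltimes I$ sending $I\subseteq A$ isomorphically onto $I\subseteq \Z\ltimes I$. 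Then \cref{birelative-term} and the long exact sequence for birelative $K$-theory reduce $K_{*}(A,A/I)$ (up to a contribution from $K_{*}(\Z\ltimes I,\Z)$) to the birelative group $K_{*}(A,\Z\ltimes I,A/I,\Z)$, which is bounded $N$-torsion by \cref{cor:GH-birelative-torsion} once we know $\Tor_{i}^{A}(A/I, \Z\ltimes I)$ is bounded $N$-torsion for all $i\geq 1$ — and here $\Tor^{A}_{i}(A/I,\Z\ltimes I)$ is built out of $\Tor^{A}_{i}(A/I, A/I)$-type groups which, since $I$ is $N$-torsion, are annihilated by $N$. Finally $K_{*}(\Z\ltimes I, \Z)$ itself is bounded $N$-torsion: $\Z\ltimes I\to \Z$ is a $1$-connective (indeed $\pi_{0}$-iso) map of connective rings whose fibre is $I$, a bounded $N$-torsion group concentrated in degree $0$, so \cref{prop p torsion K groups} applies directly.

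The main obstacle I anticipate is organising the bookkeeping so that the square-zero reduction and the Milnor-square comparison interact cleanly — in particular verifying that the chosen square is genuinely a Milnor square (that the relevant map is surjective with the right kernel) and that all the $\Tor$-groups appearing, for $\Z\ltimes I$ rather than a single ring, really are bounded $N$-torsion; some care is needed because $\Z\ltimes I$ is not itself an $N$-torsion ring, only its augmentation ideal is. An alternative that sidesteps the Milnor-square gymnastics, as the authors remark, is a direct Serre spectral sequence argument on $\BGL(A)^{+}\to \BGL(A/I)^{+}$ exactly as in the proof of \cref{prop p torsion K groups}: the fibre $F$ of $\BGL(A)\to\BGL(A/I)$ is simply connected with $\pi_{i}(F)$ bounded $N$-torsion (since $I$ is), Serre class theory gives the same for $\tilde H_{*}(F;\Z)$, the untwisted relative Serre spectral sequence transports this to $\tilde H_{*}(\tilde F;\Z)$ for the fibre of the plus-constructions, and a final Serre class argument yields that $\pi_{*}(\tilde F)=K_{*}(A,A/I)$ is bounded $N$-torsion. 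I would present the reduction-to-birelative argument as the main proof and mention the direct spectral sequence argument as a remark.
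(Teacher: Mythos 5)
Your reduction to the square-zero case by induction on the nilpotency degree matches the paper, but the core of your argument has a genuine gap. The square you propose, with ring maps $A \to \Z\ltimes I$ and $A/I \to \Z$, does not exist: a general ring $A$ admits no ring homomorphism to $\Z$ and no additive retraction onto $I$, so neither the horizontal nor the vertical maps you describe can be defined. The actual Milnor square involving the unitalization has the maps in the opposite direction, $\Z\ltimes I \to A$ and $\Z \to A/I$ (this is exactly \eqref{diag:Milnor-Suslin} in the paper). Even after reorienting, your reduction leaves the term $K_{*}(\Z\ltimes I,\Z)$, and your justification for it fails: $\Z\ltimes I \to \Z$ is \emph{not} a $\pi_{0}$-isomorphism --- its fibre is the discrete module $I$ in degree $0$, so the map is $0$-connective rather than $1$-connective --- and \cref{prop p torsion K groups} therefore does not apply. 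Worse, $K_{*}(\Z\ltimes I,\Z)$ is itself precisely an instance of the theorem being proved (a square-zero, bounded $N$-torsion ideal in a discrete ring), so at this point the argument is circular: the whole difficulty is concentrated in the relative $K$-theory of such $0$-truncations, which your route takes as an input.

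The paper's proof avoids exactly this by a degree shift: after reducing to $I^{2}=0$ it replaces the unitalization by the differential graded algebras $C(I,A)=[I\to A]$ and $C(I,A/I)=[I\xrightarrow{0} A/I]$ of \eqref{eq:CIA} and forms the pullback square \eqref{diag:CIA-pullback}. Via the Main Theorem (through \cref{birelative-term} and \cref{rem:multiplication-cofibre-sequence}) the birelative term is controlled by $\Tor^{A}_{*}(A/I,A/I)$, which is bounded $N$-torsion, and the remaining comparison $C(I,A/I)\to A/I$ \emph{is} a $\pi_{0}$-isomorphism with fibre $I[1]$, i.e.\ $1$-connective with bounded $N$-torsion homotopy, so \cref{prop p torsion K groups} genuinely applies there. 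Your fallback sketch also contains an error: the fibre of $\BGL(A)\to \BGL(A/I)$ is not simply connected (its $\pi_{1}$ is the kernel of $\GL(A)\to\GL(A/I)$, a nontrivial bounded $N$-torsion group), so while a direct spectral sequence proof is possible, as the paper remarks, it requires mod-$\mathcal{C}$ arguments for a non-simply-connected nilpotent fibre and is not literally the argument of \cref{prop p torsion K groups}.
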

\begin{proof}
Let $n$ be the smallest natural number such that $I^{n} = 0$. Consider the two projections $A \to A/I^2 \to A/I$ and the associated fibre sequence of relative $K$-theory spectra
\[ K(A,A/I^2) \lto K(A,A/I) \lto K(A/I^2,A/I).\]
By induction on $n$ we may assume that the claim holds for $K(A,A/I^2)$. Since the kernel of $A/I^2 \to A/I$ is a square zero ideal, it hence suffices to treat the case $n=2$.

In this case $I$ is canonically an $A/I$-bimodule, so we may form the two differential graded algebras
\begin{equation}
	\label{eq:CIA}
C(I,A) = [I \stackrel{i}{\lto} A ] \quad \text{and}\quad C(I,A/I) = [ I \stackrel{0}{\lto} A/I ]
\end{equation}
concentrated in degrees 1 and 0. Note that $C(I,A) \simeq A/I$.
There is a canonical map $C(I,A) \to C(I,A/I)$ given by the identity on $I$ and the projection on $A$. We now consider the commutative diagram
\begin{equation}
	\label{diag:CIA-pullback}
\begin{tikzcd}
	A \ar[r] \ar[d] & A/I \ar[d] \\ 
	C(I,A) \ar[r] & C(I,A/I) 
\end{tikzcd}
\end{equation}
in which the vertical maps are the canonical inclusions in degree zero. 
As the induced map on horizontal fibres is an equivalence, this is a pullback diagram. For ease of notation let us also write $B' = C(I,A/I)$.

We claim that the map $C(I,A) \wtimes{A}{B'} A/I \lto C(I,A/I)$ is a 1-truncation. Indeed, by Remark~\ref{rem:multiplication-cofibre-sequence} this map sits in a diagram of cofibre sequences
\[ 
\begin{tikzcd}
	I \otimes_A A/I \ar[r] \ar[d] & A/I \ar[r] \ar[d, equal] & C(I,A) \wtimes{A}{B'} A/I \ar[d] \\
	I \ar[r] & A/I \ar[r] & C(I,A/I)
\end{tikzcd}
\]
in which the left vertical map is clearly surjective on $\pi_{1}$ and an isomorphism on $\pi_0$ by the assumption that $I^2=0$.
Hence, the map in question is an isomorphism on $\pi_0$ and $\pi_1$ by the 5-lemma.

For $i \geq 2$, the homotopy groups of the fibre of the map $C(I,A) \wtimes{A}{B'} A/I \lto C(I,A/I)$ are  given by
\[
\pi_{i} ( C(I,A) \wtimes{A}{B'} A/I ) \cong \Tor^{A}_{i}(A/I, A/I) \cong \Tor^{A}_{i-1}(I, A/I).
\]
These are bounded $N$-torsion groups by the assumption on $I$.
Hence the fibre of the map
$
K(C(I,A)\wtimes{A}{B'} A/I) \to K(C(I,A/I))
 $
has bounded $N$-torsion homotopy groups by \cref{prop p torsion K groups}.
Since by Lemma~\ref{birelative-term} this fibre is the birelative $K$-theory of  diagram \eqref{diag:CIA-pullback}, it  is enough to show that the fibre of the map $K(C(I,A)) \to K(C(I,A/I))$
has bounded $N$-torsion homotopy groups. As the composition
$C(I,A) \to C(I,A/I) \to A/I $
is an equivalence, it suffices to argue that the fibre of $K(C(I,A/I)) \to K(A/I)$
has bounded $N$-torsion homotopy groups. This follows again from \cref{prop p torsion K groups} since $C(I,A/I) \to A/I$ is a 0-truncation with fibre $I[1]$ which has bounded $N$-torsion homotopy groups by assumption.
\end{proof}

\begin{rem}\label{second remark about GH}
The conditions of \cref{cor:GH-birelative-torsion} and \cref{thm:torsion-relative-K-groups} are automatically satisfied if $A$ is a $\Z/N$-algebra. In this form, these results are due to Geisser--Hesselholt \cite[Theorem C \& A]{GH2}. Geisser and Hesselholt first prove the relative case through topological cyclic homology. Using the relative case, and a pro-excision result in $K$-theory, which we address in \cref{pro excision GH}, Geisser and Hesselholt deduce the birelative case. 
\end{rem}

\subsection{Pro-excision results}
	\label{sec:pro-excision}

Recall that a pro-object  $\{X_{\lambda} \}_{\lambda\in\Lambda}$, or simply $\{X_{\lambda} \}$, in an $\infty$-category $\cC$ is a diagram in $\cC$ indexed by a small cofiltered $\infty$-category $\Lambda$.
 The pro-objects in $\cC$ form an $\infty$-category $\Pro(\cC)$. Up to equivalence, any map in $\Pro(\cC)$ can be represented by a level map, i.e.~a natural transformation of diagrams.
We are interested in the cases where $\cC$ is  the $\infty$-category $\Sp$  or $\Alg^{\cn}$ of spectra or connective $\E_{1}$-rings in spectra, respectively. 
Recall the forgetful functor $\RMod \to \Alg$ from \cite[Definition~4.2.1.13]{LurieHA} whose fibre over $A$ is the $\infty$-category of $A$-right modules.
We will also consider pro-right modules over a pro-system of connective $\E_{1}$-ring spectra $\{ A_{\lambda}\}$ by which we mean an object of the fibre $\Pro(\RMod)_{\{ A_{\lambda}\}}$ of the canonical functor $\Pro(\RMod) \to \Pro(\Alg)$ over $\{ A_{\lambda} \}$, and similarly for left and bimodules.
For each of these categories of pro-objects  we have level-wise truncation functors $\tau_{\leq n}\colon \Pro(\cC) \to \Pro(\cC)$ and homotopy pro-group functors $\pi_{i}\colon \Pro(\cC) \to \Pro(\mathrm{Ab})$.

\begin{dfn}
A map $f$ in $\Pro(\Sp)$ is called \emph{$n$-connective} if $\tau_{\leq n-1}(\fib(f))$ is zero in $\Pro(\Sp)$; it is called a \emph{weak equivalence} if it is  $n$-connective for all integers $n$. 
A square in $\Pro(\Sp)$ is called \emph{$n$-cartesian} if the map from the upper left corner to the pullback is $n$-connective; it is called \emph{weakly cartesian} if it is $n$-cartesian for every integer $n$.

Any of the $\infty$-categories $\cC$ of above has a canonical forgetful functor $\cC \to \Sp$. We say that a map or a square in $\Pro(\cC)$ is  $n$-connective, a weak equivalence, $n$-cartesian or weakly cartesian, respectively,  if its image in $\Pro(\Sp)$ is  so.
\end{dfn}

For any of the categories considered above, we call a pro-object $\{ X_{\lambda}\}$ bounded above or below if  for some integer $k$ the truncation $\{ \tau_{\leq k}X_{\lambda}\}$ is equivalent to $\{ X_{\lambda}\}$ or to $0$ via the canonical map, respectively. It is called bounded if it is bounded below and above.
A simple induction over the Postnikov tower shows that a map of bounded below objects in $\Pro(\Sp)$ is a weak equivalence if and only if it induces an isomorphism on all homotopy pro-groups.

\begin{lemma}
	\label{lem:weak-equivalences}
\begin{enumerate}
\item For any $\{ A_{\lambda}\} \in \Pro(\Alg^{\cn})$ the forgetful functor $\Pro(\RMod)_{\{ A_{\lambda}\}} \to \Pro(\Sp)$ is conservative on bounded objects.
\item The forgetful functor $\Pro(\Alg^{\cn}) \to \Pro(\Sp)$ is conservative on bounded objects.
\end{enumerate}
\end{lemma}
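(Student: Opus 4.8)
\textit{The plan.} I would prove both parts by the same device: use the levelwise Postnikov filtration to reduce to a question about pro-objects in an abelian category (pro-systems of discrete modules, resp.\ a square-zero induction built on such), where the relevant vanishing manifestly cannot see the algebraic structure. First a preliminary remark: since the fibre of a map of bounded objects of $\Pro(\Sp)$ is again bounded, and a bounded weakly-zero pro-spectrum is zero, a map of bounded pro-spectra is a weak equivalence precisely when it is an equivalence. Thus, for a morphism $f$ between bounded objects of $\Pro(\RMod)_{\{A_\lambda\}}$ or of $\Pro(\Alg^{\cn})$, asking that $f$ be a weak equivalence is the same as asking its underlying morphism of pro-spectra to be an equivalence, and by the preceding lemma $f$ then induces isomorphisms on all homotopy pro-groups.

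For (i): let $f\colon X \to Y$ be a morphism of bounded objects in $\Pro(\RMod)_{\{A_\lambda\}}$ whose underlying morphism of pro-spectra is an equivalence. Since each $\RMod(A_\lambda)$ is stable and truncations and fibres are formed levelwise, $Z := \fib(f)$ again lies in $\Pro(\RMod)_{\{A_\lambda\}}$, is bounded, and has zero underlying pro-spectrum; it suffices to show $Z \simeq 0$. Letting $a$ be the connectivity of $Z$, the fibre sequence $\tau_{\geq a+1}Z \to Z \to \Sigma^{a}\pi_{a}Z$ of pro-$\{A_\lambda\}$-modules, together with the fact that the forgetful functor to $\Pro(\Sp)$ is exact and commutes with truncations levelwise, reduces us — by induction on the (finite) number of non-vanishing homotopy pro-groups of $Z$ — to the case $Z \simeq \Sigma^{a}M$ with $M = \{M_\lambda\}$ a pro-object of \emph{discrete} $\pi_{0}(A_\lambda)$-modules whose underlying pro-abelian group vanishes. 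Such an $M$ is pro-zero: cofinally, the transition morphisms $M_\mu \to M_\lambda$ are zero. But a transition morphism is a morphism of modules, and its nullity on underlying abelian groups — equivalently, on $\pi_{0}$ of the underlying mapping spectra, via the injection $\Hom_{A}(-,-)\hookrightarrow\Hom_{\Z}(-,-)$ — already forces it to be the zero morphism of modules; hence $M$, and therefore $Z$, is pro-zero in $\Pro(\RMod)_{\{A_\lambda\}}$, so $f$ is an equivalence. (The reduction to the discrete case is essential precisely because for non-discrete modules a nullhomotopy of the underlying map need not lift to a nullhomotopy of module maps.)

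For (ii): let $f\colon \{R_\lambda\} \to \{S_\lambda\}$ be a morphism of bounded objects of $\Pro(\Alg^{\cn})$ with underlying equivalence of pro-spectra. I would show that $\tau_{\leq n}f$ is an equivalence in $\Pro(\Alg^{\cn})$ for every $n$ by induction on $n$; as the source and target are bounded above, this gives the claim. The case $n<0$ is vacuous. For the step, recall that the Postnikov stages of a connective $\E_{1}$-ring are functorially square-zero extensions: $\tau_{\leq n}R$ is the pullback of $\tau_{\leq n-1}R \xrightarrow{\delta_{R}} \tau_{\leq n-1}R \oplus \Sigma^{n+1}\pi_{n}R \xleftarrow{\mathrm{incl}} \tau_{\leq n-1}R$, with $\delta_R$ the $k$-invariant, a construction natural in $R$ and applied levelwise to pro-rings (see \cite[\S 7.4]{LurieHA}). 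Applying this to $f$ produces a morphism of such pullback squares in $\Pro(\Alg^{\cn})$: the $\tau_{\leq n-1}$-corners agree by the inductive hypothesis, and the morphism $\{\pi_{n}R_\lambda\} \to \{\pi_{n}S_\lambda\}$ of pro-bimodules over $\{\pi_{0}R_\lambda\}\simeq\{\pi_{0}S_\lambda\}$ has underlying equivalence of pro-spectra, hence is an equivalence of pro-bimodules by part (i) in its bimodule form (bimodules over $R_\lambda$ being modules over the connective ring $R_\lambda\otimes R_\lambda^{\op}$); the $k$-invariants correspond by naturality. Thus the two squares are equivalent over $f$, so $\tau_{\leq n}f$ is an equivalence.

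I expect the main obstacle to be the inductive step of part (ii): one must know not merely that the individual Postnikov stages of $\{R_\lambda\}$ and $\{S_\lambda\}$ are abstractly equivalent, but that the entire square-zero decomposition — crucially including the $k$-invariant $\delta_R$, which is genuine ring-level data invisible to the underlying spectrum — is functorial enough to assemble compatibly along $f$ and over the cofiltered index, so that matching the $(n-1)$-truncation together with the $n$-th homotopy pro-bimodule really does force the $n$-truncations to agree. Part (i), by contrast, is essentially formal once one isolates the point that being pro-zero is a condition on individual transition morphisms and so cannot detect the module structure.
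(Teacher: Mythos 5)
Your proposal is correct and follows essentially the same route as the paper: a finite induction over the Postnikov tower in both parts, using that pro-vanishing/pro-isomorphism of the discrete homotopy pro-modules is detected on underlying pro-abelian groups (you argue this directly via faithfulness where the paper cites Isaksen), and in (ii) the functorial square-zero/pullback description of the truncation tower to reduce to discrete bimodules, i.e.\ to part (i).
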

In particular, in both cases a map between bounded below pro-objects is a weak equivalence if and only if each of its truncations is an equivalence.
\begin{proof}
Let $f\colon \{ M_{\lambda} \} \to \{ N_{\lambda} \}$ be a level map of bounded pro-modules and assume that the underlying map of pro-spectra is an equivalence. As both pro-modules are bounded, we can argue by a finite induction over the Postnikov tower. For the inductive step we use that $\pi_{n}(f)$, which is an isomorphism of pro-abelian groups, is in fact an isomorphism of (discrete) pro-modules over $\{ A_{\lambda}\}$, see for instance the proof of \cite[Lemma~13.2]{MR1828474}. This proves (i).

For (ii) we again argue by induction over the Postnikov tower. For the inductive step, we use that for any connective $\E_{1}$-algebra $A$, the map $\tau_{\leq n}(A) \to \tau_{\leq n-1}(A)$ is a square-zero extension with fibre $\pi_{n}(A)[n]$ by \cite[Corollary 7.4.1.28]{LurieHA} and hence there is a pullback diagram of $\E_{1}$-ring spectra
\[
\begin{tikzcd}
 \tau_{\leq n}(A) \ar[d]\ar[r] & \tau_{\leq n-1}(A) \ar[d] \\ 
 \tau_{\leq n-1}(A) \ar[r] & \tau_{\leq n-1}(A) \oplus \pi_{n}(A)[n+1] 
\end{tikzcd}
\]
where the lower right corner denotes the trivial square-zero extension and depends functorially on the $\tau_{\leq n-1}(A)$-bimodule $\pi_{n}(A)$. This reduces the inductive step to the case of (discrete) bimodules, which follows as above.
\end{proof}

\begin{lemma}\label{KSTlemma}
Let $\{A_\lambda\}$ be a pro-system of connective $\E_{1}$-rings.  
Suppose $f \colon \{ M_\lambda\} \to \{N_\lambda\}$ is a weak equivalence of pro-systems of connective $\{A_\lambda\}$-right modules and let 
$\{B_\lambda\}$ be a pro-system of connective $\{A_{\lambda}\}$-left modules.
 Then also the induced map 
\[ 
\{M_\lambda \otimes_{A_\lambda} B_\lambda\} \lto \{N_\lambda \otimes_{A_\lambda} B_\lambda \} 
\]
is a weak equivalence.
\end{lemma}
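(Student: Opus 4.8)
We may assume that $f$ is represented by a level map, given by $A_\lambda$-linear maps $f_\lambda\colon M_\lambda \to N_\lambda$ compatible with the structure maps. Set $C_\lambda = \cof(f_\lambda)$, computed in $\RMod(A_\lambda)$. Since $M_\lambda$ and $N_\lambda$ are connective, $C_\lambda$ is a connective $A_\lambda$-module, and $\{C_\lambda\}$ is a pro-system of connective $\{A_\lambda\}$-modules. As $f$ is a weak equivalence, $\{\fib(f_\lambda)\}$ is weakly contractible, hence so is $\{C_\lambda\} \simeq \Sigma\{\fib(f_\lambda)\}$. Because $-\otimes_{A_\lambda} B_\lambda$ preserves cofibre sequences, the cofibre of $\{M_\lambda \otimes_{A_\lambda} B_\lambda\} \to \{N_\lambda \otimes_{A_\lambda} B_\lambda\}$ is $\{C_\lambda \otimes_{A_\lambda} B_\lambda\}$, and a map of pro-spectra is a weak equivalence exactly when its cofibre is weakly contractible. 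So it suffices to show that $\{C_\lambda \otimes_{A_\lambda} B_\lambda\}$ is weakly contractible, i.e.\ that $\{\tau_{\leq n}(C_\lambda \otimes_{A_\lambda} B_\lambda)\} = 0$ in $\Pro(\Sp)$ for every integer $n$.

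Fix $n$. By the standard connectivity estimate for relative tensor products over connective $\E_1$-rings (for instance via the bar resolution computing $C_\lambda \otimes_{A_\lambda} B_\lambda$), the spectrum $(\tau_{\geq n+1}C_\lambda)\otimes_{A_\lambda}B_\lambda$ is $(n+1)$-connective, since $\tau_{\geq n+1}C_\lambda$ is $(n+1)$-connective and $B_\lambda$ is connective. Hence the natural map $C_\lambda \otimes_{A_\lambda}B_\lambda \to (\tau_{\leq n}C_\lambda)\otimes_{A_\lambda}B_\lambda$ is an equivalence on $\tau_{\leq n}$, and it is enough to prove $\{(\tau_{\leq n}C_\lambda)\otimes_{A_\lambda}B_\lambda\} = 0$ in $\Pro(\Sp)$. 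Now $\{\tau_{\leq n}C_\lambda\}$ is a bounded pro-module (connective and $n$-truncated), and it is zero in $\Pro(\Sp)$ because $\{C_\lambda\}$ is weakly contractible. By Lemma~\ref{lem:weak-equivalences}(i) the forgetful functor $\Pro(\RMod)_{\{A_\lambda\}} \to \Pro(\Sp)$ is conservative on bounded objects, so applying it to $\{\tau_{\leq n}C_\lambda\} \to 0$ shows that $\{\tau_{\leq n}C_\lambda\}$ is already zero in the fibre $\Pro(\RMod)_{\{A_\lambda\}}$. The relative tensor product over $\{A_\lambda\}$ is an exact functor $\Pro(\RMod)_{\{A_\lambda\}} \to \Pro(\Sp)$ and therefore carries the zero object to the zero object, giving $\{(\tau_{\leq n}C_\lambda)\otimes_{A_\lambda}B_\lambda\} = 0$; this completes the argument.

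\textbf{Main obstacle.} The only delicate step is the last one: one needs that tensoring a pro-system of $\{A_\lambda\}$-modules with the fixed pro-system $\{B_\lambda\}$ is functorial on the fibre category $\Pro(\RMod)_{\{A_\lambda\}}$, so that vanishing there --- and not merely in $\Pro(\Sp)$ --- can be used. If one prefers a hands-on argument, vanishing of $\{\tau_{\leq n}C_\lambda\}$ in $\Pro(\RMod)_{\{A_\lambda\}}$ means that for each $\lambda$ there is a transition $\mu \to \lambda$ such that $\tau_{\leq n}C_\mu \to \tau_{\leq n}C_\lambda$ is nullhomotopic as a module map over $A_\mu \to A_\lambda$; by the extension-of-scalars adjunction the associated $A_\lambda$-linear map $(\tau_{\leq n}C_\mu)\otimes_{A_\mu} A_\lambda \to \tau_{\leq n}C_\lambda$ is nullhomotopic, and tensoring it over $A_\lambda$ with $B_\lambda$ and precomposing with the map induced by the structure map $B_\mu \to B_\lambda$ exhibits the transition map $(\tau_{\leq n}C_\mu)\otimes_{A_\mu} B_\mu \to (\tau_{\leq n}C_\lambda)\otimes_{A_\lambda} B_\lambda$ as nullhomotopic, whence this pro-spectrum is zero. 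The connectivity estimate invoked in the second paragraph is entirely routine.
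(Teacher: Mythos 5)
Your proof is correct and follows essentially the same route as the paper's: both arguments truncate level-wise, invoke \cref{lem:weak-equivalences} (conservativity of the forgetful functor on bounded pro-modules) to promote the weak equivalence to an actual equivalence/vanishing of bounded pro-modules, tensor with the connective system $\{B_\lambda\}$, and use the connectivity estimate $\tau_{\leq n}(X\otimes_{A_\lambda}B_\lambda)\simeq\tau_{\leq n}\bigl((\tau_{\leq n}X)\otimes_{A_\lambda}B_\lambda\bigr)$. Passing through the cofibre $\{C_\lambda\}$ instead of tensoring the equivalence $\{\tau_{\leq n}M_\lambda\}\simeq\{\tau_{\leq n}N_\lambda\}$ directly, and your explicit null-transition argument for functoriality of the tensor product on the fibre category, are only minor repackagings of the paper's proof.
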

\begin{proof}
By Lemma~\ref{lem:weak-equivalences} each truncation $\tau_{\leq n}(f) \colon \{ \tau_{\leq n}(M_\lambda) \} \to \{ \tau_{\leq n}(N_\lambda) \}$
is an equivalence of pro-modules, and hence yields an equivalence $ \{ \tau_{\leq n}(M_\lambda) \otimes_{A_{\lambda}} B_{\lambda} \} \xrightarrow{\sim} \{ \tau_{\leq n}(N_\lambda) \otimes_{A_{\lambda}} B_{\lambda} \}$. Applying $\tau_{\leq n}$ yields the equivalence $\{ \tau_{\leq n}(M_{\lambda} \otimes_{A_{\lambda}} B_{\lambda}) \} \xrightarrow{\sim} \{ \tau_{\leq n}(N_{\lambda} \otimes_{A_{\lambda}} B_{\lambda} )\}$ as required.
\end{proof}

\begin{lemma}
	\label{lem:factorization-pro-connective}
Any $n$-connective map in $\Pro(\Alg^{\cn})$ is equivalent to a level map which is level-wise $n$-connective.
\end{lemma}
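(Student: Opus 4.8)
Any $n$-connective map in $\Pro(\Alg^{\cn})$ is equivalent to a level map which is level-wise $n$-connective.

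The plan is as follows. Start with an $n$-connective map $f$ in $\Pro(\Alg^{\cn})$; up to equivalence, represent it by a level map $\{f_\lambda\}\colon \{A_\lambda\} \to \{B_\lambda\}$ indexed by a cofiltered poset (a standard reindexing reduces to the case where the indexing category is a cofiltered poset, or even a directed poset, which makes the pro-Mittag--Leffler-type manipulations available). The hypothesis says that $\tau_{\leq n-1}(\fib(f))$ is zero in $\Pro(\Sp)$. Concretely, $\fib(f)$ is represented by the level map $\{\fib(f_\lambda)\}$, and $\tau_{\leq n-1}$ of it being pro-zero means: for every $\lambda$ there is a transition $\mu \to \lambda$ such that the map $\tau_{\leq n-1}\fib(f_\mu) \to \tau_{\leq n-1}\fib(f_\lambda)$ is nullhomotopic, equivalently such that $\pi_i(\fib(f_\mu)) \to \pi_i(\fib(f_\lambda))$ is the zero map for all $i \leq n-1$.

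First I would reduce to a connectivity statement on homotopy pro-groups. Since $f$ is $n$-connective iff $\pi_i(\fib(f)) = 0$ in $\Pro(\mathrm{Ab})$ for $i \leq n-1$, and a pro-abelian group $\{G_\lambda\}$ is zero iff for each $\lambda$ some transition map to it is zero, the task is purely combinatorial: I want to re-index so that on the nose each $\pi_i(\fib(f_\lambda)) = 0$ for $i \leq n-1$, i.e.\ each $f_\lambda$ is itself $n$-connective. The subtlety is that the vanishing transitions for different degrees $i$ may be realized by different transition maps, but since there are only finitely many degrees $i \in \{0,\dots,n-1\}$ to control (note $\fib(f_\lambda)$ is connective since $f_\lambda$ is a map of connective rings, wait — it need not be, so one must also handle that $\fib$ can be non-connective, but $\tau_{\leq -1}$ collapses the negative part anyway and these are automatically pro-zero by the usual argument, so effectively only degrees $0 \le i \le n-1$ matter), one composes finitely many transitions: given $\lambda$, pick $\mu_0 \to \lambda$ killing $\pi_0$, then $\mu_1 \to \mu_0$ killing $\pi_1$ of $\fib(f_{\mu_0})$, and so on up to $i = n-1$; the composite $\mu_{n-1} \to \lambda$ then kills $\pi_i(\fib(f_{\mu_{n-1}})) \to \pi_i(\fib(f_\lambda))$ for all $i \leq n-1$ simultaneously (using that a zero map postcomposed with anything stays zero). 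This exhibits a cofinal subcategory, or rather a new pro-system built by this process, on which the fibres have vanishing homotopy in the relevant range.

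The key step, and the main obstacle, is turning this "eventually zero" data into a genuine level-wise $n$-connective \emph{equivalent} pro-object inside $\Pro(\Alg^{\cn})$ — i.e.\ producing, for each $\lambda$ in a cofinal poset, a connective $\E_1$-ring $A'_\lambda$ with an $n$-connective map $A'_\lambda \to B_{\lambda}$, assembled into a pro-system equivalent to $\{A_\lambda\} \to \{B_\lambda\}$. The natural choice is $A'_\lambda := $ the image of $A_{\mu(\lambda)}$ in an appropriate sense, but a cleaner route is: after the reindexing above we may assume that for each arrow $\mu \to \lambda$ in a cofinal subposet the composite $A_\mu \to A_\lambda \to B_\lambda$ has fibre with vanishing $\pi_i$ for $i \le n-1$; then define the new pro-system to be indexed by arrows $(\mu\to\lambda)$ with value $A_\mu$ and structure map $A_\mu \to B_\lambda$ — one checks this is pro-equivalent to the original via the evident maps and is level-wise $n$-connective by construction. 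I expect the bookkeeping of verifying pro-equivalence (cofinality of the category of arrows, compatibility of the two maps out of it to $\{A_\lambda\}$ and to a system mapping to $\{B_\lambda\}$) to be the genuinely fiddly part; the homotopy-theoretic input is just the finite composition of transition maps argument, which is elementary once phrased correctly. Throughout one uses Lemma~\ref{lem:weak-equivalences} only implicitly, if at all; the statement is really about the combinatorics of $\Pro$ together with the fact that connectivity of a map of connective rings is detected on finitely many homotopy groups of the fibre.
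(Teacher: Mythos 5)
There is a genuine gap at the central step. The hypothesis that $\tau_{\leq n-1}(\fib(f))$ is pro-zero only tells you that (after composing finitely many transitions) the \emph{transition maps between the fibres} $\fib(f_\mu) \to \fib(f_\lambda)$ die on homotopy in degrees $\leq n-1$. It gives you no control whatsoever over the fibre of the composite $A_\mu \to A_\lambda \to B_\lambda$ (or $A_\mu \to B_\mu \to B_\lambda$), which sits in a fibre sequence whose third term involves the transition map $\fib(A_\mu \to A_\lambda)$ (resp. $\fib(B_\mu\to B_\lambda)$) of the given systems, and these are arbitrary. So the claim ``level-wise $n$-connective by construction'' for the arrow-indexed system is false. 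A minimal counterexample: let $B_\lambda = \Z$ be constant, let $A_\lambda = \Z\ltimes\Z$ be the trivial square-zero extension for every $\lambda$, with transition maps the identity on $\Z$ and zero on the square-zero ideal, and let $f_\lambda$ be the projection. Then $\{\fib(f_\lambda)\}$ has zero transition maps, so $f$ is a weak equivalence of pro-rings (in particular $n$-connective for every $n$), yet every composite structure map $A_\mu \to B_\lambda$ is the projection $\Z\ltimes\Z \to \Z$, whose fibre is $\Z$ in degree $0$; no reindexing of the given objects produces a level-wise $1$-connective representative. The lemma genuinely requires you to build \emph{new} level objects, not merely to re-index, so your closing remark that the statement is ``really about the combinatorics of $\Pro$'' and that Lemma~\ref{lem:weak-equivalences} enters only implicitly is exactly where the argument goes wrong.

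The paper's proof does construct new objects, and only on the target side: using Lemma~\ref{lem:weak-equivalences} (conservativity on bounded pro-objects) one sees $\tau_{\leq n-1}(f)$ is an equivalence of pro-rings, so one may replace $\{B_\lambda\}$ by $\{\tau_{\leq n-1}(A_\lambda)\times_{\tau_{\leq n-1}(B_\lambda)} B_\lambda\}$ and assume the $(n-1)$-truncations agree level-wise; then, using the square-zero extension $\tau_{\leq n}(B_\lambda)\to\tau_{\leq n-1}(B_\lambda)$ and its classifying derivation \cite[Corollary 7.4.1.28]{LurieHA}, one forms the square-zero extension $\ol B_\lambda$ of $\tau_{\leq n-1}(B_\lambda)$ by the image $\ol\pi_n(B_\lambda)$ of $\pi_n(A_\lambda)\to\pi_n(B_\lambda)$, and factors $f$ as $\{A_\lambda\} \to \{\ol B_\lambda \times_{\tau_{\leq n}(B_\lambda)} B_\lambda\} \to \{B_\lambda\}$, the first map being level-wise $n$-connective and the second an equivalence. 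If you want to repair your argument you would need an input of this kind (truncations, pullbacks, square-zero extensions applied level-wise), not a cofinality argument; there is also a smaller inaccuracy in your reduction, since a map of spectra that is zero on all homotopy groups need not be nullhomotopic, though that part is fixable by composing finitely many transitions.
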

\begin{proof}
Any map of pro-objects is equivalent to a level map. So assume given a level map $f \colon \{ A_{\lambda} \} \to \{ B_{\lambda} \}$ which  is $n$-connective.
By Lemma~\ref{lem:weak-equivalences}, the map $\tau_{\leq n-1}(f)$ is an equivalence. Hence, replacing $\{B_{\lambda}\}$ with the equivalent pro-$\E_{1}$-ring $\{ \tau_{\leq n-1}(A_{\lambda}) \times_{\tau_{\leq n-1}(B_{\lambda})} B_{\lambda} \}$, we may assume that $\tau_{\leq n-1}(A_{\lambda}) \to \tau_{\leq n-1}(B_{\lambda})$ is an equivalence for every $\lambda$. 

Let $\overline{\pi}_{n}(B_{\lambda})$ denote the image of the map $\pi_{n}(A_{\lambda}) \to \pi_{n}(B_{\lambda})$.
Let $\tau_{\leq n-1}(B_{\lambda}) \to \pi_{n}(B_{\lambda})[n+1]$ be the derivation that classifies the square-zero extension $\tau_{\leq n}(B_{\lambda}) \to \tau_{\leq n-1}(B_{\lambda})$. Since $\tau_{\leq n-1}(A_{\lambda}) \simeq \tau_{\leq n-1}(B_{\lambda})$, this derivation factors through a derivation $\tau_{\leq n-1}(B_{\lambda}) \to \ol{\pi}_{n}(B_{\lambda})[n+1]$ and we write $\ol{B}_{\lambda} \to \tau_{\leq n-1}(B_{\lambda})$ for the square-zero extension classified by the latter. 
By construction, the map $\tau_{\leq n}(A_{\lambda}) \to \tau_{\leq n}(B_{\lambda})$ factors through an $n$-connective map $\tau_{\leq n}(A_{\lambda}) \to \ol{B}_{\lambda}$ for every $\lambda$. Furthermore the map $\{\ol{B}_\lambda\} \to \{\tau_{\leq n}(B_\lambda)\}$ is an equivalence by the assumption that $f$ is  $n$-connective. 
Finally, the map $f$ factors as  $\{ A_{\lambda} \} \to \{ \ol{B}_{\lambda} \times_{\tau_{\leq n}(B_{\lambda})} B_{\lambda} \} \to \{ B_{\lambda} \}$ where the first map is
 level-wise $n$-connective and the second one is an equivalence.
\end{proof}

\begin{cor}
	\label{cor:connectivity-pro}
	Let $E$ be a $k$-connective localizing invariant. If $f$ is an $n$-connective map of pro-systems of connective $\E_{1}$-ring spectra, then $E(f)$ is $(n+k)
$-connective.  In particular, $E$ preserves weak equivalences.
\end{cor}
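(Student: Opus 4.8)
\medskip

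The plan is to reduce to a level-wise statement and then quote the definition of a $k$-connective invariant. Recall first that $E$ is applied level-wise to the pro-system $\{\Perf(A_\lambda)\}$ attached to $\{A_\lambda\}$, so that $E(f)$ denotes the level map $\{E(A_\lambda)\}\to\{E(B_\lambda)\}$ in $\Pro(\Sp)$, and that the fibre and truncation functors on $\Pro(\Sp)$ are computed level-wise. Consequently a level map of pro-spectra all of whose components are $m$-connective is itself $m$-connective: its fibre is the level map of fibres, and the $(m-1)$-truncation of the latter vanishes level-wise, hence in $\Pro(\Sp)$. I would treat the case $n\geq 1$, which is the one used in the applications, and from which the assertion about weak equivalences follows: a weak equivalence is $n$-connective for every integer $n$, so $E(f)$ would be $(n+k)$-connective for every $n\geq 1$, i.e.\ $m$-connective for every $m\geq k+1$, and a map in $\Pro(\Sp)$ that is $m$-connective for all sufficiently large $m$ is $m$-connective for all $m$, since $\tau_{\leq m}\simeq\tau_{\leq m}\circ\tau_{\leq m'}$ whenever $m\leq m'$.

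So suppose $n\geq 1$. The key step is to invoke \cref{lem:factorization-pro-connective}: up to equivalence we may assume that $f$ is a level map $\{A_\lambda\}\to\{B_\lambda\}$ which is level-wise $n$-connective, and this replacement does not affect $E(f)$ up to equivalence since $E$ is a functor. For every $\lambda$ the fibre of $A_\lambda\to B_\lambda$ is then $n$-connective with $n\geq 1$, so $\pi_0$ and $\pi_{-1}$ of this fibre vanish and its long exact sequence of homotopy groups shows that $\pi_0(A_\lambda)\xrightarrow{\ \sim\ }\pi_0(B_\lambda)$. Thus each component $A_\lambda\to B_\lambda$ is a $\pi_0$-isomorphism which is $n$-connective for some $n\geq 1$, so the hypothesis that $E$ is $k$-connective applies and gives that $E(A_\lambda)\to E(B_\lambda)$ is $(n+k)$-connective. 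By the observation of the previous paragraph, $E(f)$ is then $(n+k)$-connective in $\Pro(\Sp)$, as desired.

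The only substantial point is the passage from the hypothesis that $f$ is $n$-connective \emph{in} $\Pro(\Sp)$ to its representability by a level-wise $n$-connective level map; this is precisely \cref{lem:factorization-pro-connective}, which is where the genuine pro-categorical work sits, and which I would simply cite. Everything else is bookkeeping: one transports the per-level connectivity estimate furnished by the definition of a $k$-connective invariant back up to $\Pro(\Sp)$, using only that fibres and truncations there are computed level-wise. I do not expect any further obstacle.
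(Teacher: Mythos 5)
Your proof is correct and follows the paper's own route: the paper's proof is exactly "this follows immediately from Lemma~\ref{lem:factorization-pro-connective}", and you have simply spelled out the bookkeeping (level-wise application of $E$, the automatic $\pi_0$-isomorphism for $n\geq 1$, and the deduction for weak equivalences) that the paper leaves implicit. Your restriction to $n\geq 1$ matches the intended reading, since the definition of a $k$-connective invariant only concerns maps that are $n$-connective for some $n\geq 1$.
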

\begin{proof}
This follows immediately from Lemma~\ref{lem:factorization-pro-connective}.
\end{proof}

The following results are formulated for algebraic $K$-theory. They admit obvious generalizations for $k$-connective localizing invariants.
We  consider a pro-system of commutative diagrams of connective $\E_{1}$-rings indexed by $\Lambda$ as follows.
\begin{equation}
	\label{diag:pro-spectra}
	\begin{tikzcd}
	A_{\lambda} \ar[r] \ar[d] & B_{\lambda} \ar[d] \\
	A_{\lambda}' \ar[r] & B_{\lambda}'
	\end{tikzcd}
\end{equation}

\begin{thm}
	\label{thm:pro-excision}
Fix an integer $n \geq 1$.
Assume that the diagram of connective pro-$\E_{1}$-rings given by  \eqref{diag:pro-spectra} is weakly cartesian and that
the canonical map $\{  A'_{\lambda} \otimes_{A_{\lambda}} B_{\lambda} \} \to \{ B'_{\lambda}\}$ is weakly $n$-connective.
Then the diagram of pro-spectra
\[
\begin{tikzcd}
\{K(A_{\lambda}) \} \ar[d] \ar[r] & \{ K(B_{\lambda}) \} \ar[d] \\
\{K(A'_{\lambda})\} \ar[r] & \{ K(B'_{\lambda}) \}
\end{tikzcd}
\]
is weakly $n$-cartesian.
\end{thm}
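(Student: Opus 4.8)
The strategy is to reduce to a levelwise pullback square, apply \cref{main theorem} levelwise and pass to pro-objects, and then bound the connectivity of the resulting correction term using that algebraic $K$-theory is a $1$-connective localizing invariant (\cref{cor:connectivity-pro}).

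\smallskip
\noindent\emph{Reduction to levelwise pullback squares.} Put $D_\lambda := A'_\lambda\times_{B'_\lambda}B_\lambda$. Weak cartesianness of \eqref{diag:pro-spectra} says exactly that $\{A_\lambda\}\to\{D_\lambda\}$ is a weak equivalence of pro-$\E_1$-rings. Each $D_\lambda$ has homotopy concentrated in degrees $\geq-1$, and $\{\pi_{-1}D_\lambda\}\cong\{\pi_{-1}A_\lambda\}=0$, so $\{D_\lambda\}$ is bounded below and weakly connective; thus $\{A_\lambda\}$ maps by a weak equivalence to the connective pro-$\E_1$-ring $\{\tau_{\geq0}D_\lambda\}$, which in turn maps by a weak equivalence to $\{D_\lambda\}$. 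Using \cref{cor:connectivity-pro} for $K$-theory (together with \cref{lem:weak-equivalences} and \cref{lem:factorization-pro-connective} to handle the Postnikov bookkeeping for the non-connective term $\{D_\lambda\}$) one obtains $\{K(A_\lambda)\}\simeq\{K(D_\lambda)\}$. Moreover, by \cref{KSTlemma} applied along the weak equivalence $\{A_\lambda\}\to\{D_\lambda\}$, the canonical map $\{A'_\lambda\otimes_{A_\lambda}B_\lambda\}\to\{A'_\lambda\otimes_{D_\lambda}B_\lambda\}$ is a weak equivalence over $\{B'_\lambda\}$, so the multiplication map $\{A'_\lambda\otimes_{D_\lambda}B_\lambda\}\to\{B'_\lambda\}$ is still weakly $n$-connective. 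Hence we may assume from now on that \eqref{diag:pro-spectra} is a levelwise pullback square.

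\smallskip
\noindent\emph{The Main Theorem levelwise.} Applying \cref{main theorem} for each $\lambda$ produces, functorially in $\lambda$, a pro-$\E_1$-ring $\{C_\lambda\}$ with $C_\lambda=A'_\lambda\wtimes{A_\lambda}{B'_\lambda}B_\lambda$, fitting into the pro-version of \eqref{diag:natural-diagram}, and a pullback square of pro-spectra
\[
\begin{tikzcd}
\{K(A_\lambda)\} \ar[r]\ar[d] & \{K(B_\lambda)\} \ar[d] \\
\{K(A'_\lambda)\} \ar[r] & \{K(C_\lambda)\}.
\end{tikzcd}
\]
By \cref{1.13} and \cref{map out of weird ring}, the underlying pro-spectrum of $\{C_\lambda\}$ is $\{A'_\lambda\otimes_{A_\lambda}B_\lambda\}$ — in particular weakly connective — and the map $\{C_\lambda\}\to\{B'_\lambda\}$ is, on underlying pro-spectra, the multiplication map, which is weakly $n$-connective and, since $n\geq1$, a $\pi_0$-isomorphism. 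As $K$-theory is $1$-connective, \cref{cor:connectivity-pro} (again after passing to connective covers) yields that $\{K(C_\lambda)\}\to\{K(B'_\lambda)\}$ is $(n+1)$-connective.

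\smallskip
\noindent\emph{Conclusion.} The natural map $\{K(A'_\lambda)\}\times_{\{K(C_\lambda)\}}\{K(B_\lambda)\}\to\{K(A'_\lambda)\}\times_{\{K(B'_\lambda)\}}\{K(B_\lambda)\}$ has fibre $\Omega\,\fib\big(\{K(C_\lambda)\}\to\{K(B'_\lambda)\}\big)$, hence is $n$-connective; combining this with the pullback square above shows that $\{K(A_\lambda)\}\to\{K(A'_\lambda)\}\times_{\{K(B'_\lambda)\}}\{K(B_\lambda)\}$ is $n$-connective, i.e.\ the square of $K$-theory pro-spectra is weakly $n$-cartesian. (One could equally phrase the last two steps through the birelative $K$-theory using \cref{birelative-term}.) The main obstacle is the first step: the levelwise pullback of connective rings is only \emph{weakly} connective, and one must verify that replacing $\{A_\lambda\}$ by it does not change $K$-theory at the pro-level — this is precisely what \cref{lem:weak-equivalences}, \cref{lem:factorization-pro-connective} and \cref{cor:connectivity-pro} are designed to supply.
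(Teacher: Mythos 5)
Your proposal follows the same strategy as the paper's proof (form the levelwise pullback, transfer the connectivity hypothesis to it, apply \cref{main theorem} levelwise, and use $1$-connectivity of $K$-theory via \cref{cor:connectivity-pro} on the map to $\{B'_\lambda\}$), but the first step contains a genuine gap, and it is exactly the point you yourself flag as the main obstacle. The rings $D_\lambda=A'_\lambda\times_{B'_\lambda}B_\lambda$ are in general only $(-1)$-connective levelwise, and every result you cite to justify $\{K(A_\lambda)\}\simeq\{K(D_\lambda)\}$ --- \cref{lem:weak-equivalences}, \cref{lem:factorization-pro-connective}, \cref{cor:connectivity-pro} --- is a statement about (pro-systems of) \emph{connective} rings. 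Your sandwich $\{A_\lambda\}\to\{\tau_{\geq 0}D_\lambda\}\to\{D_\lambda\}$ only settles the first leg; the second leg $\{K(\tau_{\geq 0}D_\lambda)\}\to\{K(D_\lambda)\}$ is the actual difficulty: $K$-theory does not commute with connective covers (this is precisely where negative $K$-theory and localization phenomena live), there is no connectivity estimate on $\fib\bigl(K(\tau_{\geq 0}D)\to K(D)\bigr)$ in terms of $\tau_{\leq -1}D$, and the pro-triviality of $\{\pi_{-1}D_\lambda\}$ feeds into none of the cited lemmas. So ``Postnikov bookkeeping'' does not close this step, and the reduction ``we may assume the square is a levelwise pullback'' is unjustified as written.

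Two further points of comparison with the paper. First, the paper never replaces $\{A_\lambda\}$: it keeps it, uses the levelwise pullback $C_\lambda$ only to form $R_\lambda=A'_\lambda\wtimes{C_\lambda}{B'_\lambda}B_\lambda$, applies \cref{cor:connectivity-pro} solely to the map $\{R_\lambda\}\to\{B'_\lambda\}$ (whose source is connective), and transfers the hypothesis on the multiplication map by rewriting $A'_\lambda\otimes_{A_\lambda}B_\lambda\simeq A'_\lambda\otimes_{A_\lambda}C_\lambda\otimes_{C_\lambda}B_\lambda$ and using the section of $\{A'_\lambda\otimes_{A_\lambda}C_\lambda\}\to\{A'_\lambda\}$ together with two applications of \cref{KSTlemma}; your single appeal to \cref{KSTlemma} ``along the weak equivalence $\{A_\lambda\}\to\{D_\lambda\}$'' compares tensor products over two \emph{different} pro-rings, which the lemma as stated does not do, so you need this two-step argument as well. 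Second, your parenthetical ``(again after passing to connective covers)'' when estimating $\{K(C_\lambda)\}\to\{K(B'_\lambda)\}$ would reintroduce the same connective-cover problem; the correct observation is that $A'_\lambda\otimes_{D_\lambda}B_\lambda$ is already connective (its bar resolution has $(-k)$-connective $k$-simplices), so no cover is needed there. The comparison of $\{K(A_\lambda)\}$ with $K$ of the levelwise pullback is left implicit in the final line of the paper's proof as well, but the paper does not claim it follows from the connectivity lemmas; presenting it as such is the gap you would have to fill.
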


\begin{proof}
We define the $\E_{1}$-ring $C_{\lambda}$ via the pullback
\begin{equation*}
	\label{diag:pro-simplicial}
\begin{tikzcd}
 C_{\lambda} \ar[d]\ar[r] & B_{\lambda} \ar[d] \\ 
 A'_{\lambda} \ar[r] & B'_{\lambda}
\end{tikzcd}
\end{equation*}
By the first assumption 
the canonical map $\{A_{\lambda}\} \to \{ C_{\lambda} \}$ is a weak equivalence of pro-$\E_{1}$-rings.
We claim that also $\{ A'_{\lambda} \otimes_{A_{\lambda}} B_{\lambda} \} \to \{ A'_{\lambda} \otimes_{C_{\lambda}} B_{\lambda} \}$ is a weak equivalence. To see this, we write the source of this map as $\{ A'_{\lambda} \otimes_{A_{\lambda}} C_{\lambda} \otimes_{C_{\lambda}} B_{\lambda} \}$. Under this identification, the map is given by extension of scalars along $\{ C_{\lambda} \} \to \{ B_{\lambda}\} $ of the multiplication map $\{ A'_{\lambda} \otimes_{A_{\lambda}} C_{\lambda} \} \to \{ A'_{\lambda} \}$. By \cref{KSTlemma} it then suffices to show that the latter is a weak equivalence. This map has a section 
given by $\{ A'_{\lambda} \}  \simeq \{ A'_{\lambda} \otimes_{A_{\lambda}}  A_{\lambda} \} \to \{ A'_{\lambda} \otimes_{A_{\lambda}} C_{\lambda} \}$, and as $\{A_{\lambda}\} \to \{ C_\lambda \}$ is a weak equivalence, so is the former by \cref{KSTlemma} again.

We  set $R_{\lambda} = A'_{\lambda} \wtimes{C_{\lambda}}{B'_{\lambda}} B_{\lambda}$. 
Together with the above, the assumption implies that the map of pro-$\E_{1}$-rings $\{R_{\lambda}\} \to \{B'_{\lambda} \}$ is weakly $n$-connective, and hence the induced map on $K$-theory pro-spectra is weakly $(n+1)$-connective by Corollary~\ref{cor:connectivity-pro}. The result now follows by Theorem~\ref{main theorem}.
\end{proof}

In the special case of a Milnor square of discrete rings, the theorem in particular gives the following.

\begin{cor}\label{pro excision GH}
Let  $A \to B$ be a ring homomorphism sending the two-sided ideal $I \subseteq A$ isomorphically onto the two-sided ideal $J \subseteq B$. If the pro-Tor-groups $\{ \Tor_{i}^{A}(A/I^{\lambda}, B) \}_{\lambda\in \N}$ vanish for $i = 1, \dots, n-1$, then  the diagram of pro-spectra
\[
\begin{tikzcd}
K(A) \ar[d] \ar[r] & K(B) \ar[d] \\
\{K(A/I^{\lambda})\} \ar[r] & \{ K(B/J^{\lambda}) \}
\end{tikzcd}
\]
is weakly $n$-cartesian.
\end{cor}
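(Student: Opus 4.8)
The plan is to obtain this as a direct application of \cref{thm:pro-excision} to the pro-system of commutative squares
\[
\begin{tikzcd}
	A \ar[r] \ar[d] & B \ar[d] \\
	A/I^{\lambda} \ar[r] & B/J^{\lambda}
\end{tikzcd}
\]
indexed by $\lambda\in\N$, in which the two pro-$\E_1$-rings $\{A\}$ and $\{B\}$ in the top row are constant, and the bottom map is induced by the observation that $A\to B$ carries $I$ isomorphically onto $J$, hence $I^{\lambda}$ isomorphically onto $J^{\lambda}$, and in particular $I^{\lambda}$ into $J^{\lambda}$. To invoke \cref{thm:pro-excision} I must check that this pro-square is weakly cartesian and that the canonical map $\{(A/I^{\lambda})\otimes_{A}B\}\to\{B/J^{\lambda}\}$ is weakly $n$-connective.

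For the first point, each individual square is a Milnor square of discrete rings: the maps $A\to A/I^{\lambda}$ and $B\to B/J^{\lambda}$ are surjective, and $I^{\lambda}=\ker(A\to A/I^{\lambda})$ is carried isomorphically onto $J^{\lambda}=\ker(B\to B/J^{\lambda})$. Hence by \cref{ex:LT-squares}(i) each square is already a pullback square of $\E_1$-ring spectra, so the pro-square is levelwise cartesian, a fortiori weakly cartesian.

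For the second point, I would use the computation recorded in \cref{ex:LT-squares}(i): one has $\pi_{0}\bigl((A/I^{\lambda})\otimes_{A}B\bigr)\cong B/I^{\lambda}B\cong B/J^{\lambda}$, and the canonical map to $B/J^{\lambda}$ is the $0$-truncation. Its fibre $F_{\lambda}:=\tau_{\geq 1}\bigl((A/I^{\lambda})\otimes_{A}B\bigr)$ is therefore $1$-connective, and since $B$ is discrete one has $\pi_{i}F_{\lambda}\cong\Tor_{i}^{A}(A/I^{\lambda},B)$ for all $i\geq 1$. As the pro-spectrum $\{F_{\lambda}\}$ is bounded below, $\tau_{\leq n-1}\{F_{\lambda}\}$ vanishes in $\Pro(\Sp)$ if and only if the homotopy pro-groups $\{\Tor_{i}^{A}(A/I^{\lambda},B)\}_{\lambda}$ vanish for $i=1,\dots,n-1$ --- which is precisely the standing hypothesis. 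Thus $\{(A/I^{\lambda})\otimes_{A}B\}\to\{B/J^{\lambda}\}$ is $n$-connective, and \cref{thm:pro-excision} gives that the displayed square of $K$-theory pro-spectra is weakly $n$-cartesian.

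This argument has no genuine obstacle; it is purely a matter of matching the hypotheses of \cref{thm:pro-excision}. The only step deserving care is the identification in the second point of the homotopy pro-groups of the fibre of the multiplication map with the pro-Tor-groups, together with the elementary Postnikov-tower fact underlying \cref{lem:weak-equivalences} that on bounded-below pro-spectra the $(n-1)$-truncation is trivial exactly when the homotopy pro-groups vanish in degrees $\leq n-1$.
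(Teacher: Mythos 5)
Your proposal is correct and is exactly the paper's proof: the paper simply applies Theorem~\ref{thm:pro-excision} with $A_{\lambda}=A$, $A'_{\lambda}=A/I^{\lambda}$, $B_{\lambda}=B$, $B'_{\lambda}=B/J^{\lambda}$, leaving the hypothesis checks (levelwise Milnor squares are pullbacks; the fibre of $(A/I^{\lambda})\otimes_{A}B\to B/J^{\lambda}$ has homotopy pro-groups $\{\Tor_{i}^{A}(A/I^{\lambda},B)\}$) implicit, which you spell out correctly.
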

\begin{proof}
Apply Theorem~\ref{thm:pro-excision} with $A_{\lambda} = A$, $A'_{\lambda} = A/I^{\lambda}$, $B_{\lambda} = B$, and $B'_{\lambda} = B/J^{\lambda}$.
\end{proof}

For $n=\infty$ and under the stronger assumption that the  ideal $I$ is (rationally) pro-Tor-unital, i.e.~that the pro-Tor-groups $\{ \Tor_{i}^{\Z \ltimes I^{\lambda}}(\Z,\Z) \}$ vanish (rationally) for $i >0$, the same result was first proven rationally by Corti{\~n}as \cite[Theorem 3.16]{Cortinas} and later integrally by Geisser--Hesselholt \cite[Theorem~3.1]{GH2}, \cite[Theorem~1.1]{GH}. They use pro-versions of the method of Suslin and Wodzicki \cite{Suslin-Wod,Suslin}, which is based on the homology of affine groups.
Their results   can in fact be deduced from ours, similarly as in the proof of Theorem~\ref{thm:original-suslin}. 
Morrow \cite[Theorem 0.2]{Morrow-pro-unitality} proves that the ideal $I$ is pro-Tor-unital if and only if the pro-Tor groups $\{ \Tor^{A}_{i}(A/I^{\lambda}, A/I^{\lambda})\}$ vanish for $i >0$. Our condition is still weaker.

If we moreover assume that $A$ is commutative and noetherian, then the condition of the previous corollary is automatically satisfied.
The following was proven before by Morrow \cite[Corollary 2.4]{Morrow-pro-unitality} by noting that ideals in commutative noetherian rings satisfy the above vanishing condition for pro-Tor-groups and using  Geisser--Hesselholt's pro-excision theorem.

\begin{cor}\label{first cor pro exc}
Let $A$ be a commutative noetherian ring, and let  $A \to B$ be a ring homomorphism sending the ideal $I \subseteq A$ isomorphically onto the two-sided ideal $J \subseteq B$. Then  the diagram of pro-spectra
\[
\begin{tikzcd}
K(A) \ar[d] \ar[r] & K(B) \ar[d] \\
\{K(A/I^{\lambda})\} \ar[r] & \{ K(B/J^{\lambda}) \}
\end{tikzcd}
\]
is weakly cartesian.
\end{cor}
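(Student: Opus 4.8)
The plan is to deduce this directly from \cref{pro excision GH} by taking $n=\infty$; thus it suffices to prove that the pro-Tor-groups $\{\Tor_i^A(A/I^\lambda,B)\}_{\lambda\in\N}$ vanish for every $i\geq 1$. Equivalently, one must show that the canonical map of connective pro-$\E_1$-rings $\{A/I^\lambda\otimes_A B\}\to\{B/J^\lambda\}$ is a weak equivalence: its target is discrete, and on $\pi_0$ it is an isomorphism since $\varphi(I^\lambda)=J^\lambda$ under the given map $\varphi\colon A\to B$, whence $A/I^\lambda\otimes_A B\cong B/J^\lambda$.

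The key input is Morrow's theorem that ideals in commutative noetherian rings are pro-Tor-unital \cite[Theorem 0.3]{Morrow-pro-unitality}. Combined with \cite[Theorem 0.2]{Morrow-pro-unitality} this says precisely that the pro-system $\{\Tor_i^A(A/I^\lambda,A/I^\lambda)\}$ vanishes for $i>0$, i.e.\ that the multiplication map $\{A/I^\lambda\otimes_A A/I^\lambda\}\to\{A/I^\lambda\}$ is a weak equivalence of pro-$\E_1$-rings. The Artin--Rees lemma enters exactly here, and this is the one place where noetherianness of $A$ is used --- it guarantees that the modules $I^\lambda$ and $A/I^\lambda$ occurring in the argument are finitely generated.

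Next I would upgrade this to the statement about $B$ by means of a pro-analogue of \cref{lemma Tor-unitality}. The pro-square with corners $A$, $B$, $\{A/I^\lambda\}$, $\{B/J^\lambda\}$ is weakly cartesian; in fact it is levelwise cartesian, since its horizontal fibres form the levelwise isomorphism $\{I^\lambda\}\to\{J^\lambda\}$. Now one runs the diagram chase from the proof of \cref{lemma Tor-unitality} in $\Pro(\Sp)$: this category is stable, so tensoring the square over $A$ with $A/I^\lambda$ keeps it weakly (co)cartesian, and tensoring preserves weak equivalences of connective --- hence bounded below --- pro-modules by \cref{KSTlemma} together with \cref{lem:weak-equivalences}. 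Feeding in the hypothesis that $\{A/I^\lambda\otimes_A A/I^\lambda\}\to\{A/I^\lambda\}$ is a weak equivalence then yields that $\{A/I^\lambda\otimes_A B\}\to\{B/J^\lambda\}$ is a weak equivalence, which is what is needed to apply \cref{pro excision GH}.

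I expect the only real obstacle to be bookkeeping rather than substance: setting up the pro-version of \cref{lemma Tor-unitality} carefully (checking that all relevant pro-spectra are bounded below so that \cref{lem:weak-equivalences} applies, and that $(-)\otimes(-)$ preserves the weakly cocartesian square in question). One may also bypass \cref{lemma Tor-unitality} altogether and simply cite the implication ``$I$ pro-Tor-unital $\Rightarrow$ $\{\Tor_i^A(A/I^\lambda,B)\}=0$'' from \cite[Theorem 3.1]{GH2}, which is how Morrow deduces the present result as \cite[Corollary 2.4]{Morrow-pro-unitality}; the only genuinely new ingredient in our argument is that \cref{pro excision GH} takes the place of the Geisser--Hesselholt pro-excision theorem at the final step.
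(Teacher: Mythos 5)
Your proof is correct, but it takes a genuinely different route from the paper's. The paper verifies the hypothesis of \cref{pro excision GH} directly from an Artin--Rees-type statement: since $A$ is noetherian, $\{\Tor_i^A(A/I^\lambda,M)\}$ vanishes for $i>0$ for every finitely generated $A$-module $M$ \cite[Lemme X.11]{Andre}; applying this with $M=A/I$ and extending scalars along $A/I\to B/J$ via \cref{KSTlemma} gives the vanishing of $\{\Tor_i^A(A/I^\lambda,B/J)\}$, and applying it again with $M=J\cong I$ together with the Tor long exact sequence for $J\to B\to B/J$ yields $\{\Tor_i^A(A/I^\lambda,B)\}=0$ for $i>0$. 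You instead import Morrow's theorem that ideals in commutative noetherian rings are pro-Tor-unital, i.e.\ that $\{\Tor_i^A(A/I^\lambda,A/I^\lambda)\}=0$ for $i>0$, and transport this to $B$ by a pro-version of \cref{lemma Tor-unitality} run in the levelwise Milnor squares $(A,B,A/I^\lambda,B/J^\lambda)$; the bookkeeping you flag (levelwise cartesianness, \cref{KSTlemma}, \cref{lem:weak-equivalences}) does go through, so the argument is sound. The paper's route buys self-containedness and economy: its only noetherian input is Andr\'e's lemma (which is also the engine behind Morrow's theorem), and it verifies the weaker Tor-condition of \cref{pro excision GH} directly, without passing through pro-Tor-unitality, which the paper notes is a strictly stronger condition in general. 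Your route buys a conceptual reduction: it exhibits the corollary as Morrow's \cite[Corollary 2.4]{Morrow-pro-unitality} with \cref{pro excision GH} replacing the Geisser--Hesselholt pro-excision theorem, at the cost of formulating a pro-analogue of \cref{lemma Tor-unitality} that is not stated in the paper, though it follows from the same ingredients already used in the proof of \cref{thm:pro-excision}.
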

\begin{proof}
Since $A$ is noetherian, the pro-group $\{ \Tor^{A}_{i}(A/I^{\lambda}, M) \}$ vanishes for $i >0$ and any finitely generated $A$-module $M$ \cite[Lemme X.11]{Andre}. This implies that $\{A/I^{\lambda} \otimes_{A} A/I \} \to A/I$ is a weak equivalence. Extending scalars along $A/I \to B/J$ we deduce, using \cref{KSTlemma}, that also $\{ A/I^{\lambda} \otimes_{A} B/J \} \to B/J$ is a weak equivalence. Using the above  vanishing of pro-Tor-groups for $M = J \cong I$ and the long exact sequence of Tor-groups associated with $J \to B \to B/J$ we conclude that $\{ \Tor^{A}_{i}(A/I^{\lambda}, B) \}$ vanishes for every $i > 0$ as desired.
\end{proof}

Similarly, we can derive a pro-excision result for noetherian simplicial commutative rings from Theorem~\ref{thm:pro-excision}. Recall that a simplicial commutative ring $A$ is called noetherian if $\pi_{0}(A)$ is noetherian and each $\pi_{i}(A)$ is a finitely generated $\pi_{0}(A)$-module.
We now fix a commutative ring $R$ and a finite sequence $\bc = (c_{1}, \dots, c_{r})$ of elements in $R$. 
If $A$ is a simplicial commutative $R$-algebra, we set 
\[
A \mmod \bc = A \otimes_{R[x_{1}, \dots, x_{r}]} R
\]
where the ring map $R[x_{1}, \dots, x_{r}] \to R$ sends the variable $x_{i}$ to $c_{i}$, and the tensor product is, as always, derived. If $A$ is discrete, the homotopy groups $\pi_{i}(A \mmod \bc)$ are the Koszul homology groups $H_{i}(A; \bc)$.
For a positive integer $\mu$ we denote by $\bc(\mu)$ the sequence $(c_{1}^{\mu}, \dots, c_{r}^{\mu})$, and for an $R$-module $M$ we write $\bc(\mu)M$ for the submodule $c_{1}^{\mu}M + \dots + c_{r}^{\mu}M$ of $M$.
The following corollary is precisely \cite[Theorem 4.11]{KST} whose proof is based on an adaption of  Suslin's and Wodzicki's method to the noetherian and pro-simplicial setting. In \cite{KST} this is used to prove that $K$-theory satisfies pro-descent for abstract blow-up squares.

\begin{cor}\label{second cor pro exc}
Consider a morphism of pro-systems of noetherian simplicial commutative $R$-algebras $\phi\colon \{A_{\lambda}\} \to \{ B_{\lambda}\}$. Assume that $\phi$ induces an isomorphism 
\[
\{ \bc(\mu) \pi_{i}(A_{\lambda}) \}_{\lambda,\mu} \stackrel{\cong}{\lto} \{ \bc(\mu) \pi_{i}(B_{\lambda}) \}_{\lambda,\mu}
\]
for all $i \geq 0$. Then the diagram of pro-spectra
\[
\begin{tikzcd}
 \{ K(A_{\lambda}) \}_{\lambda} \ar[d]\ar[r] & \{ K(B_{\lambda}) \}_{\lambda} \ar[d] \\ 
 \{ K(A_{\lambda}\mmod \bc(\mu) ) \}_{\lambda,\mu} \ar[r] & \{ K(B_{\lambda}\mmod \bc(\mu) ) \}_{\lambda,\mu} 
\end{tikzcd}
\]
is weakly cartesian.
\end{cor}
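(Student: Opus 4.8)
The plan is to deduce the statement from Theorem~\ref{thm:pro-excision}. First I would assemble the data into the pro-system, indexed by $(\lambda,\mu)\in\Lambda\times\N$, of commutative squares
\[
\begin{tikzcd}
A_{\lambda} \ar[r]\ar[d] & B_{\lambda} \ar[d] \\
A_{\lambda}\mmod\bc(\mu) \ar[r] & B_{\lambda}\mmod\bc(\mu)
\end{tikzcd}
\]
where the top row is the given one, regarded as constant in $\mu$. Since $A_{\lambda}\mmod\bc(\mu)\simeq A_{\lambda}\otimes_{R}(R\mmod\bc(\mu))$, and similarly for $B_{\lambda}$, associativity of the derived tensor product provides a natural equivalence $(A_{\lambda}\mmod\bc(\mu))\otimes_{A_{\lambda}}B_{\lambda}\simeq B_{\lambda}\mmod\bc(\mu)$, which one checks is the multiplication map of Theorem~\ref{thm:pro-excision}. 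Hence the connectivity hypothesis on the multiplication map holds for every $n\geq 1$ --- it is in fact a levelwise equivalence --- and it remains only to show that the pro-system of squares above is weakly cartesian; applying Theorem~\ref{thm:pro-excision} for all $n\geq 1$ then yields the claim.

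To see that the pro-system of squares is weakly cartesian it is equivalent to show that the induced map on horizontal homotopy fibres
\[
\{\fib(A_{\lambda}\to A_{\lambda}\mmod\bc(\mu))\}_{\lambda,\mu}\lto\{\fib(B_{\lambda}\to B_{\lambda}\mmod\bc(\mu))\}_{\lambda,\mu}
\]
is a weak equivalence of pro-spectra. Both sides are levelwise connective (the vertical maps are $\pi_{0}$-surjective), hence bounded below, so by the Postnikov-tower observation made before Lemma~\ref{lem:weak-equivalences} it suffices to check that this map is an isomorphism on every homotopy pro-group. The crux is the natural pro-isomorphism
\[
\bigl\{\pi_{n}\,\fib(A_{\lambda}\to A_{\lambda}\mmod\bc(\mu))\bigr\}_{\lambda,\mu}\;\cong\;\bigl\{\bc(\mu)\,\pi_{n}(A_{\lambda})\bigr\}_{\lambda,\mu};
\]
granting it, the map of fibres becomes, pro-isomorphically, the map $\{\bc(\mu)\pi_{n}(A_{\lambda})\}\to\{\bc(\mu)\pi_{n}(B_{\lambda})\}$, which is an isomorphism by hypothesis.

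For the displayed pro-isomorphism I would write $A_{\lambda}\mmod\bc(\mu)$ as the Koszul complex $\mathrm{Kos}(A_{\lambda};c_{1}^{\mu},\dots,c_{r}^{\mu})$ and analyse the long exact sequence of the fibre sequence $\fib\to A_{\lambda}\to A_{\lambda}\mmod\bc(\mu)$, which presents $\pi_{n}(\fib)$ as an extension of $\ker(\pi_{n}(A_{\lambda})\to\pi_{n}(A_{\lambda}\mmod\bc(\mu)))$ by $\mathrm{coker}(\pi_{n+1}(A_{\lambda})\to\pi_{n+1}(A_{\lambda}\mmod\bc(\mu)))$. The homotopy spectral sequence of the Koszul complex has $E^{2}$-terms the Koszul homology groups $H_{p}(\mathrm{Kos}(\pi_{q}(A_{\lambda});\bc(\mu)))$; all the $p\geq 1$ terms $\{H_{p}(\mathrm{Kos}(\pi_{q}(A_{\lambda});\bc(\mu)))\}_{\mu}$ are pro-zero, so pro-isomorphically $\pi_{m}(A_{\lambda}\mmod\bc(\mu))\cong\pi_{m}(A_{\lambda})/\bc(\mu)\pi_{m}(A_{\lambda})$ with the natural map from $\pi_{m}(A_{\lambda})$ being the quotient projection. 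Substituting this into the long exact sequence makes the cokernel pro-zero and identifies the kernel with $\bc(\mu)\pi_{n}(A_{\lambda})$, as wanted. The pro-vanishing of the higher Koszul homologies is exactly where the noetherian hypothesis enters: $\pi_{0}(A_{\lambda})$ is noetherian and each $\pi_{q}(A_{\lambda})$ is finitely generated over it, so the standard ascending-chain/Artin--Rees argument applies, as in \cite[\S4]{KST} and \cite[Lemme~X.11]{Andre}.

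I expect the main obstacle to be precisely this last point --- keeping track of all the error terms (the higher Koszul homologies of each $\pi_{q}(A_{\lambda})$ and the differentials in the homotopy spectral sequence of $\mathrm{Kos}(A_{\lambda};\bc(\mu))$) and checking that they assemble into pro-zero systems over the double index category $\Lambda\times\N$, naturally in $\phi$. The remaining ingredients --- constructing the squares, recognising the multiplication map as an equivalence, and the final appeal to Theorem~\ref{thm:pro-excision} --- are formal.
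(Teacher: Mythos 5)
Your proposal is correct and takes essentially the same route as the paper: index everything over $(\lambda,\mu)$, observe that $(A_\lambda\mmod\bc(\mu))\otimes_{A_\lambda}B_\lambda\simeq B_\lambda\mmod\bc(\mu)$ so the connectivity hypothesis of Theorem~\ref{thm:pro-excision} holds trivially, and reduce weak cartesianness of the square of pro-rings to the pro-isomorphism $\{\pi_i\fib(A_\lambda\to A_\lambda\mmod\bc(\mu))\}_\mu\cong\{\bc(\mu)\pi_i(A_\lambda)\}_\mu$. The only difference is that the paper simply cites \cite[Lemma 4.10]{KST} for this identification (where noetherianity enters), whereas you sketch its proof via the Koszul homology spectral sequence and an Artin--Rees type argument, which is sound.
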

\begin{proof}
In order to apply Theorem \ref{thm:pro-excision}, we view $\{A_{\lambda}\}$ as a pro-system indexed by $(\lambda,\mu) \in \Lambda \times \N$ which is constant in the $\mu$-direction and consider the square
\begin{equation}
	\label{diag:pro-simplicial-2}
\begin{tikzcd}
\{  A_{\lambda} \} \ar[d]\ar[r] & \{ B_{\lambda}  \}\ar[d] \\ 
\{  A_{\lambda} \mmod \bc(\mu)  \}\ar[r] & \{ B_{\lambda} \mmod \bc(\mu)  \}
\end{tikzcd}
\end{equation}
Since $A_{\lambda}$ is noetherian, \cite[Lemma 4.10]{KST} implies that 
\[
\{ \bc(\mu) \pi_{i}(A_{\lambda}) \}_{\mu} \cong \{ \pi_{i}(\fib( A_{\lambda} \to A_{\lambda} \mmod \bc(\mu) ) ) \}_{\mu}
\]
and similarly for $\{B_{\lambda}\}$. Hence the square \eqref{diag:pro-simplicial-2} is weakly cartesian. Furthermore, since 
\[
(A_{\lambda} \mmod \bc(\mu) ) \otimes_{A_{\lambda}} B_{\lambda} 
	\simeq (A_{\lambda} \otimes_{R[x_{1}, \dots, x_{r}]} R) \otimes_{A_{\lambda}} B_{\lambda}
	\simeq B_{\lambda} \otimes_{R[x_{1}, \dots, x_{r}]} R 
	\simeq B_{\lambda} \mmod \bc(\mu),
\]
the second assumption in Theorem~\ref{thm:pro-excision} is trivially satisfied. 
\end{proof}

\begin{rem}\label{rem after cor}
As noted above, Corollaries~\ref{first cor pro exc} and \ref{second cor pro exc} remain valid when replacing $K$-theory with any localizing invariant which is $k$-connective for some $k$ as for example topological Hochschild homology.
\end{rem}

\section{Applications to truncating invariants}
	\label{sec:applications to truncating invariants}

The main goal of this section is to show that truncating invariants satisfy excision and nilinvariance, and to reprove the excision theorems of Cuntz--Quillen, Corti{\~n}as, Geisser--Hesselholt and Dundas--Kittang.
Recall that for a localizing invariant $E$ and an $\E_{1}$-ring $A$, we write $E(A)$ for $E(\Perf(A))$.

\begin{dfn}
	\label{def:truncating-nil-excision}
Let $E \colon \Cat_\infty^\perf \to \cT$ be a localizing invariant. Then $E$ is said to be \emph{truncating} if for every connective $\E_1$-ring spectrum $A$, the canonical map $ E(A) \to E(\pi_0(A)) $
is an equivalence. It is said to be \emph{nilinvariant} if for every nilpotent two-sided ideal $I \subseteq A$ in a discrete unital ring $A$, the canonical map 
$ E(A) \to E(A/I) $
is an equivalence. Finally, $E$ is said to be \emph{excisive}, or to satisfy \emph{excision}, if it sends the  diagram  of $\E_{1}$-ring spectra \eqref{diag:ring-spectra2}
to a pullback square
\[
\begin{tikzcd}
 E(A) \ar[d]\ar[r] & E(B) \ar[d] \\ 
 E(A') \ar[r] & E(B') 
\end{tikzcd}
\]
in $\cT$, provided the square \eqref{diag:ring-spectra2} satisfies the following two conditions:
	\begin{enumerate}
	\item[(E1)] The square \eqref{diag:ring-spectra2} is cartesian and all  $\E_{1}$-rings in it are connective.
	\item[(E2)] The induced map $\pi_{0}(A' \otimes_{A} B) \to \pi_{0}(B')$ is an isomorphism.
	\end{enumerate}
\end{dfn}

\begin{rem}
	\label{rem:more-LT-squares}
Conditions (E1) and (E2) in Definition~\ref{def:truncating-nil-excision} are satisfied for all classes of squares discussed in Example~\ref{ex:LT-squares}, in particular for Milnor squares. Hence an excisive invariant sends a Milnor square to a pullback square. This is what is often called excision classically.

More generally, assume that \eqref{diag:ring-spectra2} is a pullback diagram of connective $\E_1$-ring spectra such that $\pi_0(B) \to \pi_0(B')$ is surjective. Then conditions (E1) and (E2) are satisfied. Indeed,  the connectivity assumption implies that the canonical map 
$  \pi_0(A') \otimes_{\pi_0(A)} \pi_0(B)  \to \pi_0(A'\otimes_A B) $ is an isomorphism. Moreover, as \eqref{diag:ring-spectra2} is a pullback square, and as the map $\pi_{0}(B) \to \pi_{0}(B')$ is surjective,  also the map $\pi_{0}(A) \to \pi_{0}(A')$ is surjective and  the kernel of the latter surjects onto the kernel of the former. This implies
\[
\pi_0(A') \otimes_{\pi_0(A)} \pi_0(B) \cong \pi_{0}(B) / \ker(\pi_{0}(B) \to \pi_{0}(B') ) \cong \pi_{0}(B')
\]
as needed.
\end{rem}

\begin{thm}\label{thm:truncating-excisive}
Any truncating invariant satisfies excision.
\end{thm}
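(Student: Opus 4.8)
The plan is to deduce this directly from the Main Theorem. Fix a truncating invariant $E$ and a square \eqref{diag:ring-spectra2} satisfying (E1) and (E2). By Theorem~\ref{main theorem}, applying $E$ to the inner square of \eqref{diag:natural-diagram} produces a pullback square with corners $E(A)$, $E(B)$, $E(A')$ and $E(A' \wtimes{A}{B'} B)$. The target square we must show is cartesian has the same three corners $E(A)$, $E(B)$, $E(A')$ and fourth corner $E(B')$, and the map $A' \wtimes{A}{B'} B \to B'$ of diagram~\eqref{diag:natural-diagram} induces a natural transformation from the former square to the latter which is the identity on three of the four vertices. Hence, if the fourth component $E(A' \wtimes{A}{B'} B) \to E(B')$ is an equivalence, then this transformation is an equivalence of squares and the target square is a pullback as well. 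So it suffices to prove that
\[
E(A' \wtimes{A}{B'} B) \lto E(B')
\]
is an equivalence.

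First I would observe that $A' \wtimes{A}{B'} B$ is a connective $\E_1$-ring: by the last clause of Theorem~\ref{main theorem} its underlying spectrum is $A' \otimes_A B$, which is connective because $A'$ and $B$ are connective modules over the connective ring $A$ by (E1). Consequently, since $E$ is truncating, the canonical maps $E(A' \wtimes{A}{B'} B) \to E(\pi_{0}(A' \wtimes{A}{B'} B))$ and $E(B') \to E(\pi_{0}(B'))$ are equivalences, and by naturality of the truncation map they fit into a commutative square identifying the displayed map, after these equivalences, with $E$ applied to the map $\pi_{0}(A' \wtimes{A}{B'} B) \to \pi_{0}(B')$ induced by $A' \wtimes{A}{B'} B \to B'$.

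Next I would identify this map on $\pi_{0}$. By Proposition~\ref{prop:identification-boxtimes-B} we have $\pi_{0}(A' \wtimes{A}{B'} B) \cong \pi_{0}(A' \otimes_A B)$, and by Proposition~\ref{map out of weird ring} the $\E_1$-map $A' \wtimes{A}{B'} B \to B'$ is the multiplication map $A' \otimes_A B \to B'$; hence on $\pi_{0}$ it is precisely the map $\pi_{0}(A' \otimes_A B) \to \pi_{0}(B')$ appearing in condition (E2), which is an isomorphism by hypothesis. An isomorphism of discrete rings induces an equivalence on categories of perfect modules, so $E$ sends it to an equivalence. Combining the three steps shows that $E(A' \wtimes{A}{B'} B) \to E(B')$ is an equivalence, and therefore, as explained above, the square of $E$-values attached to \eqref{diag:ring-spectra2} is cartesian.

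The point to be careful about — though it is not really an obstacle, given the preparatory results — is that all the facts about $A' \wtimes{A}{B'} B$ used here (connectivity, the value of $\pi_{0}$, and the identification of the map to $B'$ with the multiplication map) are already furnished by the Main Theorem together with Propositions~\ref{prop:identification-boxtimes-B} and~\ref{map out of weird ring}; once these are invoked the argument is a short formal manipulation with no further input. In particular only (E1) and (E2) are used, so by Remark~\ref{rem:more-LT-squares} the same proof shows that a truncating invariant sends every Milnor square, and more generally every square of connective $\E_1$-rings for which $\pi_{0}(B)\to\pi_{0}(B')$ is surjective, to a pullback square.
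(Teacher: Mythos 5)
Your proposal is correct and follows essentially the same route as the paper: the paper's proof simply notes that, since $E$ is truncating and the map $\pi_0(A'\otimes_A B)\to\pi_0(B')$ is an isomorphism by (E2), the map $E(A'\wtimes{A}{B'}B)\to E(B')$ is an equivalence, and then concludes by the Main Theorem. You have merely spelled out the details (connectivity of $A'\wtimes{A}{B'}B$ and the identification of the map on $\pi_0$ via Propositions~\ref{prop:identification-boxtimes-B} and~\ref{map out of weird ring}) that the paper compresses into the word ``clearly''.
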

\begin{proof}
Clearly, for any truncating invariant $E$ and any  square of $\E_{1}$-ring spectra \eqref{diag:ring-spectra2} satisfying (E1) and (E2) the map $E(A' \wtimes{A}{B'} B) \to E(B')$ is an equivalence. We thus conclude by \cref{main theorem}.
\end{proof}

\begin{rem}
	\label{rem:k-linear-excision}
If $k$ is some $\E_{\infty}$-ring, and if $E$ is a truncating invariant defined on $k$-linear $\infty$-categories as in Remark~\ref{linear categories}, then $E$ sends  any diagram of connective $k$-algebras \eqref{diag:ring-spectra2} satisfying (E1) and (E2) to a pullback square. We say that $E$ satisfies excision on $k$-algebras.
\end{rem}

\begin{cor}\label{truncating => nil}
Any truncating invariant is nilinvariant. 
\end{cor}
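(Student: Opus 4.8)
The plan is to deduce nilinvariance from excision (Theorem~\ref{thm:truncating-excisive}), reusing the square-zero construction that appears in the proof of Theorem~\ref{thm:torsion-relative-K-groups}. First I would reduce to the case of a square-zero ideal. If $I\subseteq A$ is nilpotent with $I^{n}=0$, I induct on $n$: the case $n\le 1$ is trivial, and for $n\ge 2$ the ideal $I^{n-1}$ is square-zero (since $(I^{n-1})^{2}=I^{2n-2}\subseteq I^{n}=0$) while $I/I^{n-1}$ is a nilpotent ideal of $A/I^{n-1}$ of nilpotency index $\le n-1$ with quotient $A/I$. Thus $E(A)\to E(A/I^{n-1})\to E(A/I)$ is a composite of equivalences, the first by the square-zero case and the second by the inductive hypothesis.

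So assume $I^{2}=0$. I would then invoke the pullback square~\eqref{diag:CIA-pullback} of connective $\E_{1}$-rings built from the dgas $C(I,A)=[I\stackrel{i}{\lto}A]$ and $C(I,A/I)=[I\stackrel{0}{\lto}A/I]$ of~\eqref{eq:CIA}, whose vertical maps are the inclusions in degree $0$. Since $\pi_{0}$ of the right-hand vertical map $A/I\to C(I,A/I)$ is the identity of $A/I$, in particular surjective, Remark~\ref{rem:more-LT-squares} shows that this square satisfies conditions (E1) and (E2). Hence Theorem~\ref{thm:truncating-excisive} applies and $E$ carries~\eqref{diag:CIA-pullback} to a pullback square of spectra.

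It then remains to observe that the bottom horizontal map $E(C(I,A))\to E(C(I,A/I))$ of this pullback square is an equivalence, which forces the top map $E(A)\to E(A/I)$ to be one as well. This is the only place where truncatedness is used: both $C(I,A)$ and $C(I,A/I)$ have $\pi_{0}$ equal to $A/I$, and the map between them induces the identity on $\pi_{0}$; since $E$ is truncating, the canonical maps $E(C(I,A))\to E(A/I)$ and $E(C(I,A/I))\to E(A/I)$ are equivalences, so by their naturality the map in question is identified with $E(\id_{A/I})$. I do not expect a serious obstacle here; the one point that needs care is the verification that~\eqref{diag:CIA-pullback} meets the hypotheses of excision, which is exactly the discrete square-zero instance of Remark~\ref{rem:more-LT-squares}. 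It is worth emphasizing why the nilpotency induction is genuinely necessary: for a non-square-zero ideal $I$ the complex $[I\to A]$ carries no dga structure, so the construction~\eqref{eq:CIA} is unavailable and the reduction in the first step cannot be bypassed.
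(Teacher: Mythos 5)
Your proof is correct and follows essentially the same route as the paper: reduce by induction to the case $I^{2}=0$, apply Theorem~\ref{thm:truncating-excisive} to the square \eqref{diag:CIA-pullback}, and use that $E$ is truncating together with the fact that $C(I,A)\to C(I,A/I)$ is a $\pi_{0}$-isomorphism to conclude that the lower horizontal map, and hence the upper one, is an equivalence. (Only your closing aside is slightly off: $[I\to A]$ carries a dga structure for any two-sided ideal; the square-zero hypothesis is really needed so that $I$ becomes an $A/I$-bimodule and $C(I,A/I)$ in \eqref{eq:CIA} can be formed, so the reduction is indeed necessary, just for a different reason.)
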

\begin{proof}
The proof is similar to that of Theorem~\ref{thm:torsion-relative-K-groups}.
By induction  we may assume that $I^{2} = 0$. 
Then we  form the connective differential graded algebras $C(I,A)$ and $C(I,A/I)$ as in \eqref{eq:CIA} and the pullback diagram \eqref{diag:CIA-pullback}. Since the map $C(I,A) \to C(I,A/I)$ is a $\pi_{0}$-isomorphism, we may apply  \cref{thm:truncating-excisive} to get the following pullback diagram.
\[
\begin{tikzcd}
	E(A) \ar[r] \ar[d] & E(A/I) \ar[d] \\ 
	E(C(I,A)) \ar[r] & E(C(I,A/I))
\end{tikzcd}
\]
Using that $E$ is truncating and the fact that $C(I,A) \to C(I,A/I)$ is a $\pi_0$-isomorphism again, we find that the lower horizontal map in this pullback is an equivalence. Thus so is the upper horizontal map as claimed.
\end{proof}

From Theorem \ref{thm:truncating-excisive} we obtain simple direct proofs of several  previously known excision results.
Our main new application is the following. 
We denote by $K^\inv$  the fibre of the  cyclotomic trace $K \to \TC$ from $K$-theory to integral topological cyclic homology.

\begin{cor}\label{excision for Kinv}
The fibre    of the  cyclotomic trace $K^\inv$ satisfies excision.
\end{cor}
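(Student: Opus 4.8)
The plan is to deduce this from Theorem~\ref{thm:truncating-excisive}: it suffices to check that $K^{\inv}$ is a truncating localizing invariant. First I would record that $K^{\inv}$ is a localizing invariant at all. Non-connective algebraic $K$-theory is a localizing invariant in the sense of Definition~\ref{localizing invariants}, and so is topological cyclic homology $\TC$, viewed as a functor $\Cat^{\perf}_{\infty} \to \Sp$. Since fibre sequences in $\Sp$ are detected pointwise, and the class of functors sending exact sequences of stable $\infty$-categories to fibre sequences is closed under taking fibres of natural transformations, the fibre $K^{\inv} = \fib(K \to \TC)$ of the cyclotomic trace is again a localizing invariant.

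The substantive point is that $K^{\inv}$ is truncating, i.e.\ that for every connective $\E_{1}$-ring spectrum $A$ the canonical map $K^{\inv}(A) \to K^{\inv}(\pi_{0}(A))$ is an equivalence. Unwinding the definition of $K^{\inv}$ as a fibre, this says exactly that the square
\[
\begin{tikzcd}
K(A) \ar[r] \ar[d] & K(\pi_{0}(A)) \ar[d] \\
\TC(A) \ar[r] & \TC(\pi_{0}(A))
\end{tikzcd}
\]
is cartesian, which is the theorem of Dundas--Goodwillie--McCarthy: the cyclotomic trace induces an equivalence on the relative terms of any map of connective $\E_{1}$-ring spectra which is an isomorphism on $\pi_{0}$. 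If one prefers to reduce to the classical square-zero case, one writes $A \simeq \lim_{n} \tau_{\leq n}A$, notes that each transition map $\tau_{\leq n}A \to \tau_{\leq n-1}A$ is a square-zero extension by $\pi_{n}(A)[n]$ and hence for $n \geq 1$ a $\pi_{0}$-isomorphism, applies the square-zero form of the Dundas--Goodwillie--McCarthy theorem at each finite stage, and passes to the limit. Granting this, Theorem~\ref{thm:truncating-excisive} applies verbatim and shows that $K^{\inv}$ is excisive.

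The only genuine obstacle is the appeal to the Dundas--Goodwillie--McCarthy theorem, which we use here as a black box; every remaining step is formal manipulation with localizing invariants and the already-established Theorem~\ref{thm:truncating-excisive}. I would also note in passing that, by Corollary~\ref{truncating => nil}, the same input shows that $K^{\inv}$ is nilinvariant.
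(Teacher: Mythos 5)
Your proposal is correct and follows the same route as the paper: observe that $K^{\inv}$ is a localizing invariant because $K$ and $\TC$ are and the cyclotomic trace is a transformation of such, invoke the Dundas--Goodwillie--McCarthy theorem to see that $K^{\inv}$ is truncating, and conclude by Theorem~\ref{thm:truncating-excisive}. The optional Postnikov-tower reduction is fine but unnecessary, since the Dundas--Goodwillie--McCarthy theorem applies directly to the $\pi_{0}$-isomorphism $A \to \pi_{0}(A)$.
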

\begin{proof}
Both $K$-theory and $\TC$ are localizing invariants and the cyclotomic trace is a natural transformation of such, see Corollary 19 in Nikolaus's lecture and Corollary 16 in Gepner's lecture in \cite{OWR-TC}. Thus $K^\inv$ is localizing. The main result of Dundas--Goodwillie--McCarthy  \cite[Theorem 7.0.0.2]{DGM}  implies that $K^\inv$ is truncating.
\end{proof}

\begin{rem}
After profinite completion the same result was proven by Dundas--Kittang \cite{DK1} building on work of Geisser--Hesselholt \cite{GH} in the discrete case. Using also Corti{\~n}as' rational analogue of Corollary~\ref{excision for Kinv} (see Corollary~\ref{cor:Cortinas} below), Dundas and Kittang prove a slightly weaker integral result in \cite{DK2}, namely that $K^{\inv}$ sends the pullback diagram \eqref{diag:ring-spectra2} of connective $\E_{1}$-ring spectra to a pullback if both maps $A' \to B'$ and $B \to B'$ are surjective on $\pi_{0}$. For technical reasons, our result is not obtainable with their method, see \cite[Remark 1.2(3)]{DK2}.
\end{rem}

\begin{ex}
	\label{ex:non-nil}
If $E$ is a truncating invariant, and the ideal $I$ in the discrete ring $A$ is locally nilpotent, i.e.~every element of $I$  is nilpotent, the map $E(A) \to E(A/I)$ need not be an equivalence, as the following example shows.
Let $k$ be a perfect field of characteristic $p >0$, and consider $A = k[x^{p^{-\infty}}]/(x) = \colim_n k[x]/(x^{p^n})$ where the transition maps send $x$ to $x^{p}$. The kernel of the canonical map $A \to k$ is  locally nilpotent, but  not  nilpotent. If $K^\inv(A) \to K^\inv(k)$ were an equivalence,  the $p$-completeness of $\TC(A)$ and $\TC(k)$ would imply that also the relative $K$-theory $K(A,k) \simeq \colim K(k[x]/(x^{p^n}),k)$ is $p$-complete. This is, however, not the case: First we recall from \cite[Theorem A]{HM} that $K_{2}(k[x]/(x^{p^n}),k) = 0$ for all $n$, hence $K_2(A,k) = 0$. We will argue that $\pi_2(K(A,k)^\swedge_p) \neq 0$ which shows that $K(A,k)$ is not $p$-complete. We observe that there is a surjection 
\[ 
\pi_2(K(A,k)^\swedge_p) \lto \lim\limits_{n} \pi_2(K(A,k)/p^n) \cong \lim\limits_n K_{1}(A,k)[p^n] 
\]
where the brackets denote the kernel of multiplication by $p^n$. The last isomorphism comes from the fact that  $K_2(A,k) = 0$. Since the map $A \to k$ has a section, we have 
$K_{1}(A,k)[p^n] \cong \ker( K_1(A)[p^n] \to K_1(k))$. 
The compatible system of units $(1+x^{p^{-n}})_{n\geq 0}$ gives a non-trivial element in $\lim_{n} K_{1}(A,k)[p^n] $,
so that $\pi_2(K(A,k)^\swedge_p) \neq 0$ as claimed.
\end{ex}

Next we want to deduce  Corti{\~n}as' rational analogue of Corollary~\ref{excision for Kinv}, of which it was an important precursor,   from \cref{thm:truncating-excisive}.
To do so, we first have to explain how cyclic homology and its variants fit into our framework.
For this, let $k$ be an $\E_\infty$-ring and consider the category $\Cat_\infty^k$ of $k$-linear  $\infty$-categories as in \cref{linear categories}. As discussed in \cite{Hoyois}, there is a functor 
\[ \HH(-/k) \colon \Cat_\infty^k \lto \Fun(\mathrm{B}\mathbb{T},\Mod_k) \]
sending a $k$-linear  $\infty$-category $\cC$ to its Hochschild homology relative to $k$, equipped with its canonical $\mathbb{T}$-action. 
Following the argument of Keller \cite{Keller} or Blumberg--Mandell \cite[Theorem 7.1]{BM} one  proves that $\HH(-/k)$ takes exact sequences of   $k$-linear  $\infty$-categories to fibre sequences in $\Mod(k)$, so  $\HH(-/k)$ is a localizing invariant of  $k$-linear  $\infty$-categories. It follows that $\HC(-/k)$, $\HN(-/k)$, respectively $\HP(-/k)$ are also localizing invariants for  $k$-linear  $\infty$-categories: They are obtained by applying the exact functors 
of orbits $(-)_{h\T}$, fixed points $(-)^{h\T}$, respectively the Tate construction $(-)^{t\T}$ under the circle group $\T$ to $\HH(-/k)$.
If $k$ is a discrete commutative ring and $\cC = \Perf(A)$ for some $k$-algebra $A$, then these constructions reproduce  Hochschild,  cyclic, respectively   periodic cyclic homology of $A$ relative to $k$, see \cite[Theorem 2.1]{Hoyois}, where we always mean the derived version of these theories.

The composition
\[ 
	\Cat_\infty^\perf \lto \Cat_{\infty}^\Q \xrightarrow{\HN(-/\Q)}  \Mod(\Q)
\]
of extension of scalars from $\mathbb{S}$ to $\Q$ and $\HN(-/\Q)$ is thus a localizing invariant and will be denoted by $\HN_\Q$. We will write $K_{\Q}$ for rational $K$-theory, i.e.\ for the localizing invariant that sends a small, stable $\infty$-category $\cC$ to $K(\cC) \otimes \Q$.

The Goodwillie--Jones Chern character gives a natural transformation $K_{\Q} \to \HN_{\Q}$.  We let $K_\Q^\inf$ be its fibre and call it \emph{rational infinitesimal $K$-theory}. For discrete rings, the following result is due to Corti\~{n}as, see \cite[Main Theorem 0.1]{Cortinas}.

\begin{cor}
	\label{cor:Cortinas}
Rational infinitesimal $K$-theory $K^{\inf}_{\Q}$ satisfies excision.
\end{cor}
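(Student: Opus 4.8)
The plan is to argue exactly as in the proof of Corollary~\ref{excision for Kinv}: first show that $K^{\inf}_{\Q}$ is a localizing invariant, then that it is truncating, and finally invoke Theorem~\ref{thm:truncating-excisive}.

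For the first step I would use the discussion preceding the corollary: $K_{\Q}$ and $\HN_{\Q}$ are localizing invariants $\Cat^{\perf}_{\infty} \to \Mod(\Q)$, and the Goodwillie--Jones Chern character is a natural transformation $K_{\Q} \Rightarrow \HN_{\Q}$. Given an exact sequence $\cA \to \cB \to \cC$ in $\Cat^{\perf}_{\infty}$, applying $K_{\Q}$ and $\HN_{\Q}$ yields a map of fibre sequences in $\Mod(\Q)$. Since the functor $\fib\colon \Fun(\Delta^{1},\Mod(\Q)) \to \Mod(\Q)$ is exact and $K^{\inf}_{\Q}$ is by definition the fibre of the Chern character, applying $\fib$ to this map of fibre sequences produces the fibre sequence $K^{\inf}_{\Q}(\cA) \to K^{\inf}_{\Q}(\cB) \to K^{\inf}_{\Q}(\cC)$. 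Hence $K^{\inf}_{\Q}$ is localizing.

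Next I would show that $K^{\inf}_{\Q}$ is truncating, i.e.\ that $K^{\inf}_{\Q}(A) \to K^{\inf}_{\Q}(\pi_{0}(A))$ is an equivalence for every connective $\E_{1}$-ring spectrum $A$; equivalently, the Goodwillie--Jones character induces an equivalence $K_{\Q}(A,\pi_{0}(A)) \simeq \HN_{\Q}(A,\pi_{0}(A))$ on relative terms. For nilpotent, and in particular square-zero, extensions of discrete rings this is Goodwillie's theorem \cite{Goodwillie}. The general connective case --- where $\fib(A \to \pi_{0}(A))$ is merely $1$-connective --- I would reduce to this one along the Postnikov tower $A \simeq \lim_{n}\tau_{\leq n}(A)$, each stage $\tau_{\leq n}(A) \to \tau_{\leq n-1}(A)$ being a square-zero extension of connective $\E_{1}$-rings, using the connectivity estimate of Lemma~\ref{lemma:waldhausen-E1} for rational $K$-theory together with its analogue for $\HN_{\Q}$ to control the relative terms along the tower. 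Once $K^{\inf}_{\Q}$ is known to be truncating, Theorem~\ref{thm:truncating-excisive} yields excision, which is the assertion of the corollary.

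The formal parts --- that $K^{\inf}_{\Q}$ is localizing and that a truncating localizing invariant is excisive --- are immediate from the above. The substantive input is the truncating property: having Goodwillie's classical theorem available, and, if one insists on the full generality of connective $\E_{1}$-ring spectra, carrying out the Postnikov-tower reduction to it. This is where I expect the real work to lie.
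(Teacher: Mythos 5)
Your overall skeleton (localizing $+$ truncating $+$ Theorem~\ref{thm:truncating-excisive}) is the paper's, and the localizing step is fine. The gap is in your argument for the truncating property. You propose to reduce the statement for a general connective $\E_1$-ring $A$ to Goodwillie's theorem for nilpotent extensions of \emph{discrete} rings by climbing the Postnikov tower. But the stages $\tau_{\leq n}(A) \to \tau_{\leq n-1}(A)$ are square-zero extensions of connective $\E_1$-\emph{ring spectra} with fibre $\pi_n(A)[n]$ concentrated in degree $n\geq 1$; they are not nilpotent extensions of discrete rings, so the induction never lands in the case your cited theorem covers, and the statement you would need at each stage (invariance of $K^{\inf}_{\Q}$ under square-zero extensions of $\E_1$-ring spectra) is essentially the assertion to be proved. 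Note also that Goodwillie's Main Theorem is in fact already a statement about \emph{simplicial} rings (maps inducing a surjection with nilpotent kernel on $\pi_0$), so for connective $H\Z$-algebras it applies to $A \to \pi_0(A)$ directly, with no Postnikov induction needed; the genuinely missing ingredient is the passage from simplicial rings to arbitrary connective $\E_1$-ring spectra, whose truncations need not be $H\Z$-algebras (e.g.\ $\tau_{\leq n}\bbS$). Your tower argument does not supply this bridge.

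The paper closes exactly this gap differently: it quotes Goodwillie's theorem in its simplicial-ring form to conclude that $K^{\inf}_{\Q}(R) \to K^{\inf}_{\Q}(\pi_0(R))$ is an equivalence, and handles general connective $\E_1$-rings by applying Goodwillie to $A \otimes H\Z$ and invoking a comparison of the relative terms under base change along $\bbS \to H\Z$ (Remark~\ref{rem:Goodwillie-for-E1}, citing \cite[Corollary 2.2]{AR}). Your convergence considerations along the tower (connectivity of relative $K_{\Q}$ via Lemma~\ref{lemma:waldhausen-E1} and the analogous bound for $\HN_{\Q}$) are sound but beside the point: the issue is not convergence, it is that no stage of your reduction is covered by the discrete (or simplicial) input you allow yourself. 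To repair the proposal you would either have to import the $H\Z$-base-change argument of Remark~\ref{rem:Goodwillie-for-E1} or prove the square-zero invariance of $K^{\inf}_{\Q}$ for $\E_1$-ring spectra by some independent means.
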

\begin{proof}
Since both $K_\Q$ and $\HN(-/\Q)$ are localizing, so is $K^\inf_\Q$. By Goodwillie's theorem \cite[Main Theorem]{Goodwillie}, the map $K_\Q^\inf(R) \to K_\Q^\inf(\pi_{0}(R))$ is an equivalence for every connective $\E_1$-ring $R$ (see Remark~\ref{rem:Goodwillie-for-E1} below). That is, $K_\Q^\inf$ is truncating. We thus conclude by \cref{thm:truncating-excisive}.
\end{proof}

\begin{rem}
	\label{rem:Goodwillie-for-E1}
Note that \cite[Main Theorem]{Goodwillie} is a statement about simplicial rings rather than connective $\E_1$-algebras. The more general case may be reduced to the statement about simplicial rings by applying \cite[Main Theorem]{Goodwillie} to $A\otimes H\Z$ instead of the $\E_1$-ring $A$, see for instance \cite[Corollary 2.2]{AR}.
\end{rem}

For discrete rings and $k$ a field of characteristic zero the following corollary is a  result of Cuntz and Quillen \cite[Theorem 5.3]{CQ}. 
Besides the work of Suslin and Wodzicki \cite{Suslin-Wod,Wodzicki} the ideas of Cuntz and Quillen played a crucial role in the previous approaches to excision results by Corti{\~n}as, Geisser--Hesselholt, and Dundas--Kittang. The extension of the Cuntz--Quillen theorem to arbitrary commutative base rings $k$ containing $\Q$, as we present it here, was also obtained by different techniques by Morrow \cite[Theorem 3.15]{Morrow-pro-unitality}.
\begin{cor}
	\label{cor:Cuntz-Quillen}
Let $k$ be a discrete commutative ring containing $\Q$. Then periodic cyclic homology $\HP(-/k)$ satisfies excision for $k$-algebras.
\end{cor}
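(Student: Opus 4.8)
The plan is to show that $\HP(-/k)$, viewed as a localizing invariant of $k$-linear $\infty$-categories, is truncating, and then invoke the $k$-linear version of excision (Remark~\ref{rem:k-linear-excision}, i.e.\ the $k$-linear form of \cref{thm:truncating-excisive}). Concretely, for a connective $\E_1$-$k$-algebra $A$ one must prove that the canonical map $\HP(A/k) \to \HP(\pi_0(A)/k)$ is an equivalence. Since $\HP(-/k)$ is obtained from $\HH(-/k)$ by applying the Tate construction $(-)^{t\T}$, and the Tate construction is exact, it suffices to control the fibre of $\HH(A/k) \to \HH(\pi_0(A)/k)$ after $(-)^{t\T}$.

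The key input is that over a $\Q$-algebra $k$, periodic cyclic homology is \emph{nilinvariant} in the strong sense that $\HP(B/k) \to \HP(\pi_0(B)/k)$ is an equivalence whenever $B \to \pi_0(B)$ has nilpotent (or more precisely pronilpotent/complete) kernel — this is exactly the Goodwillie-type rigidity of periodic cyclic homology in characteristic zero, the original insight of Goodwillie \cite{Goodwillie} (for relative cyclic homology of nilpotent extensions) that also underlies the Cuntz--Quillen theorem. The step I would carry out first is therefore to reduce from a general connective $A$ to the nilpotent case by a Postnikov tower argument: write $A$ as the limit of its truncations $\tau_{\leq n}A$, note $\HH(-/k)$ and hence $\HP(-/k)$ commute with this limit in the relevant range because each $\tau_{\leq n}A \to \tau_{\leq n-1}A$ is a square-zero extension by $\pi_n(A)[n]$ (\cite[Corollary 7.4.1.28]{LurieHA}), and apply the rational rigidity statement for square-zero extensions at each stage. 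An alternative, cleaner route is to observe that we already have Corollary~\ref{cor:Cortinas}: rational infinitesimal $K$-theory $K^{\inf}_\Q$ is truncating, and $\HN_\Q$ sits in a fibre sequence with $K_\Q$ and $K^{\inf}_\Q$; but since $K_\Q$ is not truncating this does not immediately give truncatedness of $\HN_\Q$ or $\HP$, so one genuinely needs the Goodwillie rigidity statement for $\HP$ directly.

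Having established that $\HP(-/k)$ is truncating on $k$-linear categories, the proof concludes formally: conditions (E1) and (E2) on the square \eqref{diag:ring-spectra2} of connective $k$-algebras guarantee, via the $k$-linear Main Theorem, that $A'\wtimes{A}{B'}B$ is a connective $k$-algebra with $\pi_0(A'\wtimes{A}{B'}B) \xrightarrow{\sim} \pi_0(B')$, so $\HP(A'\wtimes{A}{B'}B/k) \to \HP(B'/k)$ is an equivalence; feeding this into the pullback square of \cref{main theorem} (in its $k$-linear incarnation, Remark~\ref{linear categories}) yields that $\HP(-/k)$ sends \eqref{diag:ring-spectra2} to a pullback square.

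The main obstacle is the first half: proving that $\HP(A/k)\simeq\HP(\pi_0(A)/k)$ for connective $\E_1$-$k$-algebras $A$, i.e.\ rational rigidity of periodic cyclic homology under square-zero (and then Postnikov) extensions. This is the technical heart of the Cuntz--Quillen circle of ideas; one must be careful that the relevant fibre terms of $\HH$ at each Postnikov stage, after applying $(-)^{t\T}$ and passing to the limit over $n$, actually vanish — this uses in an essential way that $k\supseteq\Q$ (the de Rham–type differentials have denominators) and a convergence/completeness argument for the limit over the Postnikov tower. Once that rigidity statement is in hand, everything else is a direct citation of \cref{thm:truncating-excisive} and \cref{main theorem}.
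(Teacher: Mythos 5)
Your proposal follows essentially the same route as the paper: observe that $\HP(-/k)$ is a localizing invariant of $k$-linear $\infty$-categories, show it is truncating because $k \supseteq \Q$, and conclude by the $k$-linear version of \cref{thm:truncating-excisive}. The only difference is that the paper simply cites Goodwillie's rigidity theorems for the truncating property of $\HP$ over $\Q$-algebras (which already cover connective/simplicial algebras, so no separate Postnikov-tower convergence argument is needed), whereas you sketch re-deriving that statement from the nilpotent case — correctly identifying it as the one non-formal input.
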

\begin{proof}
As explained before, $\HP(-/k)$ is a localizing invariant for $k$-linear  $\infty$-categories. Since $k$ contains the rational numbers, $\HP(-/k)$ is truncating: If $k$ is a field, this is \cite[Theorem IV.2.1]{Goodwillie2}, the more general case follows from \cite[Theorem IV.2.6]{Goodwillie2}.      We conclude  from the $k$-linear version of \cref{thm:truncating-excisive}, see Remark~\ref{rem:k-linear-excision}.
\end{proof}

Finally, we discuss the example of Weibel's homotopy $K$-theory $\KH$. We thank Akhil Mathew and an anonymous referee for encouraging us to include this example in this paper. 
Firstly, recall from Remark~\ref{linear categories} the $\infty$-category $\Cat_\infty^\cE$ of $\cE$-linear $\infty$-categories for $\cE$ a rigid object of $\CAlg(\Cat^{\perf}_{\infty})$. It inherits a symmetric monoidal structure  from $\Cat_\infty^\perf$ given by the relative tensor product $-\otimes_\cE -$. The same is true in the presentable context, and the ind-completion functor refines to a symmetric monoidal functor, see \cite[Section~4]{HoyoisScherotzkeSibilla}. If $\cE$ is given by $\Perf(k)$ for an $\E_\infty$-ring we denote the relative tensor product by $-\otimes_k -$. 

\begin{lemma}
	\label{lemma:tensor-exact}
For every $\cE$-linear $\infty$-category $\cD$, the functor $- \otimes_\cE \cD$ preserves exact sequences.
\end{lemma}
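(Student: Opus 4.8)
The plan is to reduce the statement to the presentable setting, where $\Ind(\cD)$ becomes a dualizable object and tensoring with it preserves both limits and colimits, hence both fibre sequences and cofibre sequences, hence exact sequences.

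First I would pass to ind-completions. By \cite[Section~4]{HoyoisScherotzkeSibilla} the ind-completion functor refines to a symmetric monoidal equivalence between $\Cat_\infty^\cE$ and the $\infty$-category of compactly generated $\Ind(\cE)$-linear presentable stable $\infty$-categories with compact-object-preserving functors, sitting inside $\Mod_{\Ind(\cE)}(\Pr^{\mathrm{L, st}})$. Under this equivalence $-\otimes_\cE\cD$ corresponds to $-\otimes_{\Ind(\cE)}\Ind(\cD)$, and a sequence $\cA \to \cB \to \cC$ is exact in $\Cat_\infty^\cE$ precisely when $\Ind(\cA) \to \Ind(\cB) \to \Ind(\cC)$ is simultaneously a fibre and a cofibre sequence in $\Mod_{\Ind(\cE)}(\Pr^{\mathrm{L, st}})$, with all three functors preserving compact objects; compare \cite[\S 5]{BGT} and \cite[\S 5]{HoyoisScherotzkeSibilla}.

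Next I would use the rigidity of $\cE$: it implies that $\Ind(\cD)$ is a dualizable object of $\Mod_{\Ind(\cE)}(\Pr^{\mathrm{L, st}})$ with dual $\Ind(\cD^{\op})$ (again \cite[\S 4]{HoyoisScherotzkeSibilla}). Since the ambient symmetric monoidal $\infty$-category is closed, $-\otimes_{\Ind(\cE)}\Ind(\cD)$ is then not only a left adjoint but also right adjoint to $-\otimes_{\Ind(\cE)}\Ind(\cD^{\op})$, hence preserves all limits and all colimits. In particular it carries the fibre-and-cofibre sequence $\Ind(\cA) \to \Ind(\cB) \to \Ind(\cC)$ to another fibre-and-cofibre sequence $\Ind(\cA)\otimes_{\Ind(\cE)}\Ind(\cD) \to \Ind(\cB)\otimes_{\Ind(\cE)}\Ind(\cD) \to \Ind(\cC)\otimes_{\Ind(\cE)}\Ind(\cD)$.

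Finally I would note that these functors still preserve compact objects — this is exactly what lets $-\otimes_\cE\cD$ be defined already on $\Cat_\infty^\perf$, i.e.\ the tensor product on $\Cat_\infty^\cE$ is the restriction of the presentable one to compact objects — so the displayed sequence is the ind-completion of $\cA\otimes_\cE\cD \to \cB\otimes_\cE\cD \to \cC\otimes_\cE\cD$, which is therefore exact by the characterization recalled in the first step. The only genuinely non-formal input is the dualizability of $\Ind(\cD)$ over $\Ind(\cE)$, which is where rigidity of $\cE$ is used; I expect the main obstacle to be not any mathematical difficulty but phrasing the dictionary between $\Cat_\infty^\cE$ and its presentable counterpart precisely enough, in particular the description of $\cE$-linear exact sequences as fibre-and-cofibre sequences in $\Mod_{\Ind(\cE)}(\Pr^{\mathrm{L, st}})$.
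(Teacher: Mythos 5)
Your argument is correct, but it is not the route the paper takes, and it is worth seeing how the two differ. The paper's proof first observes that $-\otimes_\cE\cD$ preserves colimits (it is a left adjoint), so the only issue is whether it preserves fully faithfulness of $\cA\to\cB$; it then passes to ind-completions and uses rigidity of $\cE$ in the form of \cite[Remark D.7.4.4]{LurieSAG}, which produces an $\cE$-linear \emph{colimit-preserving right adjoint} to $\Ind(\cA)\to\Ind(\cB)$, and finally invokes the $2$-functoriality of $-\otimes_\cE\Ind(\cD)$ (\cite[Proof of 3.9]{CMNN}) to tensor the whole adjunction and conclude that the unit stays an equivalence. You instead use rigidity once and for all to make $\Ind(\cD)$ a dualizable object of $\Mod_{\Ind(\cE)}(\Pr^{\mathrm{L,st}})$, so that $-\otimes_{\Ind(\cE)}\Ind(\cD)$ preserves limits as well as colimits, and you then transport the ``fibre-and-cofibre'' characterization of exactness through the tensor. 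Both mechanisms are valid incarnations of rigidity; yours proves the stronger statement that the tensored functor is limit-preserving and avoids any $2$-categorical input, while the paper's argument avoids the dictionary you must carry between exact sequences of small categories and fibre/cofibre sequences of compactly generated ones (in particular the Neeman--Thomason identification of compact objects of the quotient, which is implicit in your last step). Two small remarks: the precise identification of the dual as $\Ind(\cD^{\op})$ is not needed for your argument -- dualizability alone gives limit preservation, and in fact once full faithfulness of $\Ind(\cA)\otimes_{\Ind(\cE)}\Ind(\cD)\to\Ind(\cB)\otimes_{\Ind(\cE)}\Ind(\cD)$ is extracted from the fibre sequence, the cofibre part already follows from colimit preservation exactly as in the paper; and note that a cofibre sequence in $\Pr^{\mathrm{L,st}}$ whose first functor is fully faithful is automatically also a fibre sequence, so the genuinely non-formal content of your limit-preservation step is precisely the preservation of full faithfulness, i.e.\ the same crux the paper isolates.
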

\begin{proof}
As a left adjoint, $-\otimes_{\cE} \cD$ preserves colimits. It then suffices to further argue that $\cA\otimes_{\cE} \cD \to \cB \otimes_\cE \cD$ is fully faithful if $\cA \to \cB$ is. Using that $\Ind$ is symmetric monoidal, it suffices to show that $\Ind(\cA) \otimes_\cE \Ind(\cD) \to \Ind(\cB)\otimes_\cE \Ind(\cD)$ is fully faithful. As $\cE$ is rigid, the functor $\Ind(\cA) \to \Ind(\cB)$ admits an $\cE$-linear right adjoint which preserves colimits \cite[Remark D.7.4.4]{LurieSAG}. Since $-\otimes_{\cE} \Ind(\cD)$ has the structure of a 2-functor \cite[Proof of 3.9]{CMNN}, it preserves the adjunction and the fact that its unit is an equivalence.
\end{proof}

As a localizing invariant for $H\Z$-linear categories, homotopy $K$-theory was introduced by Tabuada \cite{Tabuada}. For convenience, we briefly recall its definition.
For an $\E_{\infty}$-ring $k$, the association $A \mapsto \Perf(A)$ refines to a  symmetric monoidal functor $\Alg(k) \to \Cat^{k}_{\infty}$. In particular $\Perf(A) \otimes_{k} \Perf(B) \simeq \Perf(A \otimes_{k} B)$ for $k$-algebras $A$ and $B$, see \cite[Remark 4.8.5.17]{LurieHA} for the full module categories  and use that $\Ind$ is symmetric monoidal.
We let $\Z[\Delta^\bullet]$ be the  simplicial ring of algebraic simplices, see \cite[Definition IV.11.3]{Weibel} for details.
More generally, for any $H\Z$-algebra $R$ we let $R[\Delta^\bullet] = R\otimes_\Z \Z[\Delta^\bullet]$ and recall that there is an equivalence $\Perf(R[\Delta^\bullet]) \simeq \Perf(R) \otimes_\Z \Perf(\Z[\Delta^\bullet])$ of simplicial objects in $\Cat_\infty^\Z$.

\begin{dfn}
We define \emph{homotopy $K$-theory} as the functor $\Cat^\Z_\infty \to \Sp$ given by the formula
\[
 \KH(\cC) = \colim\limits_{\Delta^\op} K(\cC\otimes_\Z \Perf(\Z[\Delta^\bullet])) .
 \]

\end{dfn}

For a discrete ring $A$ we find 
\[ 
\KH(\Perf(A)) \simeq \colim\limits_{\Delta^\op} K(A[\Delta^\bullet])  = \KH(A)
\]
which is the original definition of Weibel, see \cite[Definition IV.12.1]{Weibel}. 
The following proposition is well-known to the experts, but for sake of completeness we include an argument. 
\begin{prop}\label{KH truncating}
Homotopy $K$-theory
$\KH$ is a truncating invariant of $H\Z$-linear $\infty$-categories.
\end{prop}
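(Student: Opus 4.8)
The plan is to prove two things: that $\KH$ is a localizing invariant of $H\Z$-linear $\infty$-categories, and that it is truncating. For the localizing property, I would observe that $\KH$ is, by construction, a sifted colimit (over $\Delta^{\op}$) of the functors $\cC \mapsto K(\cC \otimes_{\Z} \Perf(\Z[\Delta^{n}]))$. Each of these is the composite of $-\otimes_{\Z}\Perf(\Z[\Delta^{n}])$ with $K$-theory. By \cref{lemma:tensor-exact}, tensoring with a fixed $H\Z$-linear category preserves exact sequences, and $K$-theory is a localizing invariant; hence each term in the colimit preserves exact sequences, i.e. sends exact sequences to fibre sequences. Since fibre sequences in $\Sp$ are preserved by filtered colimits and, more relevantly, since a colimit over $\Delta^{\op}$ of fibre sequences is again a fibre sequence (geometric realization is exact in a stable $\infty$-category), $\KH$ sends exact sequences of $H\Z$-linear categories to fibre sequences. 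So $\KH$ is a localizing invariant.

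For truncacy, I need to show that for every connective $H\Z$-algebra $A$ the map $\KH(A) \to \KH(\pi_{0}(A))$ is an equivalence (and more generally for $H\Z$-linear categories, but by the Schwede--Shipley description it suffices to treat $\Perf$ of connective algebras, and then deduce the general statement — actually one should be slightly careful here, but the standard route is via algebras). First I would recall the classical fact, due to Weibel, that $\KH$ is $\mathbb{A}^{1}$-invariant: $\KH(R) \simeq \KH(R[t])$ for any $H\Z$-algebra $R$, which follows directly from the definition since $R[\Delta^{\bullet}]$ is already built to kill polynomial extensions (one uses that $\colim_{\Delta^{\op}}$ of the bisimplicial object coming from $R[t][\Delta^{\bullet}]$ agrees with that of $R[\Delta^{\bullet}]$, an extra-degeneracy/cofinality argument). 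Granting $\mathbb{A}^{1}$-invariance, the key computation is that for any connective $A$ the map $A \to \pi_{0}(A)$ becomes an equivalence after applying $\KH$.

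The cleanest argument here, which I expect to be the main obstacle to write out carefully, is to reduce to square-zero extensions via the Postnikov tower: for each $n$ the map $\tau_{\le n}A \to \tau_{\le n-1}A$ is a square-zero extension by $\pi_{n}(A)[n]$, and one shows $\KH$ does not see such extensions. For a square-zero extension $A \to A_{0}$ by a module $M$, the relative $K$-theory is controlled (rationally by cyclic homology, integrally by $\TC$, via Dundas--Goodwillie--McCarthy), but the slickest route for $\KH$ is: the relative term is a nilpotent extension, and $\KH$ of a polynomial ring computation plus $\mathbb{A}^{1}$-invariance forces the relative $\KH$ to vanish — concretely, one can use that $\KH$ of any $H\Z$-algebra only depends on it up to $\mathbb{A}^1$-homotopy and nil-extensions, or invoke that $K(\cC \otimes_\Z \Perf(\Z[\Delta^\bullet]))$ already realizes the relevant $\mathbb{A}^1$-localization so that nilpotent extensions with the simplicial direction become contractible. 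I would then conclude $\KH(A) \simeq \KH(\tau_{\le n}A)$ for all $n$, and pass to the (inverse) limit — using that $\KH$ commutes with the relevant limits here because $\KH$ of a connective algebra is itself connective and the tower stabilizes in each degree, or more carefully by a connectivity estimate — to get $\KH(A) \simeq \KH(\pi_{0}A)$. The boundedness/convergence of this Postnikov argument is the delicate point; alternatively one invokes the known result of Land--Tamme or the cited literature that $\KH$ is truncating and simply assembles the pieces.
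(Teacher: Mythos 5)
Your first half (that $\KH$ is localizing) is fine and agrees with the paper: Lemma~\ref{lemma:tensor-exact} gives that $-\otimes_\Z\Perf(\Z[\Delta^\bullet])$ preserves exact sequences, and geometric realization in $\Sp$ preserves fibre sequences. The problem is the truncating half. Your route through the Postnikov tower hinges entirely on the claim that $\KH$ is insensitive to the square-zero extensions $\tau_{\le n}A \to \tau_{\le n-1}A$ of connective $H\Z$-algebras, and this is exactly the point you do not prove: you appeal to $\A^1$-invariance and to ``nilinvariance'', but $\A^1$-invariance by itself says nothing about derived square-zero extensions, and Weibel's nilinvariance of $\KH$ concerns nilpotent ideals in \emph{discrete} rings, whereas here the ``ideal'' $\pi_n(A)[n]$ sits in positive homotopical degree. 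In fact invariance of $\KH$ under such extensions is essentially equivalent to the statement being proved, so the phrase ``$\A^1$-invariance forces the relative $\KH$ to vanish'' begs the question, and the fallback ``invoke the known result of Land--Tamme'' is circular, since this proposition \emph{is} that result. (The convergence worry you flag is the more harmless one: the map $A[\Delta^p]\to\tau_{\le n}(A)[\Delta^p]$ is $(n+1)$-connective because $\Z[\Delta^p]$ is free over $\Z$, so Lemma~\ref{lemma:waldhausen-E1} plus the fact that realization preserves connectivity handles the limit over the tower.)

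What is missing is a concrete mechanism for killing the relative term, and this is what the paper supplies, without any Postnikov induction. Writing $I=\fib(A\to\pi_0(A))$, the paper reduces (using that non-positive $K$-groups depend only on $\pi_0$, and that the plus construction commutes with realization) to showing that $|\GL(A[\Delta^\bullet])|\to|\GL(\pi_0(A)[\Delta^\bullet])|$ is an equivalence; by Bousfield--Friedlander the fibre is $|\M(I[\Delta^\bullet])|$, and this realization is a module over $|\Z[\Delta^\bullet]|$, which is contractible --- so the fibre is contractible and the map is an equivalence in one stroke, for all of $\fib(A\to\pi_0 A)$ at once. Some argument of this kind (exploiting the simplicial direction $\Z[\Delta^\bullet]$ to contract the fibre term) has to appear in your proof; as written, the key step is asserted rather than proved.
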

\begin{proof}
Using Lemma~\ref{lemma:tensor-exact} the fact that $K$-theory is localizing implies that 
 homotopy $K$-theory is localizing.
To show that it is truncating, we consider a connective $H\Z$-algebra $A$.
We want to prove that the  map $\KH(A) \to \KH(\pi_0(A))$, which is 
the geometric realization of the map of simplicial spectra $K(A[\Delta^\bullet]) \to K(\pi_0(A)[\Delta^\bullet])$, is an equivalence.
Since non-positive $K$-theory of an $\E_{1}$-ring $R$ only depends on $\pi_{0}(R)$ (cf.~Lemma~\ref{lemma:waldhausen-E1}), 
and since $\Omega^{\infty}$ commutes with sifted colimits of connective spectra, it is enough to show that the map on geometric realizations of infinite loop spaces
\[ 
| \BGL(A[\Delta^\bullet])^+| \to |\BGL(\pi_0(A)[\Delta^\bullet])^+| 
\]
is an equivalence.
Since the plus construction commutes with the geometric realization \cite[Lemma 3.1.1.9]{DGM} it  suffices to show that the map $| \BGL(A[\Delta^\bullet])| \to |\BGL(\pi_0(A)[\Delta^\bullet])|$ is an equivalence. We claim that in fact the map 
\[
 | \GL(A[\Delta^\bullet])| \to | \GL(\pi_0(A)[\Delta^\bullet])|
 \]
is an equivalence. Letting $I = \fib(A \to \pi_0(A))$, the fibre of this map is $| \M(I[\Delta^\bullet])|$, where $\M$ denotes the $\E_{\infty}$-space of matrices. This follows for example by applying \cite[Theorem~B.4]{BousfieldFriedlander}.
Now $| \M(I[\Delta^\bullet])|$  is a module over $| \Z[\Delta^\bullet]|$, which is contractible.
Thus the proposition is proven.
\end{proof}

\begin{cor}
Homotopy $K$-theory satisfies excision and nilinvariance.
\end{cor}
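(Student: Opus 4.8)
The plan is to obtain both assertions formally from the material already at hand, with essentially no new work. By Proposition~\ref{KH truncating}, homotopy $K$-theory $\KH$ is a truncating invariant of $H\Z$-linear $\infty$-categories in the sense of Remark~\ref{linear categories}. The first step is therefore to appeal to the $H\Z$-linear version of Theorem~\ref{thm:truncating-excisive} recorded in Remark~\ref{rem:k-linear-excision}: taking $k = H\Z$ there, any truncating invariant of $H\Z$-linear $\infty$-categories sends a diagram of connective $H\Z$-algebras satisfying (E1) and (E2) to a pullback square. Since every discrete ring is a connective $H\Z$-algebra and, by Remark~\ref{rem:more-LT-squares}, Milnor squares of discrete rings satisfy (E1) and (E2), this already yields that $\KH$ is excisive (in the appropriate $H\Z$-linear sense, which is the only one available since $\KH$ is defined on $\Cat_\infty^\Z$).

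For nilinvariance, the plan is to run the argument of Corollary~\ref{truncating => nil} verbatim in the $H\Z$-linear setting. By induction on the nilpotency degree we reduce to the case of a square-zero ideal $I \subseteq A$ in a discrete ring; then we form the connective differential graded algebras $C(I,A) \simeq A/I$ and $C(I,A/I)$ of \eqref{eq:CIA} and apply the $H\Z$-linear excision just established to the pullback square~\eqref{diag:CIA-pullback}. This is legitimate because all the rings involved are connective $H\Z$-algebras and because the map $C(I,A) \to C(I,A/I)$ is a $\pi_0$-isomorphism, so that condition (E2) holds. Using that $\KH$ is truncating, the lower horizontal map $\KH(C(I,A)) \to \KH(C(I,A/I))$ in the resulting pullback square becomes an equivalence, hence so does the top map $\KH(A) \to \KH(A/I)$.

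The one point that is not purely formal --- and the only place I would expect to have to think --- is the assertion that Theorem~\ref{thm:truncating-excisive} and Corollary~\ref{truncating => nil} remain valid for (truncating) localizing invariants of $H\Z$-linear $\infty$-categories, i.e.\ that the proof of Theorem~\ref{main theorem} goes through in the $H\Z$-linear context. This is exactly what the last paragraph of Remark~\ref{linear categories} asserts: one needs the $H\Z$-linear Schwede--Shipley statement and the fact that $A' \wtimes{A}{B'} B$ inherits an $H\Z$-algebra structure, after which every step of the proof applies unchanged. Granting this, the corollary is immediate.
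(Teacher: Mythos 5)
Your proposal is correct and is exactly the argument the paper intends: combine Proposition~\ref{KH truncating} with the $H\Z$-linear versions of Theorem~\ref{thm:truncating-excisive} and Corollary~\ref{truncating => nil}, which are available by Remarks~\ref{linear categories} and~\ref{rem:k-linear-excision}. Your care about $\KH$ only being defined on $\Cat_\infty^\Z$, and about rerunning the nilinvariance argument with the dg algebras $C(I,A)$, $C(I,A/I)$ viewed as connective $H\Z$-algebras, matches the paper's treatment.
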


For discrete rings, this result is due to Weibel and well known, see \cite[Theorem IV.12.4, Corollary IV.12.5]{Weibel}.

\section{An example}\label{sec:example}

In this section we  discuss the following family of pullback diagrams of rings. Let $k$ be a discrete unital ring, and let $\alpha$ be an element of $k$. Consider the following commutative diagram in which all maps are the canonical ones.
\begin{equation}
	\label{diag:family}
\begin{tikzcd}
	k \ar[r] \ar[d] & k[y] \ar[d] \\ k[x] \ar[r] & k[x,y]/(yx-\alpha)
\end{tikzcd}
\end{equation}
This is a pullback of $\E_1$-rings, and thus \cref{main theorem} provides an $\E_1$-ring $k[x] \wtimes{k}{k[x,y]/(yx-\alpha)} k[y]$ with underlying spectrum $k[x]\otimes_k k[y]$.

\begin{prop}
	\label{ring in example}
The $\E_{1}$-ring $k[x] \wtimes{k}{k[x,y]/(yx-\alpha)} k[y]$  in the above example is discrete and isomorphic to 
\[ 
k\langle x,y \rangle/{(yx-\alpha)},
\] 
the non-commutative polynomial ring over $k$ in two non-commuting variables $x$ and $y$ modulo the relation $yx=\alpha$. 
\end{prop}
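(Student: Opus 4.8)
The plan is to exhibit an explicit isomorphism of $k$-algebras between $R := k[x]\wtimes{k}{k[x,y]/(yx-\alpha)}k[y]$ and $S := k\langle x,y\rangle/(yx-\alpha)$. Write $A=k$, $B=k[y]$, $A'=k[x]$, $B'=k[x,y]/(yx-\alpha)$, and $q\colon B\to B'$, $q'\colon A'\to B'$ for the given maps; note that $q$ and $q'$ are split injections of $k$-modules, with complements spanned by the monomials involving the other variable. By \cref{main theorem} the underlying spectrum of $R$ is $A'\otimes_A B=k[x]\otimes_k k[y]$, a free (in particular discrete) $k$-module, so $R$ is discrete. Diagram~\eqref{diag:natural-diagram} supplies $k$-algebra maps $\phi\colon A'\to R$ and $\psi\colon B\to R$, and a $k$-algebra map $\theta\colon R\to B'$ with $\theta\phi=q'$ and $\theta\psi=q$; by Propositions~\ref{prop:identification-boxtimes-B}, \ref{prop:identification-boxtimes-A} and \ref{map out of weird ring} these are, on underlying spectra, the evident maps $A'\to A'\otimes_A B\leftarrow B$ and $A'\otimes_A B\to B'$, and the left $A'$-module structure on $R$ through $\phi$ is the standard one on $A'\otimes_A B$. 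In particular $\phi(x)=x\otimes1$, $\psi(y^j)=1\otimes y^j$, and $\phi(x^i)\psi(y^j)=x^i\cdot(1\otimes y^j)=x^i\otimes y^j$, so $\{\phi(x^i)\psi(y^j)\}_{i,j\ge0}$ is a $k$-basis of $R$.

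Granting the key identity $\psi(y)\phi(x)=\alpha\cdot1_R$, the proof finishes quickly. Since $k\langle x,y\rangle$ is the coproduct $A'\amalg_k B$ in $k$-algebras, $(\phi,\psi)$ induces a $k$-algebra map $k\langle x,y\rangle\to R$, which by the identity factors through a $k$-algebra map $\bar\Theta\colon S\to R$ sending $x^iy^j\mapsto\phi(x^i)\psi(y^j)=x^i\otimes y^j$. The monomials $\{x^iy^j\}_{i,j\ge0}$ span $S$ (use $yx=\alpha$ to rewrite any word in $x,y$ in ordered form), and $\bar\Theta$ carries them to the $k$-basis $\{x^i\otimes y^j\}$ of $R$; hence they are also $k$-linearly independent in $S$, so $\bar\Theta$ sends a basis to a basis and is an isomorphism.

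It remains to prove $\psi(y)\phi(x)=\alpha\cdot1_R$, which is the heart of the argument; the point is that $R$ is \emph{not} a split extension of $B$ and its multiplication is not the tensor-product one, so a naive computation with the ``obvious'' bimodule structures fails. Since $\psi$ is a ring map, $\psi(B)$ is a sub-$B$-bimodule of $R$ and $\psi(y)\phi(x)=y\cdot\phi(x)$ for the left $B$-action. By \cref{prop:identification-boxtimes-B} the cofibre of $\psi\colon B\to R$ is $\Sigma(I\otimes_A B)$ as a $B$-bimodule, where $I=\fib(q)$; as $q$ is injective, $\Sigma I=\cof(q)=B'/\mathrm{im}(q)$, and identifying $\Sigma I$ also with $\cof(A\to A')=k[x]/k$ (the fibres of the two vertical maps of the square agree) gives a left-$B$-module identification $R/\psi(B)\simeq(B'/\mathrm{im}(q))\otimes_A B$ on which $y$ acts through multiplication by $q(y)$ in $B'$ and under which the image of $\phi(x)=x\otimes1$ is $\overline{x}\otimes1$. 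Therefore the image of $\psi(y)\phi(x)$ in $R/\psi(B)$ is $(y\cdot\overline{x})\otimes1=\overline{yx}\otimes1=\overline{\alpha}\otimes1=0$, the last equality because $\alpha=q(\alpha)\in\mathrm{im}(q)$. Hence $\psi(y)\phi(x)=\psi(c)$ for a unique $c\in k[y]$; applying $\theta$ gives $q(c)=\theta\psi(y)\cdot\theta\phi(x)=q(y)q'(x)=yx=\alpha=q(\alpha)$, and since $q$ is injective $c=\alpha$, i.e.\ $\psi(y)\phi(x)=\alpha\cdot1_R$.

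The main obstacle is exactly this last identity: the product $\psi(y)\phi(x)$ mixes the two ``directions'' of $R$ and is invisible to the obvious $A'$- and $B$-module structures. What makes it computable is that the \emph{left} $B$-module $R/\psi(B)$ is, by the cofibre sequence of \cref{prop:identification-boxtimes-B}, governed entirely by the multiplication of $B'$ — where $yx=\alpha$ — and that $\theta$ is injective on $\psi(B)$; together these pin the product down.
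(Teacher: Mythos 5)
Your proof is correct and follows essentially the same route as the paper: both rest on the Main Theorem's identifications of the underlying module, the two ring maps and the left $k[x]$-module structure (Propositions~\ref{prop:identification-boxtimes-B} and \ref{prop:identification-boxtimes-A}), and then use the $B$-bimodule cofibre sequence of Proposition~\ref{prop:identification-boxtimes-B} to get at the mixed product $yx$. The only differences are cosmetic: you pin down $\psi(y)\phi(x)=\alpha$ via the quotient $R/\psi(B)$ together with $\theta$ and the injectivity of $q$, and then conclude by the universal property of $k\langle x,y\rangle$ plus a basis count, whereas the paper computes $y\cdot x^{m}$ for all $m$ at once from an explicit complex representing $\Sigma I$ and matches the multiplication directly.
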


\begin{proof}
Since the map $k \to k[x]$ is flat, the $\E_{1}$-ring $k[x] \wtimes{k}{k[x,y]/(yx-\alpha)} k[y]$ is discrete with underlying $k$-module $k[x] \otimes_{k} k[y]$.
From Propositions~\ref{prop:identification-boxtimes-B} and \ref{prop:identification-boxtimes-A} we know that under this identification the ring maps $k[x] \to k[x] \otimes_{k} k[y]$ and $k[y] \to k[x] \otimes_{k} k[y]$ as well as the $k[x]$-left module structure and the $k[y]$-right module structure are the canonical ones.
From the formula
\[
(x^{k}\otimes y^{l}) \cdot (x^{m}\otimes y^{n}) = (x^{k} \otimes 1) \cdot (1\otimes y^{l}) \cdot (x^{m}\otimes 1) \cdot (1\otimes y^{n})
\]
we see that it remains to describe the $k[y]$-left module structure on $k[x] \otimes_{k} k[y]$. 
Let $I$ be the fibre of the right vertical map in \eqref{diag:family}.  
By Proposition~\ref{prop:identification-boxtimes-B} the $k[y]$-left module structure on $k[x]\otimes_{k} k[y]$ is determined by the cofibre sequence of $k[y]$-left modules
\[
I \otimes_{k} k[y] \lto k[y] \lto k[x] \otimes_{k} k[y].
\]
We represent $\Sigma I$ by the complex $[k[y] \to k[x,y]/(yx-\alpha)]$ concentrated in degrees 1 and 0. Then the rotation of the previous cofibre sequence is represented by the following diagram of complexes of  left $k[y]$-modules: 
\[
\begin{tikzcd}
\begin{bmatrix} 0 \\ \downarrow \\ k[y] \end{bmatrix}  \ar[r] & \begin{bmatrix} 0 \\ \downarrow \\ k[x]\otimes_{k} k[y] \end{bmatrix} \ar[r, "j"] & 
	\begin{bmatrix}  k[y]\otimes_k k[y] \\ \downarrow \\ k[x,y]/(yx-\alpha)\otimes_k k[y] \end{bmatrix}
\end{tikzcd}
\]
Since the map $j$ is injective and $k[y]$-left linear, we may calculate the left-multiplication by $y$ 
after applying the map $j$. For any $m \geq 1$ we get
\[
j((1\otimes y) \cdot (x^{m} \otimes 1)) = (yx^{m}) \otimes 1 =   j(\alpha\cdot x^{m-1} \otimes 1).
\]
This finishes the proof.
\end{proof}

From the previous proposition and Theorem \ref{main theorem} we can compute $E(k\langle x,y \rangle/(yx-\alpha))$ for any localizing invariant $E$. We  denote by $NE(k)$ the cofibre of the canonical map $E(k) \to E(k[x])$, so that $E(k[x]) \simeq E(k) \oplus NE(k)$. We get:
\begin{cor}\label{calculation of Toeplitz ring}
	\label{cor:K-theory-nc-laurent}
There is a canonical equivalence
\[ 
E(k\langle x,y \rangle/(yx-\alpha)) \simeq E(k) \oplus NE(k) \oplus NE(k).
\]
\end{cor}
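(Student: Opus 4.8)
The plan is to read the computation straight off the Main Theorem applied to the square \eqref{diag:family}, using Proposition~\ref{ring in example} to name the resulting ring, and then to unwind a pushout of split inclusions. First I would recall that \eqref{diag:family} is a pullback square of $\E_1$-rings (as recorded just above Proposition~\ref{ring in example}), so Theorem~\ref{main theorem} supplies the $\E_1$-ring $k[x]\wtimes{k}{k[x,y]/(yx-\alpha)}k[y]$ fitting into a diagram of the form \eqref{diag:natural-diagram} whose inner square
\[
\begin{tikzcd}
k \ar[r]\ar[d] & k[y] \ar[d] \\
k[x] \ar[r] & k[x]\wtimes{k}{k[x,y]/(yx-\alpha)}k[y]
\end{tikzcd}
\]
is carried by any localizing invariant $E$ to a pullback square in $\cT$. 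By Proposition~\ref{ring in example} the lower right corner is canonically identified with $k\langle x,y\rangle/(yx-\alpha)$, compatibly with the standard structure maps out of $k[x]$ and $k[y]$. Since $\cT$ is stable, a pullback square in $\cT$ is automatically a pushout square, so there is a canonical equivalence
\[
E\big(k\langle x,y\rangle/(yx-\alpha)\big) \simeq E(k[x]) \oplus_{E(k)} E(k[y]).
\]

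Next I would split off $NE(k)$ from each factor. The augmentations $k[x]\to k$ and $k[y]\to k$ (killing $x$, respectively $y$) are retractions of the unit maps, so the cofibre sequences $E(k)\to E(k[x])\to NE(k)$ and $E(k)\to E(k[y])\to NE(k)$ split, yielding canonical decompositions $E(k[x])\simeq E(k)\oplus NE(k)$ and $E(k[y])\simeq E(k)\oplus NE(k)$ under which $E(k)\to E(k[x])$ and $E(k)\to E(k[y])$ are the inclusions of the first summand. Writing the pushout above as $\cof\big(E(k)\xrightarrow{(\mathrm{incl},-\mathrm{incl})} E(k[x])\oplus E(k[y])\big)$ and reordering summands, the map $E(k)\to E(k)\oplus E(k)\oplus NE(k)\oplus NE(k)$ factors through the split monomorphism $E(k)\xrightarrow{(1,-1)}E(k)\oplus E(k)$ whose cofibre is again a single copy of $E(k)$; hence the total cofibre is $E(k)\oplus NE(k)\oplus NE(k)$, which is the asserted equivalence.

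I do not expect a genuine obstacle: the corollary is a formal consequence of Theorem~\ref{main theorem} and Proposition~\ref{ring in example}, with stability of $\cT$ doing the essential work of converting the Main Theorem's pullback into the pushout used above, and no finiteness or connectivity hypotheses entering. The only point requiring a little care is bookkeeping — arranging the identification of $k[x]\wtimes{k}{k[x,y]/(yx-\alpha)}k[y]$ with $k\langle x,y\rangle/(yx-\alpha)$ and the two splittings compatibly, so that the resulting equivalence is natural in $E$ (and in the pair $(k,\alpha)$).
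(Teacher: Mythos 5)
Your proposal is correct and follows the same route the paper intends: the corollary is stated there as an immediate consequence of Theorem~\ref{main theorem} applied to the square \eqref{diag:family} together with the identification of the ring $k[x]\wtimes{k}{k[x,y]/(yx-\alpha)}k[y]$ from Proposition~\ref{ring in example}, using the splittings $E(k[x])\simeq E(k)\oplus NE(k)$ and $E(k[y])\simeq E(k)\oplus NE(k)$ coming from the augmentations. Your explicit unwinding of the resulting pushout (cofibre of $E(k)\xrightarrow{(\mathrm{incl},-\mathrm{incl})}E(k[x])\oplus E(k[y])$) is exactly the bookkeeping the paper leaves implicit, so there is nothing to add.
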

Recall that one can use the semi-orthogonal decomposition of $\Perf(\mathbb{P}^1_k)$ generated by $\mathcal{O}$ and $\mathcal{O}(-1)$ and the standard covering of $\mathbb{P}^1_k$ to obtain the fundamental theorem calculating $E(k[x,x^{-1}])$ for any localizing invariant $E$.
Using Corollary~\ref{calculation of Toeplitz ring} one can give an alternative  proof of the fundamental theorem 
parallel to Cuntz's proof of Bott periodicity \cite[Section 4]{Cuntz}. For homotopy $K$-theory this was done by Corti\~nas--Thom \cite[Section 7.3]{CortinasThom}. 
Namely, let us consider the case where $\alpha = 1$, so that $k[x,y]/(yx-\alpha) \cong k[x, x^{-1}]$ is the ring of  Laurent polynomials. We observe that the ring $k\langle x,y\rangle/(yx-1) = T_k$ is  the Toeplitz ring, which is also used in \cite{CortinasThom}.
It sits inside the Toeplitz extension
\[
\begin{tikzcd}
	0 \ar[r] & \M(k) \ar[r] & T_k \ar[r] & k[x,x^{-1}] \ar[r] & 0
\end{tikzcd}
\]
where $\M(k) = \colim_{n} \M_{n}(k)$ is the ring of finite matrices  and the map $\M(k) \to T_{k}$ sends the basic matrix $e_{ij}$ to the element $x^{{i-1}}(1-xy)y^{{j-1}}$. 
As a filtered colimit of unital rings, $\M(k)$ is Tor-unital, i.e.~the projection $\Z \ltimes \M(k) \to \Z$ is Tor-unital, and hence $\M(k)$ satisfies excision for any localizing invariant by Corollary~\ref{cor intro}. Moreover, by classical Morita theory
the non-unital ring homomorphism $k \to \M(k)$ that is the inclusion in the upper left corner induces an equivalence of perfect modules over the unitalizations
 $\Perf(\Z \ltimes k) \xrightarrow{\sim} \Perf(\Z \ltimes \M(k))$. Hence we get a fibre sequence
\begin{equation}
	\label{seq:Toeplitz-extension}
\begin{tikzcd}
	E(k) \ar[r] & E(T_k) \ar[r] & E(k[x,x^{-1}]).
\end{tikzcd}
\end{equation}
We claim that the first map in \eqref{seq:Toeplitz-extension} is nullhomotopic, so that $E(k[x,x^{-1}]) \simeq E(T_k) \oplus \Sigma E(k)$. To see the claim, let $e \in T_{k}$ be the idempotent $1-xy$. Then $eT_{k}$ is a $k$-$T_{k}$-bimodule, which is perfect as a $T_{k}$-right module,
and the map in question is induced by the functor $\Perf(k) \to \Perf(T_{k})$ that sends $P$ to $P \otimes_{k} eT_{k}$.
There is a $k$-$T_{k}$-bimodule isomorphism $T_{k} \oplus eT_{k} \cong T_{k}$ given by $(m,en) \mapsto xm+en$ with inverse $m \mapsto (ym, em)$.
From this we deduce a natural equivalence 
\[
(-) \otimes_{k} T_{k} \oplus (-) \otimes_{k} eT_{k}  \simeq (-) \otimes_{k} T_{k} 
\]
of functors $\Perf(k) \to \Perf(T_{k})$. By additivity, this implies that $(-) \otimes_{k} eT_{k}$ induces the zero map on any localizing invariant.
Using Corollary~\ref{calculation of Toeplitz ring} and the fibre sequence \eqref{seq:Toeplitz-extension} we obtain the fundamental theorem:

\begin{cor} 	
\label{cor:fundamental-theorem}
Let $k$ be a  discrete unital ring, and let $E$ be a localizing invariant. There is an equivalence
\[ 
E(k[x,x^{-1}]) \simeq \Sigma E(k) \oplus E(k) \oplus NE(k) \oplus NE(k).
\]
\end{cor}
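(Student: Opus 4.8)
The plan is to deduce the fundamental theorem from the computation of $E$ on the non-commutative Laurent-type ring $k\langle x,y\rangle/(yx-1)$ recorded in \cref{calculation of Toeplitz ring}, together with the Toeplitz extension. First I would specialize \cref{calculation of Toeplitz ring} to the case $\alpha = 1$: by definition $k\langle x,y\rangle/(yx-1)$ is the Toeplitz ring $T_k$, so this step immediately yields a canonical equivalence $E(T_k) \simeq E(k) \oplus NE(k) \oplus NE(k)$.

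Next I would exploit the Toeplitz extension $0 \to \M(k) \to T_k \to k[x,x^{-1}] \to 0$, where $\M(k) = \colim_n \M_n(k)$ is the ring of finite matrices and $e_{ij}$ is sent to $x^{i-1}(1-xy)y^{j-1}$. The ideal $\M(k)$, being a filtered colimit of unital rings, is Tor-unital, hence satisfies excision for every localizing invariant by \cref{cor intro}; combining this with Morita invariance — the non-unital inclusion $k \to \M(k)$ induces an equivalence $\Perf(\Z\ltimes k) \xrightarrow{\sim} \Perf(\Z\ltimes\M(k))$, so $E(\M(k)) \simeq E(k)$ — produces a fibre sequence
\[ E(k) \lto E(T_k) \lto E(k[x,x^{-1}]). \]

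Then I would show the first map in this sequence is nullhomotopic, so that the sequence splits as $E(k[x,x^{-1}]) \simeq E(T_k) \oplus \Sigma E(k)$. That map is induced by the functor $\Perf(k) \to \Perf(T_k)$, $P \mapsto P\otimes_k eT_k$, for the idempotent $e = 1-xy \in T_k$, where $eT_k$ is a $k$-$T_k$-bimodule that is perfect as a right $T_k$-module. The $k$-$T_k$-bimodule isomorphism $T_k \oplus eT_k \xrightarrow{\sim} T_k$, $(m,en) \mapsto xm + en$ (with inverse $m \mapsto (ym, em)$), gives a natural equivalence $(-)\otimes_k T_k \oplus (-)\otimes_k eT_k \simeq (-)\otimes_k T_k$ of functors $\Perf(k) \to \Perf(T_k)$; by the additivity theorem for localizing invariants this forces $(-)\otimes_k eT_k$ to induce the zero map on $E$. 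Assembling this with the first step gives $E(k[x,x^{-1}]) \simeq \Sigma E(k) \oplus E(k) \oplus NE(k) \oplus NE(k)$, as claimed.

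The main obstacle is the nullhomotopy step: one must verify that $eT_k$ is genuinely a perfect $T_k$-module (so that tensoring with it defines a functor to $\Perf(T_k)$, and hence a well-defined map on $E$), and then extract the vanishing from the concrete bimodule splitting via additivity. The remaining ingredients — \cref{calculation of Toeplitz ring}, Tor-unitality of $\M(k)$, excision from \cref{cor intro}, and Morita invariance — are by comparison routine bookkeeping.
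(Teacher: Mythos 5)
Your proposal is correct and follows essentially the same route as the paper: specializing \cref{calculation of Toeplitz ring} to $\alpha=1$, using Tor-unitality of $\M(k)$, excision via \cref{cor intro}, and Morita invariance to obtain the fibre sequence $E(k)\to E(T_k)\to E(k[x,x^{-1}])$, and then splitting it by showing $(-)\otimes_k eT_k$ is zero on $E$ via the bimodule isomorphism $T_k\oplus eT_k\cong T_k$ and additivity. This is exactly the argument given in the paper.
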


\appendix

\section{cdh-descent for truncating invariants}
	\label{sec:cdh}

In this appendix we prove that any truncating invariant satisfies cdh-descent.
We fix a quasi-compact and quasi-separated (qcqs for short) base scheme $S$, and we denote by $\Sch_{S}$ the category of quasi-separated $S$-schemes of finite type. By an abstract blow-up square we mean a pullback square 
\begin{equation}
	\label{diag:abs}
\begin{tikzcd}
 D \ar[d]\ar[r] & \widetilde X \ar[d,"p"] \\ 
 Y \ar[r,"i"] & X 
\end{tikzcd}
\end{equation}
in $\Sch_{S}$ where $i$ is a finitely presented closed immersion, and $p$ is finitely presented, proper, and an isomorphism over $X \setminus Y$.
A blow-up square is a square \eqref{diag:abs} in which $\widetilde X = \Bl_{Y}(X)$. A blow-up square is thus an abstract blow-up square if and only if $\Bl_{Y}(X) \to X$ is finitely presented, e.g.~if $X$ is noetherian. We call such squares finitely presented blow-up squares.
The cdh-topology on $\Sch_{S}$ is generated by the Nisnevich coverings and the families $\{ Y \to X, \widetilde X \to X\}$ for any abstract blow-up square as above.

As it will become relevant shortly, let us recall the notion of a derived blow-up square from \cite[\S 3.1]{KST}.  Given a finite sequence $\vec{a} = (a_{1}, \dots, a_{r})$ of elements of the commutative ring $A$, we choose a commutative noetherian\footnote{The assumption that $A'$ be noetherian is not essential.  It was used in \cite{KST}  to prove that the derived blow-up is independent of choices.} ring $A'$ together with a regular sequence $\vec{a}' = (a'_{1}, \dots, a'_{r})$ and a ring map $A' \to A$ sending $\vec{a}'$ to $\vec{a}$.
We set $X = \Spec(A)$, $Y = V((\vec{a})) \subseteq X$, and $X'$, $Y'$ analogously
and consider the following blow-up square.
\begin{equation}
	\label{diag:regular-bu}
\begin{tikzcd}
 D' \ar[d]\ar[r] & \Bl_{Y'}(X') \ar[d] \\ 
 Y' \ar[r] & X' 
\end{tikzcd}
\end{equation}
The derived pullback of the square \eqref{diag:regular-bu} along the map $X \to X'$ is a square of derived schemes for which we use the notation
\begin{equation}
	\label{diag:derived-bu}
\begin{tikzcd}
 \D \ar[d]\ar[r] & \widetilde{\X} \ar[d] \\ 
 \Y \ar[r] & X.
\end{tikzcd}
\end{equation}
Such a square is called a derived blow-up square associated to the sequence of elements $\vec{a}$. For any derived scheme $\X$, we denote by $t\X$ its underlying scheme. Then $t\Y \cong Y$, the square $t\eqref{diag:derived-bu}$ is cartesian, and $t\widetilde{\X}$ contains the ordinary blow-up $\Bl_{Y}(X)$ as a closed subscheme.

If $E$ is a localizing invariant, and if $X$ is a  qcqs (derived) scheme, we define $E(X)$ to be $E(\Perf(X))$ where $\Perf(X)$ denotes the $\infty$-category of perfect $\cO_{X}$-modules.
We  call a commutative square of schemes $E$-cartesian if  $E$ sends  it to a pullback square. Similarly, we call a morphism of schemes an $E$-equivalence if  $E$ sends it  to an equivalence.
For convenience we record the following  statements as a lemma.

\begin{lemma}\label{lemma appendix}
\begin{enumerate}
\item Every localizing invariant satisfies Nisnevich descent.
\item A localizing invariant $E$ satisfies cdh descent if and only if every abstract blow-up square is $E$-cartesian.
\item For every localizing invariant $E$, a derived blow-up square is $E$-cartesian. 
\end{enumerate}

\end{lemma}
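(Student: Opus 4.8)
The plan is to dispose of (i) and (ii) by formal arguments and to locate the real content of the lemma in (iii). For (i), fix an elementary distinguished Nisnevich square, i.e.\ a pullback square with $U \hookrightarrow X$ a quasi-compact open immersion with closed complement $Z$ and $p\colon X' \to X$ an \'etale morphism restricting to an isomorphism over $Z$; put $U' = p^{-1}(U)$ and let $Z'$ be $p^{-1}(Z)$ with its reduced structure. Thomason's localization theorem gives exact sequences
\[
\Perf_Z(X) \lto \Perf(X) \lto \Perf(U), \qquad \Perf_{Z'}(X') \lto \Perf(X') \lto \Perf(U')
\]
in $\Cat_\infty^\perf$, and Nisnevich excision identifies $p^*\colon \Perf_Z(X) \xrightarrow{\,\sim\,} \Perf_{Z'}(X')$. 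Applying the localizing invariant $E$ produces two fibre sequences with canonically identified fibres, hence the square obtained by applying $E$ to the Nisnevich square is cartesian; together with $E(\emptyset) = 0$ this gives Nisnevich descent by the standard recognition criterion for the Nisnevich topology.

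Part (ii) is then formal: the cdh-topology on $\Sch_S$ is generated by the Nisnevich covers together with the covers $\{Y \to X,\ \widetilde X \to X\}$ coming from abstract blow-up squares \eqref{diag:abs}, so by the qcqs version of Voevodsky's criterion a Nisnevich sheaf of spectra satisfies cdh-descent if and only if it sends every abstract blow-up square to a pullback square. With (i) this is exactly the assertion.

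The substance is in (iii). I would write the derived blow-up square \eqref{diag:derived-bu} as the derived base change along $X \to X'$ of the blow-up square \eqref{diag:regular-bu} of the regularly embedded centre $Y' = V((\vec{a}')) \subseteq X'$ of some codimension $r$. The exceptional divisor $D'$ is the projectivization of the rank-$r$ conormal bundle of $Y'$ in $X'$, so after base change $\D$ is a projective bundle over $\Y$. Thomason's blow-up formula provides a semiorthogonal decomposition of $\Perf(\Bl_{Y'}X')$ into $\pi^*\Perf(X')$ together with $r-1$ copies of $\Perf(Y')$, embedded by twisting and pushforward along the Cartier divisor $D' \hookrightarrow \Bl_{Y'}X'$, while the projective bundle formula writes $\Perf(D')$ as $r$ copies of $\Perf(Y')$; the restriction functor to the exceptional divisor respects these decompositions. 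The key point, which is the input taken from \cite{KST}, is that both decompositions and their compatibility survive the derived base change $-\otimes^{\mathbb{L}}_{X'}X$: the relevant functors are assembled from $\pi^*$ and from pushforward along a Cartier divisor twisted by line bundles, all of which commute with derived base change on perfect complexes, and the semiorthogonality and generation relations are base-change stable for these explicit generators. Applying $E$, which turns semiorthogonal decompositions into direct sums, then yields $E(\widetilde{\X}) \simeq E(X) \oplus E(\Y)^{\oplus(r-1)}$ and $E(\D) \simeq E(\Y)^{\oplus r}$, and a short diagram chase with the restriction map identifies $\fib(E(\widetilde{\X}) \to E(\D))$ with $\fib(E(X) \to E(\Y))$, which is precisely the statement that \eqref{diag:derived-bu} is $E$-cartesian.

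The main obstacle I anticipate is this base-change stability: the morphism $X \to X'$ is in general far from being Tor-independent of $\Bl_{Y'}X' \to X'$, so one cannot simply quote a Tor-independent base change theorem and must argue directly with Thomason's explicit generators, or else invoke the corresponding statement of \cite{KST}. Once that is granted, the bookkeeping with the semiorthogonal decompositions and the concluding diagram chase are routine, as are parts (i) and (ii).
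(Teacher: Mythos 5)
Your proposal is correct and follows essentially the same route as the paper: (i) and (ii) are exactly the paper's argument via Thomason--Trobaugh and the Voevodsky/AHW recognition criterion, and for (iii) the paper simply notes that the proof of \cite[Theorem 3.7]{KST} --- the semiorthogonal decomposition/projective bundle argument you sketch --- applies verbatim to any localizing invariant, your only superfluous worry being Tor-independence, which is not needed since the square is a \emph{derived} pullback and base change for the explicit functors holds automatically. So the proposal is essentially identical to the paper's proof, just with the cited ingredients spelled out.
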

\begin{proof}
By a theorem of Voevodsky, a presheaf $E$ on $\Sch_S$ satisfies Nisnevich, respectively cdh-descent if and only if elementary Nisnevich squares, respectively elementary Nisnevich squares and abstract blow-up squares are $E$-cartesian, see e.g.\ \cite[Theorem 3.2.5]{AHW}. Using this, (i) follows from Thomason--Trobaugh, \cite{ThomasonTrobaugh} and (ii) from (i). For (iii), one notes that the proof of \cite[Theorem 3.7]{KST} applies for any localizing invariant in place of $K$-theory.
\end{proof}

The proof of the next theorem follows the arguments  of \cite[Proof of Theorem A]{KST}. 

\begin{thm}\label{cdh}
Let $E$ be a truncating invariant. Then $E$ satisfies cdh-descent on $\Sch_{S}$. Equivalently, it sends any abstract blow-up square \eqref{diag:abs} of qcqs schemes to a pullback square.
\end{thm}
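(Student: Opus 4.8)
The plan is to follow the strategy of \cite{KST}, reducing the general abstract blow-up square to the case of a derived blow-up square, which is handled by Lemma~\ref{lemma appendix}(iii). First I would note that by Lemma~\ref{lemma appendix}(ii) it suffices to show that every abstract blow-up square \eqref{diag:abs} is $E$-cartesian. By Nisnevich descent (Lemma~\ref{lemma appendix}(i)) and a standard pro-cdh/Zariski-local argument one reduces to the affine situation: $X = \Spec(A)$ for a commutative ring $A$, $Y = V(I)$ for a finitely generated ideal $I = (\vec a)$, and $\widetilde X \to X$ proper, finitely presented, an isomorphism over $X\setminus Y$. Since $E$ is truncating, it is automatically nilinvariant and insensitive to nilpotents in the structure sheaf, and more importantly it is defined on derived schemes via $\Perf$; so I may freely replace schemes by derived schemes with the same underlying classical scheme when convenient.

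The key move is the comparison between the given (classical) abstract blow-up square and a derived blow-up square. Choose, as in the recollection preceding the theorem, a noetherian ring $A'$ with a regular sequence $\vec a'$ mapping to $\vec a$, giving a derived blow-up square \eqref{diag:derived-bu} whose underlying classical square has $t\widetilde{\cX} \supseteq \Bl_Y(X)$ as a closed subscheme and $t\cY \cong Y$. The closed immersion $\Bl_Y(X) \hookrightarrow t\widetilde{\cX}$ is a nilimmersion away from... — more precisely, one shows it becomes an isomorphism after passing to a suitable cdh-cover, or one uses that $E$ applied to $t\widetilde{\cX}$ agrees with $E$ applied to $\Bl_Y(X)$ because the two differ by nilpotents along the relevant locus and $E$ is truncating hence insensitive to such thickenings. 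Then I would run the Noetherian approximation / continuity argument of \cite{KST}: write $A$ as a filtered colimit of noetherian subrings containing $\vec a$, observe (since $E$ need not be finitary here, this requires care — see below) that it suffices to treat the noetherian case, where the ordinary blow-up $\Bl_Y(X)\to X$ is automatically finitely presented, and then compare $\Bl_Y(X)$ with the derived blow-up $t\widetilde{\cX}$ to deduce that the classical blow-up square is $E$-cartesian. Finally, an abstract blow-up square with a general proper $p$ is reduced to the blow-up case by the standard dévissage using that $\widetilde X \to \Bl_Y(X) \times_X \widetilde{?}$ ... — concretely, one factors $p$ through a blow-up and induces on the dimension of the exceptional/non-isomorphism locus, exactly as in \cite{KST}, each step requiring only that blow-up squares are $E$-cartesian.

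The main obstacle I anticipate is the absence of the finitary hypothesis: a localizing invariant in the sense of Definition~\ref{localizing invariants} need not commute with filtered colimits, so the Noetherian approximation step of \cite{KST} does not apply verbatim. The fix is that one does not actually need continuity of $E$ itself: the reduction to the derived blow-up square can be performed directly over the given (possibly non-noetherian) affine $X$ by choosing the auxiliary noetherian ring $A'$ and regular sequence $\vec a'$ as in the recollection — this is exactly why the construction of derived blow-up squares was set up to allow a non-noetherian target — and then invoking Lemma~\ref{lemma appendix}(iii), which holds for \emph{any} localizing invariant. So the real content is: (a) the derived blow-up square is $E$-cartesian for free; (b) since $E$ is truncating, $E(\widetilde{\cX}) \simeq E(t\widetilde{\cX}) \simeq E(\Bl_Y(X))$ and $E(\cY)\simeq E(Y)$ and $E(\cD)\simeq E(D)$, because in each case the derived scheme and the classical one it maps to have the same classical truncation up to nilpotents; and (c) therefore the classical blow-up square is $E$-cartesian, and the general abstract blow-up square follows by the dévissage of \cite{KST}. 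Making (b) precise — identifying which classical schemes one is comparing and checking the nil-thickening claims — is where the care is needed, but it is exactly the argument of \cite[Proof of Theorem A]{KST} with "truncating'' replacing the role that cdh-descent/continuity played there.
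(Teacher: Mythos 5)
There is a genuine gap at the heart of your comparison step (b). The closed immersion $\Bl_Y(X)\hookrightarrow t\widetilde{\X}$ is \emph{not} a nilpotent thickening in general: the underlying classical scheme of the derived blow-up is the classical pullback $\Bl_{Y'}(X')\times_{X'}X$ (a ``total transform''), which can contain extra components over $Y$ that are in no way nil-thickenings of the ordinary blow-up, so ``the two differ by nilpotents and $E$ is truncating'' does not prove $E(\Bl_Y(X))\simeq E(t\widetilde{\X})$, nor the analogous statement for $D$ versus $t\D$. What is actually true, and what the paper proves first, is that the square comparing $(\,D,\,t\D,\,\Bl_Y(X),\,t\widetilde{\X}\,)$ is itself an abstract blow-up square in which the proper map is a \emph{closed immersion}, and that any such square is $E$-cartesian. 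This closed-immersion case is where the real input of the paper enters: affine-locally it is the Milnor square $A/(I\cap J)\to A/I$, $A/J\to A/(I+J)$, which is $E$-cartesian by the excision theorem for truncating invariants (Theorem~\ref{thm:truncating-excisive}, i.e.\ the Main Theorem), combined with nilinvariance of $I\cap J$ (nilpotent because $p$ is an isomorphism off $Y$, so $I^N\cdot J=0$). Your proposal never invokes excision at all, only nilinvariance and insensitivity to derived structure, so the decisive step is missing; with only those tools the comparison of the classical and derived blow-up squares cannot be completed.

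Two smaller points. First, your worry about noetherian approximation is misplaced: the paper applies \cite[Appendix C]{ThomasonTrobaugh} to the \emph{square of schemes} (writing the given abstract blow-up square as a pullback of one between noetherian schemes along $X\to X_0$), which requires no continuity of $E$; the invariant is only ever evaluated on schemes over the original $X$. Second, the reduction from a general proper $p$ to the blow-up case is not an induction on the dimension of the non-isomorphism locus; in the paper it goes through scheme-theoretic density, Raynaud--Gruson flatification to produce $Y_0'\subseteq Y_0$ with a factorization $\Bl_{Y_0'}(X_0)\to\widetilde X_0\to X_0$ (after a nilpotent thickening of $Y_0$, using nilinvariance), and then a retraction argument: the induced map on relative terms $E(X,Y)\to E(\widetilde X,D)$ acquires a left inverse, and applying the same observation to the auxiliary square shows that left inverse is itself an equivalence. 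You may defer these to \cite{KST}, but the closed-immersion/excision step described above cannot be deferred, since it is precisely where ``truncating'' is converted into descent via the Main Theorem.
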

In Appendix~\ref{app:B} we extend Theorem~\ref{cdh} to truncating invariants with coefficients in a quasi-coherent sheaf of connective algebras on $S$.

\begin{proof} 
We show that any abstract blow-up square \eqref{diag:abs} is $E$-cartesian.

\begin{step1}
If the map $p$ in diagram \eqref{diag:abs} is a closed immersion, then \eqref{diag:abs} is $E$-cartesian.
\end{step1}

By Zariski descent we may assume that $X$ and hence all schemes   in \eqref{diag:abs} are affine, say $X = \Spec(A)$, $Y = \Spec(A/I)$, 
 $\widetilde{X} = \Spec(A/J)$ with finitely generated ideals $I$ and $J$, and thus  $D = \Spec(A/(I+J))$.
As $E$  satisfies excision by \cref{thm:truncating-excisive}, the Milnor square
\[ 
\begin{tikzcd}
	A/(I\cap J) \ar[r] \ar[d] & A/I \ar[d] \\
	A/J \ar[r] & A/(I+J)
\end{tikzcd}
\]
is $E$-cartesian. Since the map $A \to A/J$ is an isomorphism outside $Y$ and $I$ and $J$ are finitely generated, there is an integer $N >0$ such that $I^{N}\cdot J = 0$. Hence $I \cap J$ is nilpotent, and the map $A \to A/(I\cap J)$ is an $E$-equivalence  by \cref{truncating => nil}. This implies the claim.

\begin{step2}
If \eqref{diag:abs} is a pullback of a finitely presented blow-up square, then it is $E$-cartesian.
\end{step2}

We may again assume that $X$ is affine. Suppose first that \eqref{diag:abs} is a finitely presented blow-up square.
Pick a finite sequence of generators of the ideal defining $Y \subseteq X$ and form an associated derived blow-up square as in \eqref{diag:derived-bu}.
We obtain the following commutative diagram:
\[\begin{tikzcd}
	D \ar[r] \ar[d] & t\D \ar[r] \ar[d] & \D \ar[r] \ar[d] & \Y \ar[d] & Y \ar[l] \ar[dl] \\
	\widetilde{X} \ar[r] & t\widetilde{\X} \ar[r] & \widetilde{\X} \ar[r] & X 
\end{tikzcd}\]
The left square is $E$-cartesian by Step~1, the middle square is $E$-cartesian because $E$ is truncating, and the right square is a derived blow-up square and thus $E$-cartesian by \cref{lemma appendix}. Finally, the map $Y \to \Y$ is an $E$-equivalence, again since $E$ is truncating. Thus the original square \eqref{diag:abs} is $E$-cartesian. The same argument applies to any pullback of \eqref{diag:abs} along some map $X' \to X$ as by definition the formation of the underlying scheme of a derived pullback  commutes with base change.

\begin{step3}
Assume that \eqref{diag:abs} is the pullback of an abstract blow-up square
\begin{equation}
	\label{diag:abs0}
\begin{tikzcd}
 D_{0} \ar[d]\ar[r] & \widetilde X_{0} \ar[d] \\ 
 Y_{0} \ar[r] & X_{0} 
\end{tikzcd}
\end{equation}
of noetherian schemes along a map $f\colon X \to X_{0}$ where $\widetilde X_{0} =\Bl_{Y'_{0}}(X_{0})$ for some closed subscheme $Y_{0}'$ of $Y_{0}$. Then \eqref{diag:abs} is $E$-cartesian.
\end{step3}
Consider the diagram of pullback squares:
\[\begin{tikzcd}
	D'' \ar[r] \ar[d] \ar[dr, phantom, "(1)"] & f^{*}\Bl_{Y'_{0}}(Y_{0}) \ar[d] & \\
	D' \ar[r] \ar[d] \ar[dr, phantom, "(2)"]& D \ar[r] \ar[d] \ar[dr, phantom, "(3)"]& \widetilde{X} \ar[d] \\
	f^{*}Y'_{0} \ar[r] & Y \ar[r] & X
\end{tikzcd}\]
Square (1) is $E$-cartesian  by Step 1. The square obtained by combining (1) and (2) is $E$-cartesian by Step 2, as is the square obtained by combining (2) and (3). Thus so is square~(3).

\begin{step4}
Any abstract blow-up square  \eqref{diag:abs} is $E$-cartesian.
\end{step4}

By noetherian approximation \cite[Appendix C]{ThomasonTrobaugh} we can write \eqref{diag:abs} as the pullback of an abstract blow-up square \eqref{diag:abs0}
of noetherian schemes along a map $f\colon X \to X_{0}$.
Let $\hat X_{0}$ by the scheme theoretic closure of $X_{0} \setminus Y_{0}$ in $X_{0}$ and $\hat Y_{0}= Y_{0} \times_{X_{0}} \hat X_{0}$.
Then $\{ Y \to X, f^{*}\hat X_{0} \to X\}$ is a finitely presented closed cover of $X$, and hence the map of relative terms $E(X,Y) \to E(f^{*}\hat X_{0}, f^{*}\hat Y_{0})$ is an equivalence by Step 1. Similarly, if $\hat{\widetilde{X}}_{0}$ denotes the scheme theoretic closure of $X_{0} \setminus Y_{0}$ in $\widetilde X_{0}$, and $\hat D_{0}$ the pullback, then $E(\widetilde X, D) \simeq E( f^{*}\hat{\widetilde{X}}_{0}, f^{*}\hat D_{0})$. So by replacing \eqref{diag:abs0} by its hatted version we may assume that $X_{0} \setminus Y_{0}$ is scheme-theoretically dense in $X_{0}$ and $\widetilde X_{0}$.
In this situation one can use Raynaud--Gruson's \emph{platification par \'eclatements} to find a closed subscheme $Y_{0}' \to X_{0}$ which is set-theoretically contained in $Y_{0}$ and such that there is a factorization $\Bl_{Y_{0}'}(X_{0}) \to \widetilde{X_{0}} \to X_{0}$, see \cite[Lemma 2.1.5]{Temkin}.
Since $E$ is nilinvariant by \cref{truncating => nil}, we can replace $Y_{0}$ by a nilpotent thickening of $Y_{0}$ (and $Y$ by the pullback) so that $Y'_{0}$ becomes a closed subscheme of $Y_{0}$ without changing the value $E(Y)$.
We then consider the following diagram of pullback squares.
\[\begin{tikzcd}
	D' \ar[r] \ar[d] \ar[dr, phantom, "(1)"] & D \ar[r] \ar[d] \ar[dr, phantom, "(2)"] & Y \ar[d] \\
	f^{*}\Bl_{Y'_{{0}}}(X_{{0}}) \ar[r] & \widetilde{X} \ar[r] & X
\end{tikzcd}\]
The composite of relative $E$-terms
\[ 
E(X,Y) \stackrel{\alpha}{\lto} E(\widetilde{X},D) \stackrel{\beta}{\lto} E(f^{*}\Bl_{Y'_{0}}(X_{0}),D') 
\]
is an equivalence by Step 3. Thus, we have now proven that for any abstract blow-up square, the induced map $\alpha$ of relative $E$-terms has a left-inverse. Since  diagram $(1)$ is itself an abstract blow-up square, it follows that $\beta$ itself has a left-inverse, and thus is an equivalence. Thus also $\alpha$ is an equivalence as needed.
\end{proof}

\begin{rem}
Example~\ref{ex:non-nil} shows that one cannot in general drop the assumption that $i$ is finitely presented. However, if the truncating invariant $E$ commutes with filtered colimits, then one may drop the finite presentation assumption on  $i$ by writing $i$ as a cofiltered limit of finitely presented closed immersions. 
In fact, one can also drop the finite presentation assumption on $p$: Step 1 then works for $i$ and $p$ not necessarily finitely presented closed immersions, Step 2 for arbitrary blow-up squares, Step 3 works for $\widetilde X \to X$ given by the blow-up of an arbitrary closed subscheme of $Y$. Finally, for Step 4 we may assume that $Y \subseteq X$ is finitely presented and its complement is scheme-theoretically dense in $X$ and $\widetilde X$. One can then directly apply \cite[Lemma 2.1.5]{Temkin}. 
Examples of truncating invariants which commute with filtered colimits are  homotopy $K$-theory and $K^{\inv}/n$, see \cite[Theorem G]{CMM}.
\end{rem}

\begin{cor}\label{cdhforfibre}
The fibre  of the cyclotomic trace $K^\inv$ satisfies cdh-descent.
\end{cor}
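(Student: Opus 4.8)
The plan is to deduce \cref{cdhforfibre} formally from \cref{cdh} together with the structural facts already established about the cyclotomic trace. The key observation is that cdh-descent is a property that can be tested fibrewise on a natural transformation of localizing invariants: if $E' \to E \to E''$ is a fibre sequence of localizing invariants and both $E$ and $E''$ satisfy cdh-descent, then so does $E'$. Indeed, by \cref{lemma appendix}(ii) it suffices to check that every abstract blow-up square \eqref{diag:abs} is $E'$-cartesian; but applying the fibre sequence to each of the four corners of \eqref{diag:abs} and using that $E$ and $E''$ send \eqref{diag:abs} to pullback squares, the square of $E'$-values is the fibre (in the stable $\infty$-category of squares) of a map of pullback squares, hence again a pullback square, since fibres commute with limits.

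Concretely, I would first recall from the proof of \cref{excision for Kinv} that $K$-theory and $\TC$ are both localizing invariants and the cyclotomic trace $K \to \TC$ is a natural transformation of such, so that $K^\inv = \fib(K \to \TC)$ is a localizing invariant fitting into a fibre sequence $K^\inv \to K \to \TC$. Next I would invoke \cref{cdh}: since $K^\inv$ is truncating by the Dundas--Goodwillie--McCarthy theorem (as already used in \cref{excision for Kinv}), it satisfies cdh-descent on $\Sch_S$. Now $K = \fib(K^\inv \to \TC)$ up to a shift — more precisely, there is a fibre sequence $K^\inv \to K \to \TC$, which we may rotate to $K \to \TC \to \Sigma K^\inv$; but the cleanest route is simply to observe that from the fibre sequence $K^\inv \to K \to \TC$ and the facts that $K^\inv$ and $K$ both satisfy cdh-descent — the latter being the Thomason--Trobaugh-type cdh-descent for $K$-theory, which I would cite — it follows by the fibrewise principle above that $\TC$ satisfies cdh-descent as well, and then re-running the same argument with the roles arranged so that $K^\inv$ is the fibre of $K \to \TC$ gives nothing new. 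So the honest statement is: $K^\inv$ satisfies cdh-descent \emph{because it is truncating}, directly by \cref{cdh}, and the corollary is essentially immediate.

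I do not expect any real obstacle here: \cref{cdhforfibre} is a one-line consequence of \cref{cdh} once one knows $K^\inv$ is a truncating localizing invariant, which is exactly the content of \cref{excision for Kinv}'s proof. The only point requiring (minor) care is to make sure the cyclotomic trace is genuinely a natural transformation of localizing invariants $\Cat_\infty^\perf \to \Sp$, so that $K^\inv$ is defined as a localizing invariant on all of $\Cat_\infty^\perf$ and not merely on rings; but this is already asserted and referenced in the proof of \cref{excision for Kinv}. Thus the proof reads: $K^\inv$ is a truncating localizing invariant by (the proof of) \cref{excision for Kinv}, hence satisfies cdh-descent by \cref{cdh}. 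If one prefers to phrase it via the fibre-sequence stability remark, one would additionally cite cdh-descent for $K$-theory and for $\TC$, but these are not needed.

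\begin{proof}
As recalled in the proof of \cref{excision for Kinv}, the cyclotomic trace is a natural transformation of localizing invariants $K \to \TC$, so that $K^\inv = \fib(K \to \TC)$ is itself a localizing invariant $\Cat_\infty^\perf \to \Sp$, and by the Dundas--Goodwillie--McCarthy theorem \cite[Theorem 7.0.0.2]{DGM} it is truncating. The claim now follows immediately from \cref{cdh}.
\end{proof}
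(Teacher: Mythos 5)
Your final proof is correct and is exactly the paper's (immediate) argument: $K^\inv$ is a truncating localizing invariant by the proof of \cref{excision for Kinv}, hence satisfies cdh-descent by \cref{cdh}. One caution about your discarded digression: $K$-theory itself does \emph{not} satisfy cdh-descent (Thomason--Trobaugh only gives Nisnevich descent, and abstract blow-up squares genuinely fail for $K$ --- this is precisely why $\KH$ and the pro-descent results of \cite{KST} exist), so the fibrewise argument deducing cdh-descent for $\TC$ would not go through; fortunately your actual proof does not use it.
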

For schemes essentially of finite type over an infinite perfect field admitting strong resolution of singularities, a similar result was proven  by Geisser and Hesselholt \cite[Theorem B]{GH3} using an argument of Haesemeyer \cite{Haesemeyer}.

\begin{cor}
 Homotopy $K$-theory satisfies cdh-descent.
\end{cor}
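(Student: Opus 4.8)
The plan is to combine Proposition~\ref{KH truncating} with Theorem~\ref{cdh}. The only point requiring care is that $\KH$ is defined as an invariant of $H\Z$-linear $\infty$-categories, whereas Theorem~\ref{cdh} is phrased for truncating invariants on all of $\Cat_\infty^\perf$; so I first need to observe that that theorem, together with all the auxiliary statements feeding into its proof, holds in the $H\Z$-linear setting as well.

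To set this up, note that for every qcqs scheme $X$ the structure sheaf $\cO_X$ is a discrete sheaf of commutative rings, so $\Perf(X)$ is canonically an $H\Z$-linear $\infty$-category and $\KH(X) := \KH(\Perf(X))$ is defined; for an ordinary affine scheme this recovers Weibel's homotopy $K$-theory by the computation following Proposition~\ref{KH truncating}. By Proposition~\ref{KH truncating}, $\KH$ is a \emph{truncating} invariant of $H\Z$-linear $\infty$-categories. Now I would run through the proof of Theorem~\ref{cdh} and check that every input is available in the $H\Z$-linear context: Nisnevich descent (Lemma~\ref{lemma appendix}(i)) holds unchanged by Thomason--Trobaugh; excision for $H\Z$-algebras is the $H\Z$-linear form of Theorem~\ref{thm:truncating-excisive} recorded in Remark~\ref{rem:k-linear-excision}; nilinvariance follows since the proof of Corollary~\ref{truncating => nil} uses only $H\Z$-linear excision and the truncating property; the $E$-cartesianness of derived blow-up squares (Lemma~\ref{lemma appendix}(iii)) holds for any $H\Z$-linear localizing invariant, as the derived schemes appearing there are linear over the relevant discrete rings and hence over $H\Z$, and the cited argument of \cite{KST} only uses the localizing property; and the remaining ingredients (noetherian approximation, platification par \'eclatements) are purely scheme-theoretic. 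Steps~1--4 of the proof of Theorem~\ref{cdh} therefore go through verbatim, showing that $\KH$ sends every abstract blow-up square of qcqs schemes to a pullback square.

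By Lemma~\ref{lemma appendix}(ii) this is equivalent to cdh-descent, so $\KH$ satisfies cdh-descent on $\Sch_S$. There is no genuine obstacle here: the entire content is the observation that the cdh-descent mechanism of Theorem~\ref{cdh} is insensitive to replacing $\Cat_\infty^\perf$ by $\Cat_\infty^{H\Z}$, which one verifies by direct inspection of that proof together with Proposition~\ref{KH truncating}.
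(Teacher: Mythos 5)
Your proposal is correct and is essentially the paper's own proof, which simply cites Proposition~\ref{KH truncating} together with Theorem~\ref{cdh}. The extra care you take about the $H\Z$-linear setting is sound and consistent with how the paper itself handles linearity (Remark~\ref{rem:k-linear-excision} and Appendix~\ref{app:B}), but it does not change the argument.
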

\begin{proof}
This follows from Theorem~\ref{cdh} and Proposition~\ref{KH truncating}.
\end{proof}

For  schemes over $\Q$ cdh-descent of $KH$ was first proven by Haesemeyer \cite{Haesemeyer}. The general case was proven later by Cisinski \cite{Cisinski} using proper base change in motivic homotopy theory.  
In \cite{KST} it was then shown that homotopy $K$-theory is in fact the cdh-sheafification of $K$-theory if the base scheme is noetherian. Recently, Khan  \cite{Khan} gave a proof of cdh-descent for homotopy $K$-theory using a similar approach.

The following result was proven in \cite{MR2415380} using Haesemeyer's approach.
\begin{cor}
The fibre  of the rational Goodwillie--Jones Chern character $K^{\inf}_{\Q}$ and periodic cyclic homology $\HP(-/k)$ over commutative $\Q$-algebras $k$ satisfy cdh-descent.
\end{cor}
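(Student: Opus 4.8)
The plan is to deduce both assertions from \cref{cdh}, using that the two invariants in question have already been identified as truncating. For rational infinitesimal $K$-theory this is immediate: by \cref{cor:Cortinas}, $K^{\inf}_{\Q}\colon\Cat^{\perf}_{\infty}\to\Mod(\Q)$ is a truncating invariant, so \cref{cdh} applies directly and shows that $K^{\inf}_{\Q}$ sends every abstract blow-up square of qcqs schemes to a pullback square, i.e.\ satisfies cdh-descent on $\Sch_{S}$ for any qcqs base $S$.

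For periodic cyclic homology the only point needing attention is that $\HP(-/k)$ is a priori only a localizing invariant of $k$-linear $\infty$-categories (\cref{linear categories}); on a $k$-scheme $X$ it is evaluated on the $k$-linear category $\Perf(X)$. So I would first record the $k$-linear form of \cref{cdh}: any truncating invariant of $k$-linear $\infty$-categories satisfies cdh-descent on $k$-schemes. Its proof is the proof of \cref{cdh} read $k$-linearly --- that argument uses only excision (\cref{thm:truncating-excisive}, in the $k$-linear form of \cref{rem:k-linear-excision}), nilinvariance (\cref{truncating => nil}, whose proof invokes only excision and the truncating property and so is valid $k$-linearly), Nisnevich descent, and the $E$-cartesianness of derived blow-up squares (\cref{lemma appendix}, both parts of which hold for any localizing invariant of $k$-linear categories, since $\Perf(-)$ is symmetric monoidal and the Zariski gluing and noetherian-approximation arguments are insensitive to the linear structure). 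Granting this, \cref{cor:Cuntz-Quillen}, which says that $\HP(-/k)$ is truncating on $k$-algebras for every commutative $\Q$-algebra $k$, gives cdh-descent for $\HP(-/k)$. Alternatively, one may simply quote the version of \cref{cdh} with coefficients in a quasi-coherent sheaf of connective algebras established in \cref{app:B}, applied to the structure sheaf of $\Spec(k)$.

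I do not expect a genuine obstacle; the statement is a formal consequence of machinery already in place. The only thing to be careful about is bookkeeping: that ``truncating'' is read in the appropriate ($k$-linear) sense, namely that $E(\Perf(A))\to E(\Perf(\pi_{0}(A)))$ is an equivalence for connective $k$-algebras $A$, and that the $k$-linear analogues of excision, nilinvariance, and \cref{lemma appendix} are really available --- which they are, by the references just cited.
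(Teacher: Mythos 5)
Your proposal is correct and follows the paper's intended route: the paper deduces this corollary from \cref{cdh} together with the facts (from the proofs of \cref{cor:Cortinas} and \cref{cor:Cuntz-Quillen}) that $K^{\inf}_{\Q}$ and $\HP(-/k)$ are truncating, handling the $k$-linear case of $\HP(-/k)$ exactly as you do, via the $k$-linear/coefficient version of the descent theorem (\cref{app:B}, with $\cR=\cO_S$). Your extra care that ``truncating'' and the inputs to the proof of \cref{cdh} (excision via \cref{rem:k-linear-excision}, nilinvariance, \cref{lemma appendix}) are available $k$-linearly is precisely the bookkeeping the paper relies on.
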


The following is the analog of \cite[Theorem A]{KST} for topological cyclic homology.

\begin{cor}
	\label{cor:pro-cdh-TC}
Topological cyclic homology  satisfies `pro-descent' for abstract blow up squares of noetherian schemes, i.e.~for any abstract blow-up square of noetherian schemes \eqref{diag:abs}, the square of pro-spectra
\[
\begin{tikzcd}%[sep= scriptsize]
 \TC(X) \ar[d]\ar[r] & \TC(\widetilde X) \ar[d] \\ 
 \{ \TC(Y_{n}) \} \ar[r] & \{ \TC( D_{n} ) \} 
\end{tikzcd}
\]
where $Y_{n}$ and $D_{n}$ denote the $n$-th infinitesimal thickening of $Y$ in $X$ and $D$ in $\widetilde X$, respectively, is weakly cartesian.
\end{cor}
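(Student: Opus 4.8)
The plan is to deduce the statement from the fibre sequence of localizing invariants $K^{\inv} \to K \to \TC$ provided by Corollary~\ref{excision for Kinv}, combined with cdh-descent for truncating invariants (Theorem~\ref{cdh}) applied to $K^{\inv}$ and with pro-cdh-descent for algebraic $K$-theory, \cite[Theorem~A]{KST}. For a localizing invariant $E$ write $\mathrm{Sq}(E)$ for the commutative square of pro-spectra with corners $E(X)$, $E(\widetilde X)$, $\{E(Y_n)\}_n$, $\{E(D_n)\}_n$ attached to the abstract blow-up square~\eqref{diag:abs} and its infinitesimal thickenings; functoriality in the thickening index $n$ makes these honest diagrams in $\Pro(\Sp)$. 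Evaluating the fibre sequence $K^{\inv}\to K\to\TC$ corner by corner produces a fibre sequence of $\Delta^1\times\Delta^1$-diagrams $\mathrm{Sq}(K^{\inv})\to\mathrm{Sq}(K)\to\mathrm{Sq}(\TC)$ in $\Pro(\Sp)$; since $\Pro(\Sp)$ is stable, $\mathrm{Sq}(\TC)$ is equivalently the cofibre of $\mathrm{Sq}(K^{\inv})\to\mathrm{Sq}(K)$.

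First I would show that $\mathrm{Sq}(K^{\inv})$ is weakly cartesian. Because $K^{\inv}$ is truncating it is nilinvariant (Corollary~\ref{truncating => nil}), and, exactly as in Step~4 of the proof of Theorem~\ref{cdh}, this propagates to noetherian schemes via Nisnevich descent. Hence the maps $K^{\inv}(Y_n)\to K^{\inv}(Y)$ and $K^{\inv}(D_n)\to K^{\inv}(D)$ induced by the nilpotent closed immersions $Y\hookrightarrow Y_n$, $D\hookrightarrow D_n$ are equivalences for every $n$, so $\mathrm{Sq}(K^{\inv})$ is level-wise weakly equivalent to the constant square on $K^{\inv}(X)$, $K^{\inv}(\widetilde X)$, $K^{\inv}(Y)$, $K^{\inv}(D)$. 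That square is cartesian because \eqref{diag:abs} is $K^{\inv}$-cartesian by Theorem~\ref{cdh}, and a constant cartesian square of pro-spectra is certainly weakly cartesian. The corresponding statement for $\mathrm{Sq}(K)$ is nothing but \cite[Theorem~A]{KST}.

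To conclude I would use that, in the stable $\infty$-category $\Pro(\Sp)$, a square is weakly cartesian precisely when its total fibre is weakly zero, and — since suspension preserves weakly zero pro-spectra — precisely when it is weakly \emph{cocartesian}. In particular the weakly cartesian squares form a class closed under finite colimits of $\Delta^1\times\Delta^1$-diagrams. Applying this to $\mathrm{Sq}(\TC)\simeq\cof\bigl(\mathrm{Sq}(K^{\inv})\to\mathrm{Sq}(K)\bigr)$ together with the previous paragraph shows that $\mathrm{Sq}(\TC)$ is weakly cartesian, which is the assertion of the corollary.

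The mathematical substance is entirely carried by the two input theorems, so the main obstacle is the homotopy-theoretic bookkeeping in $\Pro(\Sp)$: namely that an exact sequence of square diagrams is taken by the total-fibre functor to an exact sequence of pro-spectra, and that a cofibre of weakly zero pro-spectra is again weakly zero. The latter is slightly delicate since $\TC$ need not be bounded below, but it reduces by a Postnikov-tower argument of the type used for Lemma~\ref{lem:weak-equivalences} to the fact that a bounded pro-spectrum with vanishing homotopy pro-groups is pro-zero — and the weakly-cocartesian reformulation sidesteps it altogether. I would also take care with the precise indexing of the pro-systems $\{Y_n\}$, $\{D_n\}$ and the compatibility of $K^{\inv}\to K\to\TC$ with their transition maps, though this is routine.
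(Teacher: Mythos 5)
Your proposal is correct and is essentially the paper's own (one-line) argument: the paper deduces the corollary from Corollary~\ref{cdhforfibre} (cdh-descent, hence together with nilinvariance descent for the thickened square, for $K^{\inv}$) combined with pro-descent for $K$-theory \cite[Theorem A]{KST}, via the fibre sequence $K^{\inv}\to K\to\TC$, exactly as you do. The pro-categorical bookkeeping you flag is genuinely needed but unproblematic: weakly zero pro-spectra are closed under cofibres of (level) maps by a direct two-step choice of indices using the cofibre sequences levelwise, with no bounded-below hypothesis required.
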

\begin{proof}
This follows directly from Corollary~\ref{cdhforfibre} and the corresponding statement for $K$-theory which is \cite[Theorem A]{KST}. 
\end{proof}

One checks that the only arguments in the proof of \cite[Theorem A]{KST} that are seemingly specific to $K$-theory are the following two:
\begin{enumerate}
\item The fact that $K$-theory satisfies Nisnevich descent and descent for derived blow-up squares. This holds for any localizing invariant by \cref{lemma appendix}.
\item The validity of \cite[Theorem 4.11]{KST}, which is precisely \cref{second cor pro exc}. This  holds for any $k$-connective localizing invariant by \cref{rem after cor}.
\end{enumerate}
We thus obtain the following result.

\begin{thm}\label{pro cdh descent for THH}
If $E$ is a localizing invariant which which is $k$-connective for some $k$,
then $E$ satisfies pro-descent for abstract blow-up squares of noetherian schemes. In particular, topological Hochschild homology $\THH$ satisfies pro-descent for abstract blow-up squares of noetherian schemes.
\end{thm}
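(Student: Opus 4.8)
The plan is to transcribe the proof of \cite[Theorem A]{KST} essentially word for word, replacing algebraic $K$-theory by the given $k$-connective localizing invariant $E$, and to verify that the finitely many inputs of that proof which are not formal consequences of the localizing axiom survive this replacement.

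First I would recall the shape of the argument in \cite{KST}: one proceeds by noetherian induction on $X$, reduces to the affine case by Zariski (hence Nisnevich) descent, passes from a given abstract blow-up square of noetherian schemes to an associated derived blow-up square, handles the latter directly, and controls both the passage from the ordinary to the derived situation and the reduction to the blow-up of a regularly embedded centre --- obtained via Raynaud--Gruson's \emph{platification par \'eclatements} --- by means of the pro-excision theorem \cite[Theorem 4.11]{KST}. Inspecting this argument, the only two places where more than the defining property of a localizing invariant enters are: (a) that $E$ satisfies Nisnevich descent and sends derived blow-up squares to pullback squares, which is \cref{lemma appendix}; and (b) the pro-excision statement \cite[Theorem 4.11]{KST}, which is exactly \cref{second cor pro exc}, and which by \cref{rem after cor} holds for every $k$-connective localizing invariant. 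Since both (a) and (b) are available for $E$, every step of the argument of \cite{KST} carries over, and we conclude that $E$ satisfies pro-descent for abstract blow-up squares of noetherian schemes.

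For the statement about $\THH$ it then suffices to recall that $\THH$ is a localizing invariant and that it is $0$-connective: if $A \to B$ is an $n$-connective $\pi_0$-isomorphism of connective $\E_1$-rings, then $\THH(A) \to \THH(B)$ is again $n$-connective, since $\THH$ is assembled as a geometric realization from iterated smash powers of the ring spectra in question and smashing a connective spectrum with an $n$-connective map again yields an $n$-connective map. Thus the general statement applies with $k = 0$, and in particular $\THH$ satisfies pro-descent for abstract blow-up squares of noetherian schemes.

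The main --- and essentially only --- obstacle is the bookkeeping in step (b): one has to make sure that the pro-excision input invoked in \cite{KST} is used precisely in the form provided by \cref{second cor pro exc}, i.e.\ for a morphism of pro-systems of noetherian simplicial commutative $R$-algebras inducing an isomorphism on the relevant $\bc(\mu)$-torsion submodules, rather than in some reformulation that might tacitly rely on features of $K$-theory beyond $k$-connectivity. Once the two formulations have been matched, the remainder of the proof is a direct translation of the one in \cite{KST}.
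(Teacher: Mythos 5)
Your proposal is correct and follows essentially the same route as the paper: the paper's proof likewise consists in observing that the only $K$-theory-specific inputs in the proof of \cite[Theorem A]{KST} are Nisnevich/derived blow-up descent (\cref{lemma appendix}) and the pro-excision statement \cite[Theorem 4.11]{KST} in the form of \cref{second cor pro exc} together with \cref{rem after cor}, both of which hold for any $k$-connective localizing invariant. Your added verification that $\THH$ is $0$-connective is consistent with the example list given in the paper.
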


Previously, Morrow proved that $\THH$ and $\TC$ with finite coefficients satisfy pro-descent for abstract blow-up squares of noetherian, $F$-finite $\Z_{(p)}$-schemes of finite dimension, see \cite[Theorem 3.5]{Mor}.

\section{cdh-descent for truncating invariants with coefficients}
\label{app:B}

Let $\cE \in \CAlg(\Cat_\infty^\mathrm{perf})$ be rigid, and let $E$ be a localizing invariant for $\cE$-linear $\infty$-categories. For any $\cE$-linear $\infty$-category $\cZ$, one can define a variant of $E$ with coefficients in $\cZ$ by the formula
\[ 
\cC \mapsto E(\cC;\cZ) = E(\cC\otimes_\cE \cZ).
\]
A similar construction was considered by Tabuada \cite{MR3178252}.
Since $-\otimes_\cE \cZ$ preserves exact sequences by Lemma~\ref{lemma:tensor-exact}, $E(-;\cZ)$ is again localizing. Furthermore, if $E$ is truncating, $k$ is a connective $\E_{\infty}$-ring, and $R$ is a connective $k$-algebra, then $E(-;\Perf(R))$ is also truncating: For every $k$-algebra $A$ we have $E(A;\Perf(R)) \simeq E(A\otimes_k R)$. If in addition $A$ is connective, there is a commutative diagram
\[
\begin{tikzcd}[column sep=tiny]
	E(A\otimes_k R) \ar[rr] \ar[dr,"\simeq"'] & & E(\pi_0(A)\otimes_k R) \ar[dl,"\simeq"] \\
	& E(\pi_0(A\otimes_k R))
\end{tikzcd}
\]
in which both diagonal maps are equivalences since $E$ is truncating. 

We now apply these observations to prove cdh-descent for truncating invariants with coefficients.
For any qcqs scheme $X$ we write $\QCoh(X)$ for the presentably symmetric monoidal, stable $\infty$-category of quasi-coherent $\cO_{X}$-modules, see \cite[Section~2.2.4]{LurieSAG}.
Note that $\QCoh(X)$ is compactly generated with compact objects $\QCoh(X)^{\omega} = \Perf(X)$, see \cite[Proposition~9.6.1.1]{LurieSAG}.
For an algebra $\cR \in \Alg(\QCoh(X))$ we define the presentable, stable $\infty$-category of quasi-coherent $\cR$-modules
\[
\QCoh(\cR) = \RMod_{\cR}( \QCoh(X) )
\]
as the $\infty$-category of  $\cR$-right modules in $\QCoh(X)$.
The free objects $P \otimes_{\cO_{X}} \cR$ with $P \in \Perf(X)$ form a set of compact generators of $\QCoh(\cR)$ and we define 
\[
\Perf(\cR) = \QCoh(\cR)^{\omega},
\] 
which is a $\Perf(X)$-linear category.

\begin{ex}
If $X = \Spec(A)$ is affine, then $\cR$ is equivalently given by an $HA$-algebra $R$, and $\QCoh(\cR) = \RMod_{R}( \Mod(A) ) \simeq \RMod(R)$ by \cite[Theorem 7.1.3.1]{LurieHA} and likewise $\Perf(\cR) \simeq \Perf(R)$.
\end{ex}

We now fix a qcqs base scheme $S$ and write $\Sch_{S}$ for the category of qcqs $S$-schemes of finite type. We further fix  an algebra $\cR \in \Alg(\QCoh(S))$. For $X \in \Sch_{S}$, we denote the pullback of $\cR$ to $X$ by $\cR_{X}$. 
\begin{lemma}
There are natural equivalences $\QCoh(\cR_{X}) \simeq \QCoh(X) \otimes_{\QCoh(S)} \QCoh(\cR)$ and $\Perf(\cR_{X}) \simeq \Perf(X) \otimes_{\Perf(S)} \Perf(\cR)$.
\end{lemma}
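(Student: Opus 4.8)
The plan is to deduce the statement about perfect complexes from the one about all quasi-coherent sheaves by passing to compact objects, and to prove the quasi-coherent statement by the standard base-change formula for $\infty$-categories of modules over an algebra object. Note that no geometric base change (of the form $\QCoh(X\times_{S}Y)$) enters the argument: $\cR$ is an algebra in $\QCoh(S)$, not the quasi-coherent sheaves on a second $S$-scheme, so the reasoning is purely formal.

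Let $q\colon X\to S$ denote the structure morphism. First I would observe that the pullback $q^{*}\colon\QCoh(S)\to\QCoh(X)$ is symmetric monoidal and colimit-preserving, hence exhibits $\QCoh(X)$ as an object of $\CAlg(\Mod_{\QCoh(S)}(\Pr^{\mathrm{L}}))$, and that $\cR_{X}=q^{*}\cR$ by definition. The key input is the base-change formula for module $\infty$-categories: for $\cC\in\CAlg(\Pr^{\mathrm{L}})$, $A\in\Alg(\cC)$ and $\cD\in\CAlg(\Mod_{\cC}(\Pr^{\mathrm{L}}))$ there is a natural equivalence $\RMod_{A}(\cC)\otimes_{\cC}\cD\simeq\RMod_{A_{\cD}}(\cD)$, where $A_{\cD}$ denotes the image of $A$ (see \cite[Theorem~4.8.4.6, Remark~4.8.5.17]{LurieHA}; concretely, tensoring the monadic free--forgetful adjunction between $\QCoh(S)$ and $\RMod_{\cR}(\QCoh(S))$ with $\QCoh(X)$ over $\QCoh(S)$ produces the analogous adjunction for the monad $-\otimes_{\cO_{X}}\cR_{X}$). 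Applying this with $\cC=\QCoh(S)$, $A=\cR$, and $\cD=\QCoh(X)$ yields
\[
\QCoh(X)\otimes_{\QCoh(S)}\QCoh(\cR)=\QCoh(X)\otimes_{\QCoh(S)}\RMod_{\cR}(\QCoh(S))\simeq\RMod_{\cR_{X}}(\QCoh(X))=\QCoh(\cR_{X}),
\]
and this equivalence is natural in $X$ because both the base-change formula and the pullback of $\cR$ are.

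To pass to perfect complexes, I would recall that $\Ind$ is symmetric monoidal and restricts to an equivalence $\Cat_{\infty}^{\mathrm{perf}}\simeq\Pr^{\mathrm{L,st}}_{\omega}$, that $\QCoh(Z)=\Ind(\Perf(Z))$ for every qcqs scheme $Z$, and that $\QCoh(\cR_{X})=\Ind(\Perf(\cR_{X}))$, the free modules $P\otimes_{\cO_{X}}\cR_{X}$ with $P\in\Perf(X)$ forming a set of compact generators. Consequently $\Ind(\Perf(X)\otimes_{\Perf(S)}\Perf(\cR))\simeq\QCoh(X)\otimes_{\QCoh(S)}\QCoh(\cR)$, and the latter is compactly generated since the tensor product in $\Pr^{\mathrm{L}}$ of compactly generated stable $\infty$-categories is again so. Combining this with the equivalence of the previous paragraph and taking compact objects (an equivalence of compactly generated stable $\infty$-categories automatically matches compact objects) gives $\Perf(X)\otimes_{\Perf(S)}\Perf(\cR)\simeq\Perf(\cR_{X})$, compatibly with the $\Perf(S)$-linear structures because $\Ind$ is $\Pr^{\mathrm{L}}$-linear. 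Unwinding the comparison functor confirms that it sends the external product of $\cF\in\QCoh(X)$ with the free module $P\otimes_{\cO_{S}}\cR$ to the free $\cR_{X}$-module $(\cF\otimes_{\cO_{X}}q^{*}P)\otimes_{\cO_{X}}\cR_{X}$, so it indeed matches the preferred generating sets.

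I do not expect a genuine obstacle here: the content is formal, and the only care required is in the bookkeeping of the symmetric-monoidal and linearity structures — checking that $q^{*}$ realizes $\QCoh(X)$ as a commutative algebra object in $\Mod_{\QCoh(S)}(\Pr^{\mathrm{L}})$ so that the base-change formula applies verbatim, and that the resulting equivalence is compatible with passage to compact objects so that it descends from $\QCoh$ to $\Perf$.
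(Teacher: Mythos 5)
Your proof is correct and follows essentially the same route as the paper: the quasi-coherent statement via the base-change formula for module $\infty$-categories \cite[Theorem 4.8.4.6]{LurieHA} applied to $\cC=\QCoh(S)$, $A=\cR$, $\cD=\QCoh(X)$, and the perfect statement by passing to compact objects. The extra bookkeeping you supply (compact generation of the tensor product, matching of generators under $\Ind$) is exactly what the paper leaves implicit.
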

\begin{proof}
Using the definition and  \cite[Theorem 4.8.4.6]{LurieHA} we have
\begin{align*}
\QCoh(X) \otimes_{\QCoh(S)} \QCoh(\cR) &= \QCoh(X) \otimes_{\QCoh(S)} \RMod_{\cR}(\QCoh(S)) \\
	&\simeq \RMod_{\cR}( \QCoh(X) ) \\
	&\simeq \RMod_{\cR_{X}}(\QCoh(X))\\
	&= \QCoh(\cR_{X}).
\end{align*}
The second equivalence follows by passing to compact objects.
\end{proof}

Let $E$ be a localizing invariant of $\Perf(S)$-linear $\infty$-categories. 
For ease of notation we write  $E(-;\cR)$ for the localizing invariant $E(-;\Perf(\cR))$.
Note that for $X$ in $\Sch_{S}$, we have $E(X;\cR) \simeq E(\Perf(\cR_{X}))$ and $E(-;\cR)$ satisfies Nisnevich descent on $\Sch_{S}$  by Lemma~\ref{lemma appendix}.

\begin{thm}
Let $E$ be a truncating invariant of $\Perf(S)$-linear $\infty$-categories, and assume that $\cR$ is connective. Then $E(-;\cR)$ satisfies cdh-descent on $\Sch_{S}$.
\end{thm}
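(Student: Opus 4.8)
The plan is to mirror the proof of Theorem~\ref{cdh} verbatim, simply replacing the truncating invariant $E$ by its twisted version $E(-;\cR)$ throughout. The crucial observation is that all the formal properties of $E$ used in the proof of Theorem~\ref{cdh} are available for $E(-;\cR)$ as well. Indeed, by the discussion preceding the statement, $E(-;\cR)$ is again a localizing invariant of $\Perf(S)$-linear $\infty$-categories (since $-\otimes_{\Perf(S)}\Perf(\cR)$ preserves exact sequences by Lemma~\ref{lemma:tensor-exact}), and it satisfies Nisnevich descent on $\Sch_S$ by Lemma~\ref{lemma appendix}. So the only real content is to check that $E(-;\cR)$ is \emph{truncating} in the relevant $\Perf(S)$-linear sense, i.e.\ that it satisfies the analogues of Theorem~\ref{thm:truncating-excisive} (excision on connective algebras) and Corollary~\ref{truncating => nil} (nilinvariance), and that it kills the "$\Y\to Y$" type maps that appear because the underlying scheme of a derived blow-up square differs from the derived one.

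First I would record that, since $\cR$ is connective, $E(-;\cR)$ is truncating on connective $\Perf(S)$-linear algebras. Concretely, for a connective $S$-algebra $A$ we have $E(A;\cR)\simeq E(\Perf(A\otimes_{\cO_S}\cR))$, and the square
\[
\begin{tikzcd}[column sep=tiny]
	E(A\otimes_{\cO_S}\cR) \ar[rr] \ar[dr,"\simeq"'] & & E(\pi_0(A)\otimes_{\cO_S}\cR) \ar[dl,"\simeq"] \\
	& E(\pi_0(A\otimes_{\cO_S}\cR))
\end{tikzcd}
\]
has both slanted maps equivalences because $E$ is truncating and $\pi_0(A\otimes_{\cO_S}\cR)\cong\pi_0(A)\otimes_{\pi_0(\cO_S)}\pi_0(\cR)$; hence the horizontal map is an equivalence, which is exactly truncatedness. (This requires making sense of $\pi_0$ of a sheaf of connective algebras, done affine-locally, which is legitimate by Nisnevich/Zariski descent.) From this, exactly as Theorem~\ref{thm:truncating-excisive} and Corollary~\ref{truncating => nil} are deduced from Definition~\ref{def:truncating-nil-excision} and Theorem~\ref{main theorem}, we get that $E(-;\cR)$ is excisive and nilinvariant on connective $S$-algebras. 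One subtlety: in the proof of Theorem~\ref{cdh} one applies excision and nilinvariance to the dg-algebras $C(I,A)$ etc.\ over the base; here we need the $\Perf(S)$-linear version of the Main Theorem (Remark~\ref{linear categories}), applied with $\cE = \Perf(S)$, to produce the ring $A'\wtimes{A}{B'}B$ as an $S$-algebra and to conclude that $E(-;\cR)$ sends the relevant squares to pullbacks. Since $\Perf(S)$ is rigid, Remark~\ref{linear categories} and Lemma~\ref{lemma:tensor-exact} make this go through without change.

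With these ingredients in place, I would then run Steps~1--4 of the proof of Theorem~\ref{cdh} with $E(-;\cR)$ in place of $E$. Step~1 (closed immersions, using Zariski descent, excision for the Milnor square $A/(I\cap J)$, and nilinvariance for $A\to A/(I\cap J)$) goes through word for word. Step~2 uses a derived blow-up square: here one needs that $E(-;\cR)$ sends derived blow-up squares of affine derived schemes to pullback squares, which follows since the proof of \cite[Theorem~3.7]{KST}, as noted in Lemma~\ref{lemma appendix}(iii), works for \emph{any} localizing invariant, and additionally that $E(-;\cR)$ kills the map $Y\to\Y$ from a scheme to the corresponding derived scheme — this is truncatedness again, affine-locally. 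Steps~3 and~4 are purely formal manipulations with pullback squares, noetherian approximation \cite[Appendix C]{ThomasonTrobaugh}, and Raynaud--Gruson platification \cite[Lemma~2.1.5]{Temkin}, none of which see anything about the coefficients. I expect the only genuinely non-routine point to be the bookkeeping around $\pi_0$ of the coefficient sheaf $\cR$ and the $\Perf(S)$-linear enhancement of the Main Theorem — i.e.\ making sure that the dg-algebra constructions in Step~1 and the nilinvariance statements are carried out in $\Cat_\infty^{\Perf(S)}$ rather than $\Cat_\infty^\perf$; once that is set up, the descent argument is a direct transcription of the proof of Theorem~\ref{cdh}.
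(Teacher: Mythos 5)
Your proposal is correct and is in substance the paper's own argument: the key point in both is that $E(-;\cR)$ is again a localizing invariant which is truncating (verified affine-locally, using connectivity of $\cR$ and the identification $\pi_0(A\otimes_{\cO_S}\cR)\cong\pi_0(A)\otimes\pi_0(\cR)$), after which the cdh-descent machinery of Theorem~\ref{cdh} applies in the $\Perf(S)$-linear setting. The paper merely packages this more efficiently: it first reduces to affine $S$ by Zariski descent, so that $\cR$ is an honest connective algebra $R$ and Theorem~\ref{cdh} can be quoted directly, instead of re-running Steps~1--4 with the twisted invariant as you propose.
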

\begin{proof}
By Zariski descent we may assume that $S=\Spec(A)$ is affine. In that case $\cR$ is given by a connective $HA$-algebra $R$, and  $E(-; \cR)$ is truncating by the considerations in the beginning of this appendix. It thus satisfies cdh-descent by Theorem~\ref{cdh}.
\end{proof}

\bibliographystyle{amsalpha}
\bibliography{pullbacks}

\providecommand{\bysame}{\leavevmode\hbox to3em{\hrulefill}\thinspace}
\providecommand{\MR}{\relax\ifhmode\unskip\space\fi MR }
% \MRhref is called by the amsart/book/proc definition of \MR.
\providecommand{\MRhref}[2]{%
  \href{http://www.ams.org/mathscinet-getitem?mr=#1}{#2}
}
\providecommand{\href}[2]{#2}
\begin{thebibliography}{CHSW08}

\bibitem[AHW17]{AHW}
A.~Asok, M.~Hoyois, and M.~Wendt, \emph{Affine representability results in
  {$\Bbb A^1$}-homotopy theory, {I}: vector bundles}, Duke Math. J.
  \textbf{166} (2017), no.~10, 1923--1953.

\bibitem[And74]{Andre}
M.~Andr{\'e}, \emph{Homologie des alg{\`e}bres commutatives}, Springer-Verlag,
  Berlin-New York, 1974, Die Grundlehren der mathematischen Wissenschaften,
  Band 206.

\bibitem[AR12]{AR}
C.~Ausoni and J.~Rognes, \emph{Rational algebraic {$K$}-theory of topological
  {$K$}-theory}, Geom. Topol. \textbf{16} (2012), no.~4, 2037--2065.

\bibitem[Bas68]{Bass}
H.~Bass, \emph{Algebraic {$K$}-theory}, W. A. Benjamin, Inc., New
  York-Amsterdam, 1968.

\bibitem[BF78]{BousfieldFriedlander}
A.~Bousfield and E.~Friedlander, \emph{Homotopy theory of {$\Gamma $}-spaces,
  spectra, and bisimplicial sets}, Geometric applications of homotopy theory
  ({P}roc. {C}onf., {E}vanston, {I}ll., 1977), {II}, Lecture Notes in Math.,
  vol. 658, Springer, Berlin, 1978, pp.~80--130. \MR{513569}

\bibitem[BGT13]{BGT}
A.~J. Blumberg, D.~Gepner, and G.~Tabuada, \emph{A universal characterization
  of higher algebraic {$K$}-theory}, Geom. Topol. \textbf{17} (2013), no.~2,
  733--838.

\bibitem[BM12]{BM}
A.~J. Blumberg and M.~A. Mandell, \emph{Localization theorems in topological
  {H}ochschild homology and topological cyclic homology}, Geom. Topol.
  \textbf{16} (2012), no.~2, 1053--1120.

\bibitem[CHSW08]{MR2415380}
G.~Corti{\~n}as, C.~Haesemeyer, M.~Schlichting, and C.~Weibel, \emph{Cyclic
  homology, cdh-cohomology and negative {$K$}-theory}, Ann. of Math. (2)
  \textbf{167} (2008), no.~2, 549--573.

\bibitem[Cis13]{Cisinski}
D.-C. Cisinski, \emph{Descente par \'{e}clatements en {$K$}-th\'{e}orie
  invariante par homotopie}, Ann. of Math. (2) \textbf{177} (2013), no.~2,
  425--448. \MR{3010804}

\bibitem[CMM18]{CMM}
D.~Clausen, A.~Mathew, and M.~Morrow, \emph{{K}-theory and topological cyclic
  homology of henselian pairs},
  \href{https://arxiv.org/abs/1803.10897}{arXiv:1803.10897}, 2018.

\bibitem[CMNN]{CMNN}
D.~Clausen, A.~Mathew, N.~Naumann, and J.~Noel, \emph{Descent in algebraic
  \textit{K}-theory and a conjecture of {A}usoni-{R}ognes}, J. Eur. Math. Soc.
  (JEMS) (to appear).

\bibitem[Cor06]{Cortinas}
G.~Corti{\~n}as, \emph{The obstruction to excision in {$K$}-theory and in
  cyclic homology}, Invent. Math. \textbf{164} (2006), no.~1, 143--173.

\bibitem[CQ97]{CQ}
J.~Cuntz and D.~Quillen, \emph{Excision in bivariant periodic cyclic
  cohomology}, Invent. Math. \textbf{127} (1997), no.~1, 67--98.

\bibitem[CT07]{CortinasThom}
G.~Corti{\~n}as and A.~Thom, \emph{Bivariant algebraic {$K$}-theory}, J. Reine
  Angew. Math. \textbf{610} (2007), 71--123. \MR{2359851}

\bibitem[Cun84]{Cuntz}
J.~Cuntz, \emph{{$K$}-theory and {$C^{\ast} $}-algebras}, Algebraic
  {$K$}-theory, number theory, geometry and analysis ({B}ielefeld, 1982),
  Lecture Notes in Math., vol. 1046, Springer, Berlin, 1984, pp.~55--79.
  \MR{750677}

\bibitem[DGM13]{DGM}
B.~I. Dundas, T.~G. Goodwillie, and R.~McCarthy, \emph{The local structure of
  algebraic {K}-theory}, Algebra and Applications, vol.~18, Springer-Verlag
  London, Ltd., London, 2013.

\bibitem[DK08]{DK1}
B.~I. Dundas and H.~{\O}. Kittang, \emph{Excision for {$K$}-theory of
  connective ring spectra}, Homology Homotopy Appl. \textbf{10} (2008), no.~1,
  29--39.

\bibitem[DK13]{DK2}
\bysame, \emph{Integral excision for {$K$}-theory}, Homology Homotopy Appl.
  \textbf{15} (2013), no.~1, 1--25.

\bibitem[GH06]{GH}
T.~Geisser and L.~Hesselholt, \emph{Bi-relative algebraic {$K$}-theory and
  topological cyclic homology}, Invent. Math. \textbf{166} (2006), no.~2,
  359--395.

\bibitem[GH10]{GH3}
\bysame, \emph{On the vanishing of negative {$K$}-groups}, Math. Ann.
  \textbf{348} (2010), no.~3, 707--736.

\bibitem[GH11]{GH2}
\bysame, \emph{On relative and bi-relative algebraic {$K$}-theory of rings of
  finite characteristic}, J. Amer. Math. Soc. \textbf{24} (2011), no.~1,
  29--49.

\bibitem[Goo85]{Goodwillie2}
T.~G. Goodwillie, \emph{Cyclic homology, derivations, and the free loopspace},
  Topology \textbf{24} (1985), no.~2, 187--215.

\bibitem[Goo86]{Goodwillie}
\bysame, \emph{Relative algebraic {$K$}-theory and cyclic homology}, Ann. of
  Math. (2) \textbf{124} (1986), no.~2, 347--402.

\bibitem[Hae04]{Haesemeyer}
C.~Haesemeyer, \emph{Descent properties of homotopy {$K$}-theory}, Duke Math.
  J. \textbf{125} (2004), no.~3, 589--620.

\bibitem[HM97]{HM}
L.~Hesselholt and I.~Madsen, \emph{Cyclic polytopes and the {$K$}-theory of
  truncated polynomial algebras}, Invent. Math. \textbf{130} (1997), no.~1,
  73--97. \MR{1471886}

\bibitem[Hoy15]{Hoyois}
M.~Hoyois, \emph{The homotopy fixed points of the circle action on {H}ochschild
  homology}, \href{https://arxiv.org/abs/1506.07123}{arXiv:1506.07123}, 2015.

\bibitem[HS18]{OWR-TC}
L.~Hesselholt and P.~Scholze, \emph{Topological cyclic homology}, Oberwolfach
  Rep. \textbf{15} (2018).

\bibitem[HSS17]{HoyoisScherotzkeSibilla}
M.~Hoyois, S.~Scherotzke, and N.~Sibilla, \emph{Higher traces, noncommutative
  motives, and the categorified {C}hern character}, Adv. Math. \textbf{309}
  (2017), 97--154. \MR{3607274}

\bibitem[Isa01]{MR1828474}
D.~Isaksen, \emph{A model structure on the category of pro-simplicial sets},
  Trans. Amer. Math. Soc. \textbf{353} (2001), no.~7, 2805--2841. \MR{1828474}

\bibitem[Kar74]{Karoubi-ai}
M.~Karoubi, \emph{Localisation de formes quadratiques. {I}}, Ann. Sci.
  {\'E}cole Norm. Sup. (4) \textbf{7} (1974), 359--403 (1975).

\bibitem[Kel99]{Keller}
B.~Keller, \emph{On the cyclic homology of exact categories}, J. Pure Appl.
  Algebra \textbf{136} (1999), no.~1, 1--56.

\bibitem[Kha18]{Khan}
A.~Khan, \emph{Descent by quasi-smooth blow-ups in algebraic {$K$}-theory},
  \href{https://arxiv.org/abs/1810.12858}{arXiv:1810.12858}, 2018.

\bibitem[KST18]{KST}
M.~Kerz, F.~Strunk, and G.~Tamme, \emph{Algebraic {$K$}-theory and descent for
  blow-ups}, Invent. Math. \textbf{211} (2018), no.~2, 523--577.

\bibitem[Lur09]{LurieHTT}
J.~Lurie, \emph{Higher topos theory}, Annals of Mathematics Studies, vol. 170,
  Princeton University Press, Princeton, NJ, 2009.

\bibitem[Lur17]{LurieHA}
\bysame, \emph{Higher {A}lgebra}, Available at the author's homepage
  \url{http://www.math.harvard.edu/~lurie}, 2017.

\bibitem[Lur18]{LurieSAG}
\bysame, \emph{Spectral {A}lgebraic {G}eometry}, Available at the author's
  homepage \url{http://www.math.harvard.edu/~lurie}, 2018.

\bibitem[Mil80]{Milne}
J.~S. Milne, \emph{\'{E}tale cohomology}, Princeton Mathematical Series,
  vol.~33, Princeton University Press, Princeton, N.J., 1980. \MR{559531}

\bibitem[Mor16]{Mor}
M.~Morrow, \emph{Pro {CDH}-descent for cyclic homology and {$K$}-theory}, J.
  Inst. Math. Jussieu \textbf{15} (2016), no.~3, 539--567.

\bibitem[Mor18]{Morrow-pro-unitality}
\bysame, \emph{Pro unitality and pro excision in algebraic {$K$}-theory and
  cyclic homology}, J. Reine Angew. Math. \textbf{736} (2018), 95--139.

\bibitem[Nei10]{Neisendorfer}
J.~A. Neisendorfer, \emph{Homotopy groups with coefficients}, J. Fixed Point
  Theory Appl. \textbf{8} (2010), no.~2, 247--338.

\bibitem[NS18]{NS}
T.~Nikolaus and P.~Scholze, \emph{On topological cyclic homology}, Acta Math.
  \textbf{221} (2018), no.~2, 203--409.

\bibitem[Oli88]{Oliver}
R.~Oliver, \emph{Whitehead groups of finite groups}, London Mathematical
  Society Lecture Note Series, vol. 132, Cambridge University Press, Cambridge,
  1988.

\bibitem[Qui73]{Quillen-Higher-K}
D.~Quillen, \emph{Higher algebraic {$K$}-theory. {I}}, 85--147. Lecture Notes
  in Math., Vol. 341.

\bibitem[Sus95]{Suslin}
A.~Suslin, \emph{Excision in the integral algebraic {$K$}-theory}, Trudy Mat.
  Inst. Steklov. \textbf{208} (1995), 290--317.

\bibitem[SW92]{Suslin-Wod}
A.~Suslin and M.~Wodzicki, \emph{Excision in algebraic {$K$}-theory}, Ann. of
  Math. (2) \textbf{136} (1992), no.~1, 51--122.

\bibitem[Swa71]{Swan}
R.~Swan, \emph{Excision in algebraic {$K$}-theory}, J. Pure Appl. Algebra
  \textbf{1} (1971), no.~3, 221--252.

\bibitem[Tab14]{MR3178252}
G.~Tabuada, \emph{{$E_n$}-regularity implies {$E_{n-1}$}-regularity}, Doc.
  Math. \textbf{19} (2014), 121--140. \MR{3178252}

\bibitem[Tab15]{Tabuada}
\bysame, \emph{{$\bold{A}^1$}-homotopy theory of noncommutative motives}, J.
  Noncommut. Geom. \textbf{9} (2015), no.~3, 851--875. \MR{3420534}

\bibitem[Tam18]{Tamme:excision}
G.~Tamme, \emph{Excision in {A}lgebraic {K}-theory revisited}, Compos. Math.
  \textbf{154} (2018), no.~9, 1801--1814.

\bibitem[Tem08]{Temkin}
M.~Temkin, \emph{Desingularization of quasi-excellent schemes in characteristic
  zero}, Adv. Math. \textbf{219} (2008), no.~2, 488--522.

\bibitem[TT90]{ThomasonTrobaugh}
R.~Thomason and T.~Trobaugh, \emph{Higher algebraic {$K$}-theory of schemes and
  of derived categories}, The {G}rothendieck {F}estschrift, {V}ol.\ {III},
  Progr. Math., vol.~88, Birkh{\"a}user Boston, Boston, MA, 1990, pp.~247--435.

\bibitem[Vor79]{Vorst}
T.~Vorst, \emph{Localization of the {$K$}-theory of polynomial extensions},
  Math. Ann. \textbf{244} (1979), no.~1, 33--53, With an appendix by Wilberd
  van der Kallen.

\bibitem[Wal78]{Waldhausen}
F.~Waldhausen, \emph{Algebraic {$K$}-theory of topological spaces. {I}},
  Algebraic and geometric topology ({P}roc. {S}ympos. {P}ure {M}ath.,
  {S}tanford {U}niv., {S}tanford, {C}alif., 1976), {P}art 1, Proc. Sympos. Pure
  Math., XXXII, Amer. Math. Soc., Providence, R.I., 1978, pp.~35--60.
  \MR{520492}

\bibitem[Wei13]{Weibel}
C.~Weibel, \emph{The {$K$}-book}, Graduate Studies in Mathematics, vol. 145,
  American Mathematical Society, Providence, RI, 2013, An introduction to
  algebraic $K$-theory.

\bibitem[Wod89]{Wodzicki}
M.~Wodzicki, \emph{Excision in cyclic homology and in rational algebraic
  {$K$}-theory}, Ann. of Math. (2) \textbf{129} (1989), no.~3, 591--639.

\end{thebibliography}

\end{document}